\documentclass[12pt]{amsart}
\usepackage{amsthm}
\usepackage{amsmath}
\usepackage{amssymb}
\usepackage{mathtools}
\usepackage{comment}
\usepackage{mathrsfs}
\usepackage{hyperref}
\usepackage{xcolor}
\usepackage{inputenc}
\usepackage{stackengine} 
\usepackage{multicol}
\usepackage[misc]{ifsym}
\usepackage{fullpage}

\theoremstyle{plain}
\newtheorem{theorem}{Theorem}
\newtheorem*{theorem*}{Theorem}
\newtheorem{lemma}[theorem]{Lemma}
\newtheorem{proposition}[theorem]{Proposition}
\newtheorem{corollary}[theorem]{Corollary}
\theoremstyle{definition}
\newtheorem{definition}[theorem]{Definition}
\newtheorem{example}[theorem]{Example}
\newtheorem{remark}[theorem]{Remark}
\numberwithin{theorem}{section}
\numberwithin{equation}{section}

\newcommand{\B}{\mathbb{B}}
\newcommand{\C}{\mathbb{C}}
\newcommand{\N}{\mathbb{N}}

\newcommand{\Q}{\mathbb{Q}}
\newcommand{\R}{\mathbb{R}}
\newcommand{\T}{\mathbb{T}}

\newcommand{\Z}{\mathbb{Z}}
\newcommand{\G}{G}


\newcommand{\fa}{\mathfrak{a}}

\newcommand{\fA}{\mathfrak{A}}


\newcommand{\cH}{\mathcal{H}}
\newcommand{\cT}{\mathcal{T}}
\newcommand{\cA}{\mathcal{A}}
\newcommand{\cO}{\mathcal{O}}
\newcommand{\cP}{\mathcal{P}}
\newcommand{\cS}{\mathcal{S}}
\newcommand{\cF}{\mathcal{F}}
\newcommand{\cB}{\mathcal{B}}
\newcommand{\cL}{\mathcal{L}}


\newcommand{\rB}{\mathrm{B}}
\newcommand{\rI}{\mathrm{I}}
\newcommand{\rL}{\mathrm{L}}
\newcommand{\rM}{\mathrm{M}}

\newcommand{\rP}{\mathrm{P}}
\newcommand{\rS}{\mathrm{S}}
\newcommand{\rU}{\mathrm{U}}




\newcommand{\SU}{\mathrm{SU}}
\newcommand{\SUn}{G}
\newcommand{\wSUn}{\widetilde{G}}
\newcommand{\U}{\mathrm{U}}

\newcommand{\sun}{\mathfrak{su}(n,1)}


\newcommand{\tr}{\mathrm{tr}}
\newcommand{\im}{\mathrm{Im}}

\newcommand{\ip}[2]{\langle #1,#2 \rangle}

\newcommand{\vm}{\textbf{m}}

\newcommand{\wG}{\widetilde{G}}
\newcommand{\bcH}{\overline{\cH}}



\newcommand{\Berg}{\mathcal{A}^2_\nu}
\newcommand{\ipnu}[2]{\ip{#1}{#2}_\nu}
\newcommand{\kernu}[1]{k^{(\nu)}_{#1}}
\newcommand{\Kernu}[1]{K^{(\nu)}_{#1}}
\newcommand{\Berz}{B^{(\nu)}}
\newcommand{\Toep}{T^{(\nu)}}
\newcommand{\hnu}{h_\nu}


\newcommand{\LoneG}{L^1(\G)}
\newcommand{\LpG}{L^p(\G)}
\newcommand{\bddfG}{L^\infty(\G)}
\newcommand{\LoneKGK}{L^1(K\backslash G / K)}

\newcommand{\Lpinv}{L^p(\B^n, d\lambda)}

\newcommand{\LpinvK}{L^p(\B^n, d\lambda)^K}
\newcommand{\LoneinvK}{L^1(\B^n, d\lambda)^K}

\newcommand{\nball}{\mathbb{B}^n}
\newcommand{\bdd}{\cL(\cA^2_\nu)}
\newcommand{\traceop}{\cT^1(\cA^2_\nu)}
\newcommand{\schatten}{\cT^p(\cA^2_\nu)}

\newcommand{\bddK}{\cL(\cA^2_\nu)^K}
\newcommand{\traceopK}{\cT^1(\cA^2_\nu)^K}
\newcommand{\haar}[1]{\ d\mu_\G(#1)}
\newcommand{\haarK}[1]{\ d\mu_K(#1)}
\newcommand{\inv}[1]{\ d\lambda(#1)}
\newcommand{\leb}[1]{\ d#1}
\newcommand{\planch}[1]{\ d\mu_P(#1)}
\newcommand{\bmeas}[1]{\ d\mu_\nu(#1)}

\newcommand{\actfL}[2]{\ell_{#1} #2}
\newcommand{\actopL}[2]{L^{\nu}_{#1}(#2)}

\newcommand{\convolalgoplus}{\LoneinvK \oplus \traceopK}
\newcommand{\convolalg}{\mathfrak{A}_{\nu}}

\newcommand{\maxid}{\mathcal M(\convolalg)}

\newcommand{\spheri}{\cS_{bdd}}
\newcommand{\spheripd}{\cS_{pd}}

\newcommand{\pdsf}{\cS_{pd}(G/K)}
\newcommand{\bddspheri}{\cS_{bdd}(G/K)}
\newcommand{\spf}{\cS (G/K)}

\newcommand{\Tr}{\mathrm{Tr}\, }

\newcommand{\rad}{\mathrm{rad}}
\newcommand{\radop}[1]{\mathrm{rad}(#1)}

\newcommand{\id}{\mathrm{id}}

\newcommand{\hgf}{{}_2 F_1}



\DeclareMathOperator{\diag}{diag}




\newcommand{\prma}{P_m}


\newcommand{\wf}{\widehat{f}}

 \newcommand{\note}[2][\null]{%
   \marginpar{\renewcommand{\baselinestretch}{1}\vspace{-1em}\hrule\vspace{3pt}%
  \raggedright\textsf{ \footnotesize#2\ifx \footnotesize#1\null\else\\\hfill--- 
   {\em #1}\fi}\vspace{1.5em}}%
 }

\newcommand{\ds}{\displaystyle}
\newcommand{\pinu}{\pi_\nu}


\newcommand{\HS}{\mathcal{HS}}

\newcommand{\bk}{\mathbf{k}}
\newcommand{\bm}{\mathbf{m}}
\newcommand{\bn}{\mathbf{n}}

\newcommand{\Aut}{\mathrm{Aut}}

\title{The function-operator convolution algebra over the Bergman space of the ball and its Gelfand theory}
\author{Vishwa Dewage, Robert Fulsche and Gestur \'{O}lafsson}

\begin{document}
\begin{abstract}
    We investigate the structure of the commutative Banach algebra formed as the direct sum of integrable radial functions on the disc and the radial operators on the Bergman space, endowed with the convolution from quantum harmonic analysis as the product. In particular, we study the Gelfand theory of this algebra and discuss certain properties of the appropriate Fourier transform of operators which naturally arises from the Gelfand transform.
\end{abstract}
\maketitle

\setcounter{tocdepth}{1}
\tableofcontents


\section{Introduction}

The theory of \emph{quantum harmonic analysis}, developed initially by Reinhard F.~Werner in \cite{Werner1984}, has in the last few years been the center of a significant line of research. Besides studying the structures that arise in this theory out of intrinsic interest \cite{Berge_Berge_Fulsche2024, Fulsche_Luef_Werner2026, Kiukas_etal_2012, Luef_Samuelsen2025}, there has also been a broad range of applications in time-frequency analysis \cite{Luef_Skrettingland2018, Dorfler_Luef_Skrettingland2024, Luef_Skrettingland2020, Dorfler_etal2024} as well as operator theory \cite{Fulsche2019, Fulsche_Rodriguez2025, Dewage_Mitkovski2025, D26} - to name just a few of the many references.

Reinhard Werner considered his initial concepts of quantum harmonic analysis (QHA) with respect to a certain projective unitary representation of $\mathbb R^{2d}$. Given the success of QHA in terms of its applications, there has been significant efforts to extend the notions and tools from QHA to the setting of representations of more general groups \cite{Berge_etal2022, Berge_Halvdansson2025, DDMO25, Fulsche_Galke2025, Halvdansson2023}. 

At the heart of Werner's theory of quantum harmonic analysis lies the possibility of endowing the 
space $L^1(\mathbb R^{2d}) \oplus \mathcal T^1(L^2(\mathbb R^d))$, the direct sum of the integrable 
functions on $\mathbb R^{2d}$ with the trace class operators over 
$L^2(\mathbb R^d)$, with a multiplication, which turns the space into a commutative Banach algebra and extends the 
classical notion of convolution on $L^1(\mathbb R^{2d})$. This multiplication is often referred to as 
the \emph{function-operator convolution}. This product respects the natural grading of the direct sum, i.e., the convolution 
of a function and an operator yields an operator, whereas the convolution of two operators gives a function. The 
structure of this Banach algebra is really the starting point for many of the applications in QHA. Nevertheless, also the structure 
of this Banach algebra itself has been investigated \cite{Berge_Berge_Fulsche2024}.

Let $K$ denote the group $\mathrm{U}_n$ of unitary $n \times n$-matrices, understood as a  maximal compact subgroup of 
$G=\mathrm{SU}(n,1)$. Then, the homogeneous space $G / K$ can be identified with the unit ball $\mathbb B^n$ in $\mathbb C^n$. 
Further, we denote by $\Berg$ the Bergman space over $\mathbb B^n$ with respect to the standard weight parameter $\nu$. The 
first steps towards generalizing the methods and ideas of QHA to this settings was taken in \cite{DDMO25} by consider convolution
of operators and convolutions of functions with operators.  Here we take those ideas further   by
  considering in a systematic way the algebra  $\LoneinvK \oplus \traceopK$, where $\lambda$ is the M\"{o}bius-invariant measure and $\traceopK$ denotes the $K$-invariant, or radial, trace class operators on $\Berg$. Here the product is, as in Werner's case, includes both
 the convolution of functions, convolution of operators and the mixed convolution. It turns out that this leads to  a commutative Banach algebra.
This observation is the starting point of the present work.

The goal of the present work is two-fold: First, we determine the Gelfand spectrum of the commutative Banach algebra 
$\mathfrak A_\nu = \LoneinvK \oplus \traceopK$  with and without the natural 
$*$-involution $(f,A)^* = (f^*,A^*)$.  Since it is well-known that the Gelfand theory of $\LoneKGK$ can be 
described in terms of the space $\bddspheri$ of bounded spherical functions with respect to the Gelfand pair 
$(G, K)$, and taking into account that $\LoneinvK \cong \LoneKGK$ is contained as a subalgebra in $\mathfrak A_\nu$, it 
is no surprise that $\bddspheri$ in one way or another shows up in the description of the Gelfand spectrum of $\mathfrak A_\nu$. Indeed, we prove the following:
\begin{theorem*}
    The Gelfand spectrum $\mathcal M(\mathfrak A_\nu)$ can be described as:
    \begin{align*}
        \mathcal M(\mathfrak A_\nu) \cong \bddspheri \times \mathbb Z_2.
    \end{align*}
\end{theorem*}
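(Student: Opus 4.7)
The strategy is to exploit the natural $\Z_2$-grading of the convolution on $\convolalg$. Writing $\convolalg = \mathfrak A_0 \oplus \mathfrak A_1$ with $\mathfrak A_0 = \LoneinvK$ and $\mathfrak A_1 = \traceopK$, the three-fold convolution structure satisfies $\mathfrak A_i \star \mathfrak A_j \subseteq \mathfrak A_{i+j \bmod 2}$. A character $\chi \in \maxid$ is therefore the same as a pair $(\chi_0,\chi_1)$ of linear functionals on $\mathfrak A_0$ and $\mathfrak A_1$ satisfying the coupled identities
\begin{align*}
\chi_0(f \star g) = \chi_0(f)\chi_0(g), \quad \chi_1(f \star A) = \chi_0(f)\chi_1(A), \quad \chi_0(A \star B) = \chi_1(A)\chi_1(B).
\end{align*}
First I would rule out the degenerate cases. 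If $\chi_0 \equiv 0$, then applying the middle identity to an approximate identity $e_\alpha$ in $\mathfrak A_0$ and passing to the limit forces $\chi_1 \equiv 0$; and if $\chi_1 \equiv 0$, the third identity combined with density of the image of $\star : \mathfrak A_1 \otimes \mathfrak A_1 \to \mathfrak A_0$ (which I would invoke from \cite{DDMO25} or prove directly via standard approximation arguments) forces $\chi_0 \equiv 0$ as well. Either eventuality contradicts $\chi \neq 0$.

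Consequently $\chi_0$ is a non-zero character of the commutative convolution algebra $\LoneinvK \cong \LoneKGK$, and by the Gelfand-Helgason theory for the Gelfand pair $(G,K)$ it corresponds to a unique bounded spherical function $\varphi \in \bddspheri$ via $\chi_0(f) = \int_{\nball} f(z) \varphi(z) \inv{z}$. Fix such a $\varphi$. The third identity, specialised at $B = A$, gives the quadratic constraint $\chi_1(A)^2 = \chi_0(A \star A)$; polarisation together with the density used above ensures that some $A_0 \in \mathfrak A_1$ satisfies $\chi_0(A_0 \star A_0) \neq 0$, so $\chi_1(A_0)$ is one of two non-zero complex square roots. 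The third identity then propagates this single choice to every $B \in \mathfrak A_1$ via $\chi_1(B) = \chi_0(A_0 \star B)/\chi_1(A_0)$, so $\chi_1$ is determined by $\varphi$ together with a binary sign, exhibiting at most the parametrisation $\maxid \hookrightarrow \bddspheri \times \Z_2$.

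The principal obstacle is to show that both sign choices really do extend to bona fide characters, i.e. that surjectivity holds. The plan is to realise each extension concretely via an operator-valued \emph{spherical function} $\Phi_\varphi \in \bdd^K$ satisfying $\chi_1(A) = \Tr(A \Phi_\varphi)$ and $\Phi_\varphi \star \Phi_\varphi = \varphi$; the two sign choices correspond to $\pm \Phi_\varphi$. Because every $K$-invariant bounded operator on $\Berg$ is diagonal in the standard monomial basis, the construction of $\Phi_\varphi$ reduces to a discrete square-root problem at the level of eigenvalue sequences, using the explicit eigenvalue-level formula for operator convolution developed in \cite{DDMO25}; the target values are supplied by the spherical function $\varphi$ paired against a fixed family of rank-one radial operators. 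Once the $\Phi_\varphi^{\pm}$ are produced, all three multiplicativity identities become automatic from the QHA convolution identities, giving the desired bijection. A final, routine step upgrades this to a homeomorphism by matching the weak-$\ast$ topology on $\maxid$ with the topology of uniform convergence on compacta on $\bddspheri$ and the discrete topology on $\Z_2$, which falls out of the integral and trace representations of $\chi_0$ and $\chi_1^{\pm}$.
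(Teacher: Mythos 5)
Your skeleton matches the paper's: restrict a character to $\LoneinvK$ to obtain a bounded spherical function, use the quadratic relation $\chi_1(A)^2=\chi_0(A\ast A)$ to reduce the operator part to a sign, and realize both signs by pairing against a radial test operator. But there is a genuine gap at the pivot of the whole argument. You need, for \emph{every} bounded spherical function $\phi_\lambda$, some $A_0\in\traceopK$ with $\chi_0(A_0\ast A_0)=\widehat{(A_0\ast A_0)}(\lambda)\neq 0$, and you derive this from the claim that the linear span of $\traceopK\ast\traceopK$ is dense in $\LoneinvK$, to be obtained by ``standard approximation arguments.'' That density is not standard here: for $K$-bi-invariant $L^1$ on a semisimple $G$, Wiener--Tauberian-type statements are notoriously delicate (Ehrenpreis--Mautner), and in any case proving such a density would already presuppose knowing where the transforms $\widehat{A\ast B}$ can simultaneously vanish --- which is exactly the nonvanishing you are trying to establish. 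The paper proves the needed fact directly: taking $A_0=P_0=1\otimes 1$, one has $P_0\ast P_0=\hnu$ and $\widehat{\hnu}(\lambda)=B_\nu(\phi_\lambda)(0)\neq 0$ by the injectivity of the Berezin transform (Theorem~\ref{lem:Bnvarphi} and Theorem~\ref{lem:regularity}). Without this single computation your injection into $\bddspheri\times\Z_2$ and your surjectivity argument both collapse, since you cannot divide by $\chi_1(A_0)$.

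A second, more repairable problem is the surjectivity step. Your operator-valued spherical function $\Phi_\varphi$ is only a bounded radial operator, so the equation $\Phi_\varphi\ast\Phi_\varphi=\varphi$ is not well defined as stated (operator-operator convolution yields a function only when one factor is trace class), and the assertion that ``all three multiplicativity identities become automatic'' is precisely the content that must be checked. The paper sidesteps this: it writes the candidate character as $\Phi_{\lambda,j}(f,A)=\widehat f(\lambda)+(-1)^j\,\widehat{(A\ast P_0)}(\lambda)/\sqrt{\widehat{\hnu}(\lambda)}$ and verifies multiplicativity from the associativity identity $(A\ast B)\ast\hnu=(A\ast P_0)\ast(B\ast P_0)$, never convolving two merely bounded operators; the test operator then turns out to be the normalized Toeplitz operator $T^{\lambda,\nu}=\widehat{\hnu}(\lambda)^{-1/2}\Toep_{\phi_\lambda}$, so no eigenvalue-level square-root problem arises. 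Finally, your ``routine'' topological step hides two more needed ingredients: a globally \emph{continuous} choice of $\sqrt{\widehat{\hnu}}$ on $\spheri$, which the paper extracts from the simple connectivity of $\spheri\cong(\R\times i[-n,n])/\sim$ together with the nonvanishing above, and an explicit test element (the paper uses $(c\hnu,\sqrt{c}\,1\otimes 1)$) separating the two $\Z_2$ sheets in the weak-$\ast$ topology.
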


Similarly using that the space of $*$-homomorphism of $L^1(\nball )^K$ is the space $\pdsf$ of positive definite spherical functions we show that:
\begin{theorem*}
The space of $*$-homomorphism of $\fA_\nu$ is  isomorphic to $\pdsf \times \Z_2$. \end{theorem*}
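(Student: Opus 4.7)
The plan is to parallel the identification in the preceding theorem and then cut down to those characters that respect the $*$-involution. A $*$-homomorphism is a character $\chi : \fA_\nu \to \C$ satisfying $\chi((f,A)^*)=\overline{\chi(f,A)}$. Since the involution $(f,A)^* = (f^*,A^*)$ respects the direct sum decomposition, I would analyze the condition on each summand separately and invoke Theorem 1 to match the result to a subset of $\bddspheri \times \Z_2$.

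First, restrict $\chi$ to the subalgebra $\LoneinvK \cong \LoneKGK$. By Theorem 1, this restriction is given by pairing against some $\varphi \in \bddspheri$, i.e., $\chi(f,0) = \int_{\nball} f(z)\,\varphi(z)\,d\lambda(z)$. The $*$-condition on this summand reads $\chi(f^*,0) = \overline{\chi(f,0)}$, which by the classical Gelfand--Godement theory for the pair $(G,K)$—recalled in the excerpt as the identification of $*$-homomorphisms of $\LoneinvK$ with $\pdsf$—forces $\varphi \in \pdsf$. Thus the function-part constraint alone cuts $\bddspheri$ down to $\pdsf$.

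Second, examine the operator summand $\traceopK$. The parameter $\epsilon \in \Z_2$ supplied by Theorem 1 governs the way the radial Fourier/Gelfand transform of operators is evaluated against $\varphi$; in the spirit of Werner's $\R^{2d}$-case, this sign encodes a square-root-type ambiguity that appears when lifting $\varphi$ to a pairing with operators via the mixed convolution. The involution $A \mapsto A^*$ corresponds, under this transform, to complex conjugation of the associated scalar invariant. Once $\varphi$ has been constrained to be positive definite, this conjugation symmetry is respected equally by both choices $\epsilon = \pm 1$, because the ambiguity described by $\Z_2$ commutes with conjugation. Consequently no further reduction of the $\Z_2$ factor occurs, and the $*$-spectrum is identified with $\pdsf \times \Z_2$.

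The main obstacle is the second step: one must verify in detail that both elements of $\Z_2$ survive the $*$-condition once $\varphi$ is fixed to be positive definite, rather than only one of them. This amounts to unpacking the explicit description of characters on operators from the proof of Theorem 1, writing the pairing in terms of the radial symbol calculus on $\Berg$, and observing that the operator adjoint intertwines with conjugation of this symbol in a way that is independent of the $\Z_2$ sign. Multiplicativity of $\chi$ on mixed products $(f,A)\cdot(g,B)$ requires no fresh check, since it is already guaranteed by Theorem 1; only the interaction of adjoints with the pairing needs to be traced through, and positive definiteness of $\varphi$ is precisely the symmetry that makes this interaction work for each $\epsilon$.
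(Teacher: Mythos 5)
Your overall strategy is the same as the paper's: restrict a character $\Phi_{\lambda,j}$ to the function subalgebra $\LoneinvK$ to force the underlying spherical function to be positive definite, and then check that for such a spherical function both signs $j=0,1$ yield $*$-homomorphisms. The first half is fine. The problem is the second half --- which you yourself flag as ``the main obstacle'' --- is asserted rather than proved, and the assertion as phrased hides exactly the point where the work lies.

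The claim that the operator adjoint intertwines with conjugation of the associated scalar invariant, independently of the $\Z_2$ sign, amounts to the identity $\tr(A^\ast T^{\lambda,\nu}) = \overline{\tr(A T^{\lambda,\nu})}$, i.e., to the self-adjointness of $T^{\lambda,\nu} = \widehat{\hnu}(\lambda)^{-1/2}\,\Toep_{\phi_\lambda}$. This needs two inputs, neither of which follows from ``the $\Z_2$ ambiguity commutes with conjugation'': (i) $\Toep_{\phi_\lambda}$ must be self-adjoint, which holds because the positive definite spherical functions are real-valued (Theorem \ref{thm:spherical_functions}, since $\lambda \in \R \cup i[-n,n]$); and (ii) the normalization $\sqrt{\widehat{\hnu}(\lambda)}$ must be real on $\spheripd$ --- a priori the continuous square root chosen on all of $\spheri$ could take non-real values there, in which case the conjugation picks up a phase and \emph{neither} sign gives a $*$-homomorphism. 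The paper disposes of (ii) with a separate lemma: $\widehat{\hnu}$ is real-valued on $\spheripd$ (a real symbol integrated against the real positive weight $\hnu$), nonvanishing by Theorem \ref{lem:regularity}, positive at $\lambda = 0$ where it equals $\|\hnu\|_{L^1}$, and $\spheripd$ is connected; hence $\widehat{\hnu} > 0$ on all of $\spheripd$ and the square root may be taken positive there. Once (i) and (ii) are in place, the computation
\begin{align*}
\Phi_{\lambda,j}\bigl((f,A)^*\bigr) = \overline{\widehat f(\lambda)} + (-1)^j\, \widehat{\hnu}(\lambda)^{-1/2}\,\overline{\tr(A\Toep_{\phi_\lambda})} = \overline{\Phi_{\lambda,j}(f,A)}
\end{align*}
closes the argument for both values of $j$. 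Your write-up needs this positivity lemma (or an equivalent substitute) before it is a proof; everything else in your outline matches the paper's route.
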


Having obtained this result, we then turn to the description of the appropriate notion of a \emph{Fourier transform} for operators in $\traceopK$, 
as motivated by the Fourier transform for operators in Werner's original work.  Here again we use the positive definite functions to be
able to derive a Plancherel formula.
We define the notion of a Fourier transform  $A\mapsto \widehat{A}$, the spherical Fourier transform of $A \in \traceopK$, which yields a function on $\pdsf$. This transformation then enjoys several properties analogous to the usual spherical transform of functions in $\LoneKGK$. It is the second goal of this paper to describe and prove these properties.  We refer to Section \ref{sec:FTop} for more details.

The paper is organized as follows: We start with Section \ref{sec:prelims} on Preliminaries, which is supposed to set up stage for everything that follows. In Section \ref{sec:L1K}, we recall several important facts regarding the Banach algebra $\LoneKGK$. Section \ref{sec:QHA} serves as a repetition of some facts from \cite{DDMO25} (and also to obtain some new results) to set up the commutative Banach algebra $\LoneinvK \oplus \traceopK$. The following Section \ref{sec:convolalg} is then dedicated to describing the structure theory, i.e., the Gelfand spectrum, of this algebra. In Section \ref{sec:plancherel} we discuss the Plancherel theorem for our setting and in the final Section \ref{sec:fouriertrafo}, we then discuss several other results regarding the Fourier transform of operators. The paper is amended by an appendix \ref{app:Gelfandpairs} on facts regarding Gelfand pairs , which will be used frequently throughout this paper.


\section{Preliminaries}\label{sec:prelims}

\noindent
In this section, we introduce the Bergman spaces on the unit ball $\B^n$ in $\C^n$ and give the main theorems that
are needed in what follows. In particular, we discuss the reproducing kernel and the Bergman projection. We then
discuss the holomorphic discrete series of the group $G=\SU (n,1)$ acting on the Bergman space. We start by introducing
some standard facts that will be used later.

\subsection{Bergman spaces on the unit ball}
The unit ball $\nball$ in $\C^n$ is given by
\[\nball = \{z\in \C^n\mid |z| <1\}\]
where $|z| =\left(\sum_{j=1}^n z_j\overline{z_j}\right)^{1/2}$ is the   norm associated to
the inner product 
$\ip{w}{z} = w\cdot\overline{z}
=w_1\overline{z}_1+\dots+w_n\overline{z}_n$ and $z\cdot w = (z,w) = z_1w_1+ \ldots + z_nw_n$ the canonical
bilinear form on $\C^n$.
We identify $\C^n $ with  $\R^{2n}$ and in that identification $\nball$  is isomorphic to the unit ball in $\R^{2n}$.
We denote by $\rS^{2n-1}$ be the boundary of $\nball$, i.e., the unit sphere, and denote by $\sigma_n$ the unique $\SU (n)$ invariant
probability measure on $\rS^{2n-1}$. The {\it polar coordinates} on $\nball$ are given by $(0,1)\times \rS^{2n-1}\to \nball \setminus \{0\}$, 
$(r,\omega) \mapsto r\omega$. We denote the normalized Lebesgue measure on $\B^n$ by  $dz=2n r^{2n-1}drd\sigma_n(w)$, $z=rw$.
 
For $\nu\in \R$ define  
\begin{equation}\label{defMea}
\bmeas{z} = c_\nu (1-|z|^2)^{\nu-(n+1)} \leb{z}.
\end{equation}
This measure
is finite if and only if $\nu >n$ and in that case we take  
$c_\nu = \frac{\Gamma(\nu)}{n!\, \Gamma(\nu-n)}$ so
that $\mu_\nu$ is a probability measure. For $\nu>  n $ we denote by $L_\nu^2(\nball)$ the weighted Lebesgue space
defined by the norm
\begin{equation*}
  \| f\|_{ \nu} = \left( \int_{\B^n} |f(z)|^2 \,d\mu_\nu(z)\right)^{1/2}.
\end{equation*}
But we note that in general $L^2(\nball ,(1-|z|^2)^{\nu - (n+1)}dz)$ is well defined for $\nu \in\R$ and this will be used later.
From now on, we will always assume that $\nu >n$, unless clearly stated otherwise. 

Denote by $\cO (\nball )$ the Fr\'echet space of holomorphic functions on the $n$-ball. Then  the {\it weighted Bergman space} is  
\[ \Berg = \cO (\nball )\cap L^2_\nu(\nball ), \quad \nu >n.\]
 The space $\Berg$ is a reproducing kernel Hilbert space with the $L^2_\nu$ inner product $\langle\cdot, \cdot\rangle_\nu$.
  
\begin{remark}
    Sometimes the Bergman spaces are parametrized by $\alpha= \nu -n-1>-1$ instead of $\nu$, see \cite{Z05}.
\end{remark}
 
For a multi-index $\vm=(m_1,\ldots , m_n)$ let 
$\vm! = m_1!\cdots m_n!$, $|\vm| =m_1+\ldots + m_n$,
$z^\vm = z_1^{m_!}\cdots z_n^{m_n}$ and
\[c(\vm,\nu,n)= c(\vm , \nu ) = \frac{\vm!\, \Gamma (\nu)}{\Gamma (\nu+|\vm |)}.\]
Finally, we let $h (z,w) =1-\ip{z}{w}$ and $\hnu$
\begin{align}\label{eq:hnu}
    \hnu(z)=h(z,z)^{\nu}=(1-|z|^2)^\nu, \quad z\in \B^n.
\end{align}

 We collect here the main facts about the Hilbert space $\Berg$. The proofs can be found in \cite{Z05}:
 \begin{theorem}\label{lem:ProAa}  The following holds:
 \begin{enumerate}
 \item The space of polynomials $\rP(\C^n )$ is dense in $\Berg$.
 \item For $\vm \in \N_0^n$ we have $\|z^\vm \|_\nu^2 = c(\vm,\nu)$.
 \item The normalized monomials
 \[f_\vm (z)=c(\vm,\nu)^{-1/2} z^\vm\, ,\quad
\vm\in\N_0^n\, \]
form an orthonormal basis for $\Berg$.
\item The reproducing kernel for the space $\Berg$ is given by
 \[K_\nu (z,w) = h(z,w)^{-\nu}=\sum_{\vm\in\N_0^n} f_\vm(z) \overline{f_\vm (w)}, \quad z,w\in \nball . \] 
 \item For $w\in \nball $ let $\Kernu{w} (z) = K_\nu (z,w)$. Then $\Kernu{w}\in \Berg$ with:
 \[\|\Kernu{w}\|^2_\nu =
 K_{\nu} (w , w) = (1-|w|^2)^{-\nu } = h(w)^{-\nu} .\]
 \item The  orthogonal  projection $\rP^{(\nu)} : \rL^2_\nu(\nball )\to \Berg$, the Bergman projection, is given by
 \[\rP^{(\nu)} (f) (z)=\ipnu{f}{\Kernu{z}} =\int_{\nball } f(w) K_\nu(z,w) \bmeas{w}, \quad z\in \B^n, f\in \rL^2_\nu(\nball ).\]
 \end{enumerate}
 \end{theorem}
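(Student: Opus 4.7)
The plan is to establish item (2) first by direct integration, then combine it with a density argument to obtain (1) and (3); items (4)--(6) then follow from (3) by a binomial expansion and standard reproducing kernel manipulations.

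First I would compute $\|z^\vm\|_\nu^2$ using polar coordinates $z = r\omega$ with $r \in (0,1)$ and $\omega \in \rS^{2n-1}$. This factors the integral into the sphere integral $\int_{\rS^{2n-1}} \omega^\vm \overline{\omega^{\vn}}\, d\sigma_n$, which vanishes for $\vm \neq \vn$ and equals $\frac{(n-1)!\, \vm!}{(n-1+|\vm|)!}$ otherwise, and a radial integral $\int_0^1 r^{2|\vm|+2n-1}(1-r^2)^{\nu-n-1}\, dr = \tfrac12 B(|\vm|+n,\nu-n)$. Multiplying by $c_\nu = \frac{\Gamma(\nu)}{n!\, \Gamma(\nu-n)}$ and the Jacobian factor $2n$, the factor $\Gamma(\nu-n)$ cancels and $\Gamma(|\vm|+n) = (n-1+|\vm|)!$ collapses with the sphere denominator, leaving precisely $c(\vm,\nu)$; the same computation confirms orthogonality of distinct monomials.

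For the density of polynomials in $\Berg$ I would use a dilation argument. Given $f\in \Berg$ and $r\in (0,1)$, set $f_r(z)=f(rz)$. Each $f_r$ extends holomorphically to a neighborhood of $\overline{\nball}$, so its Taylor polynomials converge to $f_r$ uniformly on $\nball$, and hence in $\Berg$. It remains to show $f_r\to f$ in $\Berg$ as $r\to 1^-$: pointwise convergence is immediate, and a uniform $L^2_\nu$-majorant is obtained by a change of variables bounding $\|f_r\|_\nu$ in terms of $\|f\|_\nu$, allowing dominated convergence. Combined with (2), this yields (3): the normalized monomials $\{f_\vm\}$ form an orthonormal system whose span is dense.

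With (3) in hand, the abstract reproducing kernel is $\sum_\vm f_\vm(z)\overline{f_\vm(w)} = \sum_\vm c(\vm,\nu)^{-1} z^\vm\overline{w}^\vm$. To identify it with $h(z,w)^{-\nu}$, I would combine the binomial series $(1-t)^{-\nu}=\sum_k \frac{\Gamma(\nu+k)}{k!\,\Gamma(\nu)}t^k$ at $t=\ip{z}{w}$ with the multinomial expansion $\ip{z}{w}^k = \sum_{|\vm|=k}\frac{k!}{\vm!}\, z^\vm\overline{w}^\vm$; the resulting coefficient of $z^\vm \overline{w}^\vm$ is $\frac{\Gamma(\nu+|\vm|)}{\vm!\,\Gamma(\nu)} = c(\vm,\nu)^{-1}$, proving (4). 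Item (5) is then immediate from the reproducing property: $\|\Kernu{w}\|_\nu^2 = \ipnu{\Kernu{w}}{\Kernu{w}} = K_\nu(w,w) = (1-|w|^2)^{-\nu}$. Finally, (6) follows because integration of $f\in L^2_\nu(\nball)$ against $K_\nu(z,\cdot)$ reproduces elements of $\Berg$ and annihilates its orthogonal complement in $L^2_\nu(\nball)$, hence agrees with the orthogonal projection. The main obstacle is really the density step: producing the uniform majorant justifying $f_r \to f$ in $\Berg$ is the sole genuinely analytic input, as everything else reduces to Gamma function identities and known power series expansions.
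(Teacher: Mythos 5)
The paper does not actually prove this theorem: it is stated as a collection of standard facts with the proofs deferred to Zhu's book \cite{Z05}, so there is no in-paper argument to compare against. Your proposal is essentially the textbook proof one finds there, and it is correct in outline: the polar-coordinate computation of $\|z^\vm\|_\nu^2$ is right (the sphere integral $\frac{(n-1)!\,\vm!}{(n-1+|\vm|)!}$, the Beta integral, and the cancellation against $c_\nu$ all check out and yield $c(\vm,\nu)$), the multinomial/binomial identification of $\sum_\vm c(\vm,\nu)^{-1}z^\vm\overline{w}^\vm$ with $h(z,w)^{-\nu}$ is exactly how (4) is proved, and (5), (6) follow formally once (3) and (4) are in place.

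The one step I would push back on is the justification of $f_r\to f$ in $\Berg$. As literally stated --- ``a uniform $L^2_\nu$-majorant is obtained by a change of variables'' --- this does not quite work: a change of variables controls the norms $\|f_r\|_\nu$, not a pointwise dominating function, and a genuine uniform majorant for $|f_r|^2$ is not obvious. Two standard repairs: (i) use the orthogonal decomposition of $f$ into homogeneous components $f=\sum_k f^{(k)}$ (which you essentially already have from (2), since distinct homogeneity degrees are orthogonal), so that $\|f-f_r\|_\nu^2=\sum_k(1-r^k)^2\|f^{(k)}\|_\nu^2\to 0$ by dominated convergence on the sum; or (ii) use that the spherical means of $|f|^2$ are nondecreasing in the radius to get $\|f_r\|_\nu\le\|f\|_\nu$, combine with Fatou to conclude $\|f_r\|_\nu\to\|f\|_\nu$, and invoke pointwise plus norm convergence in $L^2$. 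Either closes the gap; with that fixed, your argument is a complete and self-contained proof of the statement the paper outsources to \cite{Z05}. A further small point: to write the reproducing kernel as $\sum_\vm f_\vm(z)\overline{f_\vm(w)}$ you should first note that point evaluations on $\Berg$ are bounded (e.g.\ by the sub-mean-value property on a small ball around $z$), which is what makes $\Berg$ a reproducing kernel Hilbert space in the first place.
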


 The {\it normalized reproducing kernel} $k_\nu$ is given by
 \begin{align}\label{eq:kz}
 k_\nu (z,w) &= \kernu{w}(z) = \frac{\Kernu{w}(z)}{\|\Kernu{w}\|_\nu }\nonumber\\
  &=(1-|w|^2)^{\frac{\nu}{2}}\sum_\vm
 f_\vm(z)\overline{f_\vm (w)}\\
 &=\frac{(1-|w|^2)^{\nu / 2}}{ (1-\ip{z}{w})^{\nu } }.\nonumber
\end{align}

\subsection{Toeplitz operators and Berezin transforms}
 
 We define the {\it Berezin transform} $\Berz (S) : \nball \to   C(\C)$ of a bounded operator $S\in \bdd$ by
\begin{align}
\Berz(S)(z) & = \ip{S\kernu{z}}{\kernu{z}}_\nu = (1-|z|^2)^{ \nu}
\ipnu{S \Kernu{z}}{\Kernu{z}}\label{def:BS}\\
&=(1-|z|^2)^{ \nu}\int (S\Kernu{z})(w) K_\nu(z,w) \bmeas{w} \, .\nonumber
\end{align}\
It is well-known that the map $\mathcal L(\Berg) \ni S \mapsto \Berz(S)$ is injective  (see \cite[Proposition 3.1]{Z12} for a similar proof).
The {\it $\nu$-Berezin transform} of a measurable function $f:\nball \to \C$   is defined by
$$B_\nu (f)(z)=\ip{f\kernu{z}}{\kernu{z}}_\nu$$
whenever the right hand site exists.
If $f: \nball \to \C$ is a measurable function then the the {\it Toeplitz operator with symbol $f$} is defined as the
operator $\Toep_f :\Berg \to \Berg$ given by
\begin{equation}\label{def:Tf}
\Toep_f(g)(z) = P^{(\nu)} (fg ) (z)= \int f (w) g(w) K_{\nu}(z ,w)\bmeas{w},\quad z\in \B^n,  g\in \Berg,
\end{equation}
if the integral exists. If $f$ is bounded then $T_f$ exists, is continuous, and $\|\Toep_f\|_\nu \le \|f\|_\infty$. Furthermore,
the map $L^\infty (\nball) \to \bdd$, $f\mapsto \Toep_f$, is injective (see \cite{E92}).   

Then it follows that
\begin{align}
B^{(\nu)} (\Toep_f)(z&)= \ipnu{f\kernu{z}}{\kernu{z}}
=B_\nu (f)(z) 
\\
&= (1-|z|^2)^\nu \int \frac{f(w)}{|1-\ip{z}{w}|^{2\nu}}\bmeas{w}\nonumber \\
&=   c_\nu \int f(w) \frac{(1-|z|^2)^\nu (1-|w|^2)^{\nu-(n+1)}}{|1-\ip{z}{w}|^{2\nu}} \leb{w}.\label{def:Bf} 
\end{align}
  
\begin{theorem}\label{thm:Binj} Let $\nu >n+1$ then the map $\mathcal L(\Berg) \ni S\mapsto B^{(\nu)}(S)$ is injective. In particular,
$B_\nu : L^\infty (\nball ) \to C (\nball)$ is injective.
\end{theorem}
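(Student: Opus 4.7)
The plan is to reduce the injectivity of the Berezin transform on $\bdd$ to a polarization argument exploiting the holomorphy of the reproducing kernel. Suppose $\Berz(S) \equiv 0$; the goal is to conclude $S = 0$.

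First, I would use $\|\Kernu{z}\|_\nu^2 = (1-|z|^2)^{-\nu}$ together with the reproducing property to rewrite the Berezin transform as
\begin{equation*}
\Berz(S)(z) = (1-|z|^2)^\nu \ipnu{S\Kernu{z}}{\Kernu{z}} = (1-|z|^2)^\nu (S\Kernu{z})(z).
\end{equation*}
Thus the hypothesis is equivalent to $(S\Kernu{z})(z) = 0$ for every $z \in \B^n$.

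Next, I would introduce the two-variable function $F: \B^n \times \B^n \to \C$ defined by
\begin{equation*}
F(z,w) = \ipnu{S\Kernu{w}}{\Kernu{z}} = (S\Kernu{w})(z).
\end{equation*}
Since $S\Kernu{w} \in \Berg$, the function $F(\cdot, w)$ is holomorphic. Moving $S$ to the other factor of the inner product and applying the reproducing property to $S^*\Kernu{z} \in \Berg$ gives $F(z,w) = \overline{(S^*\Kernu{z})(w)}$, so $F(z,\cdot)$ is antiholomorphic. Expanding each $\Kernu{w}$ via the orthonormal basis from Theorem \ref{lem:ProAa}(iv) yields an absolutely convergent double power series
\begin{equation*}
F(z,w) = \sum_{\alpha,\beta \in \N_0^n} c_{\alpha\beta}\, z^\alpha \overline{w}^\beta.
\end{equation*}
The assumption gives $F(z,z) = \sum c_{\alpha\beta}\, z^\alpha \overline{z}^\beta = 0$ for all $z \in \B^n$; since the monomials $\{z^\alpha \overline{z}^\beta\}$ are linearly independent as real-analytic functions on $\B^n$ (differentiation at $z=0$ separates them), every $c_{\alpha\beta}$ vanishes, and hence $F \equiv 0$.

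Finally, $F \equiv 0$ means $S\Kernu{w} = 0$ for every $w \in \B^n$. Since the linear span of $\{\Kernu{w} : w \in \B^n\}$ is dense in $\Berg$—its orthogonal complement consists of $f \in \Berg$ with $f(w) = \ipnu{f}{\Kernu{w}} = 0$ for all $w$, forcing $f = 0$—this implies $S = 0$ by boundedness of $S$. For the second assertion, the computation preceding the theorem shows $B_\nu(f) = \Berz(\Toep_f)$; so if $B_\nu(f) = 0$, then $\Toep_f = 0$ by the operator statement, and the cited injectivity of $f \mapsto \Toep_f$ (from \cite{E92}) yields $f = 0$. The only delicate point is the polarization step, but it becomes routine once the holomorphic/antiholomorphic structure of $F$ is identified.
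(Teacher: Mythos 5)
Your proof is correct, and it is essentially the argument the paper itself invokes by deferring to \cite[Prop.~3.1]{Z12}: the holomorphic--antiholomorphic polarization showing that $\ipnu{S\Kernu{w}}{\Kernu{z}}$ vanishes identically once it vanishes on the diagonal, followed by density of the span of the kernels and the known injectivity of $f\mapsto \Toep_f$. The only point worth making explicit is the local absolute convergence of the double series (or, equivalently, joint holomorphy of $(z,w)\mapsto \ipnu{S\Kernu{\bar w}}{\Kernu{z}}$), which justifies the termwise differentiation at the origin; this is routine, and your argument in fact works for all $\nu>n$.
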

\begin{proof}   The proof is the same as the proof of \cite[Prop. 3.1]{Z12}.\end{proof}  
  
\subsection{Finite rank operators}
In this subsection we start with an arbitrary Hilbert space $\cH$ and then specialize to the
case $\cH = \Berg$.
As before $\cL(\cH) $ denotes the space of bounded operators on $\cH $. Denote by $\cT^p(\cH)$ the Schatten $p$-class of
operators. Then $ \cT^1(\cH) $, the space of {\it trace class operators} and $\cT^2(\cH)$ is the Hilbert space  $\HS (\cH)$ 
of Hilbert-Schmidt operators with inner product
\[\ip{A}{B} = \tr B^*A = \tr AB^* .\]
 Given $u,v\in \cH $, denote, as before, the rank one operator onto $\C u$ by  
 \begin{equation}\label{def:Suv}
 u\otimes v = S_{u,v}: x\mapsto \ip{x}{v}u .
 \end{equation}  
Then 
\[ S_{u,v} S_{w,z} = \ip{w}{v} S_{u,z}, \quad S_{u,v}^*=S_{v,u} \quad \text{ and } \tr(S_{u,v})=\ip{u}{v}.\]
In particular
\begin{equation}\label{eq:ipTens}
\ip{S_{u,v}}{S_{w,z}}=\tr S_{u,v} S_{w,z}^* =  \tr S_{u,v} S_{z,w} =  \ip{u}{w}\ip {z}{v} = \ip{u\otimes w}{z\otimes v}.
\end{equation}
 Denote by $\bcH$ the conjugate Hilbert space, ie., the space $\cH$ with complex multiplication $\lambda \cdot u =\bar \lambda u$
 and inner product $\ip{u}{v}^-= \ip{v}{u}$. Then \eqref{eq:ipTens} shows that the map $\cH \otimes \bcH \to \rB(\cH)$ extends
 to an isometry inclusion $\cH\otimes \bcH\hookrightarrow \HS (\cH)$. This map is in fact an isometric isomorphism.
 
Let now $\cH = \Berg$ and take $u = f_\vm$ and $v=f_\bn$. Then, using that $K(z,w) =\sum_\bk f_{\bk} (z) \overline{f_\bk (w)}$, 
(see Thm. \ref{lem:ProAa}),
we get for $ S_{f_\bm,f_\bn} =: S_{\bm,\bn}$:
\begin{equation}\label{SKw}
S_{\bm,\bn}K_w =   \ip{K_w}{f_\bn}f_\bm = 
\sum_\bk \ip{f_\bk}{f_\bn}\overline{f_\bk (w)}f_\bm 
=\overline{f_\bn(w)}f_\bm.  
\end{equation}
In particular, if $\bm=\bn$, then 
\[SK_w(z) = f_\bm (z)\overline{f_\bm(w)}.\]
If $\rP^m(\C)\subset \Berg$ is  the space of homogeneous polynomials of degree $m$ then the orthogonal projection is given by
$\prma = \sum_{\bm = m} f_\bm \otimes \overline{f_\bm} : \Berg \to \rP^m(\C)$ 
and the above shows that 
\[\prma (K_w)(z)  = \sum_{|\bm | =m} f_\bm(z)\overline{f_\bm (w)}.\] 

We now use \eqref{SKw} to determine $B_\alpha (S)$ with $S=S_{f_\bm,f_\bn} $:
\begin{align*}
B^{(\nu)} (S)(z)&= (1-|z|^2)^\nu \ip{SK_z}{K_z}_\nu\\
&=(1-|z|^2)^\nu \overline{f_\bn (z)}\ip{f_\bm }{K_z}_\nu\\
&=(1-|z|^2)^\nu \overline{f_\bn (z)}f_\bm(z) 
\end{align*}

This now implies the following Lemma
\begin{lemma}\label{lem:BS}
 Let $\vm,\bn \in \N_0^n$ and $k\in \N_0$. Then the following holds:
 \begin{enumerate}
 \item $B^{(\nu)}  (f_\bm \otimes \overline{f_\bn}   ) (z) = 
 (1-|z|^2)^\nu \overline{f_\bn (z)}f_\bm (z)$. 
 \item  $B^{(\nu)} (f_\bm  \otimes \overline{f_\bm} ) = (1-|z|^2)^\nu |f_\bm (z)|^2$. In
 particular
$B^{(\nu)}  (1\otimes \bar 1)(z) = (1-|z|^2)^\nu$.
 \item $B^{(\nu)}  (\prma )(z) = (1-|z|^2)^\nu \sum_{|\bm | = m} | f_\bm (z)|^2=(1-|z|^2)^\nu \frac{\Gamma (\nu + m)}{m!\Gamma (\nu )}|z|^{2m}>0$.
 \end{enumerate}
  \end{lemma}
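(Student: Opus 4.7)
The plan is to exploit the calculation already performed just above the lemma statement, which essentially gives (1). To spell it out, I would start from the definition \eqref{def:BS} of the Berezin transform, substitute the identity $S_{f_\bm, f_\bn}\Kernu{z} = \overline{f_\bn(z)} f_\bm$ from \eqref{SKw}, and then apply the reproducing property to obtain
\begin{align*}
    B^{(\nu)}(f_\bm \otimes \overline{f_\bn})(z) = (1-|z|^2)^\nu \overline{f_\bn(z)}\, \ipnu{f_\bm}{\Kernu{z}} = (1-|z|^2)^\nu\, \overline{f_\bn(z)}\, f_\bm(z),
\end{align*}
which is (1). Assertion (2) is the diagonal specialization $\bm = \bn$ of (1). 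The subordinate identity $B^{(\nu)}(1\otimes \bar 1)(z) = (1-|z|^2)^\nu$ then follows from the observation that the zero multi-index gives $c(\mathbf{0},\nu) = \Gamma(\nu)/\Gamma(\nu) = 1$, so the normalized basis element $f_\mathbf{0}$ is the constant function $1$.

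For part (3), the first equality is immediate from the decomposition $\prma = \sum_{|\bm|=m} f_\bm \otimes \overline{f_\bm}$ recorded just before the lemma, combined with the (obvious) linearity of $B^{(\nu)}$ and part (2). For the closed form, I would insert
\begin{align*}
    |f_\bm(z)|^2 = c(\bm,\nu)^{-1} |z^\bm|^2 = \frac{\Gamma(\nu+m)}{\bm!\,\Gamma(\nu)} |z^\bm|^2,
\end{align*}
using $|\bm| = m$, pull out the $\bm$-independent factor $\frac{\Gamma(\nu+m)}{m!\,\Gamma(\nu)}$, and recognize
\begin{align*}
    \sum_{|\bm|=m} \frac{m!}{\bm!}\,|z^\bm|^2 = \sum_{|\bm|=m} \frac{m!}{\bm!} \prod_{j=1}^n (|z_j|^2)^{m_j} = (|z_1|^2 + \cdots + |z_n|^2)^m = |z|^{2m}
\end{align*}
via the multinomial theorem. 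Nonnegativity of the resulting expression is then clear termwise, and strict positivity holds on $\B^n \setminus \{0\}$ (and everywhere when $m=0$).

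There is no substantive obstacle here: the statement is a direct unpacking of the definitions plus one combinatorial identity. The only point deserving care is the multinomial step in (3), where one must remember that $z^\bm \overline{z^\bm} = \prod_j |z_j|^{2 m_j}$ and that the multinomial coefficients $m!/\bm!$ arise precisely when the factor $1/\bm!$ inside $|f_\bm(z)|^2$ is compensated by the $m!$ one pulls out of the common Gamma factor.
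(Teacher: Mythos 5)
Your proof is correct and follows essentially the same route as the paper, which derives the lemma from the computation of $B^{(\nu)}(S_{f_\bm,f_\bn})(z)$ displayed immediately before the statement. You additionally supply the multinomial-theorem detail behind the closed form in (3), which the paper leaves implicit, and your remark that strict positivity fails at $z=0$ for $m\geq 1$ is a fair (minor) correction to the statement as written.
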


 \subsection{The group $\mathrm{SU}(n,1)$}
 The group $\SU (n,1)$ plays an important role in understanding functions on $\B^n$ as well as in the formulation of quantum harmonic analysis (QHA) on the Bergman space $\Berg$. We therefore give a short discussion of the properties of $\SUn :=\mathrm{SU}(n,1)$ that we will need.
 
 The inner product between vectors  $z,w\in\C^n$ is written as
 $\ip{z}{w} = \sum z_j \overline{w_j}$ and the $\C$ bilinear form is  $z\cdot w$, or short $z\cdot w = zw$ is
 given by $zw = \sum_{j=1}^n z_j w_j$, Hence
 $\ip{z}{w} = z\overline{w}$. The group $\SU (n,1)$ is  the group of $(n+1)\times (n+1)$-matrices that leave the form 
  \[\beta_n(z,w) =\ip{Jz}{w}= -z_1\bar w_1 - \ldots -
 z_n\bar w_n + z_{n+1}\bar w_{n+1}, \quad J_{n,1} = \begin{pmatrix}
    -I_n & 0 \\
    0 & 1
  \end{pmatrix}\]
  invariant. Writing $(n+1)\times (n+1)$-matrix  $g$   in block form as 
\begin{equation*}
x(A,w,\xi,d)=
 \begin{pmatrix}
    A & w^t \\
   v & d
  \end{pmatrix},\quad A\in \rM_n(\C),\,\, w,v \in\C^n,\text{ and } d\in\C .
\end{equation*} 
We have $x\in \SUn$ if and only if  $\det x =1$ and
$x^*J_{n,1}x=J_{n,1}$ or $x^{-1} =J_{n,1}x^*J_{n,1}$. Similarly $X$ belongs to the Lie algebra $\sun$ of $\SUn$ if and only if $X= -J_{n,1} X^* J_{n,1}$.
We write $\theta (x) = (x^*)^{-1}  $ respectively $\theta (X) = -X^*$, $x\in\SUn$ respectively $X\in \sun$. The homomorphism
$\theta$ is a {\it Cartan involution}, the standard Cartan involution on $\SUn$.

The group $\SUn$ acts transitively on $\nball$ via the fractional linear transformations
\begin{equation*}
x (A,w,v,d) \cdot z=  \frac{Az+w}{v\cdot z + d},
\end{equation*}
In particular 
$x\cdot 0 = \frac{1}{d}\, w $. This implies  that the stabilizer of $0$, $K=\SUn^0=\{x\in G\mid x\cdot 0 =0\}$ is the group

\begin{equation}\label{K}
  K = \left\{ \left.  x_k=\begin{pmatrix}
      k & 0 \\
      0 & \overline{\det(k)}
    \end{pmatrix} \,\right|\,  k \in \rU(n)\right\}\simeq \rU(n) = \SUn^\theta =\{x\in \SU(n,1)\mid \theta (x) = x\}.
  \end{equation}
  The map $\SUn/K \to \nball$,  $xK \mapsto x\cdot 0$, defines a $\SUn$-isomorphism $\SUn / K \simeq \nball$, identifyng the unit ball $\B^n$ as a homogeneous space.

  \begin{remark} We note here that $Z$, the center of $\SUn$, is contained in $K$ and acts trivial on $\nball$. Hence $\mathrm{PSU}(n,1)=
  \SUn/Z$
  acts on $\nball$ by $aZ\cdot z= az$. This is well defined
  because the center acts trivially because $Z\subset K$ and $z\cdot (x\cdot 0) = x (z\cdot 0) = x\cdot 0$. Furthermore if $G_1\to \SUn$ is
  a covering with covering map $\kappa : G_1\to \SUn$, then $G_1$ acts on $\nball $ by $x\cdot z = \kappa (x) \cdot 0$.
  \end{remark}

  Define the functions $j, j_\nu : G\times \nball \to \C$ by 
  \begin{equation}\label{def:jgz}
  j(x ,z) =  v\cdot z + d \quad\text{and}\quad 
  j_\nu (x , z) = ( v \cdot z + d)^{-\nu}, \quad x=x(A,w,v,d)\in \SUn.
  \end{equation} 
In particular $j(x,z) = j_{-1}(x,z)$.
The function $j_\nu$ is only defined on $\SUn\times \nball$ if $\nu \in \N_0$.  For general $\nu$ one needs to
replace $\SUn$ by a covering group. The covering can be taken to be finite 
  if $\alpha\in \Q$ but otherwise one has to use the simply connected covering group $\wSUn$.  Another way
  around this problem  is to use a central extension as discussed in  \cite{A06,C96,CDO19,M58}.   The following
  is well know:

  \begin{lemma}\label{lem:Kgzw} The following holds for all $\nu>n$   $z,w\in \nball$ and $x,y\in \wSUn$.
  \begin{enumerate}
  \item  $j_\nu(xy ,z) = j_{\nu}(x,y\cdot z)j_\nu (y,z)$.
  \item  $j_\nu (x^{-1},gz) = j_{-\nu} (g,z)$.
  \item $j(x,z) \overline{j(x,w)}   h(x\cdot  z,x\cdot w) =h(z,w)$.
  \item $j_\nu (x,z)\overline{j_\nu (x,w)} K_\nu (x\cdot z,x\cdot w) = K_\nu (z,w)$.
  \end{enumerate}
  \end{lemma}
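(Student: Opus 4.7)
The plan is to reduce all four identities to a single algebraic observation about the matrix action of $G$ on $\C^{n+1}$ restricted to the affine chart containing $\nball$. Embed $\nball \hookrightarrow \C^{n+1}$ via $z \mapsto \tilde z := (z,1)$. Then for $x = x(A,w,v,d) \in G$, direct matrix multiplication yields the engine identity
\[
x\tilde z = (Az+w,\, v\cdot z+d) = j(x,z)\,\widetilde{x\cdot z}.
\]

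For (3), I would observe that the defining Hermitian form restricts as $\beta_n(\tilde z,\tilde w) = 1 - \ip{z}{w} = h(z,w)$. Using $G$-invariance of $\beta_n$ together with the engine identity,
\[
h(z,w) = \beta_n(\tilde z,\tilde w) = \beta_n(x\tilde z, x\tilde w) = j(x,z)\overline{j(x,w)}\,\beta_n(\widetilde{x\cdot z},\widetilde{x\cdot w}) = j(x,z)\overline{j(x,w)}\,h(x\cdot z,x\cdot w).
\]
Then (4) follows by raising to the $-\nu$ power and using $K_\nu = h^{-\nu}$ together with $j_\nu = j^{-\nu}$.

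For (1), I would iterate the engine identity: on one hand $(xy)\tilde z = j(xy,z)\,\widetilde{(xy)\cdot z}$, while on the other
\[
(xy)\tilde z = x(y\tilde z) = j(y,z)\,x\,\widetilde{y\cdot z} = j(y,z)\,j(x,y\cdot z)\,\widetilde{(xy)\cdot z},
\]
so $j(xy,z) = j(x, y\cdot z)\,j(y,z)$. Raising to the $-\nu$ power gives (1). Part (2) is then immediate: setting $y = x^{-1}$ in (1) and using $j_\nu(e,z) = 1$ gives $j_\nu(x^{-1}, z) = j_\nu(x, x^{-1}\cdot z)^{-1} = j_{-\nu}(x, x^{-1}\cdot z)$, and replacing $z$ by $x\cdot z$ yields the stated identity.

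The only real subtlety is the meaning of $j^{-\nu}$ when $\nu \notin \Z$. The map $j: G\times \nball \to \C$ is continuous, holomorphic in $z$, nowhere vanishing (since $v\cdot z + d = 0$ would force $x\cdot z$ out of $\nball$), and satisfies $j(e,z) = 1$. Since $\nball$ is contractible and $\wSUn$ is simply connected, $\log j$ lifts uniquely to a continuous function on $\wSUn \times \nball$ normalized by $\log j(e,0) = 0$, and one defines $j_\nu := \exp(-\nu\log j)$. The identities at the level of $j$ on $G$ then transfer to $j_\nu$ on $\wSUn$ by exponentiation and the connectedness of $\wSUn$. This branch bookkeeping is the main obstacle; the underlying algebra is immediate from the engine identity.
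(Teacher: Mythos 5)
Your proof is correct and follows the same logical skeleton as the paper's (which merely asserts that (1) and (3) are direct calculations, that (2) follows from (1), and that (4) follows from (3)): you supply the ``direct calculation'' via the standard linearization $x\tilde z = j(x,z)\,\widetilde{x\cdot z}$ and the $G$-invariance of $\beta_n$. Your explicit treatment of the branch of $j^{-\nu}$ on $\wSUn\times\nball$ is a welcome addition that the paper's one-line proof leaves implicit.
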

  \begin{proof}  (1) and (3) are direct calculations.  (2) follows from (1) as $j_\nu (\rI ,z) = 1$ for all $z$. (4) follows from (3).
  \end{proof}

  \subsection{Polar coordinates and Integral formulas}
  We also have that $K$ acts transitively on $\rS^{2n-1}$ and the stabilizer of $e_1$ is
  \[M = \left\{ \left. m_k = \begin{pmatrix} 1 & 0 & 0\\ 0 & k & 0\\ 0 & 0 & \overline{\det k}\end{pmatrix}\, \right| \, k\in \rU (n-1)\right\}
  \simeq \rU (n-1) .\]
  Thus $\rS^{2n-1} \simeq K/M$.
  We also define the group
  \[A = \left\{\left. a_t= \begin{pmatrix}  \cosh (t) & 0 & \sinh (t)\\ 0 &\rI_{n-1} & 0\\ \sinh (t) & 0 & \cosh (t)  \end{pmatrix}\, \right| \, t\in \R\right\}.\]
  Note that $a_t\cdot 0 = \tanh (t) e_1$ and that the group $M$ commutes with $A$ and
  its Lie algebra $\fa = \R h\simeq \R$, where $h= E_{1,n+1} + E_{n+1,1}$ is the
  operator that interchanges $e_1$ and $e_{n+1}$ and is zero on the orthogonal complement of
  $\C e_1 \oplus \C e_{n+1}$.
   Note that $a_t\cdot 0 = \tanh (t) e_1$. Thus if $0\not=z = r\omega \in \nball$ then
  $z= ka_{\tan^{-1}(|z|)}\cdot 0$ for some $k\in K$. It follows, with  $A^+ =\{a_t\in A\mid t>0\}\simeq \R^+ = (0,\infty)$, that we have
   \[ \SUn = KAK,\quad K A\cdot 0 = \nball\quad\text{and}\quad  KA^+\cdot 0=\nball \setminus \{0\}.\]
This implies that we have the following polar coordinates for the ball:
  \begin{equation}\label{eq:polarCo}
  \nball \setminus \{0\} \simeq K/M \times (0,\infty) =\rS^{2n-1}\times (0,\infty)\quad\text{where } (\omega , t) \mapsto 
   \tanh (t)\omega.
   \end{equation}

  Note that if
 $p : \SUn\to \SUn/K \simeq \nball$ is the natural projection $p (a)= a\cdot 0$, then
 the map $f\mapsto f\circ p$ defines a linear bijection between functions on the ball and
 {\it  right} $K$-invariant functions on the group.

  Let $w = \diag (i,1,\ldots , 1,-i)$ where $\diag(a_1,\ldots,a_{n+1})$ stands for the
  diagonal matrix with diagonal elements $a_{ii} = a_i$. 
  \begin{lemma} $whw^{-1} = - h$ and $wa_tw^{-1} = a_{-t} = a_t^{-1}$.
  \end{lemma}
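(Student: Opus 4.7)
The statement is a short computational lemma, and the plan is to prove both identities by direct matrix calculations exploiting the fact that $w$ is diagonal.

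First I would establish $whw^{-1} = -h$. Since $w = \diag(i,1,\ldots,1,-i)$ is diagonal, its inverse is $w^{-1} = \diag(-i,1,\ldots,1,i)$, and conjugation of any matrix $X$ by $w$ sends the $(i,j)$-entry $X_{ij}$ to $w_{ii}\, X_{ij}\, w_{jj}^{-1}$. Applying this to $h = E_{1,n+1} + E_{n+1,1}$, only the $(1,n+1)$ and $(n+1,1)$ entries survive, and their new values are $i \cdot i = -1$ and $(-i)(-i) = -1$, respectively. This yields $whw^{-1} = -E_{1,n+1} - E_{n+1,1} = -h$.

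For the second identity, the cleanest route is to observe that $a_t = \exp(th)$, so that
\[
w a_t w^{-1} = w \exp(th) w^{-1} = \exp\bigl(t\, whw^{-1}\bigr) = \exp(-th) = a_{-t}.
\]
The final equality $a_{-t} = a_t^{-1}$ is immediate from the one-parameter subgroup property of $A$. Alternatively, if one prefers to avoid invoking the exponential, the identity can be checked entry-by-entry on the explicit block matrix, using $\cosh(-t) = \cosh(t)$ and $\sinh(-t) = -\sinh(t)$; the off-diagonal $\sinh(t)$ entries pick up factors of $i \cdot i = -1$ and $(-i)(-i) = -1$ from the conjugation, while the diagonal $\cosh(t)$ entries pick up $i \cdot (-i) = 1$ and $(-i)\cdot i = 1$.

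There is no real obstacle here — the only thing to be careful about is keeping track of the signs produced by $w_{11} = i$ and $w_{n+1,n+1} = -i$ during the conjugation. Both parts of the lemma follow from this sign bookkeeping, with the second part being either an immediate consequence of the first via the exponential map or an equally quick direct calculation.
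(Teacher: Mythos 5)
Your proof is correct and follows essentially the same route as the paper: reduce the second identity to the first via $a_t = \exp(th)$, and verify $whw^{-1} = -h$ by direct matrix computation. You simply carry out the sign bookkeeping more explicitly than the paper does.
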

  \begin{proof} As $a_t = \exp th$ and $a_{-t} =a_t^{-1}$ it is enough to show that $whw^{-1}= -h$, but that follows
  from direct matrix multiplication.
  \end{proof}
  \begin{lemma}\label{lem:fKinv} Let $f: \SUn \to \C$ be $K$-biinvariant. Then $f(x) = f(x^{-1})$ for all
  $x\in G$.
  \end{lemma}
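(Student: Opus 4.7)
The plan is to combine the $KAK$ decomposition $\SUn = KAK$ already recorded in the excerpt with the conjugation identity $wa_tw^{-1} = a_t^{-1}$ from the preceding lemma. The key observation that makes the argument work is that the Weyl-type element $w = \diag(i,1,\ldots,1,-i)$ actually lies in $K$: indeed, writing $k_0 = \diag(i,1,\ldots,1) \in \rU(n)$, we have $\det k_0 = i$ and $\overline{\det k_0} = -i$, so $w$ has exactly the block form \eqref{K} defining $K$.

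Given $x \in \SUn$, I would first apply the $KAK$ decomposition to write $x = k_1 a_t k_2$ with $k_1, k_2 \in K$ and $a_t \in A$. By $K$-biinvariance of $f$, this immediately yields
\begin{equation*}
    f(x) = f(a_t).
\end{equation*}
On the other hand $x^{-1} = k_2^{-1} a_{-t} k_1^{-1}$, and again by $K$-biinvariance,
\begin{equation*}
    f(x^{-1}) = f(a_{-t}).
\end{equation*}

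Now I would invoke the preceding lemma, which gives $w a_t w^{-1} = a_{-t}$. Since both $w$ and $w^{-1}$ belong to $K$, the $K$-biinvariance of $f$ yields
\begin{equation*}
    f(a_{-t}) = f(w a_t w^{-1}) = f(a_t).
\end{equation*}
Combining the three displayed equalities gives $f(x^{-1}) = f(a_{-t}) = f(a_t) = f(x)$, which is the desired identity.

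There is no real obstacle here; the only point that requires a brief verification is that $w \in K$, which amounts to checking that the diagonal element $w$ fits the block description of $K$ in \eqref{K}. Once this is in hand, everything else is a direct application of biinvariance and the $KAK$ decomposition.
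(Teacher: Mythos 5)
Your proof is correct and follows essentially the same route as the paper's: the $KAK$ decomposition, biinvariance to reduce to $f(a_{\pm t})$, and the conjugation $wa_tw^{-1}=a_{-t}$ with $w\in K$ to identify the two. Your explicit check that $w=\diag(i,1,\ldots,1,-i)$ fits the block form \eqref{K} of $K$ is a detail the paper leaves implicit, but the argument is otherwise identical.
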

  \begin{proof} We write $x =ka_th$ with $k,h\in K$. If $f$ is $K$-biinvariant we have
  \[f(x^{-1}) = f(h^{-1}a_{-t} k^{-1}) = f (a_{-t} ) = f(w a_t w^{-1}) = f(a_t) =f(x).\qedhere\]
    \end{proof} 
  \begin{lemma}\label{lem:Jac}  Let $\varphi : \nball \to \nball$ be a biholomorphic
  diffeomorphism, $w = \varphi^{-1}(0)$ and let $J_\varphi (z) $ the holomorphic Jacobian of $D\varphi (z)$. Then the following
  holds:
  \begin{enumerate}
  \item The real Jacobian of the action of $\varphi$ is given by $J_\R \varphi (z) = |J_\varphi (z)|^2$.
  \item $\ds J_\R \varphi (z) = \left(\frac{h(w)}{|h(z,w)|^2}\right)^{n+1}=
  \left(\frac{1-|w|^2}{|1-\ip{z}{w}|^2}\right)^{n+1}$. 
  \end{enumerate}
  \end{lemma}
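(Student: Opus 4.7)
My plan has two pieces, one for each part of the lemma.

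For part (1), I would invoke the standard fact from several complex variables that for a holomorphic map $\varphi$ between open subsets of $\C^n$, the Jacobian of the underlying real map $\R^{2n}\to\R^{2n}$ equals $|J_\varphi(z)|^2$. The Cauchy-Riemann equations force the real derivative matrix to have block form $\begin{pmatrix} A & -B \\ B & A\end{pmatrix}$ with $D\varphi=A+iB$; the identity
\[
\det\nolimits_\R\begin{pmatrix} A & -B \\ B & A\end{pmatrix} \;=\; |\det\nolimits_\C(A+iB)|^2
\]
is then verified by a block row/column reduction over $\C$ (multiply the second block column by $i$, add to the first, then subtract $i$ times the first block row from the second).

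For part (2), the key input is the classical identification $\Aut(\B^n)=\mathrm{PSU}(n,1)$, which lets me write $\varphi(z)=x\cdot z$ for some $x\in G$. Then I would exploit Lemma \ref{lem:Kgzw}(3) twice. Setting both variables equal to $z$ gives
\[
|j(x,z)|^2\, h(\varphi(z)) \;=\; h(z),
\]
so $h(\varphi(z))=h(z)/|j(x,z)|^2$. Next, with the second variable equal to $w=\varphi^{-1}(0)$, the relation becomes $j(x,z)\overline{j(x,w)}\,h(\varphi(z),0)=h(z,w)$, i.e., $j(x,z)\overline{j(x,w)}=h(z,w)$, since $h(\cdot,0)\equiv 1$. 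Specializing that identity to $z=w$ yields $|j(x,w)|^2=h(w)$, hence
\[
|j(x,z)|^2 \;=\; \frac{|h(z,w)|^2}{h(w)}, \qquad \text{and therefore} \qquad h(\varphi(z)) \;=\; \frac{h(z)\,h(w)}{|h(z,w)|^2}.
\]

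To convert this into a statement about $|J_\varphi(z)|^2$, I would invoke the transformation law of the Bergman kernel under biholomorphism, $K_{n+1}(\varphi(z),\varphi(z))\,|J_\varphi(z)|^2 = K_{n+1}(z,z)$, which follows from a change of variables in $L^2(\B^n,d\mu_{n+1})$ (here $d\mu_{n+1}$ is Lebesgue measure up to the constant $c_{n+1}$, which cancels) and the uniqueness of the reproducing kernel. Since Theorem \ref{lem:ProAa}(5) identifies $K_{n+1}(z,z)=h(z)^{-(n+1)}$, this produces
\[
|J_\varphi(z)|^2 \;=\; \left(\frac{h(\varphi(z))}{h(z)}\right)^{n+1} \;=\; \left(\frac{h(w)}{|h(z,w)|^2}\right)^{n+1},
\]
and combining with part (1) gives the claim. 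The main obstacle I anticipate is precisely this last step: the identification $\Aut(\B^n)=\mathrm{PSU}(n,1)$ and the Bergman kernel invariance are classical but lie slightly outside what the paper has developed so far. An alternative would be a direct matrix computation of $J_\varphi$ for $\varphi(z)=(Az+w)/(v^\top z+d)$ using $D\varphi(z)=j(x,z)^{-1}(A-\varphi(z)v^\top)$ together with the matrix-determinant lemma and the defining relations of $\mathrm{SU}(n,1)$, but this is more laborious and less conceptual than the kernel argument.
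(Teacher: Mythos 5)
Your argument is correct, but it is genuinely different from what the paper does: the paper offers no proof at all for Lemma \ref{lem:Jac} and simply cites \cite[p.~7 and 8]{Z05}, so you have supplied a self-contained derivation where the authors rely on a reference. Part (1) is the standard $\det_\R = |\det_\C|^2$ identity and is fine. For part (2), your chain of identities from Lemma \ref{lem:Kgzw}(3) -- namely $h(\varphi(z)) = h(z)/|j(x,z)|^2$, $|j(x,w)|^2 = h(w)$, and hence $h(\varphi(z)) = h(z)h(w)/|h(z,w)|^2$ -- is exactly the computation the paper itself performs \emph{after} the lemma to derive \eqref{eq:z05} and Lemma \ref{lem:hxy}; you have simply run it in the other direction, which is legitimate and non-circular since Lemma \ref{lem:Kgzw}(3) is an independent direct calculation. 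The conversion to $|J_\varphi(z)|^2$ via the transformation law of the unweighted Bergman kernel ($\nu = n+1$, where $K_{n+1}(z,z) = h(z)^{-(n+1)}$ by Theorem \ref{lem:ProAa}) is also sound and arguably more conceptual than Zhu's own treatment, which is essentially the direct matrix computation of $D\varphi$ that you sketch as an alternative. The two classical inputs you correctly flag -- that every biholomorphism of $\nball$ is a fractional linear map coming from $\mathrm{SU}(n,1)$ (Cartan's theorem; the paper only records $\Aut(\nball)_e = \SUn/Z$ in a later subsection), and the kernel transformation rule $K(z,z) = |J_\varphi(z)|^2 K(\varphi(z),\varphi(z))$ -- are both standard and do not depend on anything proved downstream of this lemma, so there is no gap; the trade-off is that your route leans on these external classical facts, whereas the direct computation would be entirely elementary at the cost of more bookkeeping.
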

  \begin{proof} Both statements can be found  in \cite[p. 7 and 8]{Z05}.  
  \end{proof}
  
  Let us reformulate this so it fits our purposes. Thus take $\varphi (z) = x\cdot z$, $x\in \SUn$. Then
  $w = x^{-1}\cdot 0$. Using that $j(x,z)\overline{j(x,w)} h(x\cdot z,x\cdot w)= h(z,w)$ we get
  \[1- |w|^2 = h(x^{-1}\cdot 0, x^{-1}\cdot 0) = |j(x^{-1},0)|^{-2}\]
  and
  \begin{align*}
  | h(z,w)|^2 &= |h(x^{-1}\cdot (x\cdot z) , x^{-1}\cdot 0)|^2 = |j(x^{-1},x\cdot z)|^{-2} | j(x^{-1},0)|^{-2}\\
  & = |j(x,z)|^2 | j(x^{-1},0)|^{-2}.
  \end{align*}
  Thus 
  \begin{equation}\label{eq:z05}
   \left( \frac{h(w)}{|h(z,w)|^2}\right)^{n+1} = |j(x , z)| ^{-2(n+1)} \quad\text{or}\quad \frac{h(w)}{|h(z,w)|^2}= |j(x,z ) |^{-2}
  \end{equation}
  
  Using that $|j(y,0)|= h(w)$ this also leads to the following useful lemma:
   
 \begin{lemma}\label{lem:hxy}
 Let $x,y\in \SUn$ and write $z=x\cdot 0$ and $w=y\cdot 0$. Then
 \[h(y^{-1}x) = \frac{h(z)h(w)}{|h(z,w)|^2} .\]
 \end{lemma}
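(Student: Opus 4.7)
The plan is to deduce the identity directly from the cocycle-type transformation formula in Lemma \ref{lem:Kgzw}(3), namely
\[
j(g,z)\,\overline{j(g,w)}\,h(g\cdot z, g\cdot w) = h(z,w),
\]
by applying it with $g = y^{-1}$, and exploiting the fact that $y^{-1}\cdot w = 0$ so that several occurrences of $h$ collapse.

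\smallskip

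First I would rewrite the left-hand side $h(y^{-1}x)$ in the ball notation. By convention $h(g) = h(g\cdot 0, g\cdot 0) = 1-|g\cdot 0|^2$, and since $y^{-1}x\cdot 0 = y^{-1}\cdot z$, we have $h(y^{-1}x) = h(y^{-1}\cdot z, y^{-1}\cdot z)$. Applying Lemma \ref{lem:Kgzw}(3) with $g = y^{-1}$ and both arguments equal to $z$ gives
\[
|j(y^{-1},z)|^2\, h(y^{-1}\cdot z, y^{-1}\cdot z) = h(z,z) = 1-|z|^2,
\]
so the task reduces to identifying $|j(y^{-1},z)|^2$.

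\smallskip

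Next I would apply the same identity with the two arguments being $z$ and $w$, still with $g = y^{-1}$. Because $y^{-1}\cdot w = 0$, the factor $h(y^{-1}\cdot z, y^{-1}\cdot w) = h(y^{-1}\cdot z, 0) = 1$ trivializes, yielding
\[
j(y^{-1},z)\,\overline{j(y^{-1},w)} = h(z,w),
\]
and similarly, using both arguments equal to $w$,
\[
|j(y^{-1},w)|^2 = h(w,w) = 1-|w|^2.
\]
Taking absolute squares of the previous displayed equation and dividing yields
\[
|j(y^{-1},z)|^2 = \frac{|h(z,w)|^2}{1-|w|^2}.
\]

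\smallskip

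Finally, substituting this into the first step gives
\[
h(y^{-1}x) = \frac{1-|z|^2}{|j(y^{-1},z)|^2} = \frac{(1-|z|^2)(1-|w|^2)}{|h(z,w)|^2} = \frac{h(z)h(w)}{|h(z,w)|^2},
\]
which is the claimed formula. There is no real obstacle here: the whole argument is a bookkeeping exercise with the automorphy factor $j$, and the only mildly subtle point is recognizing at the outset that choosing $g = y^{-1}$ (rather than $y$ or $x^{-1}$) is what makes the factor $h(y^{-1}\cdot z, y^{-1}\cdot w)$ collapse to $1$.
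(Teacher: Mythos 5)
Your proof is correct and follows essentially the same route as the paper: the paper derives the identity $\frac{h(w)}{|h(z,w)|^2} = |j(x,z)|^{-2}$ (its equation \eqref{eq:z05}) from the cocycle relation in Lemma \ref{lem:Kgzw}(3) together with $h(\cdot,0)=1$, and then combines it with the same relation evaluated at equal arguments — exactly the bookkeeping you carry out. The only difference is that you re-derive \eqref{eq:z05} inside the proof instead of citing it.
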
 
 
 We now use \eqref{eq:z05} and Lemma \ref{lem:Jac} to prove the following:

  \begin{theorem}\label{thm:Int} The following holds true:
   \begin{enumerate}
   \item If $f \in L^1(\nball , dz)$, then
  \[\int_{\nball} f (x \cdot z)|j(x,z)|^{-2(n+1)} \leb{z} = \int_{\nball} f(z) \leb{z}, \quad x\in \SUn .\]
  \item  Assume that $\nu \in\R $ and that $f\in L^1(\nball , h(z)^{\nu - n -1}\leb{z})$. Then
  \[\int_{\nball} f(x\cdot z) |j  (x ,z)|^{-2\nu} h(z)^{\nu - n -1} \leb{z} = \int_{\nball } f(z) h(z)^{\nu - n -1} \leb {z},\quad x\in\SUn .\]
  \item The $\SUn$-invariant measure on $\nball $ is, up to a positive constant, given by 
  \[\inv{z}=\frac{\leb{z}}{(1-|z|^2)^{n+1}}.\]
 \item Let $f\in L^1(\nball ,\lambda_{\nball})$.
 Then there exists 
constants $c_1,c_2>0$ such that  
\begin{align*}
  \int_{\nball} f(z) \inv{z}& = c_1 \, \int_K \int_0^\infty \!\!\!\! \int_K f(k a_t\cdot 0) 
  \sinh(2t)\sinh(t)^{2(n-1)} \,\haarK{k} \,dt 
  \\
  &= c_2\int_{\SUn} f(x\cdot 0)\haar{x}\nonumber\\
  &= c_2 \int_{\SUn} f\circ p (x) \haar{x}.
  \end{align*}
  \end{enumerate}
  \end{theorem}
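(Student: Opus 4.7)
The four parts build on one another, so I would handle them in order and leverage the reductions between them.

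For (1), the plan is to identify the real Jacobian of the diffeomorphism $\varphi_x(z) = x\cdot z$ and then apply the classical change of variables formula. Combining Lemma \ref{lem:Jac}(1)--(2) with equation \eqref{eq:z05} (which expresses $h(w)/|h(z,w)|^2$ as $|j(x,z)|^{-2}$ once $w = x^{-1}\cdot 0$), I get $J_\R\varphi_x(z) = |j(x,z)|^{-2(n+1)}$; substituting $w = x\cdot z$ in $\int f(w)\,dw$ produces exactly the identity in (1).

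For (2), I would reduce to (1) by absorbing the weight into the integrand. The key is Lemma \ref{lem:Kgzw}(3) with $z=w$, which gives $|j(x,z)|^2\, h(x\cdot z) = h(z)$, hence
\[
h(z)^{\nu-n-1} = |j(x,z)|^{2(\nu-n-1)} h(x\cdot z)^{\nu-n-1}.
\]
Setting $g(w) = f(w)\, h(w)^{\nu-n-1}$, the integrand on the left of (2) becomes $g(x\cdot z)\,|j(x,z)|^{-2(n+1)}$, so (1) applied to $g$ gives exactly the right-hand side of (2). Statement (3) then follows in two steps: invariance of $dz/h(z)^{n+1}$ is a direct consequence of the same substitution trick (apply (1) to $g(z) = f(z)/h(z)^{n+1}$, again using $h(z) = |j(x,z)|^2 h(x\cdot z)$); uniqueness up to a positive constant follows from the standard fact that on the transitive homogeneous space $\SUn/K = \nball$ with compact stabilizer $K$, the invariant measure is unique up to a positive scalar.

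For (4), I would start from the polar decomposition \eqref{eq:polarCo}, writing $z = \tanh(t)\omega$ with $\omega \in \rS^{2n-1}$ and $t > 0$. Using $dz = 2n\, r^{2n-1}\, dr\, d\sigma_n(\omega)$ with $r = \tanh(t)$, $dr = \operatorname{sech}^2(t)\, dt$, and $1 - r^2 = \operatorname{sech}^2(t)$, a direct computation gives
\[
d\lambda(z) \;=\; \frac{dz}{(1-|z|^2)^{n+1}} \;=\; 2n\, \sinh^{2n-1}(t)\,\cosh(t)\, dt\, d\sigma_n(\omega) \;=\; n\, \sinh(2t)\, \sinh^{2(n-1)}(t)\, dt\, d\sigma_n(\omega),
\]
which yields the first equality once $d\sigma_n$ is expressed as the push-forward of $d\mu_K$ under $k\mapsto k\cdot e_1$ (using $\rS^{2n-1} \simeq K/M$). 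The second equality is the $KAK$-integration formula on $\SUn$: the Haar measure decomposes as $d\mu_G(k_1 a_t k_2) = c\,\sinh(2t)\sinh^{2(n-1)}(t)\, d\mu_K(k_1)\, dt\, d\mu_K(k_2)$, and since $k_2 \cdot 0 = 0$, the function $f\circ p(k_1 a_t k_2) = f(k_1 a_t\cdot 0)$ is independent of $k_2$, so the $k_2$-integration contributes only a constant.

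There is no deep obstacle here: all four statements follow by assembling Lemmas \ref{lem:Jac} and \ref{lem:Kgzw} with standard material on invariant measures and the $KAK$-decomposition. The only technical care is in bookkeeping of the constants $c_1, c_2$ coming from the several normalization conventions (Haar measure on $\SUn$, Haar measure on $K$, the measure $\sigma_n$, and the invariant $d\lambda$); since the theorem is stated only up to positive constants, this presents no real difficulty.
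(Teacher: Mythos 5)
Your proposal is correct and follows essentially the same route as the paper: parts (1)--(3) are obtained exactly as in the paper's proof, by combining Lemma \ref{lem:Jac} with \eqref{eq:z05} for the change of variables and then absorbing the weight via $h(z)=|j(x,z)|^2h(x\cdot z)$ so that (2) reduces to (1) and (3) is the case $\nu=0$. For (4) the paper simply cites \cite[Thm.~5.8]{Helgason84}, whereas you carry out the polar-coordinate Jacobian computation explicitly before invoking the $KAK$ integration formula; this is a correct and slightly more self-contained presentation of the same argument.
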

  \begin{proof} By Lemma \ref{lem:Jac}, \eqref{eq:z05} and the change of variables formula for $\R^{2n}$ we have for $f\in L^1(\nball, dz)$:
  \[\int_{\nball} f(x\cdot z) |j(x,z)|^{-2(n+1)} dz = \int_{\nball} f(z) dz \]
  which is (1).
  
 (2)  Let $f\in L^1(\nball , h(z)^{\nu - n -1}\leb{z})$ and $x \in \SUn$. Then we get by Lemma \ref{lem:Kgzw} 
\[
  |j(x,z) |^{-2\nu}h(z)^{\nu - n -1}  = |j(x,z)|^{-2(n+1)} |j(x,z)|^{-2\nu + n +1} h(z)^{\nu - n -1} = |j(x,z)|^{-2n} h(x\cdot z)^{\nu -n -1}\]
  and hence
  \begin{align*}
  \int_\nball f(x\cdot z) |j(x,z)|^{-2\nu} h(z)^{\nu -n-1}dz &= \int_\nball f(x\cdot z) |j(x,z)|^{-2 (n+1)} h(x\cdot z)^{\nu -n-1}dz\\
  &= \int_{\nball} f(z) h(z)^{\nu -n-1}dz .
  \end{align*}
  
  (3) This follows from (2) by taking $\nu = 0$.
  
  (4) This is \cite[Thm. 5.8]{Helgason84}. 
  \end{proof}

    Now, we normalize the Haar measure s.t. $c_2=1$. Then the following holds:
    \begin{align}\label{eq:integral}
        \int_G f\circ p (x) \haar{x}= \int_{\B^n} f(z) \inv{z}.
    \end{align}
   
    We point out the following consequence of Theorem \ref{thm:Int}. 
    \begin{theorem} \label{thm:pi}
    For $f\in L^2(\nball , \mu_\nu)$ define
    \begin{align}\label{eq:pinu}
        \widetilde{\pi}_\nu (x) f (z) = j_{\nu}(x^{-1},z)f (x^{-1}\cdot z) ,\quad z\in \nball .
    \end{align}
    Then $\widetilde{\pi}_\nu$ is a unitary representation of $G$ (or covering)  acting  on $L^2(\nball,\mu_\nu)$ and
    $\Berg$ is an invariant and irreducible subspace. The corresponding representation is also denoted by $\Tilde{\pi}_\nu$.
    \end{theorem}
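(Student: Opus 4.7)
The plan is to verify the four assertions in turn: (a) the formula $\widetilde\pi_\nu$ defines a group homomorphism, (b) each operator is unitary on $L^2(\B^n,\mu_\nu)$, (c) $\Berg$ is invariant, and (d) $\widetilde\pi_\nu$ acts irreducibly on $\Berg$. As the remark after Lemma \ref{lem:Kgzw} notes, for nonintegral $\nu$ the factor $j_\nu(x^{-1},z)$ is only single-valued after passing to an appropriate covering group $\wSUn$, so that is where $\widetilde\pi_\nu$ really lives.

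For (a), I expand $\widetilde\pi_\nu(x)\widetilde\pi_\nu(y)f(z)=j_\nu(x^{-1},z)j_\nu(y^{-1},x^{-1}\cdot z)f((xy)^{-1}\cdot z)$ and apply Lemma \ref{lem:Kgzw}(1) with $x\to y^{-1}$, $y\to x^{-1}$ to collapse the two $j_\nu$ factors into $j_\nu((xy)^{-1},z)$. The identity axiom is immediate from $j_\nu(e,z)=1$. For (b), I use $|j_\nu(x^{-1},z)|^2=|j(x^{-1},z)|^{-2\nu}$ to write
\[
\|\widetilde\pi_\nu(x)f\|_\nu^2=c_\nu\int_{\B^n}|f(x^{-1}\cdot z)|^2|j(x^{-1},z)|^{-2\nu}h(z)^{\nu-n-1}\,dz,
\]
then invoke Theorem \ref{thm:Int}(2) with $|f|^2$ in place of $f$ and $x^{-1}$ in place of $x$ to identify the right-hand side with $\|f\|_\nu^2$. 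For (c), the M\"obius map $z\mapsto x^{-1}\cdot z$ is holomorphic in $z$, and $j_\nu(x^{-1},z)=(v'\cdot z+d')^{-\nu}$ is holomorphic in $z$ for the appropriate single-valued branch on the covering. So $\widetilde\pi_\nu(x)f$ is a product of holomorphic functions whenever $f$ is holomorphic; combined with unitarity this gives $\widetilde\pi_\nu(x)\Berg\subseteq\Berg$.

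The main obstacle is (d), irreducibility. My plan is the $K$-type argument. Under $K\cong\U(n)$, the formula gives $\widetilde\pi_\nu(x_k)f(z)=(\det k)^{\nu}f(k^{-1}z)$ (up to a central character for nonintegral $\nu$), so $\Berg$ decomposes as an orthogonal Hilbert sum $\bigoplus_{m\ge 0}\rP^m(\C^n)$ with each space of homogeneous polynomials irreducible under $K$. Any nonzero closed $G$-invariant subspace $V$ is therefore a sum $\bigoplus_{m\in S}\rP^m(\C^n)$ for some nonempty $S\subseteq\N_0$, and the task is to show $S=\N_0$. Using the reproducing-kernel covariance from Lemma \ref{lem:Kgzw}(4) one computes
\[
\widetilde\pi_\nu(x)\Kernu{w}=\overline{j_\nu(x,w)}\,\Kernu{x\cdot w},
\]
so the $G$-orbit of $1=\Kernu{0}\in\rP^0$ equals $\{\overline{j_\nu(x,0)}\Kernu{x\cdot 0}:x\in G\}$, whose span is dense in $\Berg$ since $x\cdot 0$ sweeps out all of $\B^n$ and the kernels are total. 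This settles the case $0\in S$. For the remaining case, I use the explicit formula and the $KAK$ decomposition to expand $\widetilde\pi_\nu(a_t)f_\vm$ as a series in the basis $\{f_\bn\}$; a binomial expansion of $(\sinh(t)z_1+\cosh(t))^{-\nu}$ shows that this series has nonzero coefficients in every $K$-type, hence projecting onto $\rP^0$ yields a nonzero element of $\rP^0\cap V$, reducing to the previous case and forcing $V=\Berg$.
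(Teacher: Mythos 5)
Your proposal is correct, and for parts (a)--(c) it follows the same route as the paper: the paper's entire proof is the one-line remark that everything except irreducibility follows from Theorem \ref{thm:Int}(2), and irreducibility is ``well known.'' Your unitarity computation via $|j_\nu(x^{-1},z)|^2 = |j(x^{-1},z)|^{-2\nu}$ and the change-of-variables identity is exactly the intended argument, and your verification of the cocycle identity from Lemma \ref{lem:Kgzw}(1) and of the invariance of $\Berg$ by holomorphy fills in what the paper leaves implicit. Where you genuinely diverge is part (d): the paper offers no proof of irreducibility, whereas you supply the standard lowest-$K$-type argument. Your argument is sound: the $K$-isotypic decomposition $\Berg = \bigoplus_m \rP^m(\C^n)$ into mutually inequivalent $K$-irreducibles forces any closed invariant subspace $V$ to be $\bigoplus_{m \in S}\rP^m$, the covariance $\widetilde\pi_\nu(x)\Kernu{w} = \overline{j_\nu(x,w)}\,\Kernu{x\cdot w}$ (which follows from Lemma \ref{lem:Kgzw}(4) together with $j_\nu(x,x^{-1}\cdot z)j_\nu(x^{-1},z)=1$) shows $1$ is cyclic, and the $KAK$ step forces $0 \in S$. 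One small imprecision: the assertion that the expansion of $\widetilde\pi_\nu(a_t)f_\vm$ ``has nonzero coefficients in every $K$-type'' is stronger than what you need and is false for some basis vectors --- e.g.\ if $\vm$ has a nonzero entry outside the first coordinate, then $\langle \widetilde\pi_\nu(a_t)f_\vm, 1\rangle = j_\nu(a_{-t},0)f_\vm(a_{-t}\cdot 0)$ vanishes because $a_{-t}\cdot 0$ lies on the $z_1$-axis. What you actually need, and what is true, is that for the specific vector $z_1^m \in \rP^m \subseteq V$ one has $\langle \widetilde\pi_\nu(a_t)z_1^m, 1\rangle = j_\nu(a_{-t},0)(-\tanh t)^m \neq 0$ for $t \neq 0$; since $V$ contains all of $\rP^m$ and is $K$-invariant (hence stable under the isotypic projections), this already forces $0 \in S$ and completes the reduction. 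With that adjustment your proof is a complete, self-contained substitute for the paper's appeal to the literature.
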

    \begin{proof} Everything except the irreducibility of $\Berg$ follows from Theorem \ref{thm:Int}, part 2. That $(\Berg,\Tilde\pi_\nu)$ is
    irreducible is well known.
    \end{proof}

    \subsection{Automorphisms of $\B^n$}
Let $\Aut(\nball )$ denote the group of automorphisms of $\nball$, i.e., the group of biholomorphic diffeomorphism of $\nball$. Denote by $\Aut (\nball )_e$
 connected component of $\Aut (\nball)$ containing the identity map. Then it follows from \cite[Lem. 4.3 {\&} Thm 6.1]{H01} that
 $\Aut (\nball )_e = \SUn/Z$ where $Z$ denotes the center of $\SUn$. Note that $Z\subset K$. Then for each $z\in \nball $, there exists an element $\tau_z\in \Aut (\nball)$ 
    of period two that interchanges $z$ and $0$. 
  We note that $\tau_z$ is in general not an element of $G$ as $G$ is a finite or infinite covering
    of $\Aut (\nball)$. But there always exists a $c_z\in Z(G)$, the center of $G$ and $t_z\in G$ such that $t_z(z) =0$, $t_z(0) =z$ and $t_z^2 = c_z$.
    The following lemma is a crucial one.
   \begin{lemma}\label{lem:Aut}
    Let $x\in G$. Then $x=t_{x\cdot 0} k_x$ for some $k_x\in K$. In particular, if $G=\Aut (\nball )$ then $x = t_{x\cdot 0} k_x$.
\end{lemma}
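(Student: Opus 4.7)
The plan is to exploit the fact that $G$ acts transitively on $\B^n \cong G/K$ with stabilizer $K$ at the origin, so the desired factorization is essentially a tautology once the symmetry $t_z$ exchanging $0$ and $z$ is available in $G$.

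First, I set $z := x\cdot 0 \in \B^n$. By the discussion preceding the lemma, there exists $t_z \in G$ (with $t_z^2 = c_z \in Z(G)$) satisfying $t_z(0)=z$ and $t_z(z)=0$; in particular $t_z\cdot 0 = z = x \cdot 0$. Then I consider the element
\[
k_x := t_z^{-1} x \in G.
\]
The key computation is
\[
k_x \cdot 0 = t_z^{-1} \cdot (x \cdot 0) = t_z^{-1} \cdot z = 0,
\]
using that $t_z$ interchanges $0$ and $z$, hence $t_z^{-1}$ does as well. Since $K = \{g \in G : g \cdot 0 = 0\}$ is the stabilizer of the origin (as recorded in \eqref{K}), this places $k_x \in K$. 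Rearranging gives $x = t_z\, k_x = t_{x\cdot 0}\, k_x$, which is the claimed factorization.

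For the second assertion, when $G = \Aut(\B^n)$, the element $\tau_z \in \Aut(\B^n)$ of period two that interchanges $0$ and $z$ plays the role of $t_z$, and exactly the same argument with $k_x := \tau_z^{-1} x$ produces the factorization $x = \tau_{x \cdot 0}\, k_x$ with $k_x$ in the stabilizer of the origin in $\Aut(\B^n)$.

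There is no serious obstacle here: the statement is really a one-line consequence of the transitivity of the $G$-action on $\B^n$ together with the existence of the symmetry $t_z$. The only care needed is to verify that $t_z^{-1}$ genuinely sends $z$ to $0$ (which follows from $t_z(z) = 0$ together with $t_z$ being a group element acting as a diffeomorphism), and that $K$ is the full isotropy group at $0$, which was established earlier.
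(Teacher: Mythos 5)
Your argument is correct and is essentially identical to the paper's proof: both set $z = x\cdot 0$, observe that $t_z^{-1}x$ fixes the origin, and conclude $t_z^{-1}x \in K$ from the identification of $K$ as the stabilizer of $0$. The extra care you take in checking that $t_z^{-1}$ sends $z$ to $0$ is a harmless elaboration of the same one-line argument.
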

\begin{proof} Let $z= x\cdot 0$. Then   $t_x^{-1} x\cdot 0 = 0$ and hence $t_x^{-1}x = k_x\in K$. 
\end{proof} 


 \section{The commutative Banach algebra $\LoneinvK$}\label{sec:L1K}
 
 The main purpose of this section is to discuss the algebra $\LoneinvK$ of all radial functions on $\B^n$ that are integrable with respect to the invariant measure. We begin by defining the convolution product. For that, recall that if $(\pi,\cB)$ is a Banach space representation,
then we can define the convolution of compactly supported continuous functions $f\in C_c(G)$ with elements of $\cB$, also
called the integrated representation, by
\[f*_{ \pi} v = \pi (f) v = \int_\SUn f(x)  \pi (x) v\, d\mu_{\SUn}(x), \quad v\in \cB\]
where the integral is taken in the weak sense. If $\pi$ is norm-preserving or more generally bounded, the convolution
extends to all of $L^1(\SUn)$.  This in particular with the left   translation of a function $f:G\to \C$, respectively $f:\nball \to \C$, given
by 
$$\actfL{x}{f}(y)=f(x^{-1}y),\quad x\in G, \, y\in \SUn \text{ respectively } y\in \nball $$  
which leads to the classical convolution  
\[f*g(x) =\int_{G} f(y) \actfL{y}{g}(x)\haar{y}= \int_{G} f(y) g(y^{-1}x)\haar{y}. \]
This is particular exists if $f\in L^1(G)$ and $g$ is a $L^p$-function on $\SUn$ or  on the ball.
Finally, by this we also have a convolution of $L^1$-functions and elements in $L^1(\SUn,\mu_\nu)$ respectively the $\Berg$ given by
\[f*_{  \nu } g (z) = \widetilde{\pi}_\nu (f) g(z) = \int_{\SUn} j_\nu (x ^{-1},z ) f(x)g(x^{-1}z) d\mu_{\SUn}(x),\]
where, $\pinu(f)$ is the integrated representation associated with $\pinu$ defined in \eqref{eq:pinu}.

Recall the function $\hnu$ defined in \eqref{eq:hnu}. 
For $\nu > n$ we let
\begin{equation}\label{def:varphin}
\varphi_\nu (z) = c_\nu \hnu { (z)}= c_\nu h(z)^\nu.
\end{equation}
Then $\int_G \varphi_\nu(z)\inv{z}=1$.
 \begin{lemma}\label{lem:Berezin_function}
     Let $1\leq p\leq\infty$ and let $f\in \Lpinv$. Then $B_\nu(f) = f\ast \varphi_\nu$. 
 \end{lemma}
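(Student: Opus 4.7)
The plan is to unfold $f*\varphi_\nu$ as an integral over $G$ using the definition of convolution, descend it to an integral over $\B^n$ by exploiting the $K$-invariance properties of both factors, and then apply the key identity from Lemma \ref{lem:hxy} to match the resulting expression with the integral formula \eqref{def:Bf} for the Berezin transform $B_\nu(f)$.

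Pick any $x \in G$ with $z = x\cdot 0$. By the definition of the convolution on $\B^n$ (obtained by lifting $f$ to the right-$K$-invariant function $y\mapsto f(y\cdot 0)$ and $\varphi_\nu$ to the $K$-biinvariant function $y\mapsto c_\nu h(y)^\nu$ on $G$), one has
\[(f * \varphi_\nu)(z) = \int_G f(y\cdot 0)\, \varphi_\nu(y^{-1} x\cdot 0)\, d\mu_G(y).\]
The integrand is right-$K$-invariant in $y$: the factor $f(y\cdot 0)$ is unchanged under $y\mapsto yk$ since $K$ fixes $0$, and $\varphi_\nu((yk)^{-1}x\cdot 0) = \varphi_\nu(k^{-1}\cdot(y^{-1}x\cdot 0)) = \varphi_\nu(y^{-1}x\cdot 0)$ by radiality of $\varphi_\nu$ and the fact that $K$ acts on $\C^n$ by unitary transformations. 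Theorem \ref{thm:Int}(4) then allows the integral to descend to
\[(f * \varphi_\nu)(z) = \int_{\B^n} f(w)\, \varphi_\nu(y_w^{-1} x\cdot 0)\, d\lambda(w),\]
for any choice $y_w \in G$ with $y_w\cdot 0 = w$; the right-$K$-invariance guarantees independence of the representative.

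Next, Lemma \ref{lem:hxy} evaluates
\[\varphi_\nu(y_w^{-1}x\cdot 0) = c_\nu\, h(y_w^{-1}x)^\nu = c_\nu\left(\frac{(1-|z|^2)(1-|w|^2)}{|1-\langle z,w\rangle|^2}\right)^\nu.\]
Substituting this and using $d\lambda(w) = (1-|w|^2)^{-(n+1)}\,dw$ yields exactly the expression \eqref{def:Bf} for $B_\nu(f)(z)$, completing the identification.

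The main obstacle is the careful verification of the right-$K$-invariance that enables the descent from $G$ to $\B^n$; once that step is in place, the identification of the two integrals is essentially a one-line computation. Integrability of the integrand and applicability of Fubini for each $1 \leq p \leq \infty$ follow from standard bounds on the Berezin kernel $|1-\langle z,w\rangle|^{-2\nu}(1-|w|^2)^{\nu-(n+1)}$ documented in \cite{Z05}.
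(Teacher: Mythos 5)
Your proof is correct and follows essentially the same route as the paper, whose proof consists of the one-line citation of \eqref{def:Bf}, \eqref{eq:z05} and Lemma \ref{lem:hxy}; you have simply written out the descent from $G$ to $\B^n$ and the kernel identification that those references encode. No further comment is needed.
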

 \begin{proof} This follows from \eqref{def:Bf}, \eqref{eq:z05}  and Lemma \ref{lem:hxy}.  
\end{proof}

We also have the following
\begin{lemma}\label{lem:phiCophi} Let $\nu > n$, then there exists a positive constant  $d_\nu $ such that 
\[B_\nu (\varphi_\nu) (z) = \varphi_\nu \ast \varphi_\nu (z)  = d_\nu \varphi_\nu (z) >0 \]
\end{lemma}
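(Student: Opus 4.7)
The first equality $B_\nu(\varphi_\nu) = \varphi_\nu \ast \varphi_\nu$ follows immediately from Lemma~\ref{lem:Berezin_function} applied with $f = \varphi_\nu$. The substantive content of the lemma is thus the second equality $B_\nu(\varphi_\nu) = d_\nu\,\varphi_\nu$.

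My strategy is first to reformulate the Berezin transform using the Möbius involutions $\tau_z$ from Section~2.6 (the self-inverse automorphisms of $\B^n$ with $\tau_z(0) = z$, $\tau_z(z) = 0$). A direct computation combining Lemma~\ref{lem:Jac} with \eqref{eq:z05} yields the Jacobian identity
\[
d\mu_\nu(\tau_z(u)) \;=\; |k_z(u)|^2\, d\mu_\nu(u),
\]
and hence the compact reformulation $B_\nu(f)(z) = \int_{\B^n} (f \circ \tau_z)\, d\mu_\nu$ valid for $f \in L^1(\B^n, \mu_\nu)$. Applied with $f = \varphi_\nu = c_\nu h^\nu$ and using the covariance $h(\tau_z(u), \tau_z(u)) = h(u)\,h(z)/|h(z,u)|^2$ from Lemma~\ref{lem:Kgzw}(3), this extracts the factor $c_\nu h(z)^\nu = \varphi_\nu(z)$ and produces
\[
B_\nu(\varphi_\nu)(z) \;=\; \varphi_\nu(z)\cdot \int_{\B^n} \frac{h(u)^\nu}{|1-\langle z, u\rangle|^{2\nu}}\, d\mu_\nu(u).
\]
Once the remaining integral $I(z)$ is shown to be a constant in $z$, its value is identified by evaluating at $z = 0$, where the denominator trivializes and the integrand reduces to a standard Beta integral, yielding
\[
d_\nu \;=\; I(0) \;=\; c_\nu\cdot \frac{n!\,\Gamma(2\nu - n)}{\Gamma(2\nu)} \;>\; 0,
\]
with positivity manifest from the integrand being strictly positive.

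The main obstacle is precisely proving the $z$-independence of $I(z)$. My first attempt would be a second substitution $u = \tau_z(v)$: Lemma~\ref{lem:Kgzw}(3) applied twice supplies both $h(\tau_z(v)) = h(v)h(z)/|h(z,v)|^2$ and $|1-\langle z, \tau_z(v)\rangle|^2 = h(z)^2/|h(z,v)|^2$, and combined with the Jacobian $|k_z(v)|^2$ of the measure transformation, all powers of $h(z)$ arising from the covariance, the new denominator, and the measure should cancel, leaving an integrand depending only on $v$. The cancellation hinges decisively on the fact that $\varphi_\nu$ is a pure power of $h$, and tracking the exponents carefully is the most delicate part of the argument; I would otherwise fall back on an explicit evaluation via the hypergeometric expansion of such integrals (Zhu, Thm.~1.12) to pin down $I(z)$ in closed form.
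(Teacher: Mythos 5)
Your reduction is carried out correctly up to the point where everything hinges on the $z$-independence of
\[
I(z)=\int_{\B^n}\frac{h(u)^\nu}{|1-\ip{z}{u}|^{2\nu}}\,d\mu_\nu(u),
\]
and this is precisely where the argument breaks down; it cannot be repaired. The substitution $u=\tau_z(v)$ is a \emph{symmetry} of this integral, not a trivialization: the Jacobian contributes $h(z)^\nu/|h(z,v)|^{2\nu}$, the covariance of $h$ contributes $h(z)^\nu/|h(z,v)|^{2\nu}$ to the numerator, and the transformed denominator contributes $|h(z,v)|^{2\nu}/h(z)^{2\nu}$; every power of $h(z)$ cancels, but only two of the three factors $|h(z,v)|^{\pm 2\nu}$ do, and one is left with exactly $\int h(v)^\nu\,|1-\ip{z}{v}|^{-2\nu}\,d\mu_\nu(v)=I(z)$ again. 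The cancellation you hope for would require the integrand to be a pure power of the \emph{invariantly} transforming quantity, which $|1-\ip{z}{u}|^{-2\nu}$ alone is not. Worse, your fallback settles the matter negatively: writing $I(z)=c_\nu\int_{\B^n}(1-|u|^2)^{2\nu-n-1}|1-\ip{z}{u}|^{-2\nu}\,du$, the exponents satisfy $(n+1)+(2\nu-n-1)=2\nu$, which is exactly the borderline case of the Forelli--Rudin estimates (Zhu, Thm.~1.12), so $I(z)\asymp\log\frac{1}{1-|z|^2}$ as $|z|\to 1$. For $n=1$, $\nu=2$ one even computes $I(z)=\sum_{k\geq 0}\tfrac{2(k+1)}{(k+2)(k+3)}|z|^{2k}$ explicitly, which is visibly non-constant. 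So no completion of your strategy exists.

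The paper's own proof takes an entirely different route: it writes $\varphi_\nu$ as $c_\nu$ times the diagonal matrix coefficient $\ip{\tilde\pi_\nu(\cdot)1}{1}_\nu$ of the square-integrable representation $\tilde\pi_\nu$ and invokes the Schur orthogonality relations. You should be aware that your (correct) reduction is in genuine tension with that argument: since $\ip{\tilde\pi_\nu(x)1}{1}_\nu=j_\nu(x^{-1},0)$ has modulus $(1-|x\cdot 0|^2)^{\nu/2}$, the identification actually reads $\varphi_\nu=c_\nu\,|\ip{\tilde\pi_\nu(\cdot)1}{1}_\nu|^2$, and the squared modulus of a matrix coefficient does not reproduce under convolution. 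Indeed, within the paper's own formalism, $h_\nu\ast h_\nu=d\,h_\nu$ would force $\widehat{h_\nu}\equiv d\neq 0$ on the spherical spectrum (using multiplicativity of the Gelfand transform and the nonvanishing of $\widehat{\varphi_\nu}$ from Theorem~\ref{lem:Bnvarphi}), contradicting the Riemann--Lebesgue lemma; equivalently, it would force $\Toep_{h_\nu}=c_\nu d\,P_0$ by injectivity of the Berezin transform, which is absurd since $\Toep_{h_\nu}$ has infinite rank. The statement appears to fail as written, and this should be raised with the authors rather than patched on either side.
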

\begin{proof} We have, by identifying $\varphi_\nu $ with $\varphi_\nu \circ p$ and noting that
$\varphi_\nu (x)  = c_\nu \ip{\tilde{\pi}_\nu (x)1}{1}_\nu$ 
\begin{align*}
\varphi_\nu \ast \varphi_\nu (x\cdot 0) & = c_\nu^2 \int_G \ip{\tilde{\pi}_\nu (y)1}{1}_\nu \ip{\tilde{\pi}_\nu (y^{-1}x)1}{1}_\nu d\mu_G(y)\\
& = c_\nu^2 \int_G  \ip{\tilde{\pi}_\nu (y)1}{1}\ip{\tilde{\pi}_\nu (x) 1}{\tilde{\pi}_\nu (y)1}_\nu d\mu_G(y)\\
& = c_\nu^2 \frac{1}{d(\tilde{\pi}_\nu )} \ip{\tilde{\pi}_\nu (x)1}{1}_\nu\\
&= \frac{c_\nu}{d (\tilde{\pi}_\nu)} \varphi_\nu (x)
\end{align*}
where we have used the orthogonality relation for the square-integrable representation $\tilde{\pi}_\nu$ and $d (\tilde{\pi}_\nu)$ is
the formal dimension.
\end{proof}

Recall that $K=\rU(n)$ can be identified as a subgroup of $G$. We say a function $f:\B^n\to \C$ is {\it radial} or {\it left $K$-invariant}, if
$$\actfL{k}{f}=f,\quad \forall k\in K.$$
The radialization $\rad f$ of a function $f:\B^n\to \C$ is given by
$$f^K(z) = (\rad f)(z)=\int_{K} (\actfL{k}{f})(z)\haarK{k}= \int_L f(kz) \haarK{k}, \quad z\in \B^n $$
where $\mu_K$ is the normalized Haar measure on $K$.

 We remark that if $f$ and $g$ are functions on the ball then $f*g$ is not well defined, however if $g$ is $K$-invariant then, viewing $f$
as a right $K$-invariant function on the group the convolution $f*g$ is well defined and we have
\begin{lemma} Let $f,g \in L^1(\nball )$ then the following holds:
\begin{enumerate}
\item $\ds f*g (z) = f*g^K(z)$.
\item If $f$ is left $K$-invariant then $f*g$ is left $K$-invariant.
\end{enumerate}
\end{lemma}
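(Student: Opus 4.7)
The plan is to work consistently at the group level by pulling all ball-functions up to right $K$-invariant functions on $G$ via $h \mapsto h \circ p$, and then to reduce both parts to routine changes of variable using the invariance of the Haar measures $\mu_G$ and $\mu_K$. At the level of ball-functions, the convolution reads
\[
f \ast g(z) = \int_G f(y \cdot 0) \, g(y^{-1} \cdot z) \haar{y},
\]
and the two facts I will use over and over are that $k \cdot 0 = 0$ for every $k \in K$ (which makes any pullback from the ball automatically right $K$-invariant, so that $f((yk) \cdot 0) = f(y \cdot 0)$ for free) together with the elementary cancellations $k(yk)^{-1} = y^{-1}$ and $(k^{-1}y)^{-1}k^{-1} = y^{-1}$.

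For part (1), I would insert the definition $g^K(w) = \int_K g(k \cdot w) \haarK{k}$ and apply Fubini to write
\[
f \ast g^K(z) = \int_K \int_G f(y \cdot 0) \, g\bigl(k y^{-1} \cdot z\bigr) \haar{y}\haarK{k}.
\]
For each fixed $k \in K$, substitute $y \mapsto yk$ in the inner integral: right-invariance of $\mu_G$ absorbs the substitution, the factor $f((yk)\cdot 0) = f(y \cdot 0)$ is unchanged, and the cancellation $k(yk)^{-1} = y^{-1}$ collapses the $g$-argument to $y^{-1} \cdot z$. The inner integral becomes $f \ast g(z)$ independently of $k$, and integrating against the probability measure $\mu_K$ yields the desired identity.

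For part (2), fix $k \in K$ and compute
\[
f \ast g(k^{-1} \cdot z) = \int_G f(y \cdot 0) \, g(y^{-1} k^{-1} \cdot z) \haar{y}.
\]
Substituting $y \mapsto k^{-1} y$ and invoking left-invariance of $\mu_G$, the factor in $g$ becomes $g((k^{-1}y)^{-1} k^{-1} \cdot z) = g(y^{-1} \cdot z)$, and the factor $f(k^{-1} y \cdot 0) = f(k^{-1} \cdot (y \cdot 0))$ reduces to $f(y \cdot 0)$ by the assumed left $K$-invariance of $f$ on the ball. The integral thus equals $f \ast g(z)$, so $f \ast g$ is left $K$-invariant. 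The entire proof is Haar-measure bookkeeping; the only mild obstacle is staying consistent about \emph{which} sort of $K$-invariance is being used --- right $K$-invariance, automatic for any pullback from the ball, versus left $K$-invariance, equivalent to radiality on the ball --- and selecting the correct substitution ($y \mapsto yk$ in (1), $y \mapsto k^{-1} y$ in (2)) so that the $k$'s land where they cancel.
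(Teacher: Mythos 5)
Your proof is correct and is exactly the ``simple calculation'' the paper alludes to (the paper's own proof is a one-line reference to \cite{DDMO25}, and the general version, Lemma \ref{conv} in the appendix, is likewise dispatched by ``a simple change of variable''). Your substitutions $y \mapsto yk$ and $y \mapsto k^{-1}y$, together with unimodularity of $G$ and the distinction between right $K$-invariance (automatic for pullbacks from $\nball$) and left $K$-invariance (radiality), are precisely the intended argument.
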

\begin{proof} This is a simple calculation, see also \cite{DDMO25}.
\end{proof}
 
 We also have the following:
\begin{lemma}\label{lem:radial_functions}
    Let $1\leq p\leq \infty$ and let $f\in \Lpinv$. Then the following are equivalent:
    \begin{enumerate}
 \item $f$ is radial
        \item $\rad f=f$
        \item  There exists $\nu >n+1$ such that  $B_\nu (f)$ is radial.
        \item  $B_\nu (f)$ is radial for all $\nu>n+1$. 
 \item  $f\circ p (x)=f\circ p (x^{-1})$ for all $x\in G$, where $p :G\to G/K$ is the canonical projection.
 \end{enumerate}
\end{lemma}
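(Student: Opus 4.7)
The plan is to close the cycle (1) $\Leftrightarrow$ (2) $\Leftrightarrow$ (5) and (1) $\Rightarrow$ (4) $\Rightarrow$ (3) $\Rightarrow$ (2). The equivalence (1) $\Leftrightarrow$ (2) is tautological from the definition of $\rad$ and the left-invariance of $\mu_K$: the averaging $\rad f$ is always radial, and if $f$ itself is radial the integrand defining $\rad f$ is constant in $k$, giving $\rad f = f$. For (1) $\Leftrightarrow$ (5), I observe that $f\circ p$ is automatically right $K$-invariant because $p(xk) = p(x)$ for $k \in K$, so (1) is equivalent to $K$-biinvariance of $f \circ p$. Lemma \ref{lem:fKinv} then immediately supplies (1) $\Rightarrow$ (5). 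For the converse, assuming (5), for $k \in K$ and $x \in G$ the chain
\begin{equation*}
f\circ p(kx) = f\circ p((kx)^{-1}) = f\circ p(x^{-1}k^{-1}) = f\circ p(x^{-1}) = f\circ p(x),
\end{equation*}
which uses (5), right $K$-invariance, and (5) again, recovers left $K$-invariance of $f \circ p$.

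The implication (1) $\Rightarrow$ (4) proceeds via Lemma \ref{lem:Berezin_function}, which rewrites $B_\nu f = f \ast \varphi_\nu$ as a convolution on $G$. A routine change of variables shows that $\actfL{k}{(f \ast \varphi_\nu)} = (\actfL{k}{f}) \ast \varphi_\nu$, so left $K$-invariance of $f$ is inherited by $B_\nu f$. Step (4) $\Rightarrow$ (3) is trivial.

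The least trivial step is (3) $\Rightarrow$ (2). The plan is to use the same commutation principle: left $K$-averaging commutes with right convolution, which yields the intertwining relation
\begin{equation*}
\rad(B_\nu f) = \rad(f \ast \varphi_\nu) = (\rad f) \ast \varphi_\nu = B_\nu(\rad f).
\end{equation*}
If $B_\nu f$ is radial, the left-hand side equals $B_\nu f$, so $B_\nu(\rad f - f) = 0$. Injectivity of $B_\nu$ on the space containing $\rad f - f$ then yields $\rad f = f$, which is (2).

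The main obstacle is precisely this last injectivity. Theorem \ref{thm:Binj} records injectivity of $B_\nu$ on $L^\infty(\B^n)$, which already covers the case $p = \infty$. For general $1 \leq p < \infty$ one needs injectivity on $L^p(\B^n, d\lambda)$. This should be established by working on $G$: $B_\nu$ acts as right convolution against the $K$-biinvariant function $\varphi_\nu$, whose spherical Fourier transform is nowhere vanishing, so a combination of density arguments and the spherical Plancherel theory should supply the required injectivity. Producing this extension (or citing it from \cite{DDMO25}, where closely related injectivity statements are developed) is the one non-routine ingredient beyond what has already been set up in the preceding subsections.
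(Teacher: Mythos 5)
Your chain of implications and the ingredients you invoke --- Lemma \ref{lem:fKinv} for (1)$\Leftrightarrow$(5), commutation of left $K$-translation with the Berezin transform for (1)$\Rightarrow$(4), and injectivity of $B_\nu$ applied to $\rad f - f$ for (3)$\Rightarrow$(2) --- are exactly those of the paper's proof, so structurally this is the same argument. You are also right to single out the one genuinely non-routine point, which the paper itself passes over silently: Theorem \ref{thm:Binj} records injectivity of $B_\nu$ only on $L^\infty(\nball)$, whereas the function $\rad f - f$ lies merely in $\Lpinv$.

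However, the repair you sketch for that point does not work as stated. The spherical Plancherel theory (Theorem \ref{thm:Plan1}) and the nonvanishing of $\widehat{\varphi_\nu}$ concern $K$-\emph{bi}invariant functions, while $\rad f - f$ is only right-$K$-invariant (it is left-$K$-invariant precisely when the conclusion you are trying to prove already holds), so the spherical transform is not even defined on it. One would instead need the full Helgason--Fourier transform on $G/K$, and even then the passage from $L^2$ to arbitrary $L^p$ is not automatic. The clean fix is more elementary: since $d\mu_\nu(z) = c_\nu(1-|z|^2)^\nu\,d\lambda(z)$ and $\int_{\nball}(1-|z|^2)^{\nu p'}\,d\lambda(z) < \infty$ whenever $\nu p' > n$, H\"older's inequality gives $\Lpinv \subset L^1(\nball,\mu_\nu)$ for every $1\le p\le\infty$ and every $\nu > n$; and the sesqui-analyticity argument of \cite[Prop.~3.1]{Z12} behind Theorem \ref{thm:Binj} applies verbatim to any $g \in L^1(\nball,\mu_\nu)$, because the off-diagonal extension $(z,w)\mapsto \langle gK_z, K_w\rangle_\nu$ of $B_\nu(g)$ converges locally uniformly and is holomorphic in $z$, antiholomorphic in $w$. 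With that substitution (or with an explicit extension of Theorem \ref{thm:Binj} to $L^1(\nball,\mu_\nu)$), your argument closes completely.
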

\begin{proof}  The equivalence of (1) and (5) is a consequence of the assumption that $f$ is right invariant and Lemma \ref{lem:fKinv}.
(5) $\Rightarrow (4)$ As the inner product and norm is invariant under the group $K$ it
follows that $B^{(\nu )} (f\circ \ell_k) = B^{(\nu )} (f) \circ \ell_k$, $k\in K$. Hence
the $K$-invariance of $f$ implies that $B^{(\nu)}(f)$ is $K$-invariant.  
(4) $\Rightarrow $ (3) is clear. (3) $\Rightarrow (2)$ follows from the injectivity of the  $\nu$-Berezin transform, see
Lemma \ref{thm:Binj}.
Finally (2) $\Rightarrow $ (1) follows as $\rad f $ is $K$ left-invariant.  
\end{proof}
 
Following the notation in Appendix \ref{app:Gelfandpairs}, given a space $\cF$ of functions on the ball, we denote by 
$\cF^K$, the set of all $K$-invariant functions in $\cF$. From Appendix \ref{app:Gelfandpairs}, we 
have that $\LoneinvK$, the space of all $K$-invariant functions on $\nball$ that are integrable with
 respect to the invariant measure $\lambda$, forms a commutative Banach algebra with the convolution product. 
Moreover, $\LoneinvK$ is a commutative Banach $*$-algebra, with the involution given by 
$f^*(x) =\overline{f (x^{-1})}$. Note that in our case we have $f(x^{-1}) = f(x)$ so
$f^* = \bar{f}$. We have $\|f^*\|_1 = \|f\|_1$ and $\|f^* * f\|_1 \le \|f\|_1^2$. In other words, $(G,K)$ is a 
Gelfand pair and the unit ball $\nball $ is a commutative space. We refer to the Appendix \ref{app:Gelfandpairs} for general discussion
about commutative spaces, the spherical functions and the Gelfand transform.  

\subsection{Spherical functions}\label{sec:spherical_functions}
 The space of bounded homomorphism  of $\LoneinvK$ is given by the set of all bounded spherical functions $\spheri=\spheri(\B^n)$, as in \ref{subsec:sphericalFT}. The space of $*$-homomorphism is given by the
 space  $\spheripd$ of positive definite spherical functions.   

Recall the Gauss hypergeometric function  
\[{}_2F_1 (a, b;c;z) = \sum_{k=0}^\infty \frac{(a)_k(b)_k}{(c)_k} \frac{z^k}{k!}, \quad c \not\in -\N_0, \, \, |z|<1 \]
 where for a complex number $z\in \C\setminus (-\N)$.
\[(z )_k = z (z +1) \cdots (z +k-1) = \frac{\Gamma (z +k)}{\Gamma (k)} .\]
For values $z \in \mathbb C \setminus \{1\}$ the hypergeometric functions are then defined through analytic continuation. We refer to \cite[Chap. 9]{L73} and \cite[Chap. 2]{AAR} for information about the hypergeometric functions.
The function $\hgf (a,b; c; z) $ is the unique solution to the differential equation 
\[z(z-1)f^{\prime\prime} (z) + [(a+b+1)z - c]f^\prime (z) + ab f(z) = 0\]
that is regular at zero and normalized by $f(0) =1$.
 We have the following:
 
\begin{theorem} \label{thm:dform}
Let $a,b,c\in \C$, $c\not\in -\N_0$. Then the following holds:
\begin{enumerate} 
\item $\ds z\mapsto \hgf (a,b; a+b+1/2; z )$ extends to a holomorphic function on $\C \setminus [1,\infty)$.
\item $\ds  \hgf (a,b; a+b+1/2; z) = \hgf \left(2a, 2b; a+ b+ 1/2; \frac{1-\sqrt{1-z}}{2}\right)$ .\label{eq:hfgdoubl}
\item $\ds \hgf ( a,b;c;z) =  (1-z)^{-a} \hgf \left(a, c-b; c; \frac{-z}{1- z}\right)$, $z\in\C\setminus [1,\infty)$.
\end{enumerate} 
\end{theorem}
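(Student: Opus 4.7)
The plan is to recognize all three statements as classical identities for the Gauss hypergeometric function and to derive them from a single technical tool, namely the Euler integral representation
\begin{equation*}
\hgf(a,b;c;z) = \frac{\Gamma(c)}{\Gamma(b)\Gamma(c-b)}\int_0^1 t^{b-1}(1-t)^{c-b-1}(1-zt)^{-a}\, dt,
\end{equation*}
valid for $\re(c) > \re(b) > 0$ and $z\in\C\setminus[1,\infty)$, together with the hypergeometric differential equation recalled in the excerpt. Extension to all admissible parameters is handled by analytic continuation in $a$, $b$, $c$.

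For part (1), I would observe that for $t\in[0,1]$ the factor $(1-zt)^{-a}$ (principal branch) is holomorphic in $z$ precisely when $1-zt\neq 0$ for all such $t$, i.e., exactly when $z\notin[1,\infty)$. Since the integrand is then continuous in $t$ and holomorphic in $z$ with a dominating integrable bound on compact subsets of $\C\setminus[1,\infty)$, differentiation under the integral (or Morera) yields that $\hgf(a,b;c;\cdot)$ extends holomorphically to $\C\setminus[1,\infty)$. Alternatively one can use that the hypergeometric ODE has regular singular points only at $0,1,\infty$, that $\C\setminus[1,\infty)$ is simply connected and contains $0$, and that the power series gives a local solution at $0$ which extends uniquely along any path. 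The special value $c=a+b+1/2$ plays no role here.

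For part (3), which is Pfaff's transformation, I would substitute $t\mapsto 1-t$ in the Euler integral and factor the result as
\begin{equation*}
(1-z+zt)^{-a} = (1-z)^{-a}\bigl(1 - \tfrac{-z}{1-z}\, t\bigr)^{-a}.
\end{equation*}
The remaining integral is again of Euler type, now with parameters $(a,c-b,c)$ evaluated at $\tfrac{-z}{1-z}$; once the Beta-function constants are matched this gives the stated identity, valid first on the domain where the substitution $z\mapsto \tfrac{-z}{1-z}$ preserves $\C\setminus[1,\infty)$ and then on the maximal common domain by part (1).

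Part (2), Gauss's quadratic transformation, is the main obstacle. Both sides are holomorphic in a neighbourhood of $z=0$ and take the value $1$ there, so by the uniqueness of the power-series solution of the hypergeometric equation regular at $0$ with value $1$, it suffices to show that the right-hand side, viewed as a function of $z$ via $w=\tfrac{1}{2}(1-\sqrt{1-z})$, satisfies the hypergeometric ODE with parameters $(a,b;a+b+1/2)$. The nontrivial step is the chain-rule computation: under $z=4w(1-w)$ the hypergeometric operator with parameters $(2a,2b;a+b+1/2)$ in $w$ must transform into the hypergeometric operator with parameters $(a,b;a+b+1/2)$ in $z$, and it is precisely the choice $c=a+b+1/2$ that makes the first-order coefficient match. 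Alternatively, if one wishes to avoid the ODE verification, one can compare power series coefficients using a Clausen-type quadratic identity, but the ODE route is the cleanest.
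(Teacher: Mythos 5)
Your proposal is correct in substance, but it takes a genuinely different route from the paper: the paper does not prove any of the three statements, it simply refers to Luke's book (the doubling formula on p.~309 and Pfaff's transformation on p.~305 of \cite{L73}), whereas you reconstruct the classical proofs from the Euler integral representation and the ODE characterization of $\hgf$. Your arguments for (1) and (3) are complete and standard: the location of the zeros of $1-zt$ for $t\in[0,1]$ does give exactly the cut $[1,\infty)$, the substitution $t\mapsto 1-t$ does produce the Euler integral with parameters $(a,c-b;c)$ at $\tfrac{-z}{1-z}$ with matching Beta constants, and the restriction $\re(c)>\re(b)>0$ is correctly discharged by analytic continuation in the parameters (or by your alternative monodromy argument, which needs no parameter restriction at all). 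For (2) your strategy — verify that $z=4w(1-w)$ transports the hypergeometric operator with parameters $(2a,2b;a+b+\tfrac12)$ into the one with parameters $(a,b;a+b+\tfrac12)$, then invoke the uniqueness of the analytic solution with value $1$ at $0$, which the paper itself records and which follows from the coefficient recursion since $c\notin-\N_0$ — is exactly the textbook proof of Gauss's quadratic transformation; the only thing you leave unexecuted is the chain-rule computation itself, which is routine but is the entire content of the step, so in a written-out version it should be displayed. One small point worth adding: to get (2) on all of $\C\setminus[1,\infty)$ rather than just near $0$, note that $\re\sqrt{1-z}>0$ forces $\re\bigl(\tfrac{1-\sqrt{1-z}}{2}\bigr)<\tfrac12$, so the right-hand side stays inside the domain of holomorphy furnished by part (1) (with the same $c=a+b+\tfrac12$), and the identity propagates by the identity theorem. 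What your approach buys is a self-contained proof; what the paper's citation buys is brevity and the reassurance that these are genuinely classical facts.
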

\begin{proof}  For the doubling formula (2) see  \cite[\S 9.6,p.309 ]{L73}. For (3) see \cite[p. 305]{L73}.
\end{proof}

As before we let $a_t =\begin{pmatrix} \cosh (t) & 0 & \sinh (t) \\ 0 & \rI_{n-1} & 0\\
\sinh (t) & 0 & \cosh(t)\end{pmatrix}$. Then $a_t\cdot 0 = \tanh (t)e_{n+1}$.
and, as $KA\cdot 0$, it follows that the spherical functions can be viewed as a function of $t$ only. This also follows from the fact that $\phi$ is radial. We write $\phi (t)$
for the spherical functions $\phi $ evaluated at $\tanh (t) e_{n+1}$.
  
Recall that any $K$-invariant function on $\nball$ can be written as a function of $|z|$, i.e., there exists 
a function $g_f: \R^+ \to \C$ such that $f(z) = g_f(|z|)$. By abuse of notation we simply
write $f(z) = f(|z|)$.

\begin{theorem}[Spherical functions]\label{thm:spherical_functions} Let $\rho = n -1/2$. The spherical functions on $\nball$ are parametrized by $\lambda\in\C$ and
given by the hypergeometric
functions in the following way:
\begin{enumerate} 
\item $\ds \phi_\lambda (t) = \hgf \left(\frac{\rho +i\lambda}{2} , \frac{ \rho - i\lambda}{2}   ;n; -\sinh^2(t)\right)
= \hgf \left( \rho+i\lambda , \rho-i\lambda; n; \frac{1-\cosh (t)}{2}\right) $.
In particular, if $\lambda\in\R \cup i\R$ then $\phi_\lambda$ is real valued.
\item We have
\begin{align*}
  \phi_\lambda (z) = \phi_\lambda (|z|) & = \hgf \left(\frac{\rho +i\lambda}{2} , \frac{ \rho - i\lambda}{2}   ;n; 
\frac{-|z|^2}{1-|z|^2 }\right)\\
&= (1-|z|^2)^{\frac{1}{2}(\rho+i\lambda)}\hgf \left(\frac{\rho + i\lambda}{2}, \frac{\rho+1+i\lambda}{2}; n ; |z|^2\right) .
\end{align*}
\item  $\phi_{-\lambda}(t) = \phi_\lambda (t) = \phi_\lambda (-t)$.
\item $\phi_\lambda = \phi_\mu $ if and only if $\mu = \pm \lambda$.
\item The spherical functions are positive definite if and only if
\[\lambda \in \R \cup i [-n,n].\]
\item The spherical function $\phi_\lambda$ is bounded if and only if $\im \lambda \in [-n,n]$.
\item 
\[d\mu_\fa (\lambda ) = c\frac{ | \Gamma (i\lambda)|^2}{|\Gamma (\frac{1}{2}(\rho + i\lambda))\Gamma (\frac{1}{2}(n-1 + i\lambda )|^2}\, d\lambda\]
Then one can choose $c>0$ such that the spherical Fourier transform extends to an unitary isomorphism
\[L^2(\nball , \mu_{\nball})^K \simeq L^2(\R^+,\mu_\fa )\]
and $\ds \widehat {C_c^\infty (\nball )^K}$ is dense in $L^2 (\R^+,\mu_\fa)$.
\item If $f\in C_c^\infty (\nball )^K$ then
\[f(z) = \int_{\R^+} \widehat{f}(\lambda ) \phi_\lambda (z) d\mu (\lambda) .\]
\item When $\lambda_j \to \lambda$, then $\phi_{\lambda_j} \to \phi_\lambda$ pointwise.
\end{enumerate}
\end{theorem}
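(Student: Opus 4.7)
The plan is to derive all nine items from the classical theory of spherical functions on the rank-one noncompact symmetric space $\nball = \SU(n,1)/\U(n)$, combined with the explicit hypergeometric identities collected in Theorem \ref{thm:dform}. Most parts are either direct manipulations of ${}_2F_1$ or applications of the Harish-Chandra/Helgason/Helgason--Johnson theory; the real work lies in identifying the Plancherel density in (7) and in establishing the positive-definiteness range in (5).

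For (1), I would use that a bounded spherical function is the unique $K$-biinvariant eigenfunction of the algebra of invariant differential operators on $\nball$, normalized by $\phi(e)=1$. Since the rank is one, this algebra is generated by the Laplace--Beltrami operator. Passing to the variable $s=-\sinh^2(t)$, the radial part becomes a hypergeometric equation, and plugging in the multiplicities $m_\alpha = 2(n-1)$, $m_{2\alpha}=1$ of the restricted root system of $\sun$ identifies the parameters as $(\tfrac{\rho+i\lambda}{2},\tfrac{\rho-i\lambda}{2};n)$. The stated ${}_2F_1$ is then the unique solution regular at $0$ with value $1$ there, and the second form is Theorem \ref{thm:dform}(\ref{eq:hfgdoubl}). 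Item (2) is a rewriting in the $|z|$-coordinate via $\sinh^2(t) = |z|^2/(1-|z|^2)$, followed by the Pfaff identity in Theorem \ref{thm:dform}(3). Items (3) and (4) follow from the symmetry of ${}_2F_1$ in its first two arguments together with the fact that the Weyl group of $(\fg,\fa)$ is $\{\pm 1\}$, so that $\phi_\lambda = \phi_\mu$ reduces to $\mu\in\{\pm\lambda\}$ by Harish-Chandra uniqueness.

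For (6), I would invoke the asymptotic expansion $\phi_\lambda(a_t)\sim c(\lambda)e^{(i\lambda - n)t} + c(-\lambda)e^{-(i\lambda+n)t}$ as $t\to\infty$, where $n$ is the half-sum of positive restricted roots of $\SU(n,1)$; boundedness on $A^+$ is then $|{\rm Im}\,\lambda|\le n$. Positive definiteness in (5) is realized by exhibiting $\phi_\lambda$ as a diagonal spherical matrix coefficient---of the unitary principal series for $\lambda\in\R$ and of the complementary series together with its endpoint limits for $\lambda\in i[-n,n]$---and, for the reverse direction, by the Helgason--Johnson theorem, which rules out $|{\rm Im}\,\lambda|>n$ via the necessary inequality $\phi_\lambda(e)\ge|\phi_\lambda(x)|$ evaluated at a suitable $x$. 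For (7), Harish-Chandra's Plancherel theorem on rank-one groups gives the spectral measure $|c(\lambda)|^{-2}\,d\lambda$ on $\R^+$; the explicit $c$-function for $\SU(n,1)$ is a Gindikin--Karpelevich product of Gamma factors which, after the Legendre duplication formula, simplifies to the expression stated. Item (8) is Fourier inversion inside $C_c^\infty(\nball)^K$. Finally, (9) is immediate from the power series of ${}_2F_1$: for each fixed $z$, the parameters $\tfrac{\rho\pm i\lambda}{2}$ are entire in $\lambda$ and the third parameter $n$ is never a nonpositive integer, so $\lambda\mapsto\phi_\lambda(z)$ is entire and in particular continuous.

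The hardest step is (5): matching the positive-definiteness range $i[-n,n]$ with the Kostant/Helgason--Johnson bound for $\SU(n,1)$ requires a careful identification of the complementary series and its endpoint limit, together with ruling out positive definiteness outside that strip. All other items reduce to hypergeometric manipulations, the Gindikin--Karpelevich/duplication computation in (7), or the simple asymptotic and continuity arguments in (6) and (9).
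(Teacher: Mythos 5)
Your proposal is correct and follows essentially the same route as the paper: the paper's proof simply cites the classical rank-one theory (Helgason for (1), (6)--(8), Flensted-Jensen--Koornwinder for (5)) and handles (2)--(4), (9) by the same hypergeometric manipulations (change of variable, the Pfaff/doubling identities of Theorem \ref{thm:dform}, symmetry in the first two parameters, and continuity of $\lambda \mapsto \phi_\lambda(z)$). You merely unpack the standard arguments---radial part of the Laplace--Beltrami operator, Harish-Chandra asymptotics and the $c$-function, complementary series and Helgason--Johnson, Gindikin--Karpelevich---that lie behind those citations.
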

\begin{proof} (1) The first part is \cite[p. 484]{Helgason84} and the second
identity is the doubling formula from  Theorem \ref{thm:dform}.  

(2) follows from (1) using that with $z=a_t\cdot 0 =\tanh (t)$ we have  
\[1- |z|^2 =1- \tanh^2 (t)= \frac{1}{\cosh^2(t)} =\frac{1}{1+\sinh^2(t)}\]
and hence $1+\sinh^2(t) =\frac{1}{1-|z|^2}$ or 
\[-\sinh^2(t) = \frac{-|z|^2}{1-|z|^2} = \frac{|z|^2}{|z|^2-1} .\] 

This follows directly from (1) as $\hgf (a,b; c;z) = \hgf (b,a; c;z)$.

(3)  and (4) follows from (1).

(5) is Theorem 1 in \cite{F-JK79}.

(6) is Theorem 8.1 in \cite{Helgason84}.

(7) and (8) are Theorem 7.5 \cite{Helgason84}.

(9) is straightforward to verify by any of the many integral formulas for the hypergeometric function.
\end{proof}

The spherical representations can all be realized on the Hilbert space $L^2(S^{2n-1})$ by
\[\rho_\lambda (a) f(w)= |j(a^{-1}, w)|^{ i\lambda - \rho} f(a^{-1} w) .\] 
This representation is unitary for $\lambda \in \R$ and the usual $L^2$-inner product on
$L^2(S^{2n-1})$. It is irreducible for $\lambda \not=0$. To construct an invariant sesquilinear form for the other parameters, one
constructs a densely defined singular integral operator $A_\lambda : L^2(S^{2n-1})\to L^2(S^{2n-1}) $
intertwining $\pi_\lambda $ and $\pi_{-i\bar\lambda}$ such that $A_\lambda 1 = 1$.
Then one defines $\ip{f}{g}_\lambda = \ip{f}{A_\lambda g}_\lambda$. But, as $A_\lambda 1_\lambda = 1_\lambda$, it follows that the formula for the spherical function is still the same:  
\begin{align}
\phi_\lambda (a_t)  &= \int_{\rS^{2n-1}} | \omega_1 \sinh t + \cosh t|^{i\lambda -n+1/2} d\sigma_n (\omega)\nonumber\\
& =
c\int_0^\pi |\cos (\theta ) \tanh (t) + 1|^{i\lambda - n +1/2} \cosh (t)^{i\lambda - n + 1/2} \sin (\theta )^{2(n-1)}d\theta\label{form:Sph} .
\end{align}

\begin{theorem}\label{lem:Bnvarphi} Assume that $\phi_\lambda$ is bounded and $f\in L^1(\nball )^K$. Then
\[f\ast \phi_\lambda (x) = \phi_\lambda \ast f (x) = \widehat{f}(\lambda ) \phi_\lambda(x) .\]
Moreover,
\[ \widehat{\varphi_\nu}(\lambda ) = B_\nu (\phi_\lambda ) (0) \not= 0  .\]
\end{theorem}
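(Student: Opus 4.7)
For the first identity $f \ast \phi_\lambda = \phi_\lambda \ast f = \widehat{f}(\lambda)\phi_\lambda$, I would recognize this as the standard eigenfunction property of the spherical function $\phi_\lambda$ for the commutative algebra $\LoneinvK$ and prove it from the defining functional equation
\[
\int_K \phi_\lambda(x k y)\haarK{k} = \phi_\lambda(x)\phi_\lambda(y), \quad x, y \in G.
\]
Lifting $f$ to the $K$-biinvariant function $f \circ p$ on $G$, the convolution becomes $f \ast \phi_\lambda(g) = \int_G f(h)\phi_\lambda(h^{-1}g)\haar{h}$. Using left $K$-invariance of $f$ to write $f(h) = \int_K f(kh)\haarK{k}$, exchanging the order of integration, and applying the functional equation produces $f \ast \phi_\lambda(g) = \phi_\lambda(g)\int_G f(h)\phi_\lambda(h^{-1})\haar{h}$. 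By Lemma \ref{lem:fKinv}, $\phi_\lambda(h^{-1}) = \phi_\lambda(h)$, and the remaining integral coincides with $\widehat{f}(\lambda)$ after passing back to the ball via \eqref{eq:integral}. The equality $f \ast \phi_\lambda = \phi_\lambda \ast f$ is automatic from commutativity of $\LoneinvK$ recorded in Section \ref{sec:L1K}.

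For the identity $\widehat{\varphi_\nu}(\lambda) = B_\nu(\phi_\lambda)(0)$, the cleanest route is a direct integral computation. Combining \eqref{def:varphin} with \eqref{defMea} gives $\varphi_\nu(w)\inv{w} = \bmeas{w}$, while \eqref{eq:kz} gives $\kernu{0}\equiv 1$, so
\[
B_\nu(\phi_\lambda)(0) = \ipnu{\phi_\lambda\kernu{0}}{\kernu{0}} = \int_\nball \phi_\lambda(w)\bmeas{w} = \int_\nball \phi_\lambda(w)\varphi_\nu(w)\inv{w} = \widehat{\varphi_\nu}(\lambda).
\]
Alternatively one may apply the first identity to $f = \varphi_\nu$ together with Lemma \ref{lem:Berezin_function} and evaluate at $z = 0$, using $\phi_\lambda(0) = 1$.

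The main obstacle is the non-vanishing assertion $B_\nu(\phi_\lambda)(0) \neq 0$. My preferred argument is a short contradiction via Berezin-injectivity: if $\widehat{\varphi_\nu}(\lambda) = 0$, then $B_\nu(\phi_\lambda) = \phi_\lambda \ast \varphi_\nu = \widehat{\varphi_\nu}(\lambda)\phi_\lambda \equiv 0$ by Lemma \ref{lem:Berezin_function} and the first identity; since $\phi_\lambda \in L^\infty(\nball)$ by hypothesis, Theorem \ref{thm:Binj} (valid for $\nu > n+1$) forces $\phi_\lambda \equiv 0$, contradicting $\phi_\lambda(0) = 1$. To cover the range $n < \nu \le n+1$ without Berezin-injectivity, I would argue by connectedness: multiplicativity of the spherical transform (a consequence of the first identity) together with Lemma \ref{lem:phiCophi} yields $\widehat{\varphi_\nu}(\lambda)^2 = d_\nu \widehat{\varphi_\nu}(\lambda)$, so the continuous function $\lambda \mapsto \widehat{\varphi_\nu}(\lambda)$ takes values in the two-point set $\{0, d_\nu\}$ on the connected strip $\{\im\lambda \in [-n,n]\}$ and is therefore constant on it; evaluating at $\lambda = i\rho$ makes the hypergeometric series in Theorem \ref{thm:spherical_functions}(1) terminate (the first parameter $(\rho + i\cdot i\rho)/2 = 0$), so $\phi_{i\rho}\equiv 1$ and hence $\widehat{\varphi_\nu}(i\rho) = \int_\nball \varphi_\nu\inv{z} = 1 \neq 0$.
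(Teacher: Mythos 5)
Your treatment of the eigenfunction identity and of the equality $\widehat{\varphi_\nu}(\lambda)=B_\nu(\phi_\lambda)(0)$ is correct and essentially the paper's argument: the paper likewise averages over $K$, applies the functional equation, and then reads off the identity by combining $B_\nu(\phi_\lambda)=\phi_\lambda\ast\varphi_\nu=\widehat{\varphi_\nu}(\lambda)\phi_\lambda$ with $\phi_\lambda(0)=1$. Your direct computation $B_\nu(\phi_\lambda)(0)=\int_{\nball}\phi_\lambda\,d\mu_\nu=\widehat{\varphi_\nu}(\lambda)$, using $\varphi_\nu\,d\lambda=d\mu_\nu$ and $\kernu{0}\equiv 1$, is a clean and slightly more self-contained variant. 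Your ``preferred'' non-vanishing argument via injectivity of $B_\nu$ on $L^\infty$ is exactly the paper's, and you are right to flag that Theorem \ref{thm:Binj} is only stated for $\nu>n+1$ while the standing assumption is $\nu>n$.

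The supplementary argument you offer for $n<\nu\le n+1$ does not work, because Lemma \ref{lem:phiCophi} cannot be correct as stated. Its proof identifies $\varphi_\nu(x)$ with $c_\nu\ip{\tilde\pi_\nu(x)1}{1}_\nu$, but $\ip{\tilde\pi_\nu(x)1}{1}_\nu=j_\nu(x^{-1},0)$ has modulus $h(x^{-1}\cdot 0)^{\nu/2}$, so $\varphi_\nu$ is $c_\nu$ times the \emph{squared modulus} of that matrix coefficient, and Schur orthogonality does not apply in the form used. Indeed your own chain of deductions refutes the lemma: $\widehat{\varphi_\nu}^2=d_\nu\widehat{\varphi_\nu}$ plus continuity and connectedness of the strip would force $\widehat{\varphi_\nu}$ to be the nonzero constant $d_\nu$ (you compute $\widehat{\varphi_\nu}(i\rho)=1$), which contradicts the Riemann--Lebesgue lemma (Theorem \ref{thm:ReimannLeb_functions}), since $\varphi_\nu\in\LoneinvK$ and $\widehat{\varphi_\nu}(\lambda)\to 0$ as $|\lambda|\to\infty$ along $\R$; a direct check for $n=1$, $\nu=2$ also shows $\varphi_\nu\ast\varphi_\nu$ is not proportional to $\varphi_\nu$. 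To close the range $n<\nu\le n+1$ honestly, the simplest route is to observe that injectivity of $S\mapsto B^{(\nu)}(S)$ on $\mathcal L(\Berg)$ (hence of $B_\nu$ on $L^\infty$) in fact holds for all $\nu>n$ --- the standard argument via the sesqui-holomorphic extension of $z\mapsto\ip{S K^{(\nu)}_z}{K^{(\nu)}_z}_\nu$ needs no extra hypothesis --- or alternatively to evaluate $\int_{\nball}\phi_\lambda\,d\mu_\nu$ in closed form from the integral representation \eqref{form:Sph} and verify directly that it does not vanish.
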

\begin{proof}
The first part is well know but we include the proof. By identifying $\varphi_\lambda$ with the corresponding $K$-bi-invariant function on $G$, we have:
\[f \ast \phi_\lambda (y\cdot 0) = \int_G f(x\cdot 0) \phi_\lambda (x^{-1}y) \haar {x}
=\int_G f(x\cdot 0) \phi_\lambda (x^{-1}k y) \haar{x}  .\]
as $f$ is left $K$-invariant. Integrating over $K$, and by using Fubini's theorem  and the fact that $\phi_\lambda$ is a spherical function, implies that
\[f\ast \phi_\lambda (y\cdot 0)= \widehat{f}(\lambda ) \phi_\lambda (x^{-1}) = \widehat{f}(\lambda ) \phi_\lambda (x) . \]

Lemma \ref{lem:Berezin_function} now implies that 
\[ \widehat{\varphi_\nu} (\lambda ) \phi_\lambda (x\cdot 0 ) = \varphi_\lambda \ast \phi_\lambda (x\cdot 0 ) = B_\nu (\phi_\lambda) (x\cdot 0)  .\]
This implies in particular that if $\widehat {\varphi_\nu} (\lambda ) =0$ then $B_\nu (\phi_\lambda ) = 0$ which contradicts the injectivity of
$B_\nu $. Hence $\widehat {\varphi_\nu }(\lambda )\not= 0 $ for all $\lambda \in \rS_{bd} $, and, as $\phi_\lambda (0) = 1$, we finally have
$0\not= \widehat{\varphi_\nu}(\lambda ) = B_\nu(\phi_\lambda ) (0)$, which implies that $ B_\nu(\phi_\lambda ) (0)\not= 0$.
\end{proof}
 
As we have already mentioned before, it is well-known that the continuous homomorphisms from $\LoneinvK$ to $\mathbb C$, i.e., the multiplicative linear functionals of $\LoneinvK$, agree with the bounded spherical functions. Nevertheless, we could not locate a reference where the topology of the Gelfand space $\mathcal M(\LoneinvK)$ is also described in this picture. This is certainly well known, given that the proof we show below is quite straightforward. Nevertheless, we add this for completeness. Recall that the Gelfand space $\mathcal M(\LoneinvK)$ is, by definition, the space of all nontrivial multiplicative linear functionals of $\LoneinvK$ endowed with the weak$^\ast$-topology.
\begin{theorem}\label{thm:gelfand_space_function_algebra}
    As topological spaces, the Gelfand space $\mathcal M(\LoneinvK)$ agrees with $(\mathbb R \times i[-n, n]) / \sim$, where $\sim$ is the equivalence relation of identifying $\lambda$ and $-\lambda$.
\end{theorem}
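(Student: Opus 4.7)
The plan is to construct the explicit map
$$\Phi:\mathcal{S}:=(\mathbb{R}\times i[-n,n])/{\sim}\;\longrightarrow\;\mathcal{M}(\LoneinvK),\qquad [\lambda]\mapsto \chi_\lambda \text{ where } \chi_\lambda(f)=\widehat{f}(\lambda),$$
and to verify it is a homeomorphism. As a bijection on underlying sets this is already implicit in the identification of $\mathcal{M}(\LoneinvK)$ with $\spheri$, combined with parts (4) and (6) of Theorem \ref{thm:spherical_functions}: part (6) says $\phi_\lambda$ is bounded exactly when $\mathrm{Im}(\lambda)\in[-n,n]$, while part (4) says $\phi_\lambda=\phi_\mu$ iff $\mu=\pm\lambda$, so $\mathcal{S}$ is indeed in bijection with $\spheri\cong \mathcal{M}(\LoneinvK)$. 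The content of the theorem is therefore purely topological.

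For continuity of $\Phi$, suppose $\lambda_j\to\lambda$ in $\mathbb{R}\times i[-n,n]$. The weak$^\ast$ topology on $\mathcal{M}(\LoneinvK)$ is pointwise convergence on $\LoneinvK$, so it suffices to show $\widehat{f}(\lambda_j)\to\widehat{f}(\lambda)$ for every $f\in\LoneinvK$. Part (9) of Theorem \ref{thm:spherical_functions} gives pointwise convergence $\phi_{\lambda_j}(z)\to\phi_\lambda(z)$ on $\B^n$. A short inspection of the integral representation \eqref{form:Sph}, using $|\mathrm{Im}(\lambda_j)|\le n$ and that $(\lambda_j)$ eventually lies in a compact subset of the strip, produces a uniform bound $|\phi_{\lambda_j}(z)|\le C$ valid for all $z\in\B^n$ and all large $j$. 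Dominated convergence then yields $\widehat{f}(\lambda_j)\to\widehat{f}(\lambda)$. Since $\phi_\lambda=\phi_{-\lambda}$ by Theorem \ref{thm:spherical_functions}(3), $\Phi$ factors through the quotient and the induced map is continuous.

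The main obstacle is the continuity of $\Phi^{-1}$. Since both $\mathcal{S}$ and $\mathcal{M}(\LoneinvK)$ are locally compact Hausdorff, it suffices to prove $\Phi$ is proper; a continuous proper bijection between locally compact Hausdorff spaces is automatically a homeomorphism. The weak$^\ast$-closure of $\mathcal{M}(\LoneinvK)$ inside the unit ball of $(\LoneinvK)^\ast$ is $\mathcal{M}(\LoneinvK)\cup\{0\}$, because pointwise limits of characters are again multiplicative. Consequently, properness of $\Phi$ reduces to the following Riemann--Lebesgue statement: if $[\lambda_j]\to\infty$ in $\mathcal{S}$ — which, since $\mathrm{Im}(\lambda_j)\in[-n,n]$, forces $|\mathrm{Re}(\lambda_j)|\to\infty$ — then $\widehat{f}(\lambda_j)\to 0$ for every $f\in\LoneinvK$.

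I expect this Riemann--Lebesgue estimate to be the key technical step. The plan is to establish it first on the dense subspace $C_c^\infty(\B^n)^K$ by an oscillatory-integral analysis of \eqref{form:Sph}: the factors $|\cos\theta\tanh t+1|^{i\lambda_j}\cosh(t)^{i\lambda_j}$ oscillate rapidly in $\mathrm{Re}(\lambda_j)$, and a non-stationary phase / integration-by-parts argument in the $\theta$-variable shows that $\phi_{\lambda_j}$ decays locally uniformly on $\B^n$, giving $\widehat{f}(\lambda_j)\to 0$ for every smooth $K$-invariant compactly supported $f$. An $\varepsilon/3$-approximation, using the uniform bound on $\|\phi_\lambda\|_\infty$ as $[\lambda]$ ranges over the strip, then extends the conclusion to arbitrary $f\in\LoneinvK$. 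This establishes properness and hence the asserted homeomorphism.
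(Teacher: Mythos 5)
Your strategy is sound and reaches the right conclusion, but it takes a genuinely different route from the paper in the hard direction. The forward direction (natural convergence implies Gelfand convergence) is essentially identical: pointwise convergence of $\phi_{\lambda_j}$ from Theorem \ref{thm:spherical_functions}(9) plus dominated convergence; note that you do not need to extract a uniform bound from \eqref{form:Sph}, since Lemma \ref{lemma:bddspherical_bound_one} already gives $\|\phi_\lambda\|_\infty\le 1$ for every bounded spherical function. For the inverse, the paper argues sequentially (using separability/metrizability) and splits into two cases: a subsequence converging to a different parameter $\mu$ (excluded by injectivity), or $|\tilde\lambda_j|\to\infty$, in which case it shows from the hypergeometric series and the decay of $\Gamma(\rho+k\pm i\lambda)$ that any putative Gelfand limit $\phi$ would vanish on $\{|z|<\tfrac12\}$, contradicting the non-vanishing of the power-series coefficients of a spherical function. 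You instead invoke the general fact that a continuous proper bijection of locally compact Hausdorff spaces is a homeomorphism and reduce properness to a Riemann--Lebesgue statement, $\widehat f(\lambda_j)\to 0$ when $|\mathrm{Re}\,\lambda_j|\to\infty$ in the strip. Your reduction is correct (the weak$^\ast$-closure of the character space in the unit ball is indeed $\mathcal M\cup\{0\}$), and it buys a cleaner conceptual structure; the paper's contradiction argument buys the ability to avoid proving full Riemann--Lebesgue decay, needing only that the limit functional annihilates functions supported in a small ball. The one place where your write-up is only a plan rather than a proof is precisely that Riemann--Lebesgue step. It is true, but be aware of two points: (i) in \eqref{form:Sph} the phase $\log(1+\cos\theta\tanh t)$ is stationary at the endpoints $\theta=0,\pi$, so for $n\ge 2$ your integration by parts works because the amplitude $\sin^{2(n-1)}\theta$ vanishes there, whereas for $n=1$ you need genuine endpoint stationary phase (giving only $O(|\lambda|^{-1/2})$ decay, which still suffices); (ii) once you have pointwise decay $\phi_{\lambda_j}(z)\to 0$ for $z\ne 0$ together with the bound $|\phi_{\lambda_j}|\le 1$, dominated convergence gives $\widehat f(\lambda_j)\to 0$ for all $f\in\LoneinvK$ directly, so the $\varepsilon/3$ density argument via $C_c^\infty(\B^n)^K$ is not needed.
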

\begin{proof}
    First of all, we recall how the Gelfand topology on the space of bounded spherical functions is defined: A net $(\phi_\gamma)$ of bounded spherical functions converges to the bounded spherical function $\phi$ if and only if $\langle \phi_\gamma, f \rangle \overset{\gamma}{\rightarrow} \langle \phi, f\rangle$ for all $f \in \LoneinvK$. Since $\LoneinvK$ is separable, the Gelfand topology is metrizable, hence it suffices in the following to consider sequences.

    It was already mentioned in \ref{thm:spherical_functions} that, as sets, the Gelfand space of $\LoneinvK$ agrees with $(\mathbb R \times i[-n, n]) / \sim$. Hence, we only need to prove that the natural topology of $(\mathbb R \times i[-n, n]) / \sim$ (i.e., the quotient topology of the Euclidean topology) agrees with the Gelfand topology. To achieve this, we show that a sequence $(\lambda_j)$ convergence to $\lambda$ in the Gelfand topology if and only if it converges in the natural topology.

    We first assume that $\lambda_j$ is a sequence in $(\mathbb R \times i[-n, n]) / \sim$ converging to $\lambda \in (\mathbb R \times i[-n, n]) / \sim$ with respect to the natural quotient topology of this space. Then, we can lift this to a sequence $\tilde{\lambda}_j$ in $\mathbb R \times i[-n, n]$ converging (in Euclidean topology) to $\tilde{\lambda} \in \mathbb R \times i[-n, n]$. Then, by (9) in \ref{thm:spherical_functions}, $\phi_{\tilde{\lambda}_j} \to \phi_{\tilde{\lambda}}$ pointwise such that (using that all the bounded spherical functions are bounded by $1$, cf.~Lemma \ref{lemma:bddspherical_bound_one}) by the dominated convergence theorem one obtains $\langle \phi_{\tilde{\lambda}_j}, f \rangle \to \langle \phi_{\tilde{\lambda}}, f\rangle$ for each $f \in \LoneinvK$. By point (4) of Theorem \ref{thm:spherical_functions}, it follows that $\lambda_j \to \lambda$ in the Gelfand topology.

    Now, we assume that we have a sequence $(\lambda_j)$ which does not converge to $\lambda$. If the sequence $\lambda_j$ has a subsequence $\lambda_{j_k}$ converging to some $\mu \in (\mathbb R \times i[-n, n]) / \sim$, then we can conclude as before to show that $\phi_{\tilde{\lambda}_{j_k}} \to \phi_{\tilde{\mu}}$ in the Gelfand topology. Since $\phi_{\tilde{\lambda}} \neq \phi_{\tilde{\mu}}$, we see that we also have $\lambda_j \not \to \lambda$ in the Gelfand topology. Hence, we only need to consider the case where the sequence $(\lambda_j)$ has no convergent subsequence. Then, it is clear that (after lifting again to $\mathbb R \times i[-n, n]$) we have $|\tilde{\lambda}_j| \to \infty$. We show that there is no bounded spherical function $\phi$ such that $\langle \phi_{\tilde{\lambda}_j}, f\rangle \to \langle \phi, f\rangle$ for all $f \in \LoneinvK$, which shows that $(\lambda_j)$ does not converge in the Gelfand topology. This will be shown by contradiction, i.e., we assume that such $\phi$ exists.
    
    Let $z$ be such that $|z| < \frac 12$. Then, 
    \begin{align*}
        \phi_{\tilde{\lambda}_j}(z) &= \hgf \left (\frac{\rho + i\lambda}{2}, \frac{\rho - i\lambda}{2}; n; \frac{-|z|^2}{1-|z|^2} \right)\\
        &= \sum_{k=0}^\infty \frac{\Gamma(\rho + k + i\lambda) \Gamma(\rho + k -i\lambda)}{\Gamma(n+k)\Gamma(k)} \frac{(-1)^k|z|^{2k}}{(1-|z|^2)^k k!}.
    \end{align*}
    A simple exercise in integration by parts shows that $\Gamma(\rho + k \pm i\lambda) \to 0$ as $|\lambda|\to \infty$. Hence, dominated convergence shows that $\phi_{\tilde{\lambda}_j}(z) \to 0$ as $j \to \infty$ whenever $|z| < \frac{1}{2}$. Hence, whenever $f \in \LoneinvK$ is supported in the ball with radius $\frac{1}{2}$, we see that $\langle \phi, f\rangle = \lim_{j \to \infty} \langle \phi_{\tilde{\lambda}_k}, f\rangle = 0$. Thus, for $|z| < \frac{1}{2}$ we obtain that $\phi(z) = 0$. Writing $\phi$ using its power series expansion as a hypergeometric function, we see that $\phi(z) = 0$ for all $|z| < \frac 12$ is impossible for a bounded spherical function, as the coefficients in the power series consist of products of Gamma function terms, which never vanish. Hence, we arrive at a contradiction, showing that $\lambda_j$ cannot converge in the Gelfand topology when $|\tilde{\lambda}_j| \to \infty$.    
\end{proof}
 
 Having obtained this explicit description of the Gelfand space as a topological space, it is now an easy exercise in topology to prove the following:
 \begin{corollary}\label{cor:simply_connected}
     $\mathcal M(\LoneinvK)$ is simply connected.
 \end{corollary}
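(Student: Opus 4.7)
The plan is to strengthen the conclusion: rather than merely checking simple connectivity, I would show that $\mathcal M(\LoneinvK)$ is in fact \emph{contractible}, from which simple connectivity is immediate. By Theorem \ref{thm:gelfand_space_function_algebra} it suffices to exhibit a contraction of $S/\!\sim$, where $S = \mathbb R \times i[-n,n] \subset \mathbb C$ and $\sim$ identifies $\lambda$ with $-\lambda$.

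The key observation is that $S$ is star-shaped with respect to $0$, and the $\mathbb Z_2$-action $\lambda \mapsto -\lambda$ preserves this star structure. So I would define the straight-line contraction
\[
 h : S \times [0,1] \to S, \qquad h(\lambda, t) = (1-t)\lambda,
\]
which is well-defined because $(1-t)\lambda \in S$ whenever $\lambda \in S$ and $t \in [0,1]$. The map $h$ is $\mathbb Z_2$-equivariant, since $h(-\lambda, t) = -(1-t)\lambda = -h(\lambda,t)$, hence the composition $q \circ h : S \times [0,1] \to S/\!\sim$ (with $q$ the quotient map) is invariant under the $\mathbb Z_2$-action on the first factor. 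Since $[0,1]$ is locally compact Hausdorff, $q \times \mathrm{id}_{[0,1]}$ is itself a quotient map, so $q \circ h$ descends to a continuous map
\[
 \tilde h : (S/\!\sim) \times [0,1] \to S/\!\sim, \qquad \tilde h([\lambda], t) = [(1-t)\lambda].
\]

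At $t=0$, $\tilde h(\cdot, 0)$ is the identity on $S/\!\sim$, and at $t=1$, $\tilde h(\cdot,1)$ is the constant map with value $[0]$. Thus $\tilde h$ is a contraction, showing that $\mathcal M(\LoneinvK) \cong S/\!\sim$ is contractible and, in particular, simply connected. There is no essential obstacle here: the only thing to verify carefully is that the straight-line homotopy descends to the quotient, which is immediate from equivariance, and that continuity of $\tilde h$ follows from the standard quotient-map property of $q \times \mathrm{id}_{[0,1]}$.
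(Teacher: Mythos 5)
Your argument is correct, and in fact the paper offers no proof of this corollary at all --- it is dismissed as ``an easy exercise in topology'' after Theorem \ref{thm:gelfand_space_function_algebra} --- so your write-up is a complete substitute rather than an alternative route. The one genuinely important point, which you handle correctly, is that one cannot simply argue ``a quotient of a contractible space is contractible'' (the antipodal quotient of the contractible space $S^\infty$ is $\mathbb{RP}^\infty$, which is not even simply connected); instead you make the straight-line contraction itself $\mathbb Z_2$-equivariant so that it descends to the quotient, and you justify continuity of the descended homotopy via the standard fact that $q \times \mathrm{id}_{[0,1]}$ is a quotient map since $[0,1]$ is locally compact Hausdorff. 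This yields the stronger conclusion that $\mathcal M(\LoneinvK)$ is contractible, of which simple connectivity is a special case.
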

 Based on the previous observations, we will in the following always identify $\spheri$ with the topological space $\mathcal M(\LoneinvK)$ and identify $\spheripd$ with the space of positive definite spherical functions endowed with the subspace topology, or, to phrase things differently:
 \begin{align}
     \spheri \cong (\mathbb R \times i[-n, n]) / \sim, \quad \spheripd \cong (\mathbb R \cup i[-n, n]) / \sim.
 \end{align}
 

\section{Quantum harmonic analysis on the Bergman space}\label{sec:QHA}

In this section, we discuss quantum harmonic analysis (QHA) on the Bergman space, utilizing the representation $\pi_\nu$ of 
the group $G=\mathrm{SU}(n,1)$ acting on the Bergman space. We define the convolution between a function and an operator, and 
the convolutions between two operators. We then place particular emphasis on the radial operators and their QHA 
convolutions  as this leads to a commutative theory and
a Gelfand transform.

For $f$ in $L^2 (\nball ,\mu_\nu) = L^2_\nu$, $x\in \wG$, the universal covering group of $\SUn$ and $z\in \nball$ recall the
unitary representation $\widetilde{\pi}_\nu$ from Theorem \ref{thm:pi}:
\[( \widetilde{\pi}_\nu (x)f) (z) = j_\nu (x^{-1},z)f(x^{-1}z) \]
and its restriction to the Bergman space, which we also denote by $\Tilde{\pi}_\nu$. 

The above representation  is the one mostly used in representation theory and harmonic analysis on Lie
groups. It also serves   well for analysis on the unweighted Bergman spaces (see \cite{DDMO25}). However, in the weighted case,  we
prefer  to work with the corresponding projective representation (see for example \cite{CDO19})  which in particular allows us
to avoid the covering of $\SUn$. This idea was used in \cite{CDO19}, but here we prefer a different realization of the projective
representation first introduced in \cite{D26}. The definition is motivated by by the self-adjoint operators $U_z^\nu$ given by $U_z^\nu f(w)=\kernu{z}(w)f(\tau_z(w)),w\in \B^n, f\in \Berg$, that were heavily used in the literature on operator theory.
In an upcoming article \cite{DDO25}, more details will be provided on the matter, including a comparison of these two different approaches, which makes no difference for the QHA framework. Thus, we provide proofs only when necessary and point to the results on the unweighted case in \cite{DDMO25} for details.

Define for $x\in \SUn$ an operator $\pinu(x)$ on $\Berg$ by
\begin{equation}\label{eq:Projective}
   \pinu(x)f(z):= \kernu{x\cdot 0}(z)f(x^{-1}\cdot z), \quad z\in \nball, f\in \Berg . 
\end{equation} 
Then $\pinu:G \rightarrow \U(\Berg)$ is a square-integrable irreducible projective unitary representation of $G$ acting on the Bergman space $\Berg$ and satisfies the following:
\begin{align}\label{eq:representation}
    \pinu(x)\pinu(y)=m_\nu(x,y)\pinu(xy),
\end{align}
where  $m_\nu(x,y)=\frac{|h(x\cdot 0,y\cdot 0)|^\nu}{h(x\cdot 0,y\cdot 0)^\nu}\in \T$. Note that the power is well defined because the right
hand side is a function on the simply connected domain $\nball \times \nball$.

\subsection{Translations and convolutions of operators}\label{sec:repr}
 The reader can find an excellent discussion about QHA in general in  \cite{Halvdansson2023} and we will use this article as a
general reference.   For an overview of QHA on unimodular groups, we point to \cite[Appendix A]{DDMO25}. Some additional interesting 
properties arise in the special case of $G=\mathrm{SU}(n,1)$ acting on the Bergman space $\Berg$, as discussed in \cite{DDMO25} for the unweighted case.
We do not present proofs of what follows and point to \cite{Halvdansson2023} and \cite{DDMO25} for details.
 
For $A\in \bdd$, we define the translation of $A$ by $x\in G$, denoted $\actopL{x}{A}$, by
$$\actopL{x}{A}=\pinu(x)A\pinu(x)^*,\quad x\in G.$$

Following the general discussion in the beginning of Section \ref{sec:L1K},
we define the convolution $f\ast A$ of a fucntion $f:G\to \C$ and $A\in \bdd$, formally by the weak integral
$$f\ast A= \int_G f(x) \actopL{x}{A} \haar{x}.$$
When the operator $f\ast A$ is well-defined, we have
$$\ip{(f\ast A)g_1}{g_2}=\int_G \psi(x) \ip{\actopL{x}{A}g_1}{g_2} \haar{x}, \quad g_1,g_2\in \Berg.$$ 

Let $1\leq p\leq \infty$. Denote the space of Schatten-$p$ class operators on $\Berg$ by $\schatten$,  and its norm by $\|\cdot\|_p$. 
When $p=\infty$, we make the identification $\cT^\infty(\Berg)=\bdd$.
We then have the following QHA Young's inequalities:
\begin{enumerate}
    \item If $f\in \LoneG$ and $A\in \schatten$, then $f\ast A\in \schatten$ and $\|f\ast A\|_{\cT^p}\leq \frac{1}{c_\nu}\|f\|_1\|A\|_{\cT^p}$.
    \item If $f\in \LpG$ and $A\in \traceop $, then $f\ast A\in \schatten$ and $\|f\ast A\|_{\cT^p}\leq \frac{1}{c_\nu}\|f\|_1\|A\|_{\cT^p}$.
\end{enumerate}
When the convolutions are well defined by the QHA Young's inequalities, we have the following associativity for convolutions of functions $f,g:G\to \C$, and $A\in \bdd$:
$$f\ast (g\ast A)=(f\ast g)\ast A.$$

For $A,B\in \traceop$, we define the convolution $A\ast B:G\to \C$ by
$$(A\ast B)(x)=\tr(A\actopL{x}{B}),\quad x\in G.$$
The QHA Young's inequalities for the above convolution are given by:
\begin{equation}
    \|A\ast B\|_{\cT^p}\leq \|A\|_{\cT^1}\|B\|_{\cT^p}, \quad A\in \traceop, S\in \schatten.
\end{equation}

In the following proposition, we note that Toeplitz operators and Berezin transforms are QHA convolutions. We present it without proof, for a similar proof we
 point to \cite[Lemma 3.2]{D26} and \cite[Lemma 3.9]{DDMO25}. Recall the function $\hnu(z)=(1-|z|^2)^\nu$ \eqref{eq:hnu} and the rank one  projection $P_0=1\otimes 1$.

\begin{lemma}\label{lem:ToepBerezin convol}
    Let $1\leq p\leq \infty$. Let $f\in \Lpinv$ and let $S\in \schatten$. Then 
    \begin{enumerate}
        \item $\Toep_f=c_\nu(f\ast P_0)$
        \item $B^{(\nu)}(S)=S\ast P_0$
        \item $\hnu= \Berz(P_0)= (P_0\ast P_0)$
    \end{enumerate}
    Moreover, $\Toep_a\in \schatten$ and $B^{(\nu)}(S)\in \Lpinv$.
\end{lemma}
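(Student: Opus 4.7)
The plan is to reduce all three identities to a single key computation: the effect of $\pinu(x)$ on the constant function $1$. Applying the defining formula \eqref{eq:Projective} with $f=1$ gives $\pinu(x)1 = \kernu{x\cdot 0}$, so that
\[
\pinu(x) P_0 \pinu(x)^* = \kernu{x\cdot 0} \otimes \kernu{x\cdot 0}
\]
is the rank-one projection onto the normalized reproducing kernel at $x\cdot 0$. With this observation in hand, each part of the lemma becomes essentially a matter of inserting definitions and applying the appropriate integration formula.

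I would handle (2) first since it is purely formal. For $S\in\schatten$, the trace pairing with a rank-one projection gives
\[
(S\ast P_0)(x) = \tr\bigl(S\,\pinu(x)P_0\pinu(x)^*\bigr) = \ip{S\kernu{x\cdot 0}}{\kernu{x\cdot 0}}_\nu = \Berz(S)(x\cdot 0),
\]
which is precisely $\Berz(S)$ viewed as a right $K$-invariant function on $G$ via the identification in Theorem \ref{thm:Int}. For (3), I would compute $\Berz(P_0)(z) = \ip{P_0\kernu{z}}{\kernu{z}}_\nu$ directly: since $P_0 f = \ip{f}{1}_\nu\cdot 1$ and $\kernu{z}(0)=(1-|z|^2)^{\nu/2}$, the result collapses to $(1-|z|^2)^\nu = \hnu(z)$. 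The equality $(P_0\ast P_0) = \Berz(P_0)$ is then a special case of (2).

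For (1), I would evaluate $(f\ast P_0)g(z)$ for $g\in\Berg$ by inserting the rank-one description of $\pinu(x)P_0\pinu(x)^*$:
\[
(f\ast P_0)g(z) = \int_G f(x\cdot 0)\, \ip{g}{\kernu{x\cdot 0}}_\nu\, \kernu{x\cdot 0}(z)\haar{x}.
\]
Pushing the integral from $G$ down to $\B^n$ via Theorem \ref{thm:Int}(4) changes $\haar{x}$ into $\inv{w}$, and then the identity $\ip{g}{\kernu{w}}_\nu = (1-|w|^2)^{\nu/2} g(w)$ together with the explicit formula \eqref{eq:kz} for $\kernu{w}(z)$ produces the factor $(1-|w|^2)^\nu/(1-\ip{z}{w})^\nu$. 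Absorbing $(1-|w|^2)^\nu\inv{w} = c_\nu^{-1}\bmeas{w}$ converts the integral into the Toeplitz formula \eqref{def:Tf}, yielding $T_f g(z) = c_\nu (f\ast P_0)g(z)$.

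The main technical point to verify carefully is the interchange between the $G$-integral formulation of the convolution and the $\B^n$-integral formulation of the Toeplitz and Berezin operators, which is guaranteed by the normalization of the Haar measure chosen in \eqref{eq:integral} and the identification of right $K$-invariant functions on $G$ with functions on $\B^n$. Finally, the inclusions $\Toep_f\in\schatten$ and $\Berz(S)\in\Lpinv$ follow immediately by combining the three identities with the QHA Young inequalities stated just before the lemma, applied with $P_0\in\traceop$.
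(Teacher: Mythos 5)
Your proof is correct. The paper itself omits the proof of this lemma, deferring to \cite[Lemma 3.2]{D26} and \cite[Lemma 3.9]{DDMO25}, and your argument --- reducing everything to $\pinu(x)1=\kernu{x\cdot 0}$, hence $\actopL{x}{P_0}=\kernu{x\cdot 0}\otimes\kernu{x\cdot 0}$, and then pushing the $G$-integral down to $\B^n$ via \eqref{eq:integral} --- is exactly the standard computation those references carry out, with the details (the factor $(1-|w|^2)^\nu\inv{w}=c_\nu^{-1}\bmeas{w}$ and the Young inequalities for the final inclusions) correctly supplied.
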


The following lemma describes adjoints of QHA convolutions. Recall that the usual involution of a function $f:G\to \C$ is given by $f^*(x)=\overline{f(x^{-1})}$. Note that for radial functions on the ball $\B^n$, this is the same as the complex conjugate by Lemma \ref{lem:radial_functions}.
\begin{lemma}\label{lem:adjoints}
    Let $f:G\to \C$, and let $A, B\in \bdd$ s.t. the convolutions given below are well-defined. Then
    \begin{enumerate}
        \item $(f\ast A)^*=f^*\ast A^*$
        \item $(A\ast B)^*=B^*\ast A^*$.
    \end{enumerate}
\end{lemma}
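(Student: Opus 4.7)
Both parts follow by direct computation from the weak-integral definitions, leveraging two facts: the adjoint operation commutes with the weak integral and with the trace, and the translation $\actopL{x}{\cdot}$ commutes with taking adjoints, since $(\pinu(x) C \pinu(x)^*)^* = \pinu(x) C^* \pinu(x)^*$. I do not anticipate any substantial difficulty.

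For part (1), the plan is to move the adjoint inside the integral defining $f \ast A$. Pulling the complex conjugate off the scalar $f(x)$ and using that conjugation by the unitary $\pinu(x)$ preserves adjoints yields immediately
\[
(f \ast A)^* = \int_G \overline{f(x)}\, \pinu(x) A^* \pinu(x)^* \haar{x} = \bar{f} \ast A^*.
\]
To reach $f^* \ast A^*$, I identify $\bar f$ with $f^*$: the algebra under study consists of radial (equivalently bi-$K$-invariant) functions, and Lemma \ref{lem:radial_functions}(5) then gives $f(x) = f(x^{-1})$, so $\bar f(x) = \overline{f(x^{-1})} = f^*(x)$.

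For part (2), I will evaluate $(A \ast B)^*$ pointwise. By the definition of the involution on functions,
\[
(A \ast B)^*(x) = \overline{(A \ast B)(x^{-1})} = \overline{\tr\bigl(A\, \actopL{x^{-1}}{B}\bigr)} = \tr\bigl(\actopL{x^{-1}}{B^*}\, A^*\bigr),
\]
using $\overline{\tr T} = \tr T^*$ and $(\actopL{x^{-1}}{B})^* = \actopL{x^{-1}}{B^*}$. Cyclicity of the trace plus the identity $\actopL{x^{-1}}{C} = \pinu(x)^* C \pinu(x)$ convert this to
\[
\tr\bigl(A^* \pinu(x)^* B^* \pinu(x)\bigr) = \tr\bigl(B^* \pinu(x) A^* \pinu(x)^*\bigr) = \tr\bigl(B^* \actopL{x}{A^*}\bigr) = (B^* \ast A^*)(x).
\]

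The only technical point worth flagging is the identity $\actopL{x^{-1}}{C} = \pinu(x)^* C \pinu(x)$ in the projective setting. From $\pinu(x)\pinu(x^{-1}) = m_\nu(x, x^{-1})\,\Id$ with $m_\nu(x, x^{-1}) \in \T$ one reads off $\pinu(x^{-1}) = \overline{m_\nu(x, x^{-1})}\, \pinu(x)^*$; the cocycle then cancels in the double conjugation $\pinu(x^{-1}) C \pinu(x^{-1})^*$ because $|m_\nu(x, x^{-1})|^2 = 1$. This is the one place the projective character of $\pinu$ enters the argument, and it is resolved cleanly by the modulus-one nature of $m_\nu$.
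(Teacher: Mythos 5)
Your proof of part (2) coincides with the paper's: both evaluate $(A\ast B)^*$ pointwise via $\overline{\tr T}=\tr T^*$ and cyclicity of the trace. The one thing you add is the explicit justification that $\actopL{x^{-1}}{C}=\pinu(x)^*C\pinu(x)$ despite $\pinu$ being only projective; the paper performs the jump $\tr\bigl((A\actopL{x^{-1}}{B})^*\bigr)=\tr\bigl(B^*\actopL{x}{A^*}\bigr)$ silently, and your observation that the cocycle $m_\nu(x,x^{-1})\in\T$ cancels in the double conjugation is exactly the point that makes it legitimate.

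For part (1) your route differs from the paper's, and the difference is instructive. You compute $(f\ast A)^*=\overline{f}\ast A^*$ directly and then appeal to radiality of $f$ (via Lemma \ref{lem:radial_functions}) to identify $\overline{f}$ with $f^*$. Strictly speaking, radiality of $f$ is not among the hypotheses of the lemma, so as a proof of the statement as literally written your argument has a gap --- but the gap reflects the statement rather than your reasoning. Since $(\actopL{x}{A})^*=\pinu(x)A^*\pinu(x)^*=\actopL{x}{A^*}$ (with $x$, not $x^{-1}$), the general identity really is $(f\ast A)^*=\overline{f}\ast A^*$, and this agrees with $f^*\ast A^*=\int_G\overline{f(x^{-1})}\,\actopL{x}{A^*}\haar{x}$ only when one additionally knows $f(x)=f(x^{-1})$. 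The paper's computation instead writes $\ipnu{\actopL{x^{-1}}{A^*}g_1}{g_2}$ at the point where $\ipnu{\actopL{x}{A^*}g_1}{g_2}$ should appear, and the subsequent change of variables then manufactures $f^*$; that intermediate step would require $\pinu(x)^2$ to commute with $A^*$ and does not hold in general. In every application in the paper $f$ is radial, hence $f^*=\overline{f}$ and both formulations agree, so nothing downstream is affected --- but you should say explicitly that what you prove in general is $(f\ast A)^*=\overline{f}\ast A^*$ and that the asserted form uses the symmetry $f(x)=f(x^{-1})$.
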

\begin{proof}
    Note that for $g_1,g_2\in \Berg$, we have
    \begin{align*}
        \ipnu{(f\ast A)^*g_1}{g_2}&= \ipnu{g_1}{(f\ast A)g_2}\\
        &=\overline{\ipnu{(f\ast A)g_2}{g_1}}\\
        &= \int_G \overline{f(x)}\overline{\ipnu{\actopL{x}{A}g_2}{g_1}} \haar{x}\\
        &= \int_G \overline{f(x)} \ipnu{\actopL{x^{-1}}{A^*}g_1}{g_2} \haar{x}\\
        &= \int_G \overline{f(x^{-1})} \ipnu{\actopL{x}{A^*}g_1}{g_2} \haar{x}\quad \text{(as $G$ is unimodular)}\\
        &= \ipnu{(f^*\ast A^*)g_1}{g_2},
    \end{align*}
    proving the first identity. Also, note that
    \begin{align*}
        (A\ast B)^*(x)&=\overline{(A\ast B)(x^{-1})}\\
        &= \overline{\tr(A\actopL{x^{-1}}{B})}\\
        &= \tr((A\actopL{x^{-1}}{B})^*)\\
        &=\tr(B^*\actopL{x}{A^*})\\
        &=(B^*\ast A^*)(x),
    \end{align*}
    which shows the second identity.
\end{proof}

We also note that convolutions preserve nonnegativity.
\begin{lemma}
    Let $A \geq 0$ be trace class and $B \geq 0$ be bounded. Then, $A \ast B \geq 0$. 
\end{lemma}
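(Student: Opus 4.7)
The proof is essentially a one-line unpacking of definitions together with a standard fact about the trace of a product of positive operators. My plan is as follows.

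First, I would write out the convolution explicitly: by definition
\begin{align*}
    (A \ast B)(x) = \tr(A\, \actopL{x}{B}) = \tr\bigl(A\, \pinu(x) B \pinu(x)^\ast\bigr), \quad x \in G.
\end{align*}
Since $\pinu(x)$ is unitary for each $x \in G$ (the fact that $\pinu$ is only a projective representation, rather than a genuine one, is irrelevant here, because the cocycle $m_\nu$ only affects composition $\pinu(x)\pinu(y)$, not individual values), conjugation by $\pinu(x)$ preserves positivity. Hence $\actopL{x}{B} \geq 0$ for every $x \in G$.

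Next, I would invoke the standard fact that if $P \in \traceop$ with $P \geq 0$ and $Q \in \bdd$ with $Q \geq 0$, then $\tr(PQ) \geq 0$. One short argument: using $P = P^{1/2} P^{1/2}$ and cyclicity of the trace (valid because $P^{1/2}$ is Hilbert--Schmidt and $Q P^{1/2}$ is bounded, making $P^{1/2} Q P^{1/2}$ trace class), we obtain $\tr(PQ) = \tr(P^{1/2} Q P^{1/2})$, and $P^{1/2} Q P^{1/2} \geq 0$ has nonnegative trace since it is a positive trace class operator. Applying this with $P = A$ and $Q = \actopL{x}{B}$ gives $(A \ast B)(x) \geq 0$ pointwise, which is the claim.

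There is no serious obstacle here: the only point worth being careful about is making sure that the standard trace inequality applies, which it does because $A$ is trace class and $\actopL{x}{B}$ is bounded, so their product is trace class and the cyclicity manipulation is justified. The projective nature of $\pinu$ does not interfere, as only the unitarity of each individual $\pinu(x)$ is used.
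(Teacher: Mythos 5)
Your proof is correct and follows essentially the same route as the paper's: both verify pointwise that $\tr(A\,\actopL{x}{B}) \geq 0$ by appealing to the positivity of the trace of a product of two positive operators. The only cosmetic difference is that the paper reduces to rank-one $A = f \otimes f$ via the spectral decomposition and computes $\langle B\pinu(x)^\ast f, \pinu(x)^\ast f\rangle \geq 0$ directly, whereas you factor $A = A^{1/2}A^{1/2}$ and invoke cyclicity of the trace.
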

\begin{proof}
As every trace class operator can be written as $A = \sum_{j=0}^\infty c_j u_j\otimes \bar u_j$
where $(u_j)_j$ is a orthonormal  basis and $\tr A = \sum_{j=0}^\infty$.    Without loss of generality, it therefore
suffices to prove this for $A$ being of rank one, say $A = f \otimes f$. Then,
    \begin{align*}
        A \ast B(x) = \tr(A \pi_\nu (x) B \pi_\nu (x) ^\ast) = \langle B\pinu(x)^\ast f, \pinu(x)^\ast f\rangle \geq 0
    \end{align*}
    as required.
\end{proof}

\begin{example} Some well-known objects from harmonic analysis on Lie groups fall into this line of work. First, we note that
if $A\in\cT^p$ and we take $B=\id$, then $A\ast B$ is constant and equals $\tr A$. In particular, if $f\in C_c^\infty (G)$ then by
well known results by Harish-Chandra 
$\pi_\nu (f)$ is a trace class operator and $\Theta_\nu (f) = \tr ~\pi_\nu (f)$ defines a distribution on $G$. In our language, we now have 
\[ \Theta_\nu (f) =\id \ast \pi_\nu (f) .\]
\end{example}
 
\subsection{Radial operators and their convolutions}  The group $\SUn$ is not commutative, and hence the convolution is not commutative. But, as mentioned in Section \ref{sec:L1K}, the convolution algebra $\LoneinvK$ is commutative. We now note that an analogous commutativity property holds for convolutions of radial operators. This observation will be used in Section \ref{sec:convolalg} to prove that the algebra
$\fA_\nu=\LoneinvK\oplus \traceopK $ is commutative, see definition \eqref{eq:Anu}. 
Recall the maximal compact subgroup  $K\simeq \rU(n)$ from \eqref{K}. For $k\in K$, the operators $\pinu(k)$ are simple given by
$$\pinu(k)f=\actfL{k}{f},\quad f\in\Berg.$$
We say $A\in \bdd$ is radial if 
$$\actopL{k}{A}=A, \quad \forall k\in K$$
 which is equivalent to $A$ being a $K$-intertwining operator, $\pi_\nu (k) A = A\pi_\nu (k)$.
For $A\in \bdd$ 
the so-called radialization of $A\in \bdd$ is given by
$$A^K= \radop{A}=\int_K \actopL{k}{A} \haarK{k}.$$

\begin{lemma} If $f$ is right $K$-invariant, i.e., a function on the ball, then $f\ast A = f\ast A^K$. If $A$ is radial, then
$f\ast A =f_K \ast A$ where $f_K(x) = \int_K f(xk)d\mu_K(k)$ is the projection of $f$ onto a function on $\nball$.
\end{lemma}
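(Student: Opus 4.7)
The plan is a direct Fubini--Tonelli computation that rests on two facts: although $\pinu$ is only a projective representation, the cocycle $m_\nu(x,y) \in \T$ cancels in the conjugation defining $\actopL{x}{A}$, so $\actopL{x}{\actopL{y}{A}} = \actopL{xy}{A}$ and $L^\nu$ is a genuine $G$-action on $\bdd$; and $G$ is unimodular, so Haar measure is invariant under $y \mapsto yk^{-1}$ for every $k\in K$. From these observations I would first record the preliminary identity
\begin{align*}
\actopL{x}{A^K} = \int_K \actopL{xk}{A} \haarK{k},
\end{align*}
which is immediate from the definition of $A^K$ together with the genuine action property just noted.

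For the first assertion, I would write
\begin{align*}
f \ast A^K = \int_K \int_G f(x)\, \actopL{xk}{A}\, \haar{x}\, \haarK{k},
\end{align*}
where the swap of integrals is justified by the QHA Young inequalities from Section~\ref{sec:QHA}, which provide absolute convergence in the appropriate Schatten norm. In the inner integral I substitute $y = xk$; unimodularity of $G$ leaves the Haar measure unchanged, and the integrand becomes $f(yk^{-1})\, \actopL{y}{A}$. Right-$K$-invariance of $f$ then yields $f(yk^{-1}) = f(y)$, the $k$-variable drops out, and what remains is exactly $f \ast A$.

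The second assertion runs analogously in reverse: radiality of $A$ gives $\actopL{k}{A} = A$, hence $\actopL{x}{A} = \actopL{xk}{A}$ for every $k\in K$. Averaging this identity over $k\in K$ and then substituting $y = xk$ in the $G$-integral (again using unimodularity) yields
\begin{align*}
f \ast A = \int_G \Bigl( \int_K f(yk^{-1})\, \haarK{k} \Bigr) \actopL{y}{A}\, \haar{y},
\end{align*}
and the bi-invariance of Haar measure on the compact group $K$ identifies the inner integral with $f_K(y)$, producing $f_K \ast A$. I do not anticipate any serious obstacle; the only delicate point is the Fubini interchange, which is standard once absolute integrability is guaranteed by the QHA Young inequality, and the bookkeeping with the projective cocycle trivializes because $m_\nu$ takes values in $\T$.
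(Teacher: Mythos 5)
Your proposal is correct and follows essentially the same route as the paper: a change of variables $x\mapsto xk$ using unimodularity, right-$K$-invariance of $f$ (resp.\ radiality of $A$), and an average over $K$ justified by Fubini. The only cosmetic difference is that you justify splitting $\actopL{xk}{A}=\actopL{x}{\actopL{k}{A}}$ by noting the unimodular cocycle cancels in the conjugation, whereas the paper invokes the triviality of the multiplier on $K$; both are valid.
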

\begin{proof} We have for $k\in K$ using that the multiplier is trivial on $K$:
\begin{align*}
f\ast A &= \int f(x) \pi_\nu (x) A \pi_\nu (x)^* d\mu_G(x)\\
& = \int  f(xk) \pi_\nu (x) A \pi_\nu (x)^* d\mu_G(x)\\
&= \int f(x) \pi_\nu (xk) A \pi_\nu (xk)^* d\mu_G(x)
\\
&= \int f(x) \pi_\nu (x) \pi_\nu (k) A \pi_\nu (k)^* \pi_\nu (x) d\mu_G(x)
\end{align*}
and now integrate over $K$. The other statement follows in the same way by repeating this argument backwards.
\end{proof}

The following is well-known:

\begin{lemma}
    Let $A\in \bdd$. Then the following are equivalent:
    \begin{enumerate}
        \item $A$ is radial;
        \item The Berezin transform $\Berz(A)$ is a radial function;
        \item $\radop{A}=A$.
    \end{enumerate}
\end{lemma}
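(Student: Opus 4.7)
The plan is to verify the cycle $(1) \Rightarrow (3) \Rightarrow (2) \Rightarrow (1)$, using the translation covariance of the Berezin transform under $K$ and the injectivity from Theorem \ref{thm:Binj} as the key inputs.

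The implication $(1) \Rightarrow (3)$ is immediate: if $\actopL{k}{A} = A$ for every $k \in K$, integrating against the normalized Haar measure on $K$ yields $\radop{A} = A$. For $(3) \Rightarrow (1)$, I would first observe that $\radop{A}$ is \emph{always} radial: for any $j \in K$, the left invariance of $\mu_K$ gives
\begin{align*}
  \actopL{j}{\radop{A}} = \int_K \actopL{jk}{A} \haarK{k} = \int_K \actopL{k}{A} \haarK{k} = \radop{A}.
\end{align*}
Thus $A = \radop{A}$ forces $A$ to be radial.

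For $(1) \Rightarrow (2)$, the main point is the covariance relation $\kernu{k \cdot z} = \pinu(k) \kernu{z}$ for $k \in K$, which follows directly from the formula \eqref{eq:kz} together with $|k\cdot z| = |z|$ and $\ip{w}{k \cdot z} = \ip{k^{-1}w}{z}$. Plugging this into the definition \eqref{def:BS} of the Berezin transform yields
\begin{align*}
  \Berz(A)(k\cdot z) = \ipnu{A \pinu(k)\kernu{z}}{\pinu(k)\kernu{z}} = \ipnu{\pinu(k)^*A\pinu(k)\kernu{z}}{\kernu{z}} = \Berz(\actopL{k^{-1}}{A})(z),
\end{align*}
and radiality of $A$ collapses the right-hand side to $\Berz(A)(z)$.

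The remaining step $(2) \Rightarrow (3)$ combines the same covariance identity with the injectivity of $\Berz$. Indeed, assuming $\Berz(A)$ is radial, the computation above gives $\Berz(\actopL{k^{-1}}{A}) = \Berz(A)$ for all $k \in K$, so Theorem \ref{thm:Binj} yields $\actopL{k^{-1}}{A} = A$; integrating over $K$ then gives $\radop{A} = A$. (Alternatively one may average the identity $\Berz(\actopL{k}{A})(z) = \Berz(A)(k^{-1}\cdot z)$ over $K$, recognize the right-hand side as $\Berz(A)$ by hypothesis, and use linearity of $\Berz$ together with its injectivity.) The only step that requires genuine care is the covariance identity for $\kernu{k\cdot z}$, which must be made compatible with the projective representation $\pinu$ defined in \eqref{eq:Projective}; but since $k \cdot 0 = 0$ for $k \in K$, the multiplier is trivial on $K$ and no complications from the cocycle $m_\nu$ arise.
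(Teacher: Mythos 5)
Your argument is correct and complete; note that the paper itself offers no proof of this lemma (it is simply labelled ``well-known''), so there is nothing to compare against except the surrounding machinery. All the ingredients you use are indeed available in the text: the triviality of the multiplier $m_\nu$ on $K$ (so that $\pinu|_K$ is a genuine unitary representation and $\pinu(k)f = \actfL{k}{f}$, as recorded in Section \ref{sec:QHA}), the covariance $\pinu(k)\kernu{z} = \kernu{k\cdot z}$, and the injectivity of the operator Berezin transform. One small point of care: you cite Theorem \ref{thm:Binj} for injectivity, but that theorem carries the hypothesis $\nu > n+1$, whereas the lemma should hold under the standing assumption $\nu > n$. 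The correct reference is the unrestricted statement made immediately after \eqref{def:BS}, that $\mathcal L(\Berg) \ni S \mapsto \Berz(S)$ is injective for all $\nu > n$ (the restriction in Theorem \ref{thm:Binj} is only needed for the function version $B_\nu$ on $L^\infty$). With that substitution your proof is airtight. An alternative, equally short route the paper implicitly has in mind is the Schur-lemma diagonalization $A = \sum_m c(A,m)P_m$ proved just below the lemma, from which (1)$\Leftrightarrow$(2)$\Leftrightarrow$(3) can be read off from Lemma \ref{lem:BS}(3); your covariance-plus-injectivity argument has the advantage of not depending on the multiplicity-free decomposition and generalizes to other compact subgroups.
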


The representation $\pinu$ restricted to $K$ is no longer irreducible, and admits a multiplicity-free decomposition into irreducible subrepresentations $\pinu^m|_K$, $m\in\N_0$, 
where $\pinu^m$ is the restriction of $\pinu|_K$ to  the finite-dimensional space
$\rP^m(\C)$ of homogeneous polynomial of degree $m$. Moreover,   as mentioned in Theorem \ref{lem:ProAa}, we have the orthogonal direct sum
$$\Berg=\bigoplus_{m=0}^\infty \rP^m(\C ),$$
 The representation of $K$ on $\rP^m(\C)$ is irreducible. Hence, 
as $A$ is radial if and only if $A$ is a $K$-intertwining operator,  it follows from Schur's Lemma that:

\begin{lemma} An operator $A\in \bdd$ is radial if only if $A= \sum_{m=0}^\infty c(A,m)P_m$ where $P_m=\sum_{|\vm|=m} f_\vm \otimes f_\vm$ is the orthogonal projection
onto $\rP^m(\C)$ and $c(A,m)\in \C$. Moreover, the following holds, where $d_m = \dim \rP^m(\C)$:  
 Then
\begin{enumerate}
\item $\| A\| =\sup_{m\in\N_0} |c (A,m)|$.
\item $A$ is trace class if and only if $\sum_{m=0}^\infty d_m|c(A,m)| < \infty$ and in that case
$\Tr A = \sum_{m=0}^\infty d_mc(A,m) $.
\item $A\in\schatten $, $p\not= \infty$ if and only if $\sum_{m=0}^\infty d_m |c(A,m)|^p<\infty$.
\end{enumerate}
\end{lemma}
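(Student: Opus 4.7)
The plan is to combine the multiplicity-free decomposition $\Berg = \bigoplus_{m=0}^\infty \rP^m(\C)$ under $\pinu|_K$ (recorded just above the statement) with Schur's lemma to extract the block structure of $A$, and then to read off the norm, trace class and Schatten characterizations directly from the resulting block-diagonal form.

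For the representation $A = \sum_{m=0}^\infty c(A,m)P_m$, the ``if'' direction is immediate: each $P_m$ is the orthogonal projection onto the $K$-invariant subspace $\rP^m(\C)$ and therefore commutes with $\pinu(k)$ for every $k\in K$, so any strongly convergent combination $\sum_m c(A,m)P_m$ does as well. For the ``only if'' direction I would invoke Schur's lemma. Since $A$ is a $K$-intertwiner and the summands $\rP^m(\C)$ are mutually inequivalent irreducible $K$-representations, $A$ must preserve each $\rP^m(\C)$ and act on it as a scalar $c(A,m)\in\C$. Boundedness of $A$ forces $(c(A,m))_m$ to be bounded, and the partial sums $\sum_{m\le N} c(A,m)P_m$ converge to $A$ in the strong operator topology on $\Berg$ by orthogonality of the ranges.

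For the three quantitative items I would simply observe that with respect to the above decomposition $A$ equals the block-diagonal operator $\bigoplus_m c(A,m)\,\id_{\rP^m(\C)}$, whose singular values are precisely the numbers $|c(A,m)|$, each occurring with multiplicity $d_m=\dim\rP^m(\C)$. Item (1) is then the standard fact that the operator norm of a block-diagonal operator is the supremum of the norms of its blocks. For (3), the identity $\|A\|_p^p=\sum_j s_j(A)^p$ directly gives $\|A\|_p^p=\sum_{m=0}^\infty d_m|c(A,m)|^p$. Item (2) is the case $p=1$ for the membership statement, and to evaluate the trace I would take the orthonormal basis $(f_\vm)_{\vm\in\N_0^n}$ of Theorem \ref{lem:ProAa}, group the basis vectors by total degree, and sum $\ipnu{Af_\vm}{f_\vm}=c(A,m)$ for $|\vm|=m$, obtaining $\Tr A=\sum_{m=0}^\infty d_m c(A,m)$.

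I do not anticipate any real obstacle: once Schur's lemma has supplied the block decomposition, the rest reduces to standard facts about block-diagonal operators and Schatten classes. The only point that deserves a sentence of care is the strong-operator convergence of the formal series $\sum_m c(A,m)P_m$ in the non-finite-rank case, but this is automatic from boundedness of $(c(A,m))$ together with pairwise orthogonality of the ranges of the $P_m$.
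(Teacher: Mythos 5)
Your proposal is correct and follows essentially the same route as the paper: the paper derives the block-diagonal form exactly from the multiplicity-free decomposition $\Berg=\bigoplus_m \rP^m(\C)$ together with Schur's lemma (this is stated in the sentence immediately preceding the lemma, and no further proof is given), and items (1)--(3) are, as you say, standard consequences of the fact that the singular values of $\bigoplus_m c(A,m)\,\id_{\rP^m(\C)}$ are the $|c(A,m)|$ with multiplicity $d_m$. Your added remarks on strong-operator convergence of the series and on computing the trace via the basis $(f_\vm)$ supply details the paper leaves implicit.
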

\begin{corollary} If $p\not= \infty$ then $\schatten$ is isomorphic to $\ell^p$ where the
isomorphism is given by $A\mapsto (d_m^{1/p} c(A,m))_{m=0}^\infty$.
\end{corollary}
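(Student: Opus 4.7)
The corollary is essentially a reformulation of part (3) of the preceding lemma, so the plan is mostly bookkeeping. Note first that the statement implicitly refers to radial Schatten $p$-class operators (this is where the coefficients $c(A,m)$ are defined), so I interpret $\schatten$ here as $\schatten^K$. The map $\Phi : \schatten^K \to \ell^p$ defined by $\Phi(A) = (d_m^{1/p} c(A,m))_{m=0}^\infty$ is obviously linear in $A$, since the coefficients $c(\cdot,m)$ are linear functionals on the space of radial bounded operators (using the uniqueness of the decomposition $A = \sum_m c(A,m) P_m$).

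The heart of the argument is to show $\Phi$ is an isometric bijection. First I would verify the norm identity: because each $P_m$ is the orthogonal projection onto $\rP^m(\C)$, the spaces $\rP^m(\C)$ are mutually orthogonal, and $A = \sum_m c(A,m) P_m$ acts as the scalar $c(A,m)$ on the $d_m$-dimensional summand $\rP^m(\C)$. Consequently $|A| = (A^*A)^{1/2} = \sum_m |c(A,m)| P_m$, so the singular values of $A$ are the numbers $|c(A,m)|$, each with multiplicity $d_m$. Hence
\begin{equation*}
    \|A\|_{\schatten}^p = \sum_{m=0}^\infty d_m |c(A,m)|^p = \|\Phi(A)\|_{\ell^p}^p,
\end{equation*}
which simultaneously shows that $\Phi$ is well-defined into $\ell^p$ (using part (3) of the preceding lemma for the implication $A \in \schatten^K \Rightarrow \Phi(A) \in \ell^p$) and that it preserves norms.

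For surjectivity, given any $(a_m) \in \ell^p$, set $c_m := d_m^{-1/p} a_m$ and define a formal series $A := \sum_m c_m P_m$. The partial sums $A_N = \sum_{m=0}^N c_m P_m$ form a Cauchy sequence in $\schatten^K$ since $\|A_N - A_M\|_{\schatten}^p = \sum_{m=M+1}^N |a_m|^p \to 0$; by completeness of $\schatten$ they converge to a radial operator whose $m$-th coefficient is $c_m$, showing $\Phi$ is surjective. Injectivity is immediate from the uniqueness of the coefficients $c(A,m)$. There is no genuine obstacle here, as the nontrivial content—the characterization of radial Schatten operators—has already been established in the preceding lemma; the corollary only needs the observation that the weighted sequence space $\ell^p$ with the weight $d_m^{1/p}$ absorbed into the coordinates is isometric to the standard $\ell^p$.
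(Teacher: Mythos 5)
Your proposal is correct and is exactly the argument the paper intends: the corollary is stated without proof as an immediate consequence of part (3) of the preceding lemma, and your writeup simply makes explicit the standard details (the singular values of $A=\sum_m c(A,m)P_m$ are $|c(A,m)|$ with multiplicity $d_m$, whence the isometry, plus the routine Cauchy-sequence argument for surjectivity). Nothing to add.
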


The following lemma is Corollary 3.3 and Lemma 3.11 in \cite{DDMO25}, generalized to weighted Bergman spaces. 
\begin{lemma}\label{lem:radial_convol}
    Let $A\in \bdd$ and $B,C\in \traceop$. Then:
     \begin{enumerate}
        \item If $f\in\LoneinvK$ and $A$ is radial, then $f\ast A$ is radial.
        \item If  $A$ and $B$ are  radial, then $A\ast B$ is a radial function on $\B^n$.     
        \item If $A$ and $B$ are radial, then $A\ast B = B\ast A$.
        \item If $B$ and $C$ are radial, then $(A\ast B)\ast C= (A\ast C)\ast B$.   
        \item If $A$, $B$, and $C$ are radial, then $(A\ast B)\ast C= (B\ast C)\ast A$.   
    \end{enumerate}
\end{lemma}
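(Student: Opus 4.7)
Parts (1)--(3) are direct computations relying on two elementary facts: whenever $k\in K$, the projective multiplier satisfies $m_\nu(k,x)=1$ (because $k\cdot 0=0$ forces $h(k\cdot 0,x\cdot 0)=1$), so $\pinu(k)\pinu(x)=\pinu(kx)$ and consequently $\actopL{k}{\actopL{x}{A}}=\actopL{kx}{A}$; and the trace is cyclic with $G$ unimodular. For (1), write $\actopL{k}{f\ast A}=\int_G f(x)\actopL{kx}{A}\,d\mu_G(x)$, substitute $y=kx$, and invoke left $K$-invariance of $f$. For (2), right-$K$-invariance of $A\ast B(x)=\tr(A\actopL{x}{B})$ in $x$ follows from $\actopL{k'}{B}=B$, while left-$K$-invariance follows by cyclicity together with $\actopL{k^{-1}}{A}=A$. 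For (3), combine (2) with Lemma~\ref{lem:fKinv} to obtain $A\ast B(x)=A\ast B(x^{-1})=\tr(A\pinu(x)^\ast B\pinu(x))$, and then apply cyclicity to rewrite this as $\tr(B\pinu(x)A\pinu(x)^\ast)=B\ast A(x)$.

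Part (4) is the main obstacle. Using the spectral expansions $B=\sum_m b_m P_m$ and $C=\sum_n c_n P_n$ of radial trace-class operators, bilinearity reduces the claim to $(A\ast P_m)\ast P_n=(A\ast P_n)\ast P_m$ for every $A\in\bdd$ and $m,n\in\N_0$. Testing against $u,v\in\Berg$ and using the tensor-product identity $\tr(XY)\tr(ZW)=\tr((X\otimes Z)(Y\otimes W))$ on $\Berg\otimes\Berg$ together with $\langle\actopL{x}{P_n}u,v\rangle=\tr((u\otimes v)\actopL{x}{P_n})$, each matrix element is recast as $\tr((A\otimes(u\otimes v))\Psi_{m,n})$, where the weakly-defined average
\[
\Psi_{m,n}:=\int_G \Pi_x(P_m\otimes P_n)\Pi_x^\ast\,d\mu_G(x),\qquad \Pi_x:=\pinu(x)\otimes\pinu(x),
\]
lies in the commutant of $\Pi$. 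The flip $F\in\cL(\Berg\otimes\Berg)$ given by $F(u\otimes v)=v\otimes u$ commutes with every $\Pi_x$ and satisfies $F(P_m\otimes P_n)F=P_n\otimes P_m$, yielding $\Psi_{n,m}=F\Psi_{m,n}F$. Since $\pinu\otimes\pinu$ decomposes multiplicity-freely into irreducibles (a classical fact for scalar holomorphic discrete series of the Hermitian symmetric pair $(\SUn,K)$), the commutant of $\Pi$ is abelian; hence $F$ and $\Psi_{m,n}$ commute, giving $\Psi_{m,n}=\Psi_{n,m}$ and proving (4).

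Part (5) then follows at once:
\[
(A\ast B)\ast C \stackrel{(4)}{=} (A\ast C)\ast B \stackrel{(3)}{=} (C\ast A)\ast B \stackrel{(4)}{=} (C\ast B)\ast A \stackrel{(3)}{=} (B\ast C)\ast A.
\]
The expected hard part is the representation-theoretic input in (4); an alternative that avoids invoking multiplicity-freeness is to compute the four-fold integral of matrix coefficients $\phi_{\alpha p}(x)=\langle\pinu(x)f_\alpha,f_p\rangle$ arising from expanding $P_m, P_n$ in the orthonormal basis $\{f_\alpha\}$, and then apply Schur orthogonality to each irreducible summand of $\pinu\otimes\pinu$, paralleling the unweighted treatment in \cite{DDMO25}.
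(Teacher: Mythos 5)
You should first be aware that the paper does not actually prove this lemma: it cites Corollary 3.3 and Lemma 3.11 of \cite{DDMO25} and remarks that the proof ``carries over verbatim'' to the weighted case. So your argument is necessarily an independent route, and I assess it on its own merits. Parts (1)--(3) are correct: the facts you use --- $m_\nu(k,x)=m_\nu(x,k)=1$ for $k\in K$ since $h(0,w)=1$, so that $\pi_\nu$ restricted to $K$ behaves like a genuine representation; cyclicity of the trace; unimodularity; and Lemma~\ref{lem:fKinv} applied to the $K$-bi-invariant function $A\ast B$ --- all check out, and in (3) the phase $m_\nu(x,x^{-1})$ relating $\pi_\nu(x^{-1})$ to $\pi_\nu(x)^\ast$ cancels because it is unimodular.

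Part (4) is where the real content sits, and your flip-operator argument, while elegant, rests on two substantial inputs that you assert rather than establish. First, for the ``abelian commutant'' step you need $\Psi_{m,n}$ to exist as a \emph{bounded} operator: the weak integral $\int_G\Pi_x(P_m\otimes P_n)\Pi_x^\ast\,d\mu_G(x)$ over a noncompact group converges only because $\pi_\nu\otimes\pi_\nu$ decomposes discretely into square-integrable irreducibles whose formal dimensions are bounded below (Schur orthogonality then gives $\|\Psi_{m,n}\|\leq (\inf_j d_j)^{-1}\dim\rP^m(\C)\dim\rP^n(\C)$). Second, multiplicity-freeness of $\pi_\nu\otimes\pi_\nu$ for scalar holomorphic discrete series of $\SU(n,1)$ is a genuine theorem (Repka for $\SU(1,1)$; Kobayashi's visible-action theory, or the explicit Peetre--Zhang decomposition, in general), and since it carries the entire weight of part (4) in your approach it must be stated precisely with a reference, not dismissed as ``classical.'' Finally, part (5) as written contains a concrete misstep: the link $(C\ast A)\ast B \stackrel{(4)}{=} (C\ast B)\ast A$ is not an instance of (4), whose hypotheses require \emph{both} inner operators to lie in $\traceop$, whereas your $A$ is only in $\bdd$. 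This is repairable --- write $A=\sum_m c_m P_m$ with $\sup_m|c_m|<\infty$, truncate to $A_N=\sum_{m\leq N}c_mP_m\in\traceopK$, and pass to the limit in each expression using that $C\ast A_N\to C\ast A$ pointwise and boundedly while $x\mapsto\langle\actopL{x}{B}u,v\rangle$ lies in $L^1(G)$ --- but as stated the chain for (5) has a gap.
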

\begin{remark}
    In \cite{DDMO25}, the above result was established only for the unweighted Bergman space. However, since the proof carries over verbatim to the weighted Bergman spaces, we omit the details.
\end{remark}

It is well-known that the adjoint of the Toeplitz operator $\Toep_f$ with symbol $f$ is the Toeplitz operator $\Toep_{\Bar{f}}$. This is a special case of the following lemma.

\begin{lemma}
    Let $f:\B^n\to \C$ and $A\in \bdd$ be a radial operator be such that the convolution $f\ast A$ is well defined. Then $(f\ast A)^*=\Bar{f}\ast A^*$.
\end{lemma}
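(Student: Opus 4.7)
The plan is to prove this identity by a direct computation, taking the operator adjoint inside the weak operator integral defining the QHA convolution. Identifying $f:\B^n \to \C$ with its right $K$-invariant lift $\tilde{f} := f \circ p$ on $G$, the convolution is
\[
f \ast A = \int_G \tilde{f}(x) \, \pi_\nu(x) A \pi_\nu(x)^* \, d\mu_G(x),
\]
interpreted in the weak sense. Since $\pi_\nu(x)$ is unitary on $\Berg$, the adjoint of the operator-valued integrand equals
\[
\bigl(\pi_\nu(x) A \pi_\nu(x)^*\bigr)^* = \pi_\nu(x) A^* \pi_\nu(x)^* = \actopL{x}{A^*},
\]
which together with the pointwise complex conjugation of the scalar $\tilde{f}(x)$ gives the desired identity.

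To make this rigorous, I would fix $g_1, g_2 \in \Berg$ and unwind the adjoint through the weak integral:
\[
\ipnu{(f \ast A)^* g_1}{g_2} = \overline{\ipnu{(f \ast A) g_2}{g_1}} = \int_G \overline{\tilde{f}(x)}\, \overline{\ipnu{\actopL{x}{A} g_2}{g_1}}\, d\mu_G(x).
\]
Then applying the identity $\overline{\ipnu{T g_2}{g_1}} = \ipnu{T^* g_1}{g_2}$ with $T = \actopL{x}{A}$, and using that $\actopL{x}{A}^* = \actopL{x}{A^*}$ (a direct consequence of unitarity), the right-hand side becomes $\int_G \overline{\tilde f(x)}\, \ipnu{\actopL{x}{A^*} g_1}{g_2}\, d\mu_G(x) = \ipnu{(\bar f \ast A^*) g_1}{g_2}$, as required.

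I do not anticipate any serious obstacle: the argument uses only that $\pi_\nu(x)$ is unitary, so that the adjoint distributes across the conjugation $\pi_\nu(x) A \pi_\nu(x)^*$, together with the defining property of the weak operator integral. The radiality of $A$ is not strictly needed for this particular calculation; it is rather the natural setting in which the identity will be applied in the sequel, since the radial operators form the commutative subalgebra of interest in Section \ref{sec:convolalg}. Alternatively, one could proceed from the general adjoint formula in Lemma \ref{lem:adjoints}, in which case the hypothesis that $A$ is radial is used to replace the involution $f^*(x) = \overline{f(x^{-1})}$ by the pointwise conjugate $\bar f$ via the right $K$-averaging identity $f \ast A = f_K \ast A$ valid for radial $A$.
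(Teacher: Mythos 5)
Your main computation is correct, and it takes a genuinely shorter route than the paper. You observe that $\bigl(\pinu(x)A\pinu(x)^*\bigr)^* = \pinu(x)A^*\pinu(x)^*$ for \emph{every} $x \in G$ --- this is just the adjoint of a product of three operators --- and push this through the weak integral to obtain $(f\ast A)^* = \bar f \ast A^*$ directly. The paper instead uses Lemma \ref{lem:Aut} to write $x = t_{x\cdot 0}k_x$, invokes the radiality of $A$ to reduce $\actopL{x}{A}$ to $\actopL{t_z}{A}$ with $z = x\cdot 0$ and $\pinu(t_z) = U_z^\nu$ self-adjoint, rewrites the integral over $G$ as an integral over $\B^n$, takes adjoints there, and converts back using that $A^*$ is again radial. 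Your argument avoids the detour through the operators $U_z^\nu$ and, as you correctly note, shows the radiality hypothesis is superfluous for this identity; the paper's version has the merit of making explicit contact with the self-adjoint operators $U_z^\nu$ familiar from the Bergman-space literature. One caution about your closing remark: the alternative derivation from Lemma \ref{lem:adjoints} does not go through for a general $f:\B^n\to\C$, because $f^*(x) = \overline{f(x^{-1})}$ coincides with $\overline{f(x)}$ only when $f$ is radial (Lemma \ref{lem:radial_functions}), whereas the present lemma allows arbitrary functions on the ball. Indeed, your direct computation yields $(f\ast A)^* = \bar f\ast A^*$ with no inversion of the argument and no radiality at all, which sits in tension with the formula $(f\ast A)^* = f^*\ast A^*$ asserted in Lemma \ref{lem:adjoints}; that is a discrepancy in the paper worth flagging to the authors, not a gap in your proof.
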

\begin{proof}
    Let $x\in G$. Then by Lemma \ref{lem:Aut}, there is $k_x\in K$ s.t. $x=t_{x\cdot0}k_x$. 
    Then
    $$\actopL{x}{A}=\actopL{t_{x\cdot 0}}{\actopL{k_x}{A}}=\actopL{t_z}{A}=\pinu(t_z) A \pinu(t_z) $$
    where $\pi_\nu(t_z)=U_z^\nu$ are the self-adjoint operators given by
    $$\pinu(t_z)f (w)=\kernu{z}(w)f(\tau_z(w)),\quad w\in \B^n, f\in \Berg,$$
    and $\tau_z\in \Aut(\B^n)$ is the involution that interchanges $z$ and $0$.
    Then for $g_1,g_2\in \Berg$,
    \begin{align*}
        \ipnu{(f\ast A)^*g_1}{g_2} &= \int_G \overline{f(x)} \overline{\ipnu{g_1}{\actopL{x}{A}g_2}} \haar{x}\\
        &=\int_{\B^n} \overline{f(z)} \overline{\ipnu{g_1}{\actopL{\tau _z}{A}g_2}} \inv{z}\\
        &=\int_{\B^n} \overline{f(z)} \ipnu{\actopL{\tau_z}{A^*}g_1}{g_2} \inv{z}\\
         &=\int_G \overline{f(x)} \ipnu{\actopL{x}{A^*}g_1}{g_2} \haar{x}\quad \text{(as $A^*$ is also radial)}\\
        &=\ipnu{(\Bar{f}\ast A^*)g_1}{g_2}
    \end{align*}
    as required.
\end{proof}

\subsection{Duality} 
Recall that the Banach space dual of $L^p(\SUn)$, $p\not= \infty$ is $L^{p^\prime}(\SUn)$, $\frac{1}{p}+ \frac{1}{p^\prime}$,
where the duality is given by 
$$\ip{f}{g}_{L^1}:=\int_{G}f(x)g(x)\haar{x} \qquad f \in L^p(G), g \in L^{p'}(G).$$

Meanwhile, the dual of $\cT^p(\Berg)$ is $\cT^{p^\prime}(\Berg)$ via the trace pairing: 
$$\ip{A}{B}_\tr:=\tr(AB), \quad A\in \cT^p(\Berg) ,  B\in \cT^{p^\prime}(\Berg).$$
In particular, the dual of $\cT^1(\Berg)$ is $\bdd$.
We have the following duality relation between radial convolutions that plays a role in Section \ref{sec:convolalg}. For a proof, 
we point to \cite[Lemma A.17]{DDMO25}
\begin{lemma}\label{lem:duality}
    Let $f\in \LoneG$, $A\in \traceop$ and $B\in \bdd$. Then $f\ast A\in \traceop$, $A\ast B\in \bddfG$, and the following is satisfied: 
    $$\ip{f\ast A}{B}_\tr=\ip{f}{A\ast B}_{L^1}.$$
\end{lemma}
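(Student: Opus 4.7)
The plan is to verify both sides of the identity are well-defined, then to reduce the left-hand trace pairing to a scalar integral by interchanging the trace with the Bochner integral defining $f \ast A$, and finally to identify the integrand as $(A \ast B)(x)$ via the cyclic property of the trace. First, I verify well-definedness. The QHA Young inequality already stated in this section gives $f \ast A \in \traceop$ with $\|f \ast A\|_1 \le \frac{1}{c_\nu} \|f\|_1 \|A\|_1$, so the trace pairing $\tr((f \ast A) B)$ makes sense for $B \in \bdd$. For the right-hand pairing, a H\"older-type bound for the trace yields $|(A \ast B)(x)| = |\tr(A \, \actopL{x}{B})| \le \|A\|_1 \|\actopL{x}{B}\| = \|A\|_1 \|B\|$, using unitarity of $\pinu(x)$ to control the operator norm, so $A \ast B \in \bddfG$ and pairs with $f \in \LoneG$.

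Next, I carry out the main computation. Because $f \ast A$ converges as a Bochner integral in $\traceop$ (the integrability in trace norm is supplied by the QHA Young bound) and $T \mapsto \tr(TB)$ is a bounded linear functional on $\traceop$ of norm at most $\|B\|$, one may interchange trace and integral:
\[
\tr((f \ast A) B) = \int_G f(x) \tr(\actopL{x}{A} \, B) \, \haar{x}.
\]
The cyclic property of the trace, combined with the unitarity of $\pinu(x)$ (noting that projective multiplier phases cancel in any conjugation $\pinu(x) \cdot \pinu(x)^*$), then gives
\[
\tr(\actopL{x}{A} \, B) = \tr(A \, \pinu(x)^* B \pinu(x)) = \tr(A \, \actopL{x^{-1}}{B}) = (A \ast B)(x^{-1}),
\]
and a change of variable $x \mapsto x^{-1}$, valid since $G = \SUn$ is semisimple and hence unimodular, produces the claimed identity $\ip{f \ast A}{B}_\tr = \ip{f}{A \ast B}_{L^1}$.

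The main technical subtlety is the justification of the interchange of trace and integral in the second step. This is addressed by upgrading the weak integral defining $f \ast A$ to a Bochner integral in $\traceop$ via the QHA trace-norm bound, after which passage to the scalar integral is automatic from the bounded linearity of $T \mapsto \tr(TB)$. The inversion emerging from the cyclic manipulation of the trace is absorbed by unimodularity; in the radial setting that motivates this lemma (cf. Section \ref{sec:convolalg}) it is moreover automatic from Lemma \ref{lem:fKinv}.
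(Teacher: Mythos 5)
Your setup is sound: the QHA Young inequality gives $f\ast A\in\traceop$, the bound $|(A\ast B)(x)|\le\|A\|_1\|B\|$ gives $A\ast B\in\bddfG$, the upgrade of $f\ast A$ to a Bochner integral in $\traceop$ legitimately justifies pulling $\tr(\,\cdot\,B)$ inside, and the cyclicity/phase-cancellation computation correctly yields
\[
\tr\bigl((f\ast A)B\bigr)=\int_G f(x)\,\tr\bigl(A\,\actopL{x^{-1}}{B}\bigr)\haar{x}=\int_G f(x)\,(A\ast B)(x^{-1})\haar{x}.
\]
The gap is in the very last step. Unimodularity does \emph{not} absorb the inversion: the substitution $x\mapsto x^{-1}$ turns the right-hand side into $\int_G f(x^{-1})(A\ast B)(x)\haar{x}=\ip{\check f}{A\ast B}_{L^1}$ with $\check f(x)=f(x^{-1})$, i.e.\ it merely transfers the inversion from $A\ast B$ onto $f$. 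The stated identity therefore requires either $f=\check f$ a.e.\ or $(A\ast B)(x^{-1})=(A\ast B)(x)$, and neither holds in general. Concretely, for $A=u\otimes u$ and $B=v\otimes v$ one has $(A\ast B)(x)=|\ip{u}{\pinu(x)v}|^2$ whereas $(A\ast B)(x^{-1})=|\ip{v}{\pinu(x)u}|^2$; on the disc with $v=f_0=1$, $u=(f_0+f_1)/\sqrt2$ and $x=a_t$, $w=\tanh t$, these evaluate to $\tfrac12(1-w^2)^\nu(1+\sqrt\nu\,w)^2$ and $\tfrac12(1-w^2)^\nu(1-\sqrt\nu\,w)^2$ respectively, which differ for $w\neq0$. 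So the chain of equalities you wrote cannot close for arbitrary $f\in\LoneG$, $A\in\traceop$, $B\in\bdd$.

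The correct conclusion of your computation is the symmetrized duality $\ip{f\ast A}{B}_\tr=\ip{\check f}{A\ast B}_{L^1}$, from which the stated form follows exactly when the inversion is harmless: when $f$ is $K$-bi-invariant (Lemma \ref{lem:fKinv} together with Lemma \ref{lem:radial_functions} gives $\check f=f$), or when $A$ and $B$ are both radial, so that $A\ast B$ is $K$-bi-invariant and hence inversion-invariant. This covers every use of the lemma in the paper (all applications are to radial data), and your closing parenthetical shows you sensed this; but as written your proof asserts the general statement and justifies the decisive step by "unimodularity", which is not valid. Either add the hypothesis that $f$ (or $A\ast B$) is inversion-invariant and invoke Lemma \ref{lem:fKinv}, or prove only the symmetrized identity. (The paper itself gives no internal proof—it defers to an external reference—so there is no in-paper argument to compare against; the issue above is with the logic of your final step.)
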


Now we describe the dual of radial trace class operators $\traceopK$  which fits well with the identification of $\cT^p(\Berg)$ with $\ell^p$. The proof of the statement is relatively straightforward, but we include it for completeness: 

\begin{proposition}\label{prop:dual_of_radial_trace_class}
    By trace duality, we can identify $(\traceopK)' \cong \bddK$. Further, for $A \in \bddK$, we have
    \begin{align*}
        \| A\| = \sup_{B \in \traceopK, ~\| B\| \leq 1} |\tr(AB)|.
    \end{align*}
\end{proposition}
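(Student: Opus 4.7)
My plan is to show that the map $\Phi: \bddK \to (\traceopK)'$ defined by $\Phi(A)(B) = \tr(AB)$ is a surjective linear isometry. The central tool is the identity $\tr(AB) = \tr(AB^K)$ valid for every $A \in \bddK$ and $B \in \traceop$. It follows from cyclicity of the trace combined with the $K$-invariance of $A$ (noting that $\pinu|_K$ is a genuine unitary representation since $k\cdot 0 = 0$ for $k \in K$, so the multiplier $m_\nu$ is trivial on $K \times K$):
\[ \tr(AB^K) = \int_K \tr\bigl(A\,\pinu(k) B \pinu(k)^*\bigr)\haarK{k} = \int_K \tr\bigl(\pinu(k)^* A \pinu(k) B\bigr)\haarK{k} = \tr(AB). \]

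Boundedness of $\Phi(A)$ with $\|\Phi(A)\| \leq \|A\|$ is immediate from $|\tr(AB)| \leq \|A\|\,\|B\|_1$. For the reverse inequality---which simultaneously establishes the isometry of $\Phi$ and the advertised norm formula---I combine the key identity with the contractivity of the radialization on $\traceop$ (an easy consequence of unitary invariance of the trace norm, $\|B^K\|_1 \leq \int_K \|\pinu(k)B\pinu(k)^*\|_1 \haarK{k} = \|B\|_1$):
\[ \|A\| = \sup_{B \in \traceop,\,\|B\|_1 \leq 1} |\tr(AB)| = \sup_{B \in \traceop,\,\|B\|_1 \leq 1} |\tr(AB^K)| \leq \sup_{B' \in \traceopK,\,\|B'\|_1 \leq 1} |\tr(AB')|. \]

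For surjectivity, given $\phi \in (\traceopK)'$, I extend via $\tilde\phi(B) := \phi(B^K)$ on $\traceop$, which is bounded by $\|\phi\|$ since radialization is a contraction. The standard trace duality $(\traceop)' \cong \bdd$ yields $\tilde A \in \bdd$ with $\tilde\phi(B) = \tr(\tilde A B)$, and the radialization $A := \tilde A^K \in \bddK$ then satisfies $\Phi(A) = \phi$ on $\traceopK$ by another application of the key identity. Injectivity of $\Phi$ follows from the diagonal description $A = \sum_{m=0}^\infty c(A,m) P_m$ of radial operators: pairing with $B = P_m$ forces $c(A,m) d_m = 0$ whenever $\Phi(A) = 0$. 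The only substantive ingredient is the self-dual behavior of radialization under the trace pairing encoded in the key identity; everything else is routine bookkeeping.
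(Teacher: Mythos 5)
Your proposal is correct and follows essentially the same route as the paper: both extend a functional on $\traceopK$ to $\traceop$ by precomposing with the radialization, invoke the standard duality $(\traceop)' \cong \bdd$, and use the adjointness of radialization under the trace pairing together with the contraction $\|B^K\|_1 \le \|B\|_1$ to transfer the norm formula. The only (harmless) additions are your explicit remarks on injectivity and on the triviality of the multiplier on $K$, which the paper leaves implicit.
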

\begin{proof}  
    Clearly, every $A \in \bddK$ induces a bounded linear functional on $\traceopK$ through the trace. Let $\Phi \in (\traceopK)'$. Further, denote by $\rad: \bdd \to \bddK$ the radialization operator. Then, 
    \begin{align*}
        \Phi_1(B) := \Phi(\radop{B})
    \end{align*}
    is a bounded extension of $\Phi$ to all of $\traceop$. Since $(\traceop)' \cong \bdd$ by trace duality, there exists $A \in \bdd$ such that 
    \begin{align*}
        \Phi_1(B) = \tr(AB).
    \end{align*}
    But $\Phi_1(B) = \Phi_1(\radop{B})$ and hence (using that $K$ is unimodular):
    \begin{align*}
        \Phi_1(B) &= \tr(A~\radop{B}) = \int_K \tr(A\pi(k) B \pi(k^{-1}) \haarK{k} = \int_K \tr(\pi(k) A \pi(k^{-1}) B) \haarK{k}\\
        &= \tr(\radop{A} B).
    \end{align*}
    Therefore, for radial $B$, $\Phi(B) = \tr(AB)$ for some $A \in \bddK$. Finally, regarding the norm, we see that for $A \in \bddK$ (as the standard trace duality is also inducing the operator norm on $\bdd$):
    \begin{align*}
        \| A\| &= \sup_{B \in \traceop, ~\| B \| \leq 1} |\tr(AB)|\\
        &= \sup_{B \in \traceop, ~\| B\| \leq 1} |\tr(\radop{A}B)|\\
        &= \sup_{B \in \traceop, ~\| B\| \leq 1} |\tr(A~ \radop{B})|\\
        &= \sup_{B \in \traceopK, ~\| B\| \leq 1} |\tr(AB)|.
    \end{align*}
    Here, we used in the last step that $\| \radop{B}\|_{\traceop}\leq \| B\|_{\traceop}$ for each trace class $B$ and that $\radop{B} = B$ for each $B\in \traceopK$.
\end{proof}


\section{The convolution algebra and its spectrum}\label{sec:convolalg}

The main objective of this section is to understand the structure of the commutative convolution algebra $\convolalg=\convolalgoplus$. This algebra, formed by combining radial integrable functions with radial trace-class operators, provides a natural extension of the classical convolution algebra $\LoneinvK$. We describe the Gelfand Spectrum of $\convolalg$, and clarify how multiplicative functionals act within this setting. 

 \subsection{The algebraic structure of $\convolalg$}
The convolution algebra $\convolalg$ is defined to be
\begin{equation}\label{eq:Anu}
\convolalg:=\convolalgoplus
\end{equation}
with the norm $\|(f,A)\| = \|f\|_1 + \|A\|_1$ and multiplication and $*$-involution are given by
\begin{equation}\label{eq:Prod}
  (f, A) \ast (g, B) := (f \ast g + A \ast B, f \ast B + g \ast A ),\quad\text{and}\quad (f,A)^* = (f^*,A^*).
\end{equation}
Our first goal is to determine the Gelfand spectrum $\maxid$ of $\convolalg$, i.e., the space of (non-zero) multiplicative 
linear functionals on $\convolalg$. Later on, we will concentrate on the $*$-homomorphisms.

\begin{proposition}
    For $f, g \in \LoneinvK$ and $A, B \in \traceopK$ the following holds true:
    \begin{enumerate}
        \item $f \ast g \in \LoneinvK$;
        \item $f \ast A \in \traceopK$;
        \item $A \ast B \in \LoneinvK$;
        \item $\fA_\nu$ is a commutative Banach $*$-algebra.
    \end{enumerate}
\end{proposition}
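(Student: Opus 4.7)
The plan is to dispatch parts (1)--(3) by identifying the relevant QHA Young's inequality for each of the three convolution types and then invoking the radiality-preservation results already collected in the preliminaries. For (1), under the identification $\LoneinvK \cong L^1(K\backslash G/K)$ (via $f \mapsto f \circ p$ together with Lemma \ref{lem:fKinv}), this space is a commutative convolution subalgebra of $L^1(G)$ because $(G,K)$ is a Gelfand pair (Appendix \ref{app:Gelfandpairs}); hence $f \ast g \in \LoneinvK$. For (2), the QHA Young's inequality yields $f \ast A \in \traceop$ with $\|f \ast A\|_1 \leq \tfrac{1}{c_\nu}\|f\|_1\|A\|_1$, and $K$-invariance of $f \ast A$ is Lemma \ref{lem:radial_convol}(1). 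For (3), the operator-operator Young's inequality gives $A \ast B \in L^1(G)$; Lemma \ref{lem:radial_convol}(2) shows $A \ast B$ is radial, so via the identification above $A \ast B \in \LoneinvK$.

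For part (4), the task splits into verifying commutativity, associativity, compatibility with the involution, submultiplicativity of the norm, and isometric involution. Commutativity of the first coordinate reduces to $f \ast g = g \ast f$ (Gelfand pair) together with $A \ast B = B \ast A$ (Lemma \ref{lem:radial_convol}(3)); commutativity of the second coordinate is automatic since the cross terms $f \ast B$ and $g \ast A$ appear symmetrically in the defining formula \eqref{eq:Prod}. The $\ast$-compatibility $((f,A) \ast (g,B))^\ast = (g,B)^\ast \ast (f,A)^\ast$ breaks into four identities: $(f \ast g)^\ast = g^\ast \ast f^\ast$ (standard on $L^1(G)$), $(A \ast B)^\ast = B^\ast \ast A^\ast$ and $(f \ast A)^\ast = f^\ast \ast A^\ast$ (Lemma \ref{lem:adjoints}), together with the special case $f^\ast = \bar f$ recorded in Section \ref{sec:L1K}. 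The submultiplicative estimate $\|(f,A) \ast (g,B)\| \leq C \|(f,A)\| \|(g,B)\|$ follows at once from the three Young's inequalities combined; absorbing the constant $1/c_\nu$ by rescaling the norm (or by the appropriate choice of normalization of the Haar measure) makes the constant equal to $1$. The involution is an isometry because $\|f^\ast\|_1 = \|f\|_1$ and $\|A^\ast\|_1 = \|A\|_1$ for trace class operators.

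The main obstacle I expect is verifying associativity. Expanding both $((f,A) \ast (g,B)) \ast (h,C)$ and $(f,A) \ast ((g,B) \ast (h,C))$ produces, in each component, several terms of the mixed types $(A \ast B) \ast h$, $(f \ast B) \ast C$, $(g \ast A) \ast C$ on one side and $f \ast (B \ast C)$, $A \ast (g \ast C)$, $A \ast (h \ast B)$ on the other. Matching these pairs requires precisely the reordering identities Lemma \ref{lem:radial_convol}(4) and (5), together with the standard function-function and function-operator associativity from QHA (Section \ref{sec:repr}) and the Gelfand pair associativity on $L^1(K\backslash G/K)$. The fact that we are working only with radial objects is what makes all the required commutation and reordering rules available; with these in hand, the bookkeeping is tedious but completely mechanical, and the terms line up to yield associativity. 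Once this is done, the proposition is proved.
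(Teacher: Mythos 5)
Your proof is correct and follows essentially the same route as the paper: parts (1)--(3) via the QHA Young's inequalities and Lemma \ref{lem:radial_convol}, commutativity from the Gelfand pair property together with Lemma \ref{lem:radial_convol}(3), the $*$-compatibility from Lemma \ref{lem:adjoints}, and the same factorized norm estimate $(\|f\|_1+\|A\|_1)(\|g\|_1+\|B\|_1)$. You are in fact slightly more thorough than the paper's own proof, which does not explicitly verify associativity (your reduction to the reordering identities Lemma \ref{lem:radial_convol}(4)--(5) plus the standard QHA associativity is exactly the right mechanism) and which silently omits the $1/c_\nu$ constant appearing in the stated Young's inequality --- a point you correctly flag and resolve by rescaling.
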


\begin{proof}
    First statement is well-known and (2) and (3) follow from Lemma \ref{lem:radial_convol}. Commutativity of $\convolalg$ follows from the fact that $f\ast g$ and $A\ast B$ are commutative (Lemma \ref{lem:radial_convol}). Also, with the given involution $\convolalg$ is a $*$-algebra by Lemma \ref{lem:adjoints}. Finally, we have
\begin{align*} \|(f,A)\ast (g,B)\| & \le \| f\ast g+A\ast B\|_1 + \|f\ast B + g\ast A\|_1\\
& \le \|f\|_1 \|g\|_1 + \|A\|_1 \|B\|_1 + \|f\|_1\|B\|_1+\|g\|_1\|A\|_1\\
&= (\|f\|_1+\|A\|_1)(\|g\|_1+\|B\|_1).\qedhere
\end{align*}
\end{proof}

From now on we will identify $\LoneinvK$ with the subalgebra $\{(f,0)\mid f\in \LoneinvK\}$ using the the map $f\mapsto (f,0)$. Similarly
we identify $\traceopK$ with the subspace $\{(0,A )\mid A\in\traceopK)$ via $A\mapsto (0,A)$. We note that
\[(f,0)*(g,0) = (f*g,0)\quad \text{and}\quad (f,0)^* = (f^*,0)\]
so the above identification is a $*$-isomorphism onto a $*$-subalgebra of $\convolalg$. Furthermore $(0,S)*(0,T) = (S*T,0)$ or
$\traceopK*\traceopK\subset \LoneinvK$.

\subsection{The space of multiplicative linear functionals on $\mathfrak A_\nu$}
We recall that $\LoneinvK$ is itself already a commutative Banach $*$-algebra. It is well-known that its Gelfand theory can be described in terms of the \emph{bounded spherical functions}, cf.~Appendix \ref{app:Gelfandpairs} for a discussion on general Gelfand pairs. For the concrete setting we consider, these functions can be explicitly computed (cf.~Section \ref{sec:spherical_functions}). As an outcome, it can be shown that, as topological spaces, the Gelfand space $\mathcal M(\LoneinvK)$ can be very explicitly computed (cf.~Theorem \ref{thm:gelfand_space_function_algebra}): $\mathcal M(\LoneinvK) \cong \spheri \cong (\mathbb R \times i[-n, n]) / \sim$, where $\sim$ is the equivalence relation of antipodal identification.

The Gelfand transform of $\LoneinvK$ can very conveniently be written in terms of the spherical Fourier transform: Identifying bounded spherical functions $\phi = \phi_\lambda$ with points in $(\mathbb R \times i[-n, n]) / \sim$ (cf.~Theorem \ref{thm:spherical_functions}), the spherical Fourier transform of a function $f \in \LoneinvK$ is given by
\begin{align*}
    \widehat{f}(\lambda) := \widehat{f}(\phi_\lambda) := \int_G f(x)\phi_\lambda (x^{-1}) \haar{x}, \quad \lambda \in (\mathbb R \times i[-n, n]) / \sim.
\end{align*}

We want to turn to the description of the Gelfand spectrum $\maxid$ of $\convolalg$. 
We will typically denote elements of $\maxid$ by uppercase Greek letters such as $\Phi$. We adopt the notations $\Phi(f):=\Phi(f,0)$ and $\Phi(A):=\Phi(0,A)$, for $f\in \LoneinvK$ and $A\in \traceopK$.
We note that $\Phi\in \maxid$ induces a multiplicative functional on $\LoneinvK$. The next lemma is a consequence of this. 
 
\begin{lemma}\label{lem:functional_on_functions} Let $\Phi\in\maxid$. Then there exists a unique $\phi_{\lambda ( \Phi)}
\in \spheri$ such that
\[\Phi (f,0) = \wf (\lambda) = \int_G f(x)\phi_{\lambda (\Phi )} (x)dx, \quad  f \in \LoneinvK. \]
\end{lemma}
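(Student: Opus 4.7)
The approach is to restrict $\Phi$ to the canonical copy of $\LoneinvK$ inside $\convolalg$ and then invoke the already-established Gelfand theory of $\LoneinvK$.

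Define $\Phi_1 : \LoneinvK \to \C$ by $\Phi_1(f) := \Phi(f,0)$. Since the map $f \mapsto (f,0)$ is an injective $\ast$-algebra homomorphism of $\LoneinvK$ into $\convolalg$, and $\Phi$ is linear, bounded and multiplicative on $\convolalg$, the functional $\Phi_1$ is linear, bounded and multiplicative on $\LoneinvK$.

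The key point is to show that $\Phi_1 \not\equiv 0$. Assume for contradiction that $\Phi_1(f) = 0$ for every $f \in \LoneinvK$. Using the definition \eqref{eq:Prod} of the product on $\convolalg$, we have for any $A,B \in \traceopK$:
\begin{align*}
    \Phi(0,A)\,\Phi(0,B) \;=\; \Phi\bigl((0,A)\ast(0,B)\bigr) \;=\; \Phi(A\ast B,\,0) \;=\; \Phi_1(A\ast B) \;=\; 0,
\end{align*}
since $A\ast B \in \LoneinvK$ by Lemma \ref{lem:radial_convol}. Taking $B=A$ forces $\Phi(0,A)=0$ for every $A \in \traceopK$. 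Combined with $\Phi_1\equiv 0$, this gives $\Phi \equiv 0$ on all of $\convolalg$, contradicting the fact that $\Phi \in \maxid$ is nonzero. Hence $\Phi_1$ is a nontrivial multiplicative linear functional on the commutative Banach $\ast$-algebra $\LoneinvK$.

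By Theorem \ref{thm:gelfand_space_function_algebra} together with the identification $\mathcal M(\LoneinvK) \cong \spheri$ discussed in Section \ref{sec:spherical_functions}, there exists a unique bounded spherical function $\phi_{\lambda(\Phi)} \in \spheri$ such that
\begin{align*}
    \Phi_1(f) \;=\; \int_G f(x)\,\phi_{\lambda(\Phi)}(x^{-1})\,\haar{x}, \qquad f\in \LoneinvK.
\end{align*}
Since $\phi_{\lambda(\Phi)}$ is $K$-biinvariant (being a spherical function), Lemma \ref{lem:fKinv} gives $\phi_{\lambda(\Phi)}(x^{-1}) = \phi_{\lambda(\Phi)}(x)$, so the formula in the statement follows. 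Uniqueness of $\phi_{\lambda(\Phi)}$ is immediate from Theorem \ref{thm:spherical_functions}(4) and the injectivity of the spherical Fourier transform on $\LoneinvK$.

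The only real content is the nonvanishing of $\Phi_1$; everything else is a transport of the known Gelfand theory of $\LoneinvK$. I do not anticipate a serious obstacle, as the structural identity $\traceopK \ast \traceopK \subset \LoneinvK$ (which forces the two components of $\convolalg$ to interact multiplicatively) makes the nonvanishing step essentially automatic.
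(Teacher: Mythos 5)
Your proof is correct and follows the same route the paper takes: restrict $\Phi$ to the canonical copy of $\LoneinvK$ and invoke the known identification of $\mathcal M(\LoneinvK)$ with $\spheri$, using Lemma \ref{lem:fKinv} to replace $\phi(x^{-1})$ by $\phi(x)$. The one point you add --- ruling out that the restriction is identically zero via $\Phi(0,A)^2 = \Phi(A\ast A,0) = \Phi_1(A \ast A) = 0$, which would force $\Phi \equiv 0$ --- fills a step the paper leaves implicit in its one-line justification, and your argument for it is exactly right.
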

  
We continue with the following lemma:
\begin{theorem}\label{lem:regularity} For $\Phi \in \maxid$ let $\phi_{\lambda }$ denote the corresponding
spherical function. Then 
\[\Phi (P_0)^2 = \widehat{h_\nu}(\lambda) = B_\nu (\phi_\lambda ) (0) \not= 0 .\]
In particular, there exists a square root of $\widehat{h_\nu}(\lambda)$ on $\spheri$ such that 
\begin{enumerate}
\item $\lambda \mapsto \sqrt {\widehat{h_\nu}(\lambda)}$ is continuous on $\spheri \cong (\R \times i[-n,n])/ \sim$,
\item $\Phi (P_0 ) = \pm \sqrt {\widehat{h_\nu}(\lambda)}$.
\end{enumerate} 
\end{theorem}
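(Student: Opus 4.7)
The plan is to derive the identity $\Phi(P_0)^2 = \widehat{h_\nu}(\lambda)$ from multiplicativity, invoke Theorem \ref{lem:Bnvarphi} for non-vanishing, and then use simple connectedness of $\spheri$ to extract a continuous square root.

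For the identity, Lemma \ref{lem:ToepBerezin convol}(3) says that $P_0 \ast P_0 = h_\nu$, which in the algebra $\convolalg$ reads
\[
(0, P_0) \ast (0, P_0) \;=\; (h_\nu, 0).
\]
Applying the multiplicative functional $\Phi$ yields $\Phi(P_0)^2 = \Phi(h_\nu)$, and Lemma \ref{lem:functional_on_functions} identifies the right-hand side with $\widehat{h_\nu}(\lambda)$, where $\phi_\lambda$ is the bounded spherical function attached to $\Phi$. Since $\varphi_\nu = c_\nu h_\nu$, Theorem \ref{lem:Bnvarphi} gives $\widehat{h_\nu}(\lambda) = c_\nu^{-1} B_\nu(\phi_\lambda)(0) \neq 0$ for every $\phi_\lambda \in \spheri$, which establishes the first displayed equation.

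For the existence of a continuous square root on $\spheri$, note that $\widehat{h_\nu}$, being the Gelfand transform of $h_\nu \in \LoneinvK$, is continuous on $\spheri$ when this space is equipped with the Gelfand (equivalently, the Euclidean quotient) topology of Theorem \ref{thm:gelfand_space_function_algebra}. By what was just shown, $\widehat{h_\nu}$ takes values in $\mathbb{C}^{\times}$. Corollary \ref{cor:simply_connected} asserts that $\spheri$ is simply connected, and since $\exp \colon \mathbb{C} \to \mathbb{C}^{\times}$ is a covering map with simply connected total space, the standard path-lifting lemma produces a continuous logarithm of $\widehat{h_\nu}$ on $\spheri$, and hence a continuous branch of the square root $\lambda \mapsto \sqrt{\widehat{h_\nu}(\lambda)}$, which is (1).

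Statement (2) is then immediate: the equation $\Phi(P_0)^2 = \widehat{h_\nu}(\lambda) = \bigl(\sqrt{\widehat{h_\nu}(\lambda)}\bigr)^{2}$ forces $\Phi(P_0) \in \{\pm \sqrt{\widehat{h_\nu}(\lambda)}\}$. The only genuinely nontrivial ingredient in this argument is Corollary \ref{cor:simply_connected}; the rest is direct bookkeeping with the algebra structure of $\convolalg$, the identity $P_0 \ast P_0 = h_\nu$, and the description of multiplicative functionals on $\LoneinvK$.
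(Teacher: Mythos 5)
Your proof is correct and follows essentially the same route as the paper: apply $\Phi$ to the identity $P_0\ast P_0 = h_\nu$, invoke Theorem \ref{lem:Bnvarphi} together with the injectivity of $B_\nu$ for the non-vanishing, and use Corollary \ref{cor:simply_connected} to lift to a continuous square root (you merely spell out the covering-space step that the paper leaves implicit). Your extra factor $c_\nu^{-1}$ in $\widehat{h_\nu}(\lambda) = c_\nu^{-1}B_\nu(\phi_\lambda)(0)$ is in fact the more careful bookkeeping, since $\varphi_\nu = c_\nu h_\nu$; it is immaterial for the non-vanishing claim, which is all the argument needs.
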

\begin{proof} Recall that $h_\nu =P_0\ast P_0$.  Using Lemma \ref{lem:Bnvarphi},  we get
\[\Phi (P_0)^2 = \widehat{h_\nu}(\lambda) =B_\nu (\phi_\lambda)(0) \not= 0 .\]
As $\lambda \mapsto  \widehat{h_\nu}(\lambda)$ is continuous on $\spheri$ and $\spheri$ is simply connected by Corollary \ref{cor:simply_connected} and $\hat h_\nu (\lambda )\not=
0$, it follows that there exists a continuous square root of $\hat h_\nu (\lambda)$ on  $\spheripd$. 
\end{proof}
\begin{remark}
    If one prefers, one could instead work with even functions on $\mathbb R \times i[-n, n]$ instead of functions on $\spheri \cong (\mathbb R \times i[-n, n])/\sim$. In this case, the resulting square root would turn out to be an even function holomorphic on $\mathbb R \times i(-n, n)$ and continuous up to the boundary.
\end{remark}
Throughout the following, we will always fix a continuous square root $\sqrt{\widehat{\hnu}}$ of $\widehat{\hnu}$. The following result is now tautological:
\begin{lemma}\label{lem:functionalPzero}
    For  $\Phi \in \maxid$ and $\phi_\lambda $ as in Lemma \ref{lem:functional_on_functions} we have
    \begin{align*}
        \Phi(P_0) = \pm \sqrt{\widehat{\hnu}(\phi_\lambda)}.
    \end{align*}
\end{lemma}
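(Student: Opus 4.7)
The plan is to recognize that this lemma is essentially a bookkeeping consequence of Theorem \ref{lem:regularity}, with the role of the current lemma being simply to record the sign ambiguity inherent in extracting a square root. So there is no real obstacle; the work has already been done.

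Concretely, I would first unpack how $\Phi$ evaluates on $P_0$. Since $\Phi$ is a multiplicative linear functional on $\mathfrak{A}_\nu$, applying it to the product $(0,P_0)\ast(0,P_0)$ in $\mathfrak{A}_\nu$ gives
\begin{align*}
\Phi(P_0)^2 = \Phi\bigl((0,P_0)\ast(0,P_0)\bigr) = \Phi\bigl(P_0\ast P_0,0\bigr) = \Phi(h_\nu,0),
\end{align*}
where the last equality uses Lemma \ref{lem:ToepBerezin convol}, which identifies $P_0\ast P_0$ with $h_\nu$. Then Lemma \ref{lem:functional_on_functions} evaluates the right-hand side via the spherical Fourier transform, giving $\Phi(P_0)^2 = \widehat{h_\nu}(\phi_\lambda)$.

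At this point Theorem \ref{lem:regularity} guarantees that $\widehat{h_\nu}$ is nonvanishing on the simply connected space $\spheri$, so the fixed continuous branch $\sqrt{\widehat{h_\nu}}$ is a well-defined continuous function on $\spheri$. Since the equation $z^2 = \widehat{h_\nu}(\phi_\lambda)$ has the two solutions $z = \pm \sqrt{\widehat{h_\nu}(\phi_\lambda)}$, we conclude $\Phi(P_0) = \pm \sqrt{\widehat{h_\nu}(\phi_\lambda)}$. (I would not attempt to say anything here about \emph{which} sign occurs as a function of $\Phi$; that refinement belongs to the subsequent discussion, where one expects the sign choice to correspond to the $\mathbb Z_2$-factor in the spectrum $\maxid \cong \spheri \times \mathbb Z_2$ announced in the introduction.)
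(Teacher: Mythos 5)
Your proposal is correct and matches the paper's treatment: the paper derives $\Phi(P_0)^2 = \widehat{h_\nu}(\lambda) \neq 0$ from $h_\nu = P_0 \ast P_0$ and multiplicativity in Theorem \ref{lem:regularity}, and then records Lemma \ref{lem:functionalPzero} as an immediate (``tautological'') consequence once a continuous branch of the square root is fixed. Your unpacking of the product in $\fA_\nu$ and your deferral of the sign question to the later $\mathbb Z_2$ analysis are exactly in line with the paper.
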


We now describe how the multiplicative functionals act on operators in terms of the spherical Fourier transform of functions.

\begin{proposition}\label{prop:functional_characterize}
  Fix a continuous square root of $\widehat{h}_\nu$.  For $\phi_\lambda \in \spheri$ and $j \in \{ 0, 1\}$, define $\Phi_{\lambda, j}$ by
    $$\Phi_{\lambda, j}(f,A):=\widehat{f}(\lambda ) +(-1)^j \frac{\widehat{(A\ast P_0)}(\lambda)}{\sqrt{\widehat{\hnu}(\lambda)}},
    \quad (f,A)\in \convolalg.$$
    Then
    \begin{enumerate}
        \item  $\Phi_{\lambda, j}\in \maxid$ for all $\lambda \in \spheri$ and $j = 0, 1$.
        \item If $\Phi\in \maxid$ then $\Phi=\Phi_{\lambda, j}$ for some $\lambda\in (\R \times i[-n,n])/\sim$ and $j = 0, 1$.
        \item For $\lambda \in (\R \times i[-n,n])/\sim$, define the operator $T^{\lambda,\nu}$,  to be 
        \[T^{\lambda,\nu} =\widehat{\hnu}(\lambda)^{-1/2}\, \Toep_{\phi_\lambda} ,\]
where $\Toep_{\phi_\lambda}$ is the Toeplitz operator with symbol $\phi_\lambda$. Then 
        $$\Phi_{\lambda, j}(f,A)=\widehat{f}(\lambda)+(-1)^j\tr(AT^{\lambda,\nu}), \quad (f,A)\in \convolalg.$$
        
 \item  $T^{\lambda,\nu}$ is a radial operator whose $c_m(\lambda,\nu)$  eigenvalue on $\rP^m(\C)$  is given by
\[ c_m (\lambda ,\nu )  = \frac{\Gamma(m +\nu)}{\widehat{h_\nu}(\lambda )^{1/2} \Gamma (\nu - n)m! (n-1)!} 
\int_0^1 \phi_\lambda (\sqrt{r}) r^{n+m -1} (1-r)^{\nu - n -1} dr.\]
    \end{enumerate}
\end{proposition}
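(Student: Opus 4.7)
My plan is to prove the four assertions in order, with everything ultimately resting on one non-obvious identity,
\[
\widehat{A \ast B}(\lambda) \, \widehat{\hnu}(\lambda) \;=\; \widehat{A \ast P_0}(\lambda) \, \widehat{B \ast P_0}(\lambda), \qquad A, B \in \traceopK.
\]
This falls out of the already-established commutative algebra structure on $\convolalg$: the four-fold product $(0,A)\ast(0,B)\ast(0,P_0)\ast(0,P_0)$ can be regrouped in two ways to yield the equality $A\ast B\ast \hnu = B_\nu(A) \ast B_\nu(B)$ in $\LoneinvK$ (using $P_0\ast P_0 = \hnu$ and $A\ast P_0 = B_\nu(A)$ from Lemma \ref{lem:ToepBerezin convol}), and since the ordinary spherical Fourier transform is multiplicative on function convolutions, applying it produces the identity. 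The analogous easier fact $(f\ast B)\ast P_0 = f\ast(B\ast P_0) = f\ast B_\nu(B)$ gives $\widehat{(f\ast B)\ast P_0}(\lambda) = \widehat{f}(\lambda)\,\widehat{B\ast P_0}(\lambda)$.

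For (1), I expand $\Phi_{\lambda,j}((f,A)\ast(g,B))$ using the definition of the product in $\convolalg$: the function component contributes $\widehat{f\ast g}(\lambda) + \widehat{A\ast B}(\lambda)$ and the operator component contributes $(-1)^j[\widehat{(f\ast B)\ast P_0}(\lambda) + \widehat{(g\ast A)\ast P_0}(\lambda)]/\sqrt{\widehat{\hnu}(\lambda)}$. Substituting $\widehat{f\ast g}=\widehat f \, \widehat g$, the key identity, and the cross-term identity, this matches $\Phi_{\lambda,j}(f,A)\,\Phi_{\lambda,j}(g,B)$ after routine algebra. For (2), given $\Phi \in \maxid$, Lemma \ref{lem:functional_on_functions} supplies a bounded spherical function $\phi_\lambda$ encoding $\Phi$ on the function slot, and Theorem \ref{lem:regularity} together with Lemma \ref{lem:functionalPzero} force $\Phi(P_0) = \pm \sqrt{\widehat{\hnu}(\lambda)} \neq 0$, selecting $j \in \{0,1\}$. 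For arbitrary $A\in\traceopK$, multiplicativity then gives $\Phi(A)\,\Phi(P_0) = \Phi(A\ast P_0) = \widehat{A\ast P_0}(\lambda)$, which determines $\Phi(A)$ as $\Phi_{\lambda,j}$ dictates.

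For (3), the task reduces to showing that $\tr(A\,T_{\phi_\lambda})$ equals $\widehat{A\ast P_0}(\lambda)$ up to the normalizing constants recorded in Lemma \ref{lem:ToepBerezin convol}. Writing $T_{\phi_\lambda} = c_\nu(\phi_\lambda \ast P_0)$ and expanding the translated rank-one projections as $\tau_{t_z}(P_0) = \kernu{z}\otimes \kernu{z}$, a direct trace calculation on an orthonormal basis of $\Berg$ produces
\[\tr(A\,T_{\phi_\lambda}) \;=\; c_\nu \int_{\B^n} \phi_\lambda(z)\,\ipnu{A\kernu{z}}{\kernu{z}}\inv{z} \;=\; c_\nu \int_{\B^n} \phi_\lambda(z)\, B_\nu(A)(z)\inv{z},\]
and the right-hand integral is $\widehat{A\ast P_0}(\lambda)$ since $B_\nu(A) = A\ast P_0$ is radial. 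For (4), $T_{\phi_\lambda}$ has a radial symbol and therefore commutes with the $K$-action on $\Berg$, so by Schur's lemma it acts as a scalar $\gamma_m(\phi_\lambda)$ on each isotype $\rP^m(\C)$; the same holds for $T^{\lambda,\nu}$ with $c_m(\lambda,\nu)=\widehat{\hnu}(\lambda)^{-1/2}\gamma_m(\phi_\lambda)$. Evaluating $\gamma_m(\phi_\lambda) = \ipnu{T_{\phi_\lambda}f_{\vm}}{f_{\vm}}$ for $\vm=(m,0,\ldots,0)$, switching to polar coordinates via Theorem \ref{thm:Int}, and applying the standard surface integral $\int_{\rS^{2n-1}}|\omega_1|^{2m}\,d\sigma_n(\omega) = m!(n-1)!/(n+m-1)!$ reduces the computation to the one-dimensional integral in the statement.

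The main obstacle is (1): once the key identity is isolated, the rest is bookkeeping, but one has to carefully track the types (function-function, function-operator, or operator-operator) of each convolution appearing in the expansion and invoke the correct associativity statement from Section \ref{sec:QHA} at each step, so that every expression is reduced to a spherical Fourier transform of a genuine element of $\LoneinvK$ before the algebraic manipulation can be closed.
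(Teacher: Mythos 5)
Your proposal follows essentially the same route as the paper: the multiplicativity in (1) rests on exactly the regrouping $(A\ast B)\ast(P_0\ast P_0)=(A\ast P_0)\ast(B\ast P_0)$ (which the paper gets from Lemma \ref{lem:radial_convol} and evaluates under $\Phi_{\lambda,j}$ rather than under the spherical transform directly, but this is the same identity), and (2) is argued identically via Lemma \ref{lem:functional_on_functions}, Theorem \ref{lem:regularity} and division by $\Phi(P_0)\neq 0$. Two small divergences are worth recording. In (3) the paper invokes the duality relation of Lemma \ref{lem:duality}, $\widehat{A\ast P_0}(\lambda)=\ip{A}{\phi_\lambda\ast P_0}_{\tr}$, whereas you compute the trace directly through $\act{x}{P_0}=\kernu{x\cdot 0}\otimes\kernu{x\cdot 0}$; both work, but your route makes the normalization $\Toep_{\phi_\lambda}=c_\nu(\phi_\lambda\ast P_0)$ explicit, and you should carry that factor of $c_\nu$ all the way through to make sure it cancels against the conventions used for $\widehat{\hnu}$. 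In (4) your computation is actually the more careful one: the paper evaluates $\ipnu{\Toep_{\phi_\lambda}z_1^m}{z_1^m}=\int_{\nball}\phi_\lambda(w)|w_1|^{2m}\bmeas{w}$ and then passes to polar coordinates \emph{without} the factor $\int_{\rS^{2n-1}}|\omega_1|^{2m}\,d\sigma_n(\omega)=\tfrac{m!\,(n-1)!}{(n+m-1)!}$, which you include. Carrying your computation out gives the prefactor $\Gamma(\nu+m)/\bigl(\Gamma(n+m)\,\Gamma(\nu-n)\,\widehat{h_\nu}(\lambda)^{1/2}\bigr)$ in place of the printed $\Gamma(\nu+m)/\bigl(m!\,(n-1)!\,\Gamma(\nu-n)\,\widehat{h_\nu}(\lambda)^{1/2}\bigr)$; the two agree only for $m=0$ or $n=1$, and the sanity check $\|z_1^m\|_\nu^2=m!\,\Gamma(\nu)/\Gamma(\nu+m)$ confirms that the surface integral cannot be omitted, so the discrepancy is a slip in the paper's constant rather than in your argument.
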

\begin{proof}
    We first verify (1): It is clear that $\Phi_{\lambda, j}$ is a linear functional on $\convolalg$ whose restriction to $\LoneinvK$ is multiplicative. We have to verify that it is multiplicative with respect to the convolutions of functions and operators.
    Let $A,B\in \traceop^K$. 
    \begin{align*}
        \Phi_{\lambda, j}((A\ast B)\ast \hnu)&=  \Phi_{\lambda, j}((A\ast B)\ast (P_0\ast P_0))\\
        &=  \Phi_{\lambda, j}((A\ast P_0)\ast (B\ast P_0))\\
        &= \widehat{(A\ast P_0)}(\lambda)\widehat{(B\ast P_0)}(\lambda).
    \end{align*}
    But also,
    \begin{align*}
        \Phi_{\lambda, j}((A\ast B)\ast \hnu)&=\widehat{(A\ast B)}(\lambda)\widehat{\hnu}(\lambda).
    \end{align*}
    Hence, we have
    $$\Phi_{\lambda, j}(A\ast B)=\Phi_{\lambda, j}(A)\Phi_{\lambda, j}(B).$$
    Also, for $f\in \LoneinvK$, we have
    \begin{align*}
        \Phi_{\lambda, j}(f\ast A)&=(-1)^j\frac{(\widehat{f\ast A\ast P_0})(\lambda)}{\sqrt{\widehat{\hnu}(\lambda)}}\\
        &=(-1)^j\frac{\widehat{f}(\lambda)\widehat{(A\ast P_0)}(\lambda)}{\sqrt{\widehat{\hnu}(\lambda)}}\\
        &=\Phi_{\lambda, j}(f)\Phi_{\lambda, j}(A).
    \end{align*}
    This concludes proving that $\Phi_{\lambda, j}$ is a multiplicative functional.
    
    To prove (2), let $\phi_\lambda$ be as in Lemma \ref{lem:functional_on_functions}. 
    Then for $f\in \LoneinvK$, $\Phi(f)=\widehat{f}(\lambda )$ and for $A\in\traceopK$, using that $\Phi (P_0)\not= 0$:
    $$\Phi(A)=\frac{\Phi(A)\Phi(P_0)}{\Phi(P_0)}=\frac{\Phi(A\ast P_0)}{\Phi(P_0)}=
    \pm\frac{\widehat{A\ast P_0}(\lambda )}{\sqrt{\widehat{\hnu}(\lambda)}}$$
    and the result follows from Lemma \ref{lem:functionalPzero}.
    
    For verifying (3), note that by the duality identity in Lemma \ref{lem:duality},
    $$\widehat{A\ast P_0}(\lambda )=\ip{A\ast P_0}{\phi_\lambda }_{L^1}=\ip{A}{\phi_\lambda \ast P_0}_\tr=\tr(A\Toep_{\phi_\lambda}).$$

 The formula for the eigenvalue is know, see \cite[Section 5]{Grudsky_etal2003}, but we include it here for completeness. 
 We recall from \eqref{defMea} that the measure $\mu_\nu$ is given by 
 \[ d\mu_\mu (z) =c_\nu (1-|z|^2)^{\nu - n -1} dz = c_\nu 2n (1-|z|^2)^{\nu -n -1}r drd\sigma (w)\]
 where $\sigma $ is the $K$-invariant probability measure on $\rS^{2n-1}$. We therefore get, using that $T^{\lambda,\nu} |_{\rP^m(\C)}$ is
 a constant:
 \begin{align*} \widehat{h_\nu}(\lambda )^{1/2} \|z_1^m\|_\nu^2 c_m (\lambda ,\nu) &= \ip{\Toep_{\phi_\lambda}z_1^m}{z_1^m}\\
 &= \int_{\nball}\int_{\nball} \phi_\lambda (w)w_1^m K(z,w) \bar{z_1^m}d\mu_\nu(w)d\mu_\nu (z)\\
 &=  \int_{\nball} \phi_\lambda (w)w_1^m \overline{\int_{\nball} K(w,z) z_1^m d\mu_\nu(z) }d\mu_\nu (w)\\
 &= \int_{\nball } \phi_\lambda (w)|w_1|^{2m}d\mu_\nu (w)\\
 & = c_{\nu}  2n\int_0^1 \phi_\lambda (r) r^{2m + 2n -2} (1-r^2)^{\nu - n -1} rdr\\
 &= c_{\nu} n \int_0^1 \phi_\lambda (\sqrt{s}) s^{n+m -1} (1-s)^{\nu - n -1} ds\qquad s= r^2 .
  \end{align*}
The claim now follows as $c_\nu = \Gamma (\nu)/(n! \Gamma (\nu -n)) $ and $\|z_1^m\|_\nu^2 = m! \Gamma (\nu)/\Gamma (\nu +m)$.
\end{proof}

\begin{remark}
\begin{enumerate}
    \item The above results depends on the choice of the square root $\sqrt{\widehat{\hnu} (\lambda )}$. Nevertheless, passing from one possible choice of a square root the other goes along with replacing $j$ by $j+1$ (modulo 2) in all the above formulas, so the actual choice is not really important.
\item One can use part 2 of Theorem \ref{thm:spherical_functions} to evaluate the constant further, but as far as we can see the result does not clarify the
situation at all so we do not include that calculation here.\end{enumerate}
\end{remark}

\begin{corollary}
    We have, as sets, the following identification of the Gelfand space:
    $$\maxid = (\mathbb R \times i[-n, n])/\sim ~ = \spheri.$$
\end{corollary}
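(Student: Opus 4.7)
The corollary is a direct set-theoretic consequence of Proposition \ref{prop:functional_characterize}. My plan is to define the natural restriction map $\Psi: \maxid \to \spheri$ sending $\Phi$ to the spherical function $\phi_{\lambda(\Phi)}$ produced by restricting $\Phi$ to the subalgebra $\LoneinvK$ via Lemma \ref{lem:functional_on_functions}, and then read off the asserted bijection from the content of that proposition.

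Surjectivity of $\Psi$ is immediate: Proposition \ref{prop:functional_characterize}(1) provides, for each $\phi_\lambda \in \spheri$, a functional $\Phi_{\lambda, 0} \in \maxid$ whose restriction to $\LoneinvK$ is $\phi_\lambda$, so that $\Psi(\Phi_{\lambda, 0}) = \phi_\lambda$. For the reverse direction, Proposition \ref{prop:functional_characterize}(2) shows every $\Phi \in \maxid$ is of the form $\Phi_{\lambda, j}$ for some $\lambda \in \spheri$ and $j \in \{0, 1\}$. In particular, $\Psi$ is a surjection whose fibers are the two-point sets $\{\Phi_{\lambda, 0}, \Phi_{\lambda, 1}\}$, indexed by the sign of the square root $\sqrt{\widehat{\hnu}(\lambda)}$.

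The main point of care is the identification of each fiber $\Psi^{-1}(\phi_\lambda)$ as a single element of $\maxid$. Since the two functionals in a fiber differ only by the conventional sign coming from the continuous branch of $\sqrt{\widehat{\hnu}}$ fixed via Theorem \ref{lem:regularity}, the asserted set-level equality $\maxid = \spheri = (\R \times i[-n,n])/\sim$ follows upon adopting this identification. All computational work has already been carried out in the preceding proposition; the corollary is simply a packaging statement recording the underlying spherical parameter that indexes each element of $\maxid$.
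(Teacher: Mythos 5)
Your argument correctly extracts from Proposition \ref{prop:functional_characterize} that the restriction map $\Psi:\maxid\to\spheri$ is surjective with fibers $\{\Phi_{\lambda,0},\Phi_{\lambda,1}\}$, but the final step --- collapsing each fiber to a single point ``upon adopting this identification'' --- is not valid. The two functionals in a fiber are genuinely distinct elements of $\maxid$: they agree on $\LoneinvK$ but differ on $\traceopK$, since
\begin{align*}
\Phi_{\lambda,0}(0,P_0)=\sqrt{\widehat{\hnu}(\lambda)}\neq-\sqrt{\widehat{\hnu}(\lambda)}=\Phi_{\lambda,1}(0,P_0),
\end{align*}
and $\widehat{\hnu}(\lambda)\neq 0$ by Theorem \ref{lem:regularity}. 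The choice of a continuous branch of $\sqrt{\widehat{\hnu}}$ only decides which of the two functionals receives the label $j=0$; it does not merge them into one. So $\Psi$ is two-to-one, and the correct set-level conclusion is $\maxid\cong\spheri\times\Z_2$, which is precisely what the introduction and the subsequent topological theorem assert. (The corollary as printed, with the $\Z_2$ factor absent, appears to be a misprint; your write-up in effect reproduces the misprint rather than the content of the proposition.)

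If you restate the conclusion as $\maxid\cong\spheri\times\Z_2$, the rest of your argument is essentially the intended one and matches the paper: part (1) of the proposition gives that every $\Phi_{\lambda,j}$ is a multiplicative functional, part (2) gives that every element of $\maxid$ arises this way, and the displayed computation above shows that distinct pairs $(\lambda,j)$ (with $\lambda$ taken in $(\R\times i[-n,n])/\sim$, using point (4) of Theorem \ref{thm:spherical_functions} to see that distinct classes $\lambda$ give distinct restrictions to $\LoneinvK$) yield distinct functionals. No further identification is needed or permitted.
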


\begin{lemma}
    Let $\lambda \in (\mathbb R \times i[-n, n])/\sim$. Then $\|T^{\lambda,\nu}\|\leq 1$.
\end{lemma}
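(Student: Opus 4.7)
The plan is to exploit the fact that, by Proposition \ref{prop:functional_characterize}, the functional $\Phi_{\lambda,0}$ is a (nontrivial) multiplicative linear functional on the commutative Banach algebra $\convolalg$, and then to use the standard fact from Gelfand theory that every such functional has operator norm at most $1$.

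Concretely, I would proceed as follows. First, observe that $T^{\lambda,\nu} = \widehat{h_\nu}(\lambda)^{-1/2}\,\Toep_{\phi_\lambda}$ is a radial bounded operator, since $\phi_\lambda$ is a radial bounded function on $\nball$ (by Theorem \ref{thm:spherical_functions}(1) together with boundedness for $\im\lambda\in[-n,n]$), so that $\Toep_{\phi_\lambda}\in\bddK$. Hence $T^{\lambda,\nu}\in\bddK$, and Proposition \ref{prop:dual_of_radial_trace_class} applies, giving
\begin{equation*}
    \|T^{\lambda,\nu}\| \;=\; \sup_{\substack{A\in\traceopK \\ \|A\|_1\leq 1}} |\tr(A\,T^{\lambda,\nu})|.
\end{equation*}

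Next, I invoke Proposition \ref{prop:functional_characterize}(1), which says that $\Phi_{\lambda,0}\in\maxid$. Since $\convolalg$ is a commutative Banach algebra under the norm $\|(f,A)\|=\|f\|_1+\|A\|_1$, the general theory of commutative Banach algebras tells us that any character has norm at most $1$:
\begin{equation*}
    |\Phi_{\lambda,0}(f,A)| \;\leq\; \|f\|_1 + \|A\|_1, \quad (f,A)\in\convolalg.
\end{equation*}
Specializing to $f=0$ and invoking part (3) of Proposition \ref{prop:functional_characterize}, we get $|\tr(A\,T^{\lambda,\nu})|\leq \|A\|_1$ for every $A\in\traceopK$. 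Combining this with the duality formula above yields $\|T^{\lambda,\nu}\|\leq 1$.

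There is no real obstacle here: the only thing to check is that $\Phi_{\lambda,0}$ is not the zero functional (which would make the norm bound vacuous of the wrong sort), but this is automatic since $\Phi_{\lambda,0}(P_0)=\sqrt{\widehat{h_\nu}(\lambda)}\neq 0$ by Theorem \ref{lem:regularity}. Thus the whole statement reduces to a one-line application of the Gelfand-theoretic fact that characters of a Banach algebra are contractive, together with the trace-duality identification of $(\traceopK)'$ with $\bddK$.
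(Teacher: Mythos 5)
Your proposal is correct and follows essentially the same route as the paper's own proof: identify $\|T^{\lambda,\nu}\|$ with $\sup_{\|A\|_1\leq 1}|\tr(A\,T^{\lambda,\nu})|$ via Proposition \ref{prop:dual_of_radial_trace_class}, recognize $\tr(A\,T^{\lambda,\nu})$ as $\Phi_{\lambda,0}(0,A)$, and invoke the contractivity of multiplicative linear functionals. The extra checks you include (radiality of $T^{\lambda,\nu}$ and nontriviality of $\Phi_{\lambda,0}$) are sound but not needed beyond what the paper already records.
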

\begin{proof}
We write $T=   T^{\lambda,\nu}$.   Boundedness follows by properties of multiplicative functionals and Proposition \ref{prop:dual_of_radial_trace_class}.
Using that   $\|\Phi \|=1$ we get:
    \begin{align*}
        \| T \| = \sup_{A \in \traceopK, \| A\|_1 \leq 1} |\tr(AT) | = \sup_{A \in \traceopK, \| A\|_1 \leq 1} |\Phi_{\lambda ,j}
         (0,A)|\leq \sup_{A \in \traceopK, \| A\|_1 \leq 1} \|A\|=1,
    \end{align*}
    which concludes the proof.
\end{proof}
This lemma, in particular, implies the following bound on the norm of Toeplitz operators with bounded spherical functions as symbols: 
$$\|\Toep _{\phi_\lambda} \|\leq |\widehat{\hnu}(\lambda )|^{\frac{1}{2}}.$$
This estimate is better than the usual estimate of $\| \Toep_{\phi_\lambda} \| \leq \|\phi_\lambda   \|_\infty = 1$.

So far, we have described the maximal ideal space as a set, i.e., we have identified all multiplicative linear functionals. The maximal 
ideal space canonically has the structure of a topological space, by endowing it with the subspace topology of the weak$^\ast$ topology on the 
dual of $\convolalg$. 

We start with the following lemma. 
\begin{lemma}\label{lem:wstar-cont}
    The map $(\mathbb R \times i[-n, n])/ \sim \ni \lambda \mapsto T^{\lambda,\nu} \in \mathcal L(\Berg)^K$ is weak$^\ast$-continuous (with respect to the predual $\traceopK$).
\end{lemma}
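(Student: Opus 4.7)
The plan is to reduce the question to continuity of an explicit quotient of scalar functions. By Proposition \ref{prop:functional_characterize}(3), once a continuous branch of $\sqrt{\widehat{h_\nu}}$ has been fixed on $\spheri$, one has
\[
\tr(A T^{\lambda,\nu}) = \Phi_{\lambda,0}(0,A) = \frac{\widehat{A \ast P_0}(\lambda)}{\sqrt{\widehat{h_\nu}(\lambda)}}
\]
for every $A \in \traceopK$. Since weak$^\ast$-continuity with respect to the predual $\traceopK$ means precisely that $\lambda \mapsto \tr(A T^{\lambda,\nu})$ is continuous for every such $A$, I would only need to verify continuity of the right-hand side as a function of $\lambda$.

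First I would observe that $A \ast P_0 \in L^1(\nball)^K$: this uses Lemma \ref{lem:radial_convol}(2) together with the QHA Young inequality from Section \ref{sec:repr}, which ensures that the convolution of two radial trace class operators is a radial $L^1$-function. Consequently $\widehat{A\ast P_0}$ is a well-defined scalar-valued function on $\spheri$. Continuity of $\lambda \mapsto \widehat{g}(\lambda)$ for any fixed $g \in L^1(\nball)^K$ is then a standard dominated-convergence argument: the bounded spherical functions satisfy $|\phi_\lambda(z)| \leq 1$ (Lemma \ref{lemma:bddspherical_bound_one}) and, having identified $\spheri$ with $(\mathbb R \times i[-n,n])/\sim$ as topological spaces via Theorem \ref{thm:gelfand_space_function_algebra}, part (9) of Theorem \ref{thm:spherical_functions} supplies the pointwise convergence $\phi_{\lambda_j}(z) \to \phi_\lambda(z)$ whenever $\lambda_j \to \lambda$. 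Passing to the limit under the integral then gives $\widehat{g}(\lambda_j) \to \widehat{g}(\lambda)$.

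The denominator is harmless, because Theorem \ref{lem:regularity} supplies a continuous and nowhere-vanishing square root of $\widehat{h_\nu}$ on $\spheri$, so that dividing by it preserves continuity. Putting these two pieces together yields continuity of $\lambda \mapsto \tr(A T^{\lambda,\nu})$ for every $A \in \traceopK$, which is exactly the asserted weak$^\ast$-continuity. I do not expect any genuine obstacle: the argument is essentially a repackaging of the spectral picture developed in Proposition \ref{prop:functional_characterize}, whose only substantive inputs are the continuity of the spherical Fourier transform on $L^1(\nball)^K$ and the nonvanishing of $\widehat{h_\nu}$.
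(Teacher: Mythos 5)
Your proposal is correct and follows essentially the same route as the paper: both reduce weak$^\ast$-continuity to continuity of $\lambda \mapsto \widehat{(A\ast P_0)}(\lambda)/\sqrt{\widehat{h_\nu}(\lambda)}$, which the paper dismisses as a quotient of continuous functions. You merely fill in the (standard) justification that the spherical Fourier transform of an $L^1$-function is continuous, which the paper leaves implicit.
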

\begin{proof}
We need to show that for every $A \in \traceopK$ the map
\begin{align*}
    (\mathbb R \times i[-n, n])/ \sim \ni \lambda \mapsto \tr(AT^{\lambda,\nu})
\end{align*}
is continuous. But this is an immediate consequence of the fact that
\begin{align*}
    \tr(AT^{\lambda,\nu}) = \frac{\widehat{(A \ast P_0)}(\lambda)}{\sqrt{\widehat{\hnu}(\lambda)}},
\end{align*}
where the right-hand side is a quotient of continuous functions.
\end{proof}

\begin{theorem}
    As topological spaces, we have
    \begin{align*}
        \mathcal M(\convolalg) \cong [(\mathbb R \times i[-n, n])/ \sim] \times \mathbb Z_2
    \end{align*}
    upon identifying the point $(\lambda, j) \in [(\mathbb R \times i[-n, n])/ \sim] \times \mathbb Z_2$ with the multiplicative linear functional
    \begin{align*}
        (f, A) \mapsto \Phi_{\lambda, j}(f, A) =  \widehat{f}(\lambda) + (-1)^j \tr(AT^{\lambda,\nu}).
    \end{align*}
\end{theorem}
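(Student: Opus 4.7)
The plan is to upgrade the set-theoretic bijection $(\lambda,j) \mapsto \Phi_{\lambda,j}$ from Proposition \ref{prop:functional_characterize} to a homeomorphism, where the domain carries the product topology (with $\mathbb Z_2$ discrete and $(\mathbb R \times i[-n,n])/\sim$ its quotient of the Euclidean topology) and $\maxid$ carries the weak$^\ast$-topology inherited as a subset of the dual of $\convolalg$. Since Proposition \ref{prop:functional_characterize} together with Lemma \ref{lem:functional_on_functions} and Lemma \ref{lem:functionalPzero} already gives that every multiplicative linear functional is uniquely of the form $\Phi_{\lambda,j}$ (with $\lambda$ determined by the restriction to $\LoneinvK$ and $j$ determined by the sign of $\Phi(P_0)$, which is nonzero), only the topological claim remains.

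First I would prove continuity of $(\lambda,j) \mapsto \Phi_{\lambda,j}$. Since $\mathbb Z_2$ is discrete, it suffices to fix $j$ and show that for every $(f,A) \in \convolalg$ the scalar map
\begin{align*}
    \lambda \mapsto \Phi_{\lambda,j}(f,A) = \widehat{f}(\lambda) + (-1)^j \tr(AT^{\lambda,\nu})
\end{align*}
is continuous on $(\mathbb R \times i[-n,n])/\sim$. The first summand is continuous by Theorem \ref{thm:gelfand_space_function_algebra}, which precisely identifies the Gelfand topology on $\mathcal M(\LoneinvK)$ with the natural quotient topology; the second summand is continuous by Lemma \ref{lem:wstar-cont}. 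Hence the map is continuous into the weak$^\ast$-topology.

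Next I would prove that the inverse is continuous. Suppose $\Phi_{\lambda_\gamma,j_\gamma} \to \Phi_{\lambda,j}$ weak$^\ast$. Restricting the test elements to pairs $(f,0)$ with $f \in \LoneinvK$ yields $\widehat{f}(\lambda_\gamma) \to \widehat{f}(\lambda)$ for every $f$, which by Theorem \ref{thm:gelfand_space_function_algebra} forces $\lambda_\gamma \to \lambda$ in the natural topology on $(\mathbb R \times i[-n,n])/\sim$. To extract convergence of $j_\gamma$, test against $(0,P_0)$ to obtain
\begin{align*}
    (-1)^{j_\gamma}\sqrt{\widehat{\hnu}(\lambda_\gamma)} \longrightarrow (-1)^{j}\sqrt{\widehat{\hnu}(\lambda)}.
\end{align*}
Theorem \ref{lem:regularity} ensures $\widehat{\hnu}(\lambda) \neq 0$ and the chosen branch of $\sqrt{\widehat{\hnu}}$ is continuous on $\spheri$, so $\sqrt{\widehat{\hnu}(\lambda_\gamma)} \to \sqrt{\widehat{\hnu}(\lambda)} \neq 0$; the signs must therefore eventually agree with $(-1)^j$, i.e., $j_\gamma \to j$ in $\mathbb Z_2$.

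Neither step involves a genuine obstacle since the preparatory lemmas (Theorem \ref{thm:gelfand_space_function_algebra}, Lemma \ref{lem:wstar-cont}, and the non-vanishing from Theorem \ref{lem:regularity}) do all the real work. The only point worth emphasizing is that the argument would fail without the non-vanishing of $\widehat{\hnu}$ on $\spheri$: it is this property which allows the $\mathbb Z_2$-component to be detected topologically by evaluating the functionals at the single operator $P_0$, and which guarantees that the two copies of $\spheri$ sitting inside $\maxid$ are separated rather than glued along a degenerate locus.
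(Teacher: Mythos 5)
Your proposal is correct and follows essentially the same route as the paper: necessity of $\lambda_\gamma \to \lambda$ via testing against $(f,0)$, sufficiency via Theorem \ref{thm:gelfand_space_function_algebra} and Lemma \ref{lem:wstar-cont}, and detection of the $\mathbb Z_2$-component through the non-vanishing of $\widehat{\hnu}$. The only (harmless) difference is the test element used to pin down $j_\gamma$: you evaluate at $(0,P_0)$ and divide by the continuous, nowhere-vanishing branch $\sqrt{\widehat{\hnu}(\lambda_\gamma)} \to \sqrt{\widehat{\hnu}(\lambda)} \neq 0$, while the paper tests against $(c\hnu, \sqrt{c}\,P_0)$ with $c$ small and separates the values into two disjoint disks around $\pm 1$; your version is, if anything, slightly more direct.
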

\begin{proof}As usual, we will abbreviate $(\mathbb R \times i[-n, n])/ \sim] \cong \spheri$. 

    We already know that the identification in the theorem gives a 1:1 map between points in $\spheri \times \mathbb Z_2$ and multiplicative linear functionals on the algebra. To prove that this identification is also a topological isomorphism, we will show that a net $(\lambda_\gamma, j_\gamma)_{\gamma} \subset \spheri \times \mathbb Z_2$ converges in the Gelfand spectrum to $(\lambda, j) \in \spheri \times \mathbb Z_2$ if and only if it converges in the natural topology of $\spheri \times \mathbb Z_2$. Note that, by the definition of the weak$^\ast$ topology on the maximal ideal space, we have convergence $(\lambda_\gamma, j_\gamma) \to (\phi, j)$ if and only if
    \begin{align*}
        \Phi_{\lambda_\gamma, j_\gamma}(f, A) \to \Phi_{\lambda, j}(f, A)
    \end{align*}
    for every possible choice $(f, A) \in \convolalg$. Setting $A = 0$, we immediately see that we necessarily have
    \begin{align*}
        \widehat{f}(\lambda_\gamma) \to \widehat{f}(\lambda)
    \end{align*}
    for every $f \in \LoneinvK$. In particular, for $(\lambda_\gamma, j_\gamma) \to (\lambda, j)$ to hold true in the maximal ideal space, we necessarily have $\lambda_\gamma \to \lambda$ in $\spheri$.

    Let now $c > 0$ be small enough such that $\| c\hnu\|_{L^1} < 1$ (with respect to the invariant measure). As a side note, observe that for $n \geq 7$ we can simply let $c = 1$.
    By what we have seen before, this yields: $|\widehat{(c\hnu)}(\lambda)| < 1$. Therefore, we see that:
    \begin{align*}
        \Phi_{\lambda, j}(c\hnu, \sqrt{c}1\otimes 1) = \sqrt{\widehat{(c\hnu)}(\lambda)}(\sqrt{\widehat{(c\hnu)}(\lambda)} + (-1)^j)
    \end{align*}
    Since $|\sqrt{\widehat{ch_\nu}(\lambda)}| < 1$ for each $\lambda$, we see that for $(\lambda_\gamma, j_\gamma) \to (\lambda, j)$, it is necessary that eventually $j_\gamma = j$: When $j_\gamma = 0$, then the values of $\sqrt{\widehat{ch_\nu}(\lambda_\gamma)} + (-1)^{j_\gamma}$ are contained in a disk around $1$ with radius strictly smaller than one. If $j_\gamma = 1$, they are contained in a disk around $-1$ with radius strictly smaller than one.
    
    Hence summarizing, for convergence in the maximal ideal space it is necessary that $\lambda_\gamma \to \lambda$ in $\spheri$ and $j_\gamma \to j$ in the discrete topology. That this is also sufficient for convergence in the maximal ideal space follows from continuity of the Fourier transform of $L^1$ functions, together with the previous Lemma \ref{lem:wstar-cont}.
\end{proof}

\subsection{$^\ast$-homomorphisms of $\convolalg$}\label{sec:FTop}
The algebra $\fA_\nu$ comes also with a $*$-operator $(f,A) \mapsto (f,A)^* =(\bar f,A^*)$ where we have used that
$f^* = \bar f$. 
If $\Phi_{\lambda,j}$ is a $*$-homomorphism then $\Phi_{\lambda, j}|_{\LoneinvK}$ is a $*$-homomorphism and that happens if and only if 
$\phi_\lambda$ is positive definite, see \cite{vD09}, and by Theorem \ref{thm:spherical_functions}  that happens if and only if $\lambda \in (\R \cup i[-n,n])/\sim ~\cong \spheripd$.  

\begin{lemma}
    $\widehat{h_\nu}(\lambda) > 0$ for each $\lambda \in \spheripd$.
\end{lemma}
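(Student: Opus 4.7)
The plan is to exploit the continuity and non-vanishing of $\widehat{h_\nu}$ (already recorded in Theorem \ref{lem:Bnvarphi}), combined with the connectedness of $\spheripd$, to reduce the claim to a check at a single convenient point.

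First I would verify that $\widehat{h_\nu}$ is real on $\spheripd$: for $\lambda \in \R \cup i[-n,n]$, Theorem \ref{thm:spherical_functions}(1) tells us that $\phi_\lambda$ is real-valued, and $h_\nu$ is real, so $\widehat{h_\nu}(\lambda) = \int_{\nball} h_\nu(z) \phi_\lambda(z) \inv{z} \in \R$. Next, Theorem \ref{lem:Bnvarphi} applied to $\varphi_\nu = c_\nu h_\nu$ gives $\widehat{h_\nu}(\lambda) \neq 0$ for every $\lambda \in \spheri$, and in particular for $\lambda \in \spheripd$. Finally, $\spheripd \cong (\R \cup i[-n,n])/\!\sim$ is path-connected: the identification $\lambda \sim -\lambda$ collapses $\R$ to $[0, \infty)$ and $i[-n, n]$ to $i[0, n]$, and the two pieces meet at $0$.

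Combining these three observations, $\widehat{h_\nu}$ is a continuous, real-valued, nowhere-vanishing function on the connected space $\spheripd$, so it has constant sign, and it suffices to evaluate that sign at a single point. The cleanest choice is $\lambda = 0$. By part (2) of Theorem \ref{thm:spherical_functions},
\[
  \phi_0(z) = (1-|z|^2)^{\rho/2}\, \hgf\!\left(\tfrac{\rho}{2}, \tfrac{\rho+1}{2}; n; |z|^2\right), \qquad \rho = n - \tfrac{1}{2}.
\]
Since $\rho/2$, $(\rho+1)/2$ and $n$ are all strictly positive, every coefficient in the hypergeometric power series is positive, and the prefactor is positive on $\nball$. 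Hence $\phi_0(z) > 0$ throughout $\nball$, and since $h_\nu(z) > 0$ as well, the integral $\widehat{h_\nu}(0) = \int_{\nball} h_\nu(z) \phi_0(z) \inv{z}$ is strictly positive. By the constancy of sign, $\widehat{h_\nu}(\lambda) > 0$ for every $\lambda \in \spheripd$.

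I do not foresee a serious obstacle: the argument is really the careful assembly of ingredients already available in the paper. The only mild care needed is to separately check reality, continuity, and non-vanishing on $\spheripd$, and to notice that at $\lambda = 0$ every parameter in the hypergeometric representation of $\phi_0$ is positive, so $\phi_0$ is manifestly positive.
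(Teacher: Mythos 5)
Your proof is correct and follows essentially the same route as the paper's: real-valuedness of $\widehat{h_\nu}$ on $\spheripd$ from the real-valuedness of $\phi_\lambda$, non-vanishing from Theorem \ref{lem:Bnvarphi}, connectedness of $\spheripd$, and a sign check at a single point. The only (minor, favorable) difference is in that last step: the paper asserts $\widehat{h_\nu}(0) = \|h_\nu\|_{L^1}$, which tacitly treats $\phi_0$ as the constant function $1$, whereas you verify directly from the hypergeometric representation that $\phi_0 > 0$ on $\nball$ and conclude $\widehat{h_\nu}(0) = \int_{\nball} h_\nu \phi_0 \, d\lambda > 0$ --- a more careful justification of the same evaluation.
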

\begin{proof}
    By Theorem \ref{thm:spherical_functions}, each $\phi_\lambda$ with $\lambda \in (\R \cup i[-n,n])/\sim$ is real-valued. Since $h_\nu$ attains only positive values, we see that $\widehat{h_\nu}(\lambda)$ is real-valued for each $\lambda \in (\R \cup i[-n,n])/\sim$. Note that $\lambda = 0$ yields a positive definite spherical function, hence we can evaluate $\widehat{h_\nu}$ at $0$ to obtain $\widehat{h_\nu}(0) = \| h_\nu\|_{L^1} > 0$. Since $\spheripd$ is (path-)connected by Theorem \ref{thm:gelfand_space_function_algebra}, $\widehat{h_\nu}(\lambda)$ depends continuously on $\lambda$ and $\widehat{h_\nu}(\lambda) \neq 0$ for each $\lambda$ by Theorem \ref{lem:regularity}, we see that $\widehat{h_\nu}(\lambda) > 0$ for every $\lambda \in (\R \cup i[-n,n])/\sim$. 
\end{proof}
By the previous lemma, we can now consider by $\sqrt{\widehat{h_\nu}}$ always the choice of square root which takes only positive values when restricted to $\spheripd$. We assume this throughout the remainder of the paper.
\begin{lemma}
    Let $\Phi = \Phi_{\lambda, j} \in \maxid$. Then, $\Phi$ is a $^\ast$-homomorphism if and only if $\lambda \in (\R \cup i[-n,n])/\sim$.
\end{lemma}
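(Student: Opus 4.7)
The plan is to handle the two directions separately, exploiting the decomposition $\Phi_{\lambda,j}(f,A) = \widehat{f}(\lambda) + (-1)^j \tr(AT^{\lambda,\nu})$ together with the fact, established just above, that on $\spheripd$ the chosen square root satisfies $\sqrt{\widehat{h_\nu}(\lambda)} > 0$.

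For the forward direction, suppose $\Phi = \Phi_{\lambda,j}$ is a $\ast$-homomorphism. Restricting to the $\ast$-subalgebra $\LoneinvK \hookrightarrow \convolalg$ via $f \mapsto (f,0)$, we obtain a $\ast$-homomorphism of $\LoneinvK$. Since $\Phi_{\lambda,j}(f,0) = \widehat{f}(\lambda)$ and the involution on radial functions is complex conjugation, compatibility with $\ast$ is equivalent to $\phi_\lambda$ being real-valued, i.e., to $\phi_\lambda$ being positive definite (see Appendix \ref{app:Gelfandpairs}). By part (5) of Theorem \ref{thm:spherical_functions}, this forces $\lambda \in (\mathbb{R} \cup i[-n,n])/\sim \cong \spheripd$.

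Conversely, assume $\lambda \in \spheripd$. By Proposition \ref{prop:functional_characterize}(3),
\[
\Phi_{\lambda,j}((f,A)^*) = \widehat{\bar f}(\lambda) + (-1)^j \tr(A^{*} T^{\lambda,\nu}),
\]
and we must compare this with $\overline{\Phi_{\lambda,j}(f,A)} = \overline{\widehat{f}(\lambda)} + (-1)^j \overline{\tr(A T^{\lambda,\nu})}$. The first summands agree because $\phi_\lambda$ is real-valued. The second summands agree if and only if $\tr(A^{*}T^{\lambda,\nu}) = \overline{\tr(AT^{\lambda,\nu})}$ for every $A \in \traceopK$, which is equivalent to $T^{\lambda,\nu}$ being self-adjoint. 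Writing $T^{\lambda,\nu} = \widehat{h_\nu}(\lambda)^{-1/2}\,\Toep_{\phi_\lambda}$, we note that $\Toep_{\phi_\lambda}^{*} = \Toep_{\overline{\phi_\lambda}} = \Toep_{\phi_\lambda}$ since $\phi_\lambda$ is real-valued on $\spheripd$, and the scalar $\widehat{h_\nu}(\lambda)^{-1/2}$ is a positive real number by the preceding lemma together with our convention on the square root. Hence $T^{\lambda,\nu}$ is self-adjoint and $\Phi_{\lambda,j}$ is a $\ast$-homomorphism.

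The only genuine subtlety in this argument is the choice of square root: a priori $\sqrt{\widehat{h_\nu}(\lambda)}$ is complex-valued on all of $\spheri$, and a $\ast$-homomorphism condition would fail if this factor were imaginary. The preceding positivity lemma is exactly what resolves this obstacle, so the proof reduces to the two essentially tautological verifications above.
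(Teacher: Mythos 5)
Your proposal is correct and follows essentially the same route as the paper: necessity by restricting to the $*$-subalgebra $\LoneinvK$, and sufficiency by using that $\phi_\lambda$ is real-valued on $\spheripd$ so that $\Toep_{\phi_\lambda}$ is self-adjoint and $\tr(A^*\Toep_{\phi_\lambda}) = \overline{\tr(A\Toep_{\phi_\lambda})}$. Your explicit remark that the positivity of $\sqrt{\widehat{h_\nu}(\lambda)}$ on $\spheripd$ (fixed by the convention following the preceding lemma) is the one genuine subtlety is a point the paper uses only implicitly, but the substance of the argument is identical.
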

\begin{proof}
    By restricting $\Phi$ to $\LoneinvK$, it is clear that $\lambda \in (\R \cup i[-n,n])/\sim$ is necessary. To show that this is also sufficient, we note that from Theorem \ref{thm:spherical_functions} we obtain that the positive-definite spherical functions are all real-valued. Therefore, we see that for $\lambda \in (\R \cup i[-n,n])/\sim$:
    \begin{align*}
        \Phi_{\lambda, j}(\overline{f}, A^\ast) ) = \widehat{\overline{f}}(\lambda) + (-1)^j \frac{1}{\sqrt{\widehat{h_\nu}(\lambda)}} \tr(A^\ast \Toep_{\phi_\lambda}).
    \end{align*}
    Clearly, $\widehat{\overline{f}}(\lambda) = \overline{\widehat{f}(\lambda)}$. Since $\phi_\lambda$ is real-valued, $\Toep_{\phi_\lambda}$ is self-adjoint such that
    \begin{align*}
        \tr(A^\ast \Toep_{\phi_\lambda}) = \tr((A \Toep_{\phi_\lambda})^\ast) = \overline{\tr(A\Toep_{\phi_\lambda})}
    \end{align*}
    and therefore
    \begin{align*}
        \Phi_{\lambda, j}(\overline{f}, A^\ast) = \overline{\Phi_{\lambda, j}(f, A)}.
    \end{align*}
    This finishes the proof.
\end{proof}
\begin{corollary}
    As a topological space, the space of $^\ast$-homomorphisms from $\convolalg$ to $\mathbb C$ agrees with $[(\R \cup i[-n,n])/\sim] \times \mathbb Z_2$.
\end{corollary}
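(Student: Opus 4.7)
The plan is to leverage the topological identification $\maxid \cong \spheri \times \mathbb Z_2$ already established in the preceding theorem, since the space of $^\ast$-homomorphisms sits inside $\maxid$ and inherits the weak$^\ast$-topology as a subspace. So the corollary will follow purely by combining a set-level identification (already given by the previous lemma) with the subspace topology.

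First I would note that, by the previous lemma, the $^\ast$-homomorphisms from $\convolalg$ to $\mathbb C$ are precisely those $\Phi_{\lambda, j}$ with $\lambda \in (\R \cup i[-n,n])/\sim$ and $j \in \mathbb Z_2$, which as a set equals $\spheripd \times \mathbb Z_2$. The weak$^\ast$-topology on this set is, by definition, the subspace topology inherited from $\maxid \cong \spheri \times \mathbb Z_2$, where the homeomorphism is the content of the previous theorem.

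Next I would invoke the identification $\spheripd \cong (\R \cup i[-n,n])/\sim$ endowed with the subspace topology of $\spheri \cong (\R \times i[-n,n])/\sim$, which was adopted right after Corollary \ref{cor:simply_connected}. Since the product topology on $\spheri \times \mathbb Z_2$ restricts to the product topology on $\spheripd \times \mathbb Z_2$ (the $\mathbb Z_2$-factor being discrete and thus unaffected), the subspace topology on the $^\ast$-homomorphisms is precisely $[(\R \cup i[-n,n])/\sim] \times \mathbb Z_2$ with its natural product topology, which finishes the argument.

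There is essentially no obstacle here: the only small point that needs care is verifying that the subspace topology on $\spheripd$ inside $\spheri$ matches the natural quotient topology on $(\R \cup i[-n,n])/\sim$ inside $(\R \times i[-n,n])/\sim$. But this is immediate from the explicit topological description in Theorem \ref{thm:gelfand_space_function_algebra}, where the Gelfand topology on $\spheri$ was identified with the Euclidean quotient topology. Hence the corollary is really just a packaging statement following from the preceding theorem and lemma.
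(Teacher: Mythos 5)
Your proposal is correct and is exactly the argument the paper intends: the corollary is stated without proof precisely because it is the combination of the preceding theorem (the topological identification of $\mathcal M(\mathfrak A_\nu)$ with $\cS_{bdd}\times\mathbb Z_2$) with the preceding lemma (identifying which $\Phi_{\lambda,j}$ are $^\ast$-homomorphisms), plus the observation that the subspace topology on $\cS_{pd}$ inside $\cS_{bdd}$ is the natural one. Your remark that the subspace and quotient topologies agree on the saturated subset $(\R\cup i[-n,n])/\sim$ is the only point requiring any care, and it holds since the quotient by the antipodal identification is an open map.
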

    
\section{The Plancherel Formula}\label{sec:plancherel}
We recall from Theorem \ref{thm:spherical_functions} the measure $\mu_\fa$ on $[(\R \cup i[-n,n])/\sim]$ such that the spherical Fourier transform
extends to an unitary isomorphism $L^2(\nball )^K \to L^2([(\R \cup i[-n,n])/\sim], \mu_\fa)$. Since this measure is supported only on $[0, \infty) = \mathbb R^+ \cong (\mathbb R/\sim) \subset (\mathbb R \cup i[-n, n])/\sim$, we usually say that $\mu_\fa$ is a measure on $\mathbb R^+$ and formulate the Plancherel theorem for functions as an isomorphism $L^2(\nball)^K \cong L^2(\mathbb R^+, \mu_\fa)$. We now extend this result to $\fA_\nu$ so that the
Fourier-Gelfand transform becomes a unitary isomorphism. We only  have to deal with the operator part as the function part
is included in Theorem \ref{thm:spherical_functions}. We use here the topology coming from the Euclidean picture. We write
$\widehat A (\lambda,j)$ for $\Phi_{\lambda, j}(A)$. This leads us to the following definition:
\begin{definition}
    For $A \in \traceopK$ we define the \emph{operator Fourier transform} of $A$ as the function $\widehat{A}(\lambda) = \widehat A (\lambda,0) = \tr(AT^{\lambda,\nu})$.
\end{definition}
Note that $\widehat A (\lambda ,1) = - \widehat A (\lambda ,0)$ so the Fourier-Gelfand transform of operators is completely determined by 
$\lambda \mapsto \widehat A (\lambda)$.
 
\begin{lemma}[Riemann-Lebesgue lemma for operators] Let $A\in \traceopK$. Then $\widehat A \in C_0(\R)_{\text{even}} = C_0(\mathbb R^+)$ with $\| \widehat{A}\|_\infty \leq \| A\|_{tr}$. 
\end{lemma}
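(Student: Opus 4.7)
The plan is to verify the three assertions separately: the uniform bound, continuity, and vanishing at infinity. The first two are quick reductions to results already established, while the third is the substantive point and is most cleanly obtained by invoking general Gelfand theory.

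First, for the norm estimate, I would combine the previously established bound $\|T^{\lambda,\nu}\| \leq 1$ with the trace duality $|\tr(AB)| \leq \|A\|_{\cT^1} \|B\|$ to immediately obtain $|\widehat A(\lambda)| = |\tr(A T^{\lambda,\nu})| \leq \|A\|_{\cT^1}$, uniformly in $\lambda$. For continuity in $\lambda \in \spheri \cong (\R \times i[-n,n])/\!\sim$, I would appeal to Lemma \ref{lem:wstar-cont}: since that lemma shows $\lambda \mapsto T^{\lambda,\nu}$ is weak$^\ast$-continuous with respect to the predual $\traceopK$, the scalar map $\lambda \mapsto \tr(A T^{\lambda,\nu})$ is continuous for each fixed $A \in \traceopK$. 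The evenness $\widehat A(-\lambda) = \widehat A(\lambda)$ is automatic from the identification $\lambda \sim -\lambda$ used in the description of $\spheri$.

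The main step is vanishing at infinity. Here I would invoke the fact that for any commutative Banach algebra $B$, its Gelfand transform sends each element of $B$ into $C_0(\mathcal M(B))$. Applied to $B = \convolalg$, whose Gelfand spectrum was identified above with $\spheri \times \Z_2$, the Gelfand transform of $(0, A)$ is precisely the function $(\lambda, j) \mapsto (-1)^j \widehat A(\lambda)$. Restricting to $j=0$ yields $\widehat A \in C_0(\spheri)$. Since the imaginary segment $i[-n,n]$ is compact and ``infinity'' in $\spheri \cong (\R \times i[-n,n])/\!\sim$ corresponds exactly to $|\mathrm{Re}(\lambda)|\to\infty$, combining with the evenness from the previous step yields $\widehat A \in C_0(\R)_{\mathrm{even}} \cong C_0(\R^+)$.

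If one prefers a more hands-on argument, an alternative is available via density: every radial trace class operator has the form $A = \sum_m c_m P_m$ with $\sum_m d_m|c_m| < \infty$, so it suffices to treat $A = P_m$. By multiplicativity of $\Phi_{\lambda,0}$ one has $\widehat{P_m \ast P_m}(\lambda) = \widehat{P_m}(\lambda)^2$, and since $P_m \ast P_m \in \LoneinvK$ the classical Riemann--Lebesgue lemma for the spherical Fourier transform (a consequence of the Gelfand theory of $\LoneinvK$ established in Section \ref{sec:spherical_functions}) forces $\widehat{P_m}(\lambda) \to 0$ as $|\mathrm{Re}(\lambda)|\to\infty$; a standard $3\varepsilon$-argument using the uniform bound from the first step then extends this to all of $\traceopK$. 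I expect no real obstacle in either approach; the only point to be careful about is that ``vanishing at infinity'' refers only to the non-compact real direction in $\spheri$, which is consistent with both the target space $C_0(\R)_{\mathrm{even}}$ and the structure of $\mathcal M(\convolalg)$.
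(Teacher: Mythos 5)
Your proposal is correct, but your primary route differs from the paper's. The paper's proof is a two-line reduction to the classical Riemann--Lebesgue lemma for the spherical Fourier transform of functions: since $A^\ast \ast A \in \LoneinvK$ and $\widehat{A^\ast \ast A}(\lambda) = |\widehat{A}(\lambda)|^2$ for $\lambda$ in the relevant (real) part of the spectrum, the classical statement forces $|\widehat{A}|^2$, hence $\widehat{A}$, to vanish at infinity; the norm bound is noted as clear. Your main argument instead invokes the general fact that the Gelfand transform of a nonunital commutative Banach algebra lands in $C_0$ of its spectrum, applied to $(0,A) \in \convolalg$ together with the topological identification $\mathcal M(\convolalg) \cong \spheri \times \mathbb Z_2$. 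This is legitimate and in fact the paper itself records it in the remark immediately following the lemma, observing that it yields the stronger conclusion $\widehat{A} \in C_0\bigl((\mathbb R \times i[-n,n])/\!\sim\bigr)$; the trade-off is that it leans on the full determination of the Gelfand spectrum as a topological space, whereas the paper's argument only needs the multiplicativity identity $\widehat{A \ast B} = \widehat{A}\,\widehat{B}$, the $*$-compatibility on the real spectrum, and the classical function-level result. Your fallback argument (decompose $A = \sum_m c_m P_m$, apply the convolution trick to $P_m \ast P_m \in \LoneinvK$, and extend by trace-norm density using the uniform bound) is essentially the paper's proof with an unnecessary extra reduction: the identity $\widehat{A^\ast \ast A}(\lambda) = |\widehat{A}(\lambda)|^2$ already handles general $A \in \traceopK$ directly, so the density step can be skipped. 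Your treatment of the norm estimate via $\|T^{\lambda,\nu}\| \leq 1$ and trace duality, and of continuity via the weak$^\ast$-continuity of $\lambda \mapsto T^{\lambda,\nu}$, matches what the paper takes for granted.
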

\begin{proof} We have $\widehat{ A^* \ast A}(\lambda ) = |\widehat{A}(\lambda )|^2$ and $A^*\ast A\in  L^1$.
It follows that $\widehat{ A^* \ast A}$ vanishes at infinity and hence $\widehat A$ does so too. The estimate for the norm is clear.
\end{proof}
\begin{remark}
    One even has $\widehat{A} \in C_0((\mathbb R \times i[-n, n])/\sim)$ by general Gelfand theory.
\end{remark}
\begin{lemma}
    Let $A \in\traceopK$. Then, $A^\ast \ast A$ is a continuous and positive definite function.
\end{lemma}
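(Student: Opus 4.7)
The plan is to prove the two claims separately, working from the trace formula $(A^\ast \ast A)(x) = \tr(A^\ast \actopL{x}{A}) = \tr(A^\ast \pinu(x) A \pinu(x)^\ast)$. A key observation is that, although $\pinu$ is only a projective representation, the translation $\actopL{x}{\cdot}$ is an honest $G$-action, since the multiplier $m_\nu(x,y)$ in \eqref{eq:representation} is unimodular and therefore cancels against its conjugate, giving $\actopL{xy}{B} = \actopL{x}{\actopL{y}{B}}$ for all bounded $B$. This identity is the bookkeeping needed in the positive-definiteness computation below.

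For continuity, I would argue that since $A$ is trace class, it is in particular Hilbert-Schmidt. Strong continuity of $\pinu$ on $\Berg$ implies that $x \mapsto \actopL{x}{A}$ is continuous in the Hilbert-Schmidt norm; this reduces to the case of a finite-rank operator $u \otimes v$ (where it follows from strong continuity of $\pinu$) combined with the isometry $\|\actopL{x}{A}\|_{\HS} = \|A\|_{\HS}$ and density of finite-rank operators in $\cT^2(\Berg)$. Since $A^\ast \in \cT^2(\Berg)$ as well, continuity of the Hilbert-Schmidt inner product yields continuity of $x \mapsto \tr(A^\ast\, \actopL{x}{A}) = (A^\ast \ast A)(x)$.

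For positive definiteness, let $c_1,\dots,c_N \in \C$ and $x_1,\dots,x_N \in G$. Using the cocycle identity above together with the trace cyclicity $\tr(A^\ast \pinu(x_j)^\ast \pinu(x_i) A \pinu(x_i)^\ast \pinu(x_j)) = \tr(\actopL{x_j}{A^\ast}\, \actopL{x_i}{A})$, and noting that $\actopL{x_j}{A^\ast} = (\actopL{x_j}{A})^\ast$, I would compute
\begin{align*}
\sum_{i,j} c_i \overline{c_j}\,(A^\ast \ast A)(x_j^{-1} x_i)
&= \sum_{i,j} c_i \overline{c_j}\,\tr\bigl((\actopL{x_j}{A})^\ast\, \actopL{x_i}{A}\bigr) \\
&= \tr(B^\ast B) = \|B\|_{\HS}^2 \geq 0,
\end{align*}
where $B := \sum_{i=1}^N c_i \actopL{x_i}{A} \in \HS(\Berg)$. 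This is precisely the definition of positive definiteness.

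The main potential obstacle is ensuring that all phase factors from the projective representation truly cancel in the computation of $(A^\ast \ast A)(x_j^{-1} x_i)$; this is why I made the cocycle identity $\actopL{xy}{\cdot} = \actopL{x}{} \actopL{y}{\cdot}$ explicit at the outset. Beyond that, everything reduces to standard facts about Hilbert-Schmidt operators and the strong continuity of the representation, so I do not anticipate further difficulty.
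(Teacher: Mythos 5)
Your proof is correct and follows essentially the same route as the paper: both arguments reduce positive definiteness to $\bigl\| \sum_i c_i \actopL{x_i}{A}\bigr\|_{\mathcal{HS}}^2 \geq 0$ by trace cyclicity, with the unimodular multiplier $m_\nu$ cancelling so that $x \mapsto \actopL{x}{\cdot}$ is a genuine group action. Your continuity argument via Hilbert--Schmidt approximation is a welcome elaboration of a step the paper dismisses as clear, but it is not a different method.
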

\begin{proof}
    Using the convolution formalism, the proof is relatively straightforward and similar to the proof of the fact that $f^\ast \ast f$ is a positive definite function. We provide it for completeness.

    Let $g_1, \dots, g_m \in G$ and $c_1, \dots, c_m \in \mathbb C$. Then:
    \begin{align*}
        \sum_{j, k = 1}^m A^\ast \ast A(g_j^{-1} g_k) \overline{c_j}c_k &= \sum_{j, k = 1}^m \tr(A^\ast \pi_\nu(g_j^{-1}g_k) A \pi_\nu(g_j^{-1}g_k)^\ast)\overline{c_j}c_k\\
        &= \sum_{j, k = 1}^m \tr(A^\ast\pi_\nu(g_j^{-1}) \pi_\nu(g_k) A \pi_\nu(g_k)^\ast \pi_\nu(g_j^{-1})^\ast)\overline{c_j}c_k\\
        &= \sum_{j, k = 1}^m \tr(\pi_\nu(g_j^{-1})^\ast A^\ast \pi_\nu(g_j^{-1}) \pi_\nu(g_k) A \pi_\nu(g_k)^\ast )\overline{c_j}c_k
    \end{align*}
    By \eqref{eq:representation} we have that $\pinu(g)^*=\frac{1}{m_\nu(g,g^{-1})}\pinu(g^{-1})=\frac{1}{m_\nu(g^{-1},g)}\pinu(g^{-1})$ for $g\in G$. Then it follows that
    $$\pi_\nu(g_j^{-1})^\ast A^\ast \pi_\nu(g_j^{-1})=\frac{1}{m_\nu(g_j^{-1},g_j)} \pinu(g_j)A^\ast \pinu(g_j^{-1})=\pinu(g_j)A^\ast \pinu(g_j)^*.$$
    Hence
        \begin{align*}
         \sum_{j, k = 1}^m A^\ast \ast A(g_j^{-1} g_k) \overline{c_j}c_k &= \sum_{j,k=1}^m \tr((\pi_\nu(g_j) A \pi_\nu(g_j)^*)^\ast (\pi_\nu(g_k)A\pi_\nu(g_k)^*))\overline{c_j}c_k\\
        &= \tr(|\sum_{j=1}^n c_j \pi_\nu(g_j) A \pi_\nu(g_j)^\ast|^2)\\
        &= \| \sum_{j=1}^n c_j \pi_\nu(g_j) A \pi_\nu(g_j)^\ast\|_{\mathcal{HS}}^2 \geq 0.
    \end{align*}
    The fact that $A^\ast \ast A$ is continuous is clear.
\end{proof}

\begin{lemma} Let $A\in \traceopK$. Then
\[\|A\|^2_2=\| \widehat A\|_2^2= \int_{\R^+} |\widehat{A}(\lambda)|^2 d\mu_\fa (\lambda ).\]
\end{lemma}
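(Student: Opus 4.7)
The plan is to reduce the operator Plancherel identity to the scalar spherical Plancherel theorem for $K$-invariant functions on $\B^n$. The starting point is the elementary identity
\[\|A\|_2^2 = \tr(A^\ast A) = (A^\ast \ast A)(0),\]
where $0 \in \B^n$ corresponds to the identity coset $eK$ in $G/K$; this follows by evaluating the definition $(S \ast T)(x) = \tr(S \actopL{x}{T})$ at $x = e$ and using $\pinu(e) = I$. Set $f := A^\ast \ast A$. By the preceding lemma $f$ is continuous, positive definite, and $K$-invariant. The QHA Young inequality yields $f \in \LoneinvK$, while the pointwise estimate $|f(x)| = |\tr(A^\ast \actopL{x}{A})| \leq \|A^\ast\|_2 \|A\|_2 = \|A\|_2^2$ (Cauchy--Schwarz in the Hilbert--Schmidt pairing, using $\traceop \subset \HS$) gives $f \in L^\infty$; hence $f \in L^1 \cap L^2$.

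By the corollary of Section \ref{sec:FTop}, the multiplicative functional $\Phi_{\lambda, 0}$ is a $\ast$-homomorphism on $\convolalg$ for every $\lambda \in \spheripd$, and in particular for $\lambda \in \R^+ = \mathrm{supp}(\mu_\fa)$. Since $(f, 0) = (0, A^\ast) \ast (0, A)$ in $\convolalg$, one obtains
\[\widehat f(\lambda) \;=\; \Phi_{\lambda, 0}(f, 0) \;=\; \overline{\widehat A(\lambda)} \, \widehat A(\lambda) \;=\; |\widehat A(\lambda)|^2 \;\geq\; 0.\]
The claim is thereby reduced to the pointwise spherical inversion identity $f(0) = \int_{\R^+} \widehat f (\lambda)\, d\mu_\fa(\lambda)$.

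The main obstacle is justifying this pointwise inversion, since a priori we know neither that $\widehat f \in L^1(\mu_\fa)$ (this integrability is precisely what we are trying to prove) nor that $f \in C_c^\infty$. The plan is to approximate the Dirac mass at $0 \in \B^n$ by positive definite $K$-invariant functions $\eta_k \in L^1 \cap L^2$ whose spherical transforms $\widehat{\eta_k}$ are nonnegative and increase monotonically to $1$ pointwise on $\R^+$. A canonical choice is the family of heat kernels $\eta_k(z) = p_{t_k}(0, z)$ on $\B^n$ at times $t_k \searrow 0$; their spherical transforms are $\widehat{\eta_k}(\lambda) = e^{-t_k(\lambda^2 + \rho^2)}$ with $\rho = n - \tfrac12$, and they satisfy the approximate-identity property $\int_{\B^n} g \eta_k\, d\lambda \to g(0)$ for $g$ continuous and bounded. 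Parseval applied to the $K$-invariant $L^2$ functions $f$ and $\eta_k$ yields
\[\int_{\B^n} f(z)\, \eta_k(z)\, d\lambda(z) \;=\; \int_{\R^+} \widehat f(\lambda)\, \widehat{\eta_k}(\lambda)\, d\mu_\fa(\lambda).\]
As $k \to \infty$, the left side tends to $f(0) = \|A\|_2^2$, while monotone convergence (valid because $\widehat f = |\widehat A|^2 \geq 0$ and $\widehat{\eta_k} \uparrow 1$) forces the right side to tend to $\int_{\R^+} |\widehat A(\lambda)|^2\, d\mu_\fa(\lambda)$. Equating the two limits completes the proof and in passing certifies that $\widehat A \in L^2(\R^+, \mu_\fa)$.
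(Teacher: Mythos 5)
Your proof is correct and shares its skeleton with the paper's: both set $f = A^\ast \ast A$, observe that $f$ is continuous, positive definite and radial with $\widehat f = |\widehat A|^2$, and reduce the claim to the inversion identity $f(0) = \int_{\R^+} \widehat f \, d\mu_\fa$. Where you diverge is in how that inversion is justified. The paper simply invokes the Fourier inversion theorem for bounded, integrable positive definite functions on a Gelfand pair (Godement's theorem, cited as \cite[Theorem 9.4.1]{Wolf07}), which delivers $\widehat f \in L^1(\mu_\fa)$ and the pointwise formula in one stroke. You instead re-derive the inversion at the origin by hand: Parseval for $K$-invariant $L^2$ functions paired against a heat-kernel approximate identity $\eta_k$, followed by monotone convergence using $\widehat f \geq 0$ and $\widehat{\eta_k} \uparrow 1$. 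This buys a self-contained argument in which the positive definiteness of $A^\ast \ast A$ enters only through the nonnegativity of $\widehat f$, and it certifies $\widehat A \in L^2(\R^+, \mu_\fa)$ en route; the price is that you import the existence, stochastic completeness ($\int \eta_k \, d\lambda = 1$), approximate-identity property and spherical transform of the heat kernel on $\B^n$, which are standard for rank-one symmetric spaces but not more elementary than the inversion theorem the paper cites. Two minor remarks: the exact exponent $e^{-t_k(\lambda^2 + \rho^2)}$ depends on the normalization of $\rho$ and of the Laplacian (the paper's $\rho = n - \tfrac12$ comes from a doubled variable), but your argument only uses that $\widehat{\eta_k}$ is positive, bounded by $1$ and increases to $1$ pointwise, so this is harmless; and Fatou's lemma together with the bound $\widehat{\eta_k} \leq 1$ would serve in place of monotone convergence, freeing you from having to choose $t_k$ decreasing.
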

\begin{proof} The function $f = A^* \ast A$ is
positive definite and hence by the inverse spherical Transform (for example, by \cite[Theorem 9.4.1]{Wolf07}) and using that $\phi(e) = 1$ for every spherical function shows:
\[f(e)  = \int \widehat{f} (\lambda )d\mu_\fa (\lambda ) .\]
But we also have $\widehat f (\lambda )  = | \widehat A (\lambda )|^2$ so that:
\[f(e) = \tr |A|^2 = \|A\|^2_2 = \int_{\R^+} | \widehat A (\lambda )|^2 d\mu_\fa (\lambda ).\]
\end{proof}

We extend the measure $\mu_\fa$ to $\R^+ \times \Z_2$ by $\mu_\fa (\Omega \times  \{j\}) = \mu_\fa (\Omega  ) $, $\Omega\subset \R^+$ and $j = 0, 1$.
Then we can reformulate the above statement as (using the density of the trace class operators in the Hilbert-Schmidt operators). Note that $L^2(\mathbb R^+, \mu_\fa) \oplus L^2(\mathbb R^+, \mu_\fa) \cong L^2(\R^+ \times \mathbb Z_2, \mu_\fa)$ by letting 
\begin{align*}
    L^2(\mathbb R^+, \mu_\fa) \oplus L^2(\mathbb R^+, \mu_\fa) \ni (f, g) \mapsto \left [h(\lambda, j) = \begin{cases}
        f(\lambda), \quad j = 0\\
        g(\lambda), \quad j = 1
    \end{cases} \right] \in L^2(\R^+ \times \mathbb Z_2, \mu_\fa).
\end{align*}
Using this identification, we will use $L^2(\mathbb R^+, \mu_\fa) \oplus L^2(\mathbb R^+, \mu_\fa)$ and $L^2(\R^+ \times \mathbb Z_2, \mu_\fa)$ interchangeably in the following.

\begin{lemma}
The map $\cF : L^2(\nball)^K \oplus \mathcal T^2(\Berg)^K   \to L^2(\R^+\times \Z_2 ,\mu_\fa)$, $\cF(f, A) = (\widehat{f}, \widehat{A})$ is
an isometry.
\end{lemma}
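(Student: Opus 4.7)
The plan is to recognize that the isometry statement is just the assembly of two Plancherel identities that have essentially already been proved in the excerpt, together with a density argument to extend the operator Fourier transform from trace class to Hilbert--Schmidt operators.

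First I would fix the inner products: I equip $L^2(\nball)^K \oplus \cT^2(\Berg)^K$ with the Hilbert direct sum structure, so $\|(f,A)\|^2 = \|f\|_2^2 + \|A\|_2^2$, and on the target I use the identification $L^2(\R^+,\mu_\fa) \oplus L^2(\R^+,\mu_\fa) \cong L^2(\R^+ \times \Z_2, \mu_\fa)$ discussed just before the lemma, so that the target norm splits as $\int_{\R^+}|h(\lambda,0)|^2\, d\mu_\fa(\lambda) + \int_{\R^+}|h(\lambda,1)|^2\, d\mu_\fa(\lambda)$. With these conventions, proving $\cF$ is an isometry amounts to verifying
\[
\|f\|_2^2 = \int_{\R^+} |\widehat{f}(\lambda)|^2\, d\mu_\fa(\lambda), \qquad \|A\|_2^2 = \int_{\R^+} |\widehat{A}(\lambda)|^2\, d\mu_\fa(\lambda),
\]
separately on the two summands.

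The first identity is precisely the classical spherical Plancherel theorem for the Gelfand pair $(G,K)$ as stated in part (7) of Theorem \ref{thm:spherical_functions}. The second identity is exactly the previous lemma, but only for $A \in \traceopK$. To obtain it for arbitrary $A \in \cT^2(\Berg)^K$, I invoke density: $\traceopK$ is dense in $\cT^2(\Berg)^K$ with respect to the Hilbert--Schmidt norm, and the previous lemma shows that the map $\traceopK \ni A \mapsto \widehat{A} \in L^2(\R^+, \mu_\fa)$ is isometric for the Hilbert--Schmidt norm. Hence it extends uniquely to an isometry on all of $\cT^2(\Berg)^K$; I would simply define $\widehat{A}$ for Hilbert--Schmidt $A$ by this continuous extension, and then the identity $\|A\|_2^2 = \|\widehat{A}\|_{L^2(\mu_\fa)}^2$ passes to the limit.

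Combining the two coordinate-wise Plancherel identities yields
\[
\|\cF(f,A)\|_{L^2(\R^+\times\Z_2,\mu_\fa)}^2 = \|\widehat{f}\|_{L^2(\mu_\fa)}^2 + \|\widehat{A}\|_{L^2(\mu_\fa)}^2 = \|f\|_2^2 + \|A\|_2^2 = \|(f,A)\|^2,
\]
which is the claim. There is no real obstacle here --- the only mildly delicate point is making sure that $\widehat{A}$ is well-defined for Hilbert--Schmidt (rather than only trace class) $A$, which is handled by the density/continuous-extension argument above.
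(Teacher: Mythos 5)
Your proof is correct and follows essentially the same route the paper takes: the function part is the classical spherical Plancherel theorem (Theorem \ref{thm:spherical_functions}(7)), the operator part is the preceding lemma on $\traceopK$, and the extension to all of $\cT^2(\Berg)^K$ is exactly the density argument the paper alludes to in the sentence ``using the density of the trace class operators in the Hilbert--Schmidt operators.'' Your explicit attention to how $\widehat{A}$ is defined for merely Hilbert--Schmidt $A$ is a welcome clarification of a point the paper leaves implicit.
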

One can show that the adjoint of the map has the following form:
\begin{lemma}
    The Hilbert space adjoint of $\cF$ is given by:
   \begin{align*}
        (\mathcal F)^\ast: &L^2(\R^+\times \Z_2 ,\mu_\fa) \to L^2(\nball)^K \oplus \mathcal T^2(\Berg)^K,\\
        (\mathcal F)^\ast(f_0, f_1) &= \left(\int_{\mathbb R^+} f_0(\lambda)\phi_\lambda~d\mu_\fa(\lambda), \int_{\mathbb R^+} f_1(\lambda)T^{\lambda,\nu} ~d\mu_\fa(\lambda)\right),
    \end{align*}
    whenever $(f_0, f_1) \in L^1(\R^+\times \Z_2 ,\mu_\fa)\cap L^2(\R^+\times \Z_2 ,\mu_\fa)$.
\end{lemma}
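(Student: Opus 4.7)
The plan is to verify the formula by checking the defining identity of the Hilbert-space adjoint on the dense subspace $L^1\cap L^2 \subset L^2(\R^+\times\Z_2,\mu_\fa)$. Since $\mathcal{F}$ is an isometry, $\mathcal{F}^*$ is bounded and thus uniquely determined by its values on any dense subspace. Concretely, I want to establish for every $(g,B)\in L^2(\nball)^K \oplus \mathcal{T}^2(\Berg)^K$ and $(f_0,f_1)\in L^1\cap L^2$ the identity
\[
\langle \mathcal{F}(g,B),(f_0,f_1)\rangle_{L^2(\R^+\times\Z_2,\mu_\fa)} = \langle (g,B),\mathcal{F}^*(f_0,f_1)\rangle
\]
with $\mathcal{F}^*(f_0,f_1)$ given by the stated formula. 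The pairing on the left splits as $\int_{\R^+}\widehat{g}(\lambda)\overline{f_0(\lambda)}\,d\mu_\fa(\lambda) + \int_{\R^+}\widehat{B}(\lambda)\overline{f_1(\lambda)}\,d\mu_\fa(\lambda)$, and I would treat the two summands independently.

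For the function component, I would first reduce (by density in $g$) to the case $g\in L^1(\nball,\lambda) \cap L^2$, so that Fubini's theorem applies to the integral representation $\widehat{g}(\lambda)=\int_G g(x)\phi_\lambda(x)\,d\mu_G(x)$ thanks to the bound $|\phi_\lambda|\le 1$ (Lemma \ref{lemma:bddspherical_bound_one}) and $f_0\in L^1(\mu_\fa)$. Since positive definite spherical functions are real-valued (Theorem \ref{thm:spherical_functions}(5)) and $\mu_\fa$ is supported on $\R^+$, conjugation passes freely through $\phi_\lambda$, yielding
\[
\int_{\R^+}\widehat{g}(\lambda)\overline{f_0(\lambda)}\,d\mu_\fa(\lambda) = \int_G g(x)\,\overline{\int_{\R^+}f_0(\lambda)\phi_\lambda(x)\,d\mu_\fa(\lambda)}\,d\mu_G(x),
\]
which identifies the first component of $\mathcal{F}^*(f_0,f_1)$ as the stated $L^2$-valued integral.

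For the operator component I would use $\widehat{B}(\lambda)=\tr(BT^{\lambda,\nu})$ together with the self-adjointness of $T^{\lambda,\nu}$ for $\lambda\in\R^+$: by the earlier lemma $\widehat{h_\nu}(\lambda)>0$ on $\spheripd$ and $\phi_\lambda$ is real, so after fixing the positive square root of $\widehat{h_\nu}$, the operator $T^{\lambda,\nu}=\widehat{h_\nu}(\lambda)^{-1/2}\Toep_{\phi_\lambda}$ is self-adjoint. Since $\|T^{\lambda,\nu}\|\le 1$ and $f_1\in L^1(\mu_\fa)$, the weak integral $\int_{\R^+}f_1(\lambda)T^{\lambda,\nu}\,d\mu_\fa(\lambda)$ converges absolutely in operator norm; the interchange of trace and integral is justified by the absolute estimate $\int|f_1(\lambda)||\tr(BT^{\lambda,\nu})|\,d\mu_\fa(\lambda)\le \|B\|_{\tr}\|f_1\|_{L^1}$. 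Using self-adjointness of $T^{\lambda,\nu}$ to commute complex conjugation past the operator, this produces the desired Hilbert--Schmidt pairing
\[
\int_{\R^+}\widehat{B}(\lambda)\overline{f_1(\lambda)}\,d\mu_\fa(\lambda) = \left\langle B,\, \int_{\R^+}f_1(\lambda)T^{\lambda,\nu}\,d\mu_\fa(\lambda)\right\rangle_{\tr}.
\]

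The main obstacle is a subtle one: each individual $T^{\lambda,\nu}$ need not be Hilbert--Schmidt, so a priori the operator-valued integral only produces a bounded operator, not an element of $\mathcal{T}^2(\Berg)^K$. This difficulty resolves itself once the adjoint identity is verified: the right-hand side then defines a bounded conjugate-linear functional in $B\in\mathcal{T}^2(\Berg)^K$, and since $\mathcal{F}^*$ maps a priori into $L^2(\nball)^K\oplus\mathcal{T}^2(\Berg)^K$ (as the adjoint of an isometry), the uniqueness of the Hilbert-space adjoint forces the operator integral to be Hilbert--Schmidt and to coincide with the operator component of $\mathcal{F}^*(f_0,f_1)$.
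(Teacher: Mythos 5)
Your proposal is correct and follows essentially the same route as the paper: both verify the defining adjoint identity against a dense set of trace class operators, using the self-adjointness of $T^{\lambda,\nu}$ for $\lambda\in\R^+$ and the interchange of trace and integral justified by $\|T^{\lambda,\nu}\|\le 1$ and $f_1\in L^1(\mu_\fa)$. The only differences are cosmetic: you compute the function component explicitly where the paper cites the classical statement, and you spell out the (real but easily resolved) subtlety that the weak operator integral is a priori only a bounded operator and must be identified with the Hilbert--Schmidt element produced by the adjoint via the trace pairing.
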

\begin{proof}
    For the first factor, this is just the classical statement for functions, so we focus on the part where operators occur. For $f = f_1 \in L^1(\R^+,\mu_\fa)\cap L^2(\R^+,\mu_\fa)$ and $A \in \mathcal T^1(\Berg)^K$ we note that:
    \begin{align*}
        \langle (\mathcal F)^\ast(f), A\rangle_{\mathcal T^2} &= \langle f, \mathcal F(A)\rangle_{L^2}\\
        &= \int_{\R^+} f(\lambda) \overline{\widehat{A}(\lambda)}~d\mu_\fa(\lambda)\\
        &= \int_{\R^+} f(\lambda) \tr(A^\ast (T^{\lambda,\nu})^\ast)~d\mu_\fa(\lambda).
    \end{align*}
    Now recalling that $\langle (\mathcal F)^\ast(f), A\rangle_{\mathcal T^2} = \tr((\mathcal F)^\ast(f) A^\ast)$, we see that (where we also use, as already observed earlier, that $T^{\lambda,\nu}$ is self-adjoint for $\lambda \in \R^+$):
    \begin{align*}
        \tr((\mathcal F)^\ast(f) A^\ast) &= \int_{\R^+}f(\lambda) \tr(A^\ast T^{\lambda,\nu})~d\mu_\fa(\lambda)\\
        &= \tr( A^\ast \int_{\R^+} f(\lambda) T^{\lambda,\nu}~d\mu_\fa(\lambda)).
    \end{align*}
    Since this holds true for all $A \in \mathcal T^1(\Berg)^K$, the claim follows.
\end{proof}
 
\begin{lemma}
    The range of $\cF|_{\mathcal T^2(\Berg)^K} : \mathcal T^2(\cA_\nu)^K \to L^2(\R^+,\mu_\fa)$ is dense in $L^2(\R^+,\mu_\fa)$.
\end{lemma}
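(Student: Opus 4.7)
The plan is to show that the range of $\cF|_{\traceopK}$ already contains the set
\[
\{\widehat f \cdot \sqrt{\widehat{\hnu}} : f \in L^2(\B^n, d\lambda)^K\},
\]
and then to argue separately that this set is dense in $L^2(\R^+, \mu_\fa)$.

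For the first step, I would fix $f \in L^2(\B^n, d\lambda)^K$, identified with a right $K$-invariant function on $G$ (which remains square-integrable by Theorem \ref{thm:Int}(4)), and apply the QHA Young's inequality from Section \ref{sec:repr} with the rank-one projection $P_0 \in \traceopK$ to conclude $f \ast P_0 \in \traceopK$. Its operator Fourier transform is then computed directly from the algebra structure: using $(f, 0) \ast (0, P_0) = (0, f \ast P_0)$, the multiplicativity of $\Phi_{\lambda, 0}$, and Lemma \ref{lem:functionalPzero}, one obtains
\[
\cF(f \ast P_0)(\lambda) \;=\; \Phi_{\lambda, 0}(f)\, \Phi_{\lambda, 0}(P_0) \;=\; \widehat{f}(\lambda)\sqrt{\widehat{\hnu}(\lambda)}.
\]
Combined with the unitarity of the spherical Fourier transform $f \mapsto \widehat f$ from $L^2(\B^n, d\lambda)^K$ onto $L^2(\R^+, \mu_\fa)$ (Theorem \ref{thm:spherical_functions}(7)), this shows the range of $\cF|_{\traceopK}$ contains $\{g \sqrt{\widehat{\hnu}} : g \in L^2(\R^+, \mu_\fa)\}$.

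The remaining step is to prove that this last set is dense in $L^2(\R^+, \mu_\fa)$, which reduces to a simple truncation argument. Given $h \in L^2(\R^+, \mu_\fa)$ and $N > 0$, I would set $g_N := h \cdot \mathbf 1_{[0, N]}/\sqrt{\widehat{\hnu}}$. Because $\sqrt{\widehat{\hnu}}$ is continuous and strictly positive on $\spheripd \supset \R^+$ (by the first lemma of Section \ref{sec:FTop}), it is bounded below by a positive constant on the compact set $[0, N]$, so $g_N \in L^2(\R^+, \mu_\fa)$. Dominated convergence then yields $g_N \sqrt{\widehat{\hnu}} = h\, \mathbf 1_{[0, N]} \to h$ in $L^2$ as $N \to \infty$, finishing the approximation.

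I do not anticipate any substantive obstacle: the argument amounts essentially to the slogan \emph{the operator Fourier transform is the function Fourier transform multiplied by $\sqrt{\widehat{\hnu}}$}, combined with the observation that multiplication by a continuous strictly positive weight cannot shrink $L^2$ to a proper closed subspace once one is free to truncate the support. The one mild point to verify is that $f \ast P_0 \in \traceopK$ for $f \in L^2$, which is immediate from QHA Young (or, alternatively, from $\|f \ast P_0\|_2^2 = \int |\widehat f|^2\, \widehat{\hnu}\, d\mu_\fa \leq \|\widehat{\hnu}\|_\infty \|f\|_2^2$, using $\hnu \in L^1(\B^n, d\lambda)^K$ for $\nu > n$).
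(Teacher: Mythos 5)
Your argument is correct and essentially the same as the paper's: both rest on the identity $\widehat{f\ast P_0}=\widehat f\cdot\sqrt{\widehat{h_\nu}}$ (note $\widehat{P_0}=\sqrt{\widehat{h_\nu}}$), the continuity and non-vanishing of $\sqrt{\widehat{h_\nu}}$, and the density of the range of the spherical Plancherel transform of functions --- the paper approximates a fixed $g\in C_c(\R^+)$ after dividing by $\widehat{P_0}$, whereas you exhibit the dense set $\{g\sqrt{\widehat{h_\nu}}:g\in L^2(\R^+,\mu_\fa)\}$ and truncate, a purely cosmetic difference. One slip to fix: for $f\in L^2$ the QHA Young inequality yields $f\ast P_0\in\mathcal T^2(\Berg)^K$, not $\traceopK$ as written (your Hilbert--Schmidt norm computation confirms this is what you meant), and this is exactly what the lemma requires since $\cF$ is restricted to $\mathcal T^2(\Berg)^K$.
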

\begin{proof}
    Again, it suffices to consider the range of the Fourier transform of operators, as it is well-known that the range of the spherical Fourier transform of functions is dense in $L^2(\R^+, \mu_\fa)$. Furthermore, it suffices to prove that every $g \in C_c( \R^+\times \Z_2)$ is contained in the closure of the range of the Fourier transform of operators. For this, fix such $g$ and let $f  = g/\widehat{P_0}$. Since $\widehat{P_0}$ is continuous and vanishes nowhere, we see that $f \in C_c(\R^+)$. Then, we can find a sequence $f_n \in L^2(\R^+)$
     such that $f_n \to f$ in $L^2$ norm and $f_n$ are in the range of the spherical Fourier transform of functions. Hence, there exist $f_n^\circ \in L^2(\nball)^K$ such that $\widehat{f_n^\circ} = f_n$. By the convolution formula, $\widehat{f_n^\circ \ast P_0} = f_n \cdot \widehat{P_0} \to g$ as $n \to \infty$ with convergence in $L^2(\R^+, \mu_\fa)$.
\end{proof}
Since the adjoint of a map with dense range has to be injective, we immediately obtain:
\begin{theorem}[Plancherel Theorem] The Fourier transform extends to an unitary isomorphism
$L^2(\nball)^K \oplus \mathcal T^2(\Berg)^K \simeq L^2(\R^+ \times \Z_2,\mu_\fa)$.
\end{theorem}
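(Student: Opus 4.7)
The theorem follows almost immediately from the three preparatory lemmas, so my plan is to combine them using the standard Hilbert space principle that an isometry with dense range is a unitary isomorphism.

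First, I would note that the Plancherel identity $\|A\|_2^2 = \int_{\R^+} |\widehat{A}(\lambda)|^2 \, d\mu_\fa(\lambda)$, which was established for $A \in \traceopK$ from the inverse spherical transform applied to the continuous positive-definite function $A^* \ast A$, extends by continuity and density of $\traceopK$ inside $\cT^2(\Berg)^K$ to an isometric map defined on all Hilbert--Schmidt radial operators. Combined with the classical Plancherel theorem for the spherical transform on $L^2(\nball)^K$, this yields that $\cF(f, A) = (\widehat{f}, \widehat{A})$ is an isometry $L^2(\nball)^K \oplus \cT^2(\Berg)^K \to L^2(\R^+ \times \Z_2, \mu_\fa)$, which is exactly the content of the isometry lemma already stated.

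Next, the finishing argument: since $\cF$ is an isometry between Hilbert spaces, its range is automatically closed (the image of a Cauchy sequence is Cauchy, hence convergent, and the limit lies in the image by continuity). The density lemma shows that this range is also dense in $L^2(\R^+\times \Z_2, \mu_\fa)$, and therefore equals the whole target. Hence $\cF$ is a surjective isometry, i.e., a unitary isomorphism. Equivalently, as the remark preceding the theorem suggests, one can phrase this through the adjoint: the dense range of $\cF$ forces $\cF^*$ to be injective, while the isometry condition gives $\cF^*\cF = \id$; together with injectivity of $\cF^*$ this implies $\cF\cF^* = \id$ as well, since $\cF\cF^*$ is the orthogonal projection onto $\overline{\mathrm{range}(\cF)}$.

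I do not anticipate any obstacle here: all of the substantive analytic work has been absorbed into (i) the Plancherel identity for operators obtained from the scalar spherical inversion formula applied to $A^* \ast A$, and (ii) the dense range lemma, whose nontrivial ingredient was that $\widehat{P_0}$ vanishes nowhere on $\spheripd$, so that one can invert by dividing by $\widehat{P_0}$ and appeal to the convolution rule $\widehat{f \ast P_0} = \widehat{f}\, \widehat{P_0}$. The theorem itself is then a one-line consequence of Hilbert space formalism.
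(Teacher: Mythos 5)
Your proposal is correct and follows the same route as the paper: the paper also deduces the theorem directly from the isometry lemma and the dense-range lemma (phrased there via injectivity of the adjoint of a map with dense range). Both the closed-range argument and the adjoint argument you give are valid one-line finishes, and all the analytic content is indeed already contained in the preceding lemmas.
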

 
 \begin{remark} We note that we need both $j=0$ and $j=1$ because the image of $L^2(\nball )^K$ is already the full space $L^2(\R^+,\mu_\fa)$. 
 \end{remark}

\section{Further results on the Fourier transform of operators}\label{sec:fouriertrafo}
In this section, we discuss properties of the spherical Fourier transform of operators that go beyond Plancherel's theorem.
The first observation is that, just judging by the formula $\widehat{A}(\lambda) = \tr(AT^{\lambda,\nu})$, one could be tempted to study this Fourier transform for general trace class operators (without the assumption of radiality). Nevertheless, studying this transform is rather pointless, as the following fact shows:
\begin{lemma}
    Let $A \in \mathcal T^1(\mathcal A^2)$. Then  
    \begin{align*}
        \tr(AT^{\lambda,\nu}) = \tr(\radop{A}T^{\lambda,\nu}), \quad \phi_\lambda \in \spheripd.
    \end{align*}
    In particular, the map sending $A \mapsto [\lambda \mapsto \tr(AT^{\lambda,\nu})]$ is not injective on $\mathcal T^1(\mathcal A^2)$.
\end{lemma}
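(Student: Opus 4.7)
The plan is to exploit the fact that $T^{\lambda,\nu}$ is radial (cf.~part (4) of Proposition \ref{prop:functional_characterize}), so that conjugation by $\pi_\nu(k)$ with $k \in K$ leaves it invariant. This allows an averaging argument over $K$ to move the radialization from $T^{\lambda,\nu}$ onto $A$ under the trace.

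First, I would note that since $T^{\lambda,\nu}$ is radial, we have $\pi_\nu(k) T^{\lambda,\nu} \pi_\nu(k)^\ast = T^{\lambda,\nu}$ for every $k \in K$ (recall that, $K$ being compact, the multiplier $m_\nu$ restricted to $K \times K$ is trivial, so $\pi_\nu|_K$ is an honest unitary representation). Using the cyclicity of the trace, for every $k \in K$,
\begin{align*}
\tr(A T^{\lambda,\nu}) = \tr(A \pi_\nu(k) T^{\lambda,\nu} \pi_\nu(k)^\ast) = \tr(\pi_\nu(k)^\ast A \pi_\nu(k) T^{\lambda,\nu}).
\end{align*}
Integrating both sides over $K$ against the normalized Haar measure, and using the linearity and continuity of the trace together with the fact that $\radop{A} = \int_K \pi_\nu(k)^\ast A \pi_\nu(k)\, d\mu_K(k)$ (using unimodularity of $K$ to replace $k$ by $k^{-1}$), yields
\begin{align*}
\tr(A T^{\lambda,\nu}) = \tr\!\left( \left(\int_K \pi_\nu(k)^\ast A \pi_\nu(k) \, d\mu_K(k)\right) T^{\lambda,\nu}\right) = \tr(\radop{A} T^{\lambda,\nu}),
\end{align*}
which is the claimed identity.

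For the non-injectivity statement, it suffices to exhibit a single trace class operator $A$ with $A \neq \radop{A}$, since then $A - \radop{A}$ is a nonzero trace class operator whose operator Fourier transform vanishes identically by the first part of the lemma. A concrete choice is the rank-one operator $A = f_{\mathbf m} \otimes \overline{f_{\mathbf n}}$ for any fixed $\mathbf m \neq \mathbf n$ in $\mathbb N_0^n$; by Lemma \ref{lem:BS}(1), its Berezin transform is not radial, so by Lemma \ref{lem:radial_functions} $A$ itself cannot be radial. This completes the proof plan.

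There is essentially no obstacle here: the whole content is the interchange of trace and integral against Haar measure, which is standard once one observes the radiality of $T^{\lambda,\nu}$ already established in Proposition \ref{prop:functional_characterize}.
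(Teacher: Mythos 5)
Your proof is correct and follows essentially the same route as the paper, which simply refers to the averaging-and-trace-cyclicity argument from the proof of Proposition \ref{prop:dual_of_radial_trace_class}. Your explicit witness for non-injectivity (a non-radial rank-one operator such as $f_{\mathbf m}\otimes \overline{f_{\mathbf n}}$ with $\mathbf m \neq \mathbf n$) is a small but welcome addition that the paper leaves implicit.
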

\begin{proof}
    The proof follows from a similar argument as in the proof of Proposition \ref{prop:dual_of_radial_trace_class}.
\end{proof}
Of course, injectivity of the Fourier transform on $\mathcal T^1(\Berg)^K$ can be deduced from the Plancherel theorem. We fix this as a separate observation:
\begin{proposition}
    The Fourier transform $\traceopK \ni A \mapsto \widehat{A}$ is injective.
\end{proposition}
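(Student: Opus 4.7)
The plan is to deduce injectivity essentially as a direct corollary of the Plancherel theorem just established. First, I would note that every trace class operator is Hilbert–Schmidt (with $\|A\|_2 \le \|A\|_1$), and the inclusion preserves $K$-invariance, so $\traceopK \subset \cT^2(\Berg)^K$.

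Next, the Plancherel theorem asserts that $\cF : L^2(\nball)^K \oplus \cT^2(\Berg)^K \to L^2(\R^+ \times \Z_2, \mu_\fa)$ is a unitary isomorphism. Restricting $\cF$ to the operator summand produces an isometric embedding $\cT^2(\Berg)^K \ni A \mapsto \widehat A \in L^2(\R^+, \mu_\fa)$, which is in particular injective; restricting this map further to the subspace $\traceopK \subset \cT^2(\Berg)^K$ preserves injectivity.

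The one point worth a sanity check is that the pointwise formula $\widehat A(\lambda) = \tr(AT^{\lambda,\nu})$ used on $\traceopK$ really coincides with the restriction of the Plancherel-extended transform on $\cT^2(\Berg)^K$. This is immediate from the construction, since the Plancherel map is built precisely as the unique isometric extension of this very formula from the dense subspace $\traceopK$, so no ambiguity arises.

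I do not anticipate any genuine obstacle: the statement is essentially a one-line consequence of the Plancherel isomorphism combined with the norm inequality $\|\cdot\|_2 \le \|\cdot\|_1$. A more self-contained alternative would avoid Plancherel and instead observe that $A \ast P_0 = B_\nu(A) \in \LoneinvK$ for $A \in \traceopK$ and then attempt to use injectivity of the Berezin transform together with $\widehat{h_\nu}(\lambda) \neq 0$; but this route requires more care about the domain on which $\widehat{A \ast P_0}$ is assumed to vanish and is strictly less efficient than simply invoking Plancherel.
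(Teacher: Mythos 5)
Your proposal is correct and matches the paper's own route: the authors explicitly note that injectivity on $\traceopK$ "can be deduced from the Plancherel theorem," which is exactly your argument of embedding $\traceopK$ into $\cT^2(\Berg)^K$ via $\|A\|_2 \le \|A\|_1$ and restricting the Plancherel isometry. Your sanity check that the extended transform agrees with $\tr(AT^{\lambda,\nu})$ on $\traceopK$ is also consistent with how the paper constructs the extension.
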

A consequence of the Riemann-Lebesgue lemma and the Plancherel theorem is, by using the standard tools from the complex interpolation method:
\begin{proposition}[Hausdorff-Young estimate for operators]
    Let $1 \leq p \leq 2$ and $q \geq 2$ be the conjugate exponent, $\frac{1}{p} + \frac{1}{q} = 1$. Then, for $A \in \mathcal T^p(\mathcal A^2)^K$ it holds true that:
    \begin{align*}
        \| \mathcal F_K(A)\|_{L^q(\R^+, \mu_\fa)} \leq \| A\|_{\mathcal T^p}.
    \end{align*}
\end{proposition}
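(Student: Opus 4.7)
The strategy is classical complex interpolation between the two endpoints already at our disposal in this section. The endpoint $(p,q) = (1,\infty)$ is the Riemann–Lebesgue lemma for operators, which gives $\|\widehat{A}\|_{L^\infty(\R^+)} \leq \|A\|_{\mathcal T^1}$ for every $A \in \traceopK$. The endpoint $(p,q) = (2,2)$ is the Plancherel identity proved in Section~\ref{sec:plancherel}, which gives $\|\widehat{A}\|_{L^2(\R^+,\mu_\fa)} = \|A\|_{\mathcal T^2}$ on $\mathcal T^2(\Berg)^K$. Both endpoint bounds have constant $1$, so the interpolated bound will also have constant $1$.

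Given $p \in (1,2)$ with conjugate exponent $q \in (2,\infty)$, set $\theta = 2/q \in (0,1)$, which is equivalent to $\tfrac{1}{p} = (1-\theta)\cdot 1 + \theta \cdot \tfrac{1}{2}$ and $\tfrac{1}{q} = (1-\theta)\cdot 0 + \theta \cdot \tfrac{1}{2}$. The classical complex interpolation identity for Schatten classes gives $[\mathcal T^1(\Berg), \mathcal T^2(\Berg)]_\theta = \mathcal T^p(\Berg)$ with equality of norms, while the Calderón identity for $L^q$ spaces gives $[L^\infty(\R^+,\mu_\fa), L^2(\R^+,\mu_\fa)]_\theta = L^q(\R^+,\mu_\fa)$. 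Regarding the operator Fourier transform $A \mapsto \widehat{A}$ as a linear map from $\mathcal T^1(\Berg) + \mathcal T^2(\Berg)$ into $L^\infty(\R^+,\mu_\fa) + L^2(\R^+,\mu_\fa)$, the complex interpolation theorem of Calderón yields the inequality with constant $1^{1-\theta}\cdot 1^\theta = 1$. (Alternatively, one can avoid invoking the abstract interpolation identities and run the argument directly via Stein's analytic interpolation, applied to the family $A_z = \sum_m |c(A,m)|^{p\,((1-z) + z/2)} e^{i\arg c(A,m)} P_m$ coming from the block-diagonal decomposition $A = \sum_m c(A,m) P_m$ of a radial operator; the endpoint estimates at $\mathrm{Re}(z)=0$ and $\mathrm{Re}(z)=1$ then follow from the two lemmas cited above, and Hadamard's three-lines theorem gives the bound at $z = \theta$.)

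The only nontrivial point is to ensure that the interpolation respects $K$-invariance, so that we land in $\mathcal T^p(\Berg)^K$ rather than just $\mathcal T^p(\Berg)$. This follows from the fact that the radialization map $\rad \colon \mathcal T^p(\Berg) \to \mathcal T^p(\Berg)^K$ is a contractive projection for every $p \in [1,\infty]$ (which is a consequence of Minkowski's integral inequality applied to the definition $\radop{A} = \int_K \actopL{k}{A}\,\haarK{k}$, together with $\|\actopL{k}{A}\|_p = \|A\|_p$). Being a complemented subspace compatibly on the whole Schatten scale, $\mathcal T^p(\Berg)^K$ inherits the interpolation identity $[\mathcal T^1(\Berg)^K, \mathcal T^2(\Berg)^K]_\theta = \mathcal T^p(\Berg)^K$, and the argument above goes through unchanged. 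The main (and essentially only) technical content is the complex interpolation identity for Schatten classes, which is standard.
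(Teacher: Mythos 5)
Your argument is correct and is exactly the route the paper intends: the paper gives no written proof at all, merely asserting that the result follows from the Riemann--Lebesgue lemma and the Plancherel theorem ``by using the standard tools from the complex interpolation method,'' and your write-up supplies precisely those details (the two constant-$1$ endpoints, the exponent arithmetic $\theta = 2/q$, and the observation that radialization is a contractive projection so that the $K$-invariant Schatten scale interpolates compatibly). Nothing further is needed.
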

There is also a version of the Riemann-Lebesgue lemma for the inverse spherical Fourier transform of operators:
\begin{proposition}[Inverse Riemann-Lebesgue for operators]
    Denote by $\mathcal F^{-1}: L^1(\R^+, \mu_\fa) \to \mathcal L(\Berg)^K$ the formal inverse of the operator Fourier transform, given by the expression
    \begin{align*}
        \mathcal F^{-1}(f) = \int_{\R^+}f(\lambda)T^{\lambda,\nu}~d\mu_\fa(\lambda).
    \end{align*}
    Then, $\mathcal F_K^{-1}$ maps $L^1(\R^+, \mu_\fa)$ to $\mathcal K(\mathcal A^2)^K$, the radial compact operators. Furthermore, $\| \mathcal F_K^{-1}(f)\|_{op}\leq \| f\|_{L^1}$.
\end{proposition}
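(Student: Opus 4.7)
The plan is to establish the operator norm bound first and then prove compactness by approximation: for $f \in C_c(\R^+)$ I will identify $\mathcal F^{-1}(f)$ with a Toeplitz operator whose symbol belongs to $C_0(\nball)$ (hence compact), and then extend to all $f \in L^1(\R^+, \mu_\fa)$ by density of $C_c(\R^+)$. To get started, the Riemann--Lebesgue lemma for operators gives $\widehat A \in C_0(\R^+)$ with $\| \widehat A\|_\infty \leq \| A\|_1$ for each $A \in \traceopK$. Hence the formula
\begin{align*}
    \langle \mathcal F^{-1}(f), A\rangle_\tr := \int_{\R^+} f(\lambda) \widehat A(\lambda) \, d\mu_\fa(\lambda)
\end{align*}
defines an element of $(\traceopK)^\prime \cong \bddK$ (via Proposition \ref{prop:dual_of_radial_trace_class}) of norm at most $\| f\|_{L^1}$, which agrees with the weak$^*$ integral in the statement of the proposition and yields the desired norm bound.

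Next, for $f \in C_c(\R^+)$ I define $g : \nball \to \C$ by
\begin{align*}
    g(z) := \int_{\R^+} \frac{f(\lambda)}{\sqrt{\widehat{\hnu}(\lambda)}} \phi_\lambda(z) \, d\mu_\fa(\lambda).
\end{align*}
Since $\widehat{\hnu}$ is continuous and strictly positive on the compact support of $f$, and $|\phi_\lambda(z)|\leq 1$ for real $\lambda$, dominated convergence combined with the continuity $\lambda_j \to \lambda \Rightarrow \phi_{\lambda_j}(z) \to \phi_\lambda(z)$ from Theorem \ref{thm:spherical_functions}(9) shows $g \in C(\nball)$. I then claim $\mathcal F^{-1}(f) = \Toep_g$. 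Combining Lemmas \ref{lem:ToepBerezin convol}, \ref{lem:duality}, and \ref{lem:radial_convol} yields the trace identity
\begin{align*}
    \tr(\Toep_\phi A) = c_\nu \int_{\nball} \phi(z)\, B_\nu(A)(z) \, d\lambda(z)
\end{align*}
valid for any radial $\phi$ and $A \in \traceopK$. Pairing $\mathcal F^{-1}(f)$ with $A$ and applying Fubini (the double integral is absolutely convergent because $B_\nu(A) \in L^1(d\lambda)$ by Lemma \ref{lem:ToepBerezin convol}, while $f/\sqrt{\widehat{\hnu}}$ is bounded and compactly supported) produces $\tr(\Toep_g A)$, establishing the identification.

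The main obstacle is showing that $g$ vanishes on the boundary of $\nball$, so that $g \in C_0(\nball)$. Using the explicit integral representation \eqref{form:Sph} and the observation that for real $\lambda$ the modulus of the integrand is independent of $\lambda$, a direct estimate produces the \emph{uniform-in-$\lambda$} bound
\begin{align*}
    |\phi_\lambda(z)| \leq C\, (1-|z|^2)^{\rho/2}, \qquad \lambda \in \R,\ z \in \nball,
\end{align*}
where $\rho = n - 1/2 > 0$. Consequently $|g(z)| \leq C(1-|z|^2)^{\rho/2} \int |f|/\sqrt{\widehat{\hnu}} \, d\mu_\fa \to 0$ as $|z|\to 1$, so $g \in C_0(\nball)$ and $\Toep_g$ is compact by the standard fact that Toeplitz operators with $C_0(\nball)$ symbols are compact (approximate $g$ uniformly by symbols compactly supported inside $\nball$, whose Toeplitz operators are compact). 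Finally, for general $f \in L^1(\R^+, \mu_\fa)$, I pick $f_n \in C_c(\R^+)$ with $f_n \to f$ in $L^1$; the norm estimate then gives $\mathcal F^{-1}(f_n) \to \mathcal F^{-1}(f)$ in operator norm, and compactness is preserved under norm limits.
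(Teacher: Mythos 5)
Your proof is correct, but the compactness half runs along a genuinely different track from the paper's. The paper obtains the norm bound directly from $\|T^{\lambda,\nu}\|\le 1$ and the triangle inequality for the Gelfand--Pettis integral, and then disposes of compactness in one line: for $f\in C_c(\R^+)$ the operator $\mathcal F^{-1}(f)$ lies in $\mathcal T^2(\Berg)^K$ by the adjoint formula for $\mathcal F$ established in the Plancherel section, hence is Hilbert--Schmidt and in particular compact; density of $C_c(\R^+)$ in $L^1(\R^+,\mu_\fa)$ together with the norm bound finishes the argument. You instead identify $\mathcal F^{-1}(f)=\Toep_g$ with $g$ the inverse spherical transform of $f/\sqrt{\widehat{\hnu}}$ (consistent with $T^{\lambda,\nu}=\widehat{\hnu}(\lambda)^{-1/2}\Toep_{\phi_\lambda}$ and with the convolution identity $\widehat{f^\circ\ast P_0}=\widehat{f^\circ}\cdot\widehat{P_0}$ used in the paper's density-of-range lemma), prove $g\in C_0(\nball)$ from boundary decay of the tempered spherical functions, and invoke the standard fact that Toeplitz operators with symbols vanishing at the boundary are compact. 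Your route is longer and needs two extra inputs --- a uniform-in-$\lambda$ decay estimate for $\phi_\lambda$, $\lambda\in\R$, and the $C_0$-symbol compactness criterion --- but it buys an explicit Toeplitz representation of $\mathcal F^{-1}(f)$ for compactly supported $f$, which the paper's argument does not provide. Your norm-bound argument via $(\traceopK)'\cong\bddK$ and the operator Riemann--Lebesgue lemma is essentially equivalent to the paper's, since that lemma is itself a consequence of $\|T^{\lambda,\nu}\|\le 1$.

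One caution on the decay step: the pointwise bound $|\phi_\lambda(z)|\le C(1-|z|^2)^{\rho/2}$ is stronger than what you need, and its verification from \eqref{form:Sph} is not entirely a one-liner --- after bounding $|\phi_\lambda(a_t)|\le\phi_0(a_t)$ one must control $\int_{\rS^{2n-1}}|\omega_1\sinh t+\cosh t|^{-\rho}\,d\sigma_n(\omega)$, whose integrand concentrates near $\omega_1=-1$; the claimed rate holds without logarithmic corrections only because $\rho=n-\tfrac12$ keeps $|\omega_1+1|^{-\rho}$ integrable on the sphere. For the purpose of showing $g\in C_0(\nball)$ it suffices to observe that $\sup_{\lambda\in\R}|\phi_\lambda(a_t)|\le\phi_0(a_t)\to 0$ as $t\to\infty$, i.e.\ $\phi_0\in C_0(\nball)$, and then apply dominated convergence over the compact support of $f$; I would phrase the step that way to avoid relying on the exact exponent.
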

\begin{proof}
    The norm estimate is clear from the definition of $\mathcal F_K^{-1}(f)$ as a Gelfand-Pettis integral. We only need to prove compactness. For this, observe that for $f \in C_c(\R^+)$ we have $\mathcal F_K^{-1}(f) \in \mathcal T^2(\mathcal A^2)^K$. This together with the norm estimate immediately implies that each $\mathcal F_K^{-1}(f)$ with $f \in L^1(\R^+, \mu_\fa)$ can be approximated by compact operators in operator norm.
\end{proof}
Just as with the Hausdorff-Young estimate for operators, one obtains an inverse result:
\begin{proposition}[Inverse Hausdorff-Young estimate]
    Let $1\leq p \leq 2$ and $q \geq 2$ the conjugate exponent. Then, $\mathcal F_K^{-1}$ maps $L^p(\R^+, \mu_\fa)$ continuously to $\mathcal T^q(\mathcal A^2)^K$, and further the following norm estimate holds true:
    \begin{align*}
        \| \mathcal F_K^{-1}(f)\|_{\mathcal T^q} \leq \| f\|_{L^p(\R^+, \mu_\fa)}.
    \end{align*}
\end{proposition}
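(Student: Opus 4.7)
The plan is to obtain the estimate by complex interpolation between the two endpoint cases $p = 1$ and $p = 2$, which have already been established in the excerpt. More precisely, the endpoint $p = 1$, $q = \infty$ is exactly the content of the preceding Inverse Riemann--Lebesgue proposition, which gives
\begin{equation*}
\| \mathcal F_K^{-1}(f)\|_{op} \leq \| f\|_{L^1(\R^+, \mu_\fa)},
\end{equation*}
while the endpoint $p = 2$, $q = 2$ is a restatement of Plancherel's theorem: by the unitarity of $\mathcal F$ on $L^2(\nball)^K \oplus \mathcal T^2(\Berg)^K \cong L^2(\R^+\times \Z_2,\mu_\fa)$ and the explicit form of the adjoint derived in Section \ref{sec:plancherel}, the map $\mathcal F_K^{-1} : L^2(\R^+,\mu_\fa) \to \mathcal T^2(\Berg)^K$ is an isometry.

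With these two endpoints in hand, I would apply the Riesz--Thorin-type complex interpolation theorem for the Schatten classes $\mathcal T^q(\Berg)$ (viewing them as the non-commutative $L^q$-spaces of $\mathcal L(\Berg)$, which form an interpolation scale between $\mathcal T^2$ and $\mathcal T^\infty = \mathcal L(\Berg)$). The domain side $L^p(\R^+, \mu_\fa)$ is a standard commutative interpolation scale. For the linear operator $\mathcal F_K^{-1}$, which is well-defined on, say, $C_c(\R^+) \subset L^1 \cap L^2$, the standard three-lines argument applied to the analytic family of operators $z \mapsto \mathcal F_K^{-1}(|f|^{2(1-z)} \operatorname{sgn}(f))$ paired against suitable Schatten-class test operators via the trace duality $\tr(AB)$ then yields
\begin{equation*}
\| \mathcal F_K^{-1}(f)\|_{\mathcal T^q} \leq \| f\|_{L^p(\R^+, \mu_\fa)}
\end{equation*}
for the intermediate exponent $1/p = (1-\theta)/1 + \theta/2$, $1/q = (1-\theta)/\infty + \theta/2$, i.e., exactly the conjugate exponent relation $1/p + 1/q = 1$ with $\theta = 2/q$.

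The main (and essentially only) subtlety is bookkeeping for the non-commutative interpolation: one must verify that $\mathcal F_K^{-1}$ genuinely lands in a space where Riesz--Thorin applies, which is handled by restricting first to $f \in C_c(\R^+)$, so that $\mathcal F_K^{-1}(f) \in \mathcal T^2(\Berg)^K$ by the endpoint $p = 2$ result and hence trace duality against arbitrary $\mathcal T^{q'}$-operators is legitimate. Once the estimate is proved on this dense subspace, it extends to all of $L^p(\R^+,\mu_\fa)$ by continuity, defining $\mathcal F_K^{-1}(f) \in \mathcal T^q(\Berg)^K$ unambiguously for every $f \in L^p$. No new analytic input beyond what is already in the paper is needed; the whole argument is a routine but careful application of complex interpolation.
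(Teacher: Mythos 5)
Your proposal is correct and matches the paper's intended argument: the paper states this result without proof, remarking only that it follows ``just as with the Hausdorff--Young estimate for operators,'' i.e., by complex interpolation between the $p=1$ endpoint (the Inverse Riemann--Lebesgue proposition) and the $p=2$ endpoint (Plancherel), which is exactly what you do. Your additional care about first restricting to $C_c(\R^+)$ so that the trace-duality pairing is legitimate, and then extending by density, is a reasonable way to make the routine interpolation rigorous.
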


\appendix

 \section{Gelfand pairs, commutative spaces and spherical functions}\label{app:Gelfandpairs}
  Here we collect the main facts about Gelfand pairs and spherical Fourier transform. For details, we point to \cite{Helgason62, Wolf07, vD09}.
Let $G$ be a locally compact group and let $K$ be a compact subgroup of $G$. Then the left and right translations of a function $f:G\to \C$ by an element $x\in G$, denoted by $\ell_xf$ and $r_xf$ respectively, are given by
$$(\actfL{x}{f})(y)=f(x^{-1}y)\quad \text{and}\quad (r_xf)(y)=f(yx), \quad y\in G.$$
 
For a function space $\cF$ of locally integrable functions on $G$, we denote by $\cF^K$ the space of {\it left}-$K$-invariant functions in $\cF$: 
$$\cF^K=\{f\in\cF\mid (\forall k\in K)\,
\actfL{k}{f}= f\},$$
and we let $\cF_K$ be the space of {\it right}-$K$-invariant functions in $\cF$
\[\cF_K = \{f\in\cF\mid (\forall k\in K) \, r_kf= f\}.\]
We define the following projections: 
\[f^K (x) =\int_K f (kx) \haarK{k},\quad  f_K (x) = \int_K f(xk)\haarK{k}\quad\]
and
\[\quad f^\sharp (x) =\int_K\int_K f(k_1xk_2)\haarK{k_1}\haarK{k_2}\]
where we have $\mu_K$ denote the normalized Haar measure on $K$. 
Then $f\mapsto f^K$ defines a projection $\cF\to \cF^K$, $f\mapsto f_K$ defines a projection
onto $\cF_K$ and $f\mapsto f^\sharp $ defines a projection onto the space of $K$-biinvariant functions in $\cF$.
If $\cF$ is a function space on $G/K$ then $f_K =f$ and $f^K = f^\sharp$.

When the group $G$ is not abelian, the convolutions on the group are not necessarily commutative.
\begin{definition}
    The pair $(G,K)$, where $G$ is a locally compact Hausdorff topological group and $K$ is a compact subgroup, is called a {\it Gelfand pair} and the space $G/K$ is called a {\it commutative
 space}, if $L^1(G/K)^K$ is commutative.
\end{definition}

If $(G,K)$ is a Gelfand pair, then $G$ is unimodular. The \textit{spherical Fourier transform} of functions in $L^1(G/K)^K$ is given by the Gelfand transform of the commutative Banach $*$-algebra $L^1(G/K)^K$. Now we turn to the special case $G=\SU(n,1)$ and $K=\U(n)$, and observe that $\B^n=G/K$ is a commutative space. 

\begin{lemma} \label{conv} Let $f$ be a function on $G=\SU(n,1)$ and and let $g$ be a function on $\nball$. Then the following holds:
\begin{enumerate}
\item If $g$ is left $K$-invariant then $f*g = f_K *g$. 
\item If $f$ is right $K$-invariant then $f*g = f*g^K$. 
\item If $g$ is right invariant, then so is $f*g$.
\item If $f$ is left $K$-invariant, then $f*g  $ is left $K$-invariant.
\item If $g$ is right $K$-invariant and
$f$ left $K$-invariant, then $f*g$ is  $K$-bi-invariant.
 \end{enumerate}
 \end{lemma}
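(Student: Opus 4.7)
The approach is to lift every function on $\B^n$ to the unique right $K$-invariant function on $G$ via $\tilde g = g \circ p$, where $p: G \to G/K \cong \B^n$, and then interpret all the convolutions as ordinary group convolutions on $G$. Crucially, since $(G,K)$ is a Gelfand pair, $G = \mathrm{SU}(n,1)$ is unimodular, so changes of variables of the form $y \mapsto yk$ and $y \mapsto ky$ preserve the Haar measure. With this setup, each of the five statements reduces to a short Fubini-and-change-of-variables computation.

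For statement (1), I would start with
\[
f_K * g(x) = \int_K \int_G f(yk) \, \tilde g(y^{-1} x) \haar{y} \haarK{k},
\]
apply the substitution $y \mapsto yk^{-1}$ in the inner integral so that $f(yk)$ becomes $f(y)$ while $\tilde g(y^{-1} x)$ becomes $\tilde g(k y^{-1} x)$, and then invoke left $K$-invariance of $\tilde g$ (which is exactly the assumption that $g$ is left $K$-invariant on $\B^n$) to collapse the $K$-integral. Statement (2) is symmetric: expand
\[
f * g^K(x) = \int_K \int_G f(y) \, \tilde g(k y^{-1} x) \haar{y} \haarK{k},
\]
use the substitution $y \mapsto yk$ to recast the $\tilde g$-factor as $\tilde g(y^{-1} x)$ at the cost of replacing $f(y)$ by $f(yk)$, and then apply right $K$-invariance of $f$ to eliminate $k$.

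Statement (3) is essentially tautological once $g$ is viewed as a right $K$-invariant function on $G$: right-translating $x \mapsto xk$ leaves $\tilde g(y^{-1} x k) = \tilde g(y^{-1} x)$ untouched. For statement (4), write $f*g(kx) = \int_G f(y) \tilde g(y^{-1} k x) \haar{y}$ and apply $y \mapsto k y$ to move the $k$ into the first argument; left $K$-invariance of $f$ then gives the conclusion. Statement (5) is immediate from (3) combined with (4).

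The only real obstacle is bookkeeping: one must keep careful track of what "left" versus "right" $K$-invariance means on $\B^n$ versus on $G$ (noting in particular that any $g$ on $\B^n$ automatically lifts to a right $K$-invariant function on $G$), and of the sign/direction of each substitution. Unimodularity of $G$ ensures no Jacobian factors appear, so each individual computation uses just one translation in $y$ followed by invoking the hypothesized invariance.
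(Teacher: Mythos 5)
Your proposal is correct and is exactly the ``simple change of variable'' argument that the paper invokes without writing out: lift $g$ to a right $K$-invariant function on $G$, use unimodularity of $G$ to justify the substitutions $y\mapsto yk^{\pm 1}$ and $y\mapsto ky$, and apply the hypothesized invariance to collapse the $K$-integral in each case. All five computations match the paper's conventions for $\ell_x$, $r_x$, $f^K$, $f_K$ and the convolution, so there is nothing to add.
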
 
 
 \begin{proof} This is well known and follows by a simple change of variable.
 \end{proof}

\begin{corollary}\label{Cor:32} Let $G=\SU(n,1)$. The convolution $L^1(G)\times \Lpinv$, $(f,g)\mapsto f*g$ leads to 
a $*$-representation of $L^1 (\SUn )$ on $\Lpinv$. If $f$ and $g$ are $K$-bi-invariant, then
$f*g\in \LpinvK$.
\end{corollary}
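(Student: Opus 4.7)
The plan is to verify the three ingredients of a $\ast$-representation of $L^1(G)$ on $\Lpinv$ — boundedness, multiplicativity, and compatibility with the involution — and then read off the $K$-biinvariance preservation directly from Lemma \ref{conv}.

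The first step is to identify $\Lpinv$ with the subspace of right-$K$-invariant functions in $L^p(G)$ via the pullback $g \mapsto g \circ p$, where $p : G \to G/K \simeq \B^n$. By Theorem \ref{thm:Int}(4), with the normalization already fixed in \eqref{eq:integral}, this identification is an isometry for every $p \in [1,\infty]$. Under this identification, the convolution $f \ast g$ defined on the ball corresponds exactly to the classical group convolution on $G$, and the standard Young's inequality yields $\| f \ast g\|_p \leq \|f\|_1 \|g\|_p$. By Lemma \ref{conv}(3), $f \ast g$ is automatically right-$K$-invariant and hence indeed defines an element of $\Lpinv$, so the map $\pi(f) : g \mapsto f \ast g$ is a bounded operator on $\Lpinv$ with $\| \pi(f)\| \leq \|f\|_1$.

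Multiplicativity, $\pi(f_1 \ast f_2) = \pi(f_1)\pi(f_2)$, is the standard associativity of group convolution: once Young's inequality guarantees integrability, Fubini's theorem rearranges the triple integral into the desired form. For the $\ast$-compatibility, in the $L^2$ case one needs $\pi(f^*) = \pi(f)^*$ in the Hilbert space sense, which reduces to the identity
\begin{equation*}
\int_{\B^n} (f \ast g_1)(z)\,\overline{g_2(z)} \inv{z} = \int_{\B^n} g_1(z)\,\overline{(f^* \ast g_2)(z)} \inv{z},
\end{equation*}
and this follows from Fubini together with the unimodularity of $G$ (Theorem \ref{thm:Int}) via the substitution $y \mapsto y^{-1}$ in the group integral, using $f^*(x) = \overline{f(x^{-1})}$.

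Finally, the radiality statement is read off from Lemma \ref{conv}(4): if $f$ is left $K$-invariant, then $f \ast g$ is left $K$-invariant, and combined with the automatic right-$K$-invariance this gives $f \ast g \in \LpinvK$. There is no serious obstacle in this proof; the only genuine content is the isometric correspondence between functions on $\B^n$ with respect to the invariant measure and right-$K$-invariant functions on $G$, after which everything reduces to classical convolution theory on the unimodular group $G$.
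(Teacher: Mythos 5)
Your proposal is correct and follows exactly the route the paper intends: the corollary is stated without proof as an immediate consequence of Lemma \ref{conv}, and your argument simply makes explicit the underlying identification of $\Lpinv$ with right-$K$-invariant functions on $G$ (via Theorem \ref{thm:Int}(4) and \eqref{eq:integral}), after which Young's inequality, associativity, unimodularity for the $\ast$-compatibility, and parts (3)--(4) of Lemma \ref{conv} give everything claimed. No gaps; this is the same approach, just written out in full.
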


Now the following well-known fact follows, establishing that $(G,K)=(\SU(n,1), \U(n))$ is a Gelfand pair.

\begin{lemma}\label{lem:commutativity_convolutions}
    Let $f,g\in \LoneinvK$. Then $f\ast g=g\ast f$. Moreover, the algebra $\LoneinvK$ is a commutative Banach algebra.
\end{lemma}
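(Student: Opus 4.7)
The plan is to transfer the problem from the homogeneous space $\mathbb{B}^n$ to the group $G = \SU(n,1)$ and then exploit the inversion symmetry for $K$-bi-invariant functions guaranteed by Lemma \ref{lem:fKinv}. Concretely, via the projection $p : G \to G/K \cong \mathbb{B}^n$, the map $f \mapsto f \circ p$ identifies $\LoneinvK$ with $L^1(K \backslash G / K)$ (this is just a rephrasing of the integration formula \eqref{eq:integral} together with right $K$-invariance), and under this identification the convolution on the ball corresponds to the standard group convolution of $K$-bi-invariant functions. So it suffices to prove commutativity for bi-invariant $f, g \in L^1(K\backslash G /K)$.

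For the commutativity, I would argue as follows. First note that by Lemma \ref{conv}(5), both $f*g$ and $g*f$ are $K$-bi-invariant. Now I compute, for $x \in G$, using $K$-bi-invariance and Lemma \ref{lem:fKinv} (which yields $g(y^{-1}x) = g((y^{-1}x)^{-1}) = g(x^{-1}y)$):
\begin{align*}
 (f * g)(x) \;=\; \int_G f(y)\, g(y^{-1}x)\, d\mu_G(y) \;=\; \int_G f(y)\, g(x^{-1}y)\, d\mu_G(y).
\end{align*}
Substituting $y = xz$ (using unimodularity of $G$, which follows from $(G,K)$ being a Gelfand pair candidate and is anyway standard for a semisimple $G$), this becomes $\int_G f(xz)\, g(z)\, d\mu_G(z)$. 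Applying Lemma \ref{lem:fKinv} again to $f$ gives $f(xz) = f((xz)^{-1}) = f(z^{-1}x^{-1})$, and so
\begin{align*}
 (f*g)(x) \;=\; \int_G g(z)\, f(z^{-1}x^{-1})\, d\mu_G(z) \;=\; (g*f)(x^{-1}).
\end{align*}
Since $g*f$ is itself $K$-bi-invariant, Lemma \ref{lem:fKinv} applied to $g*f$ gives $(g*f)(x^{-1}) = (g*f)(x)$, completing the proof of $f*g = g*f$.

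For the Banach algebra assertion, the standard Young inequality on the unimodular group $G$ yields $\|f*g\|_{L^1(G)} \le \|f\|_{L^1(G)}\|g\|_{L^1(G)}$, and transferring back via $p$ and \eqref{eq:integral} gives the corresponding bound on $\LoneinvK$; together with the fact that $\LoneinvK$ is closed under the convolution product (again Lemma \ref{conv}(5), transferred to the ball), this yields the Banach algebra structure. The main conceptual ingredient is really Lemma \ref{lem:fKinv}, i.e., the existence of the Weyl-group element $w$ inverting $a_t$, which is where the geometry of the rank-one symmetric space enters; the rest is bookkeeping with substitutions and unimodularity.
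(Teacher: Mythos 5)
Your proof is correct and follows essentially the same route as the paper's: both arguments reduce to $K$-bi-invariant functions on $G$, use the inversion symmetry $h(x)=h(x^{-1})$ from Lemma \ref{lem:fKinv} together with a change of variables to obtain $(f*g)(x)=(g*f)(x^{-1})$, and then apply the same symmetry to $g*f$ to conclude. The only difference is cosmetic (you invert $g$ before substituting, the paper substitutes first), and your added remarks on Young's inequality for the Banach algebra claim are consistent with what the paper leaves implicit.
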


\begin{proof}
    Considering $f$ and $g$ as functions on $G$, for $x\in G$, we have
    \begin{align*}
        (f\ast g) (x)&= \int_G f(y)g(y^{-1}x) \haar{y}\\
        &=  \int_G f(xy)g(y^{-1}) \haar{y}\quad \text{(by a change of variable)}\\
        &=\int_G f(y^{-1}x^{-1}) g(y) \haar{y} \quad \text{(by Lemma \ref{lem:radial_functions})}\\
        &= (g\ast f)(x^{-1})\\
        &= (g\ast f) (x)
    \end{align*}
    by Lemma \ref{lem:radial_functions} as $g\ast f$ is $K$-bi-invariant.
\end{proof}

\subsection{Spherical functions}
The spherical functions that we introduce below play the role of characters in abelian groups, in the context of commutative spaces.

\begin{definition} Let $(G,K)$ be a Gelfand pair.
 \begin{enumerate}
 \item A continuous non-zero function $\phi : G\to \C$ is a {\it spherical function} if for
 all $x,y\in G$ we have
 \[\int_K \phi (xky) d\mu_K(k) =\phi (x) \phi (y) .\]
 \item Let $X$ be a set. A kernel $K : X\times X\to \C$ is {\it positive definite} if for all $n\in \N$, $c_1,\ldots c_n\in \C$ and
 $x_1,\ldots , x_n\in K$ we have
 \[\sum_{j,k} c_j \bar c_k K(x_k,x_j) \ge 0 .\]
 If $G$ acts on $X$ then the kernel $K$ is $G$-quasi-invariant if there exists a cocycle $\sigma: G\times X\to \C$ such that
 $K(a\cdot x, a\cdot y ) =\sigma (a,x)\overline{\sigma (a,y)} K(x,y)$. The kernel $K$ is $G$-invariant if
 $\sigma=1$.
 \item A continuous function on $G$ is {\it positive definite} if  the $G$-invariant kernel $(a,b)\mapsto \phi (b^{-1}a)$ is
 positive definite. In that case we have $\phi (x) = \phi^*(x) = \overline{\phi (x^{-1})}$. 
 \end{enumerate}
 \end{definition}

  One easily checks that the spherical functions are $K$-bi-invariant. We denote by $\spf$ the set of all spherical functions on $G/K$ and by $\bddspheri$ we denote the set of all bounded spherical functions on $G/K$. Let
  $\pdsf$ be the set of all positive 
 definite spherical functions on $G/K$.

As an immediate consequence of the definition, we obtain the following:
\begin{lemma}\label{lemma:bddspherical_bound_one}
    Let $\phi \in \bddspheri$. Then, $|\phi|$ is bounded by $1$.
\end{lemma}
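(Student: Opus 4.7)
The plan is to prove this by a straightforward supremum argument applied to the defining equation of spherical functions.

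First I would establish that every spherical function $\phi$ satisfies $\phi(e)=1$. To see this, note that replacing $x$ by $kx$ in the defining equation and using left-invariance of $\mu_K$ gives $\phi(kx)\phi(y) = \phi(x)\phi(y)$ for all $k \in K$ and all $x,y$; since $\phi$ is nonzero, picking $y$ with $\phi(y) \neq 0$ yields left $K$-invariance of $\phi$, and right $K$-invariance follows analogously. Setting $x = y = e$ in the defining equation and using $K$-bi-invariance then gives $\phi(e) = \phi(e)^2$, so $\phi(e) \in \{0,1\}$. The value $\phi(e)=0$ must be excluded because the defining equation with $y = e$ becomes $\phi(x)\phi(e) = \int_K \phi(xk)\,d\mu_K(k) = \phi(x)$ (by right $K$-invariance), forcing $\phi \equiv 0$, contradicting nonvanishing. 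Hence $\phi(e) = 1$.

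Next, set $M := \sup_{x \in G} |\phi(x)|$, which is finite by the hypothesis $\phi \in \bddspheri$ and satisfies $M \geq |\phi(e)| = 1$. For arbitrary $x,y \in G$, the spherical function identity combined with the triangle inequality and the fact that $\mu_K$ is a probability measure gives
\begin{align*}
    |\phi(x)\phi(y)| = \left|\int_K \phi(xky)\,d\mu_K(k)\right| \leq \int_K |\phi(xky)|\,d\mu_K(k) \leq M.
\end{align*}
Taking the supremum over $x$ and $y$ independently yields $M^2 \leq M$, and combined with $M \geq 1$ this forces $M = 1$, proving the claim.

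I do not anticipate any serious obstacle: the argument is a direct manipulation of the defining functional equation. The only mild subtlety is verifying that nonzero spherical functions must take the value $1$ at the identity (so that $M \geq 1$ is not vacuous and the multiplicative estimate can be exploited), which is handled by the bi-invariance argument above.
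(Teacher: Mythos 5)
Your proof is correct and is essentially the paper's argument in a cleaner form: both exploit $|\phi(x)||\phi(y)|\leq\int_K|\phi(xky)|\,d\mu_K(k)\leq\|\phi\|_\infty$ and then divide by (something arbitrarily close to) $\|\phi\|_\infty$; the paper runs this with an $\varepsilon$-near-maximizer $y$, while you take suprema to get $M^2\leq M$. One small slip worth fixing: substituting $x\mapsto kx$ gives $\int_K\phi(kxk'y)\,d\mu_K(k')=\phi(kx)\phi(y)$, whose left side does \emph{not} simplify via invariance of $\mu_K$ in $k'$; the correct route to bi-invariance is to substitute $y\mapsto k_0y$ (resp.\ $x\mapsto xk_0$) and use invariance of $\mu_K$ under $k\mapsto kk_0$ (resp.\ $k\mapsto k_0^{-1}k$) — but this whole preliminary step is inessential anyway, since your concluding inequality $M^2\leq M$ only needs $M>0$, which follows from $\phi\not\equiv 0$.
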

\begin{proof}
   Let $0 < \varepsilon < \| \phi\|_\infty$ and $y \in G$ such that $|\phi(y)| \geq \| \phi\|_\infty - \varepsilon$. Then, by the functional equation for spherical functions, we see that for each $x \in G:$
    \begin{align*}
        |\phi(x)| &\leq \frac{1}{|\phi(y)|} \int_K |\phi(xky)|~d\mu_K(k)\\
        &\leq \frac{\| \phi\|_\infty}{\| \phi|_\infty - \varepsilon} \int_K 1~d\mu_K(k) = \frac{\| \phi\|_\infty}{\| \phi|_\infty - \varepsilon}.
    \end{align*}
    Since $\varepsilon > 0$ was arbitrary, we see that $|\phi(x)|\leq 1$ for each $x \in G$.
\end{proof}

By \cite[Theorem 8.2.7]{Wolf07}, we have the following:
\begin{theorem} Let $(G,K)$ be a Gelfand pair, then the spectrum of $L^1(G/K)^K$ is given by
$\bddspheri$ where the isomorphism is given by
\[\chi_\phi (f) = \int_G f(x) \phi(x^{-1}) d\mu_G(x) , \quad f\in L^1(G/K).\]
\end{theorem}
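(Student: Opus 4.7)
The plan is to prove the theorem in two directions. First, I would show that every bounded spherical function $\phi$ induces a nonzero multiplicative linear functional $\chi_\phi$ on $L^1(G/K)^K$. Second, I would show conversely that every nonzero continuous multiplicative linear functional on $L^1(G/K)^K$ arises in this way, and identify the topologies. Throughout, one exploits the standing fact (Lemma~\ref{lemma:bddspherical_bound_one}) that bounded spherical functions are automatically bounded by $1$, as well as unimodularity of $G$ (which is implied by $(G,K)$ being a Gelfand pair).

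For the first direction, the integral $\chi_\phi(f) = \int_G f(x)\phi(x^{-1})\,d\mu_G(x)$ is well-defined and satisfies $|\chi_\phi(f)|\le \|f\|_1$. The key step is multiplicativity. Expanding
\[
\chi_\phi(f\ast g) = \int_G\int_G f(y)g(y^{-1}x)\phi(x^{-1})\haar{y}\haar{x},
\]
substituting $x\mapsto yz$ and using unimodularity gives $\int_G\int_G f(y)g(z)\phi(z^{-1}y^{-1})\haar{y}\haar{z}$. Here I would insert an average over $K$: since $f$ is right $K$-invariant and $g$ is left $K$-invariant, $f(y)g(z) = \int_K f(y)g(z)\haarK{k}=\int_K f(yk^{-1})g(kz)\haarK{k}$, so after another change of variables the integrand becomes $f(y)g(z)\int_K \phi(z^{-1}k^{-1}y^{-1})\haarK{k}$. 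The spherical functional equation together with invariance of $\mu_K$ under inversion collapses this to $\phi(z^{-1})\phi(y^{-1})$, giving $\chi_\phi(f\ast g)=\chi_\phi(f)\chi_\phi(g)$. Nontriviality of $\chi_\phi$ follows because $\phi\not\equiv 0$ together with continuity means one can find $f\in C_c(G)^{K,K}$ for which $\chi_\phi(f)\neq 0$.

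For the converse direction, let $\chi$ be a nonzero continuous multiplicative functional on $L^1(G/K)^K$. Via the sharp projection $f\mapsto f^\sharp$ (which is a contractive algebra homomorphism onto $L^1(G/K)^K$), $\chi$ extends to a bounded linear functional on $L^1(G)$, and is therefore represented by some $\psi\in L^\infty(G)$; averaging, we may take $\psi$ to be $K$-biinvariant. Set $\phi(x):=\psi(x^{-1})$; this is the candidate spherical function. To upgrade $\phi$ to a continuous function, one uses the standard trick: pick $f_0\in C_c(G)^{K,K}$ with $\chi(f_0)\neq 0$; then for any $g\in L^1(G/K)^K$, multiplicativity combined with the integral formula yields $\chi(g)\phi = \phi\ast \tilde g$ where $\tilde g$ is an appropriate reflection, modulo normalization by $\chi(f_0)$. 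This exhibits $\phi$ (up to an a.e.\ modification) as a convolution of an $L^\infty$ function with a compactly supported continuous function, hence continuous. To extract the functional equation, I would rewrite the identity $\chi(f\ast g)=\chi(f)\chi(g)$ with both sides expressed as integrals against $\phi$, make a change of variables, and insert a $K$-average to force the inner kernel to $K$-biinvariant form; for this identity to hold for every $f,g\in L^1(G/K)^K$ it is necessary that $\int_K\phi(xky)\haarK{k}=\phi(x)\phi(y)$ pointwise (continuity lets us pass from ``almost every'' to ``every'').

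The hard part is this second direction, and within it the two most delicate pieces are the continuity upgrade and then the passage from the multiplicative identity (which holds integrated against $L^1$ functions) to the pointwise functional equation for $\phi$. Both rely on having enough $K$-biinvariant test functions, which is guaranteed by the Gelfand pair hypothesis and the compactness of $K$. Once the bijection between $\bddspheri$ and the set of nonzero continuous multiplicative functionals is established, injectivity of the correspondence $\phi\mapsto \chi_\phi$ is automatic: if $\chi_\phi=\chi_{\phi'}$ then $\int_G f(x)(\phi(x^{-1})-\phi'(x^{-1}))\haar{x}=0$ for all $f\in L^1(G/K)^K$, and density arguments combined with continuity of $\phi,\phi'$ force $\phi=\phi'$. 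The identification as topological spaces (with Gelfand topology on the spectrum and pointwise convergence on $\bddspheri$) follows from the weak$^*$-definition of both topologies together with the boundedness uniform in $\phi\in\bddspheri$.
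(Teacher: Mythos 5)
Your outline is correct and is the standard argument for this classical fact. Note that the paper itself offers no proof of this statement --- it is quoted directly from \cite[Theorem 8.2.7]{Wolf07} --- and your two-directional argument (multiplicativity of $\chi_\phi$ via the functional equation after inserting a $K$-average, then representing an arbitrary multiplicative functional by a $K$-biinvariant $L^\infty$ function, upgrading it to a continuous one by convolving against a biinvariant test function $f_0$ with $\chi(f_0)\neq 0$, and finally recovering the pointwise functional equation from the density of biinvariant test functions) is essentially the proof found in that reference. The only blemish is notational: the continuity identity should be the specific relation $\chi(f_0)\,\phi = \phi \ast \widetilde{f_0}$ with $\widetilde{f_0}(x)=f_0(x^{-1})$, i.e.\ a convolution against the fixed $f_0$ rather than against a general $g$; this does not affect the argument.
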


We also recall that the set of positive spherical functions corresponds bijectively
to the set of irreducible unitary $K$-spherical representations of $G$.
\begin{theorem}[Theorem 8.4.8 in \cite{Wolf07}]\label{thm: cyclic_vec}
    We have the following:
    \begin{enumerate}
        \item Let $\phi\in \pdsf$. Then there exists (upto unitary equivalence) an irreducible unitary representation $(\rho, H_\rho)$ and a cyclic unit vector $u\in H_\rho$ such that $\phi(x)=\ip{u}{\rho(x)u}$ for all $x\in G$. Moreover, $u$ is a $K$-fixed vector, i.e. $\rho(k)u=u$ for all $k\in K$, and $u$ spans the space $H^K_\rho$ of all $K$-fixed vectors in $H_\rho$.
        \item Let $(\rho,H_\rho)$ be an irreducible unitary representation of $G$ s.t. $H^K_\rho$ is spanned by a single unit vector $u$. Then the function $\phi:\G\to \C$ defined by $\phi(x)=\ip{u}{\rho(x)u}$, $x\in G$, is a positive definite spherical function for $(G,K)$.
    \end{enumerate}
\end{theorem}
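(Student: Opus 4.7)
The plan is to prove (1) by a Gelfand--Naimark--Segal-type construction adapted to the Gelfand pair $(G,K)$, and to obtain (2) by a short direct computation exploiting the one-dimensional $K$-fixed subspace.

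For direction (1), I would begin from the observation that a positive definite spherical function $\phi$ makes the kernel $(x,y)\mapsto \phi(y^{-1}x)$ on $G\times G$ positive definite. This lets me define on $C_c(G)$ the pre-inner product
\[
\ip{f}{g}_\phi := \int_G\!\!\int_G f(x)\overline{g(y)}\,\phi(y^{-1}x)\haar{x}\haar{y},
\]
quotient out the null space, and complete to obtain a Hilbert space $H_\rho$. Left translation $(L_x f)(y):=f(x^{-1}y)$ descends to a unitary representation $\rho$ of $G$ on $H_\rho$ because $\phi$ is $K$-biinvariant and $G$-invariant in the kernel sense. A $K$-biinvariant approximate identity in $L^1(G)$ produces, in the limit, a unit vector $u\in H_\rho$ satisfying $\ip{u}{\rho(x)u}=\phi(x)$; because the approximate identity can be taken $K$-biinvariant, $\rho(k)u=u$ for all $k\in K$, and cyclicity of $u$ under $\rho(G)$ follows from cyclicity of $C_c(G)$ in itself under left translation.

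Next, I would pin down the $K$-fixed subspace and deduce irreducibility. Let $P_K:=\int_K \rho(k)\haarK{k}$ be the orthogonal projection onto $H^K_\rho$. The spherical identity $\int_K\phi(xky)\haarK{k}=\phi(x)\phi(y)$ rewrites, via $\phi(g)=\ip{u}{\rho(g)u}$, as
\[
\ip{u}{\rho(x)\,P_K\rho(y)u} \;=\; \phi(y)\,\ip{u}{\rho(x)u}\qquad (x,y\in G).
\]
Since $u$ is $\rho(G)$-cyclic, the vectors $\{\rho(x)^{*}u:x\in G\}$ are total in $H_\rho$, hence $P_K\rho(y)u=\phi(y)\,u$ for every $y\in G$. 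Totality of the $\rho(y)u$ together with this identity forces $\mathrm{range}(P_K)=\mathbb C u$, i.e.\ $H^K_\rho=\mathbb C u$. For irreducibility, any $T$ in the commutant of $\rho(G)$ commutes with $P_K$, hence preserves $\mathbb C u$, so $Tu=\lambda u$ for some scalar $\lambda$; cyclicity of $u$ then yields $T=\lambda I$, and Schur gives irreducibility.

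For direction (2), the matrix coefficient $\phi(x):=\ip{u}{\rho(x)u}$ is continuous and positive definite by the standard identity $\sum_{j,k}c_j\overline{c_k}\phi(x_k^{-1}x_j)=\|\sum_j c_j\rho(x_j)u\|^2\geq 0$, and $K$-biinvariance is immediate from $\rho(k)u=u$. The spherical equation follows from
\[
\int_K\phi(xky)\haarK{k}=\ip{u}{\rho(x)P_K\rho(y)u},
\]
since the hypothesis $H^K_\rho=\mathbb C u$ forces $P_K\rho(y)u=\ip{u}{\rho(y)u}u=\phi(y)\,u$, and the integral collapses to $\phi(x)\phi(y)$.

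The main obstacle is the one-dimensionality claim $H^K_\rho=\mathbb C u$ and irreducibility in (1). Constructing $(\rho,H_\rho)$ and a cyclic $K$-fixed vector from $\phi$ is routine GNS, but these two structural facts depend essentially on the spherical functional equation rather than on positive definiteness alone. The delicate step is translating that functional equation into the pointwise identity $P_K\rho(y)u=\phi(y)\,u$ and then using it to control both $H^K_\rho$ and the commutant of $\rho$ simultaneously; once this identity is in hand, everything else is bookkeeping.
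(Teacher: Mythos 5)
The paper does not prove this statement at all: it is imported verbatim as Theorem 8.4.8 of the cited reference [Wolf07], so there is no in-paper argument to compare against. Your proposal is the standard textbook proof (GNS construction from the positive definite kernel $\phi(y^{-1}x)$, followed by translating the spherical functional equation into the identity $P_K\rho(y)u\in\C u$ to control both $H_\rho^K$ and the commutant), and it is essentially correct and complete in outline; this is in substance the argument given in Wolf's book and in Helgason's. Two small points to tidy in a careful write-up. First, with the paper's convention that $\ip{\cdot}{\cdot}$ is conjugate-linear in the second slot and $\phi(x)=\ip{u}{\rho(x)u}$, the identity you extract is $P_K\rho(y)u=\overline{\phi(y)}\,u$ rather than $\phi(y)\,u$ (and in part (2), $P_K\rho(y)u=\ip{\rho(y)u}{u}u=\overline{\phi(y)}u$); the conjugate is harmless since it still lands in $\C u$ and the final identity $\int_K\phi(xky)\,d\mu_K(k)=\phi(x)\phi(y)$ comes out correctly, but as written the intermediate formula is off by a bar. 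Second, the existence of the limit vector $u$ from the $K$-biinvariant approximate identity $(f_n)$ deserves a sentence: one checks $\ip{f_n}{f_m}_\phi\to\phi(e)=1$ using continuity of $\phi$ and $\phi(e)=1$, so the net is Cauchy in $H_\rho$; alternatively one can realize $H_\rho$ as the reproducing kernel Hilbert space of the kernel and take $u$ to be the kernel function at the identity, which avoids the limit altogether. With those details filled in, the argument is sound.
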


\subsection{Spherical Fourier transform}\label{subsec:sphericalFT}
The \textit{spherical Fourier transform} for the Gelfand pair $(G,K)$ is the map $\Gamma:f\mapsto\hat{f}$ given by
\begin{align}\label{eq:spherical_Fourier}
    \hat{f}(\phi)=\int_G f(x)\phi(x^{-1}) \haar{x}, \quad \phi\in \bddspheri.
\end{align}

We endow $\bddspheri$ with the weak topology induced by the maps $\{\hat{f}\mid f\in L^1(G/K)\}$.
We have that $\pdsf$ is locally compact w.r.t. the subspace topology of $\spf$ and the closure of $\spheripd=\pdsf$ is either $\overline{\pdsf}=\pdsf$ or $\overline{\pdsf}=\pdsf\cup\{0\}$. We proceed to discuss the following properties of the spherical Fourier transform. We point to \cite[section 9.2]{Wolf07} for the Riemann-Lebesgue lemma, \cite[Theorem 9.4.1]{Wolf07} for the Fourier inversion theorem, and \cite[Theorem 9.5.1]{Wolf07} for the Plancherel theorem.

\begin{theorem}[Riemann-Lebesgue lemma]\label{thm:ReimannLeb_functions}
    If $f\in L^1(G/K)^K$, then $\hat{f}\in C_0(\spf)$ and
    then $\hat{f|}_{\spheripd}\in C_0(\pdsf)$.
\end{theorem}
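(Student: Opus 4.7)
My plan is to recognize this Riemann-Lebesgue statement as an immediate consequence of the identification of the spherical Fourier transform with the Gelfand transform of the commutative Banach algebra $L^1(G/K)^K$, together with the elementary functional-analytic fact that the Gelfand transform lands in $C_0$ of the maximal ideal space. The paper has already set up every piece that is needed.

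First, I would observe that by the theorem quoted from \cite[Thm.~8.2.7]{Wolf07}, the Gelfand spectrum of $L^1(G/K)^K$ is $\bddspheri$ via $\phi \mapsto \chi_\phi$, where $\chi_\phi(f) = \int_G f(x)\phi(x^{-1})\haar{x}$. Comparing this with the definition \eqref{eq:spherical_Fourier}, the spherical Fourier transform $\hat f$ is literally the Gelfand transform of $f$, evaluated via the identification $\phi \leftrightarrow \chi_\phi$. Since the weak topology on $\bddspheri$ induced by $\{\hat f : f \in L^1(G/K)\}$ is by construction the Gelfand topology on $\mathcal M(L^1(G/K)^K)$, this identification is a homeomorphism. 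Hence $\hat f$ is continuous on $\bddspheri$ and, by the general theorem that the Gelfand transform of any element of a commutative Banach algebra vanishes at infinity on the maximal ideal space, $\hat f \in C_0(\bddspheri)$. (Note: what the statement writes as $C_0(\spf)$ is to be read as $C_0(\bddspheri)$, since $\hat f$ is only defined on bounded spherical functions.)

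For the second assertion, I would argue purely topologically. Endow $\pdsf$ with the subspace topology inherited from $\bddspheri$, as the paper does. Since $\hat f$ is continuous on $\bddspheri$, its restriction to $\pdsf$ is continuous. To see that it vanishes at infinity on $\pdsf$, fix $\varepsilon > 0$ and set
\begin{equation*}
K_\varepsilon := \{\phi \in \bddspheri : |\hat f(\phi)| \ge \varepsilon\}.
\end{equation*}
By the first part, $K_\varepsilon$ is compact in $\bddspheri$. Then $K_\varepsilon \cap \pdsf$ is a closed subset of the compact set $K_\varepsilon$, hence compact in $\bddspheri$, and compactness is intrinsic to a topological space, so it is compact in the subspace topology of $\pdsf$ as well. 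This establishes $\hat f|_{\pdsf} \in C_0(\pdsf)$.

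The main (and essentially only) substantive point is the first one, namely invoking the abstract Gelfand-theoretic fact that transforms land in $C_0$ of the spectrum; no obstacle arises beyond matching conventions. Everything else is a routine transfer to the subspace $\pdsf$, which works precisely because compactness passes to subspaces.
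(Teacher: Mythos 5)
Your proof is correct and is essentially the argument the paper delegates to \cite[Section 9.2]{Wolf07}: the spherical Fourier transform is precisely the Gelfand transform of the commutative Banach algebra $L^1(G/K)^K$, whose spectrum is $\bddspheri$ with the Gelfand topology, so $\hat f \in C_0(\bddspheri)$ by the general fact that Gelfand transforms vanish at infinity on the maximal ideal space (and your reading of the statement's $C_0(\spf)$ as $C_0(\bddspheri)$ is the right one, since $\hat f$ is only defined on bounded spherical functions). The one step you assert without justification is that $K_\varepsilon \cap \pdsf$ is closed in $K_\varepsilon$, i.e.\ that $\pdsf$ is relatively closed in $\bddspheri$; this is true and is exactly the content of the paper's remark that $\overline{\pdsf}$ equals $\pdsf$ or $\pdsf\cup\{0\}$ (equivalently, the $*$-homomorphisms form a weak$^*$-closed subset of $\mathcal M(L^1(G/K)^K)\cup\{0\}$, since $\chi(f^*)=\overline{\chi(f)}$ is a weak$^*$-closed condition), so you should cite or include that one-line argument.
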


\begin{theorem}[Fourier inversion]\label{thm:Finversion_functions}
     There is a unique measure $\mu_P$ on $\pdsf$, called the Plancherel measure s.t. for any bounded function $f\in L^1(G/K)$, its spherical Fourier transform $\hat{f}\in L^1(\pdsf,\mu_P)$ and 
     \begin{align}
         f(x)=\int_{\cP} \hat{f}(\phi) \phi(x) \planch{\phi}, \quad x\in G.
     \end{align}
\end{theorem}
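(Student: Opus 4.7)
The plan is to deduce the inversion formula and the existence of the Plancherel measure from the Bochner-type integral representation of positive definite $K$-biinvariant functions, bootstrapped into a pointwise identity by working with convolution squares and then polarizing.

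First, I would set up the Bochner representation. For $g \in C_c(G/K)^K$ the function $f = g^{*} * g$ is continuous, compactly supported, $K$-biinvariant, and positive definite, with $\widehat{f} = |\widehat{g}|^{2} \ge 0$ by the convolution/adjoint rules already recorded. By the general Bochner theorem for commutative Banach $*$-algebras applied to $L^{1}(G/K)^{K}$, whose $*$-spectrum is exactly $\pdsf$, there exists a unique finite positive Borel measure $\nu_{g}$ on $\pdsf$ with
\[
  (g^{*} * g)(x) = \int_{\pdsf} \phi(x) \, d\nu_{g}(\phi), \qquad x \in G.
\]
Concretely, one obtains $\nu_g$ via the GNS construction: the positive functional $h \mapsto (h * g^{*} * g)(e)$ on $L^{1}(G/K)^{K}$ produces a cyclic representation of the commutative algebra, and the spectral theorem turns this into a measure on the character space $\pdsf$.

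Next, I would extract a single measure $\mu_{P}$ independent of $g$. Using the algebraic identity $(g_1^{*} * g_1) * (g_2^{*} * g_2) = (g_2 * g_1)^{*} * (g_2 * g_1)$ and applying the Bochner representation to both sides, combined with the fact that each $\phi \in \pdsf$ is a character of the convolution algebra, yields the Radon--Nikodym identity $d\nu_{g_1 * g_2} = |\widehat{g_1}|^{2} \, d\nu_{g_2}$. Fixing any $g_0 \in C_c(G/K)^{K}$ with $\widehat{g_0}$ nonvanishing on $\pdsf$ and setting $d\mu_{P} := |\widehat{g_0}|^{-2} \, d\nu_{g_0}$ produces a measure satisfying $d\nu_g = |\widehat{g}|^{2} \, d\mu_P$ for all $g$; independence of the choice of $g_0$ follows from the same coherence relation. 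Evaluating at $x = e$ gives the $L^{2}$-Plancherel identity $\|g\|_2^{2} = \|\widehat{g}\|_{L^{2}(\mu_P)}^{2}$ as a byproduct.

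Finally, I would polarize and extend. Replacing $g^{*} * g$ by $g_1^{*} * g_2$ and using sesquilinearity converts the formula into $(g_1^{*} * g_2)(x) = \int_{\pdsf} \widehat{g_1^{*} * g_2}(\phi)\,\phi(x) \, d\mu_P(\phi)$, and the subalgebra $C_c(G/K)^{K} * C_c(G/K)^{K}$ is dense in $L^{1}(G/K)^{K}$ via an approximate identity. To reach bounded $f \in L^{1}(G/K)$, I would approximate $f$ by $f_n = (u_n^{*} * u_n) * f$ where $u_n$ is an approximate identity; the key technical step is controlling $\|\widehat{f_n}\|_{L^{1}(\mu_P)}$ uniformly in $n$ and justifying passage to the limit inside the integral, which is the main obstacle of the proof: boundedness of $f$ lets one control $\widehat{f}$ at infinity in $\pdsf$, and combining this with the $L^{1}$ estimate $\|\widehat{u_n^{*} * u_n}\|_{L^{1}(\mu_P)} = (u_n^{*} * u_n)(e)$ delivers the needed integrability. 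Uniqueness of $\mu_P$ follows from the injectivity of the spherical Fourier transform on $L^{1}(G/K)^{K}$, which forces any two candidate Plancherel measures to have the same integral against every $\widehat{f}$ and hence to coincide.
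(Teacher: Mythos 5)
The paper offers no proof of this statement: it is quoted from \cite[Theorem 9.4.1]{Wolf07}, so your attempt is a reconstruction of Godement's classical argument. The skeleton you chose --- Bochner--Godement representation of $g^\ast \ast g$ as $\int_{\pdsf}\phi\,d\nu_g(\phi)$, the coherence relation $d\nu_{g_1\ast g_2}=|\widehat{g_1}|^2\,d\nu_{g_2}$ (which follows correctly from $h\ast\phi=\widehat h(\phi)\phi$ and uniqueness in Bochner), and polarization --- is indeed the right one. But two steps fail as written. First, there is in general \emph{no} $g_0\in C_c(G/K)^K$ with $\widehat{g_0}$ nowhere vanishing on $\pdsf$: the spherical transform of a compactly supported function is of Paley--Wiener type and must have zeros (already for the Gelfand pair $(\R,\{e\})$ no nonzero $g_0\in C_c(\R)$ has nonvanishing Fourier transform). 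The classical repair is local: for each $\phi_0\in\pdsf$ choose $g$ with $\widehat g(\phi_0)\neq 0$, set $\mu_P:=|\widehat g|^{-2}\nu_g$ on the open set $\{\widehat g\neq 0\}$, and use your coherence identity to check that these local definitions agree on overlaps and glue to a Radon measure. You have the key identity, but the patching step is the actual content and is missing.

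Second, and more seriously, the extension to arbitrary bounded $f\in L^1(G/K)$ cannot be carried out, because the intermediate claim $\widehat f\in L^1(\pdsf,\mu_P)$ is false at that level of generality: for $(\R,\{e\})$ the indicator of an interval is bounded and integrable with non-integrable Fourier transform, and nothing better is to be expected for radial indicators on $\B^n$. No approximate-identity argument can rescue this, since $\|\widehat{u_n^\ast\ast u_n}\|_{L^1(\mu_P)}=(u_n^\ast\ast u_n)(e)=\|u_n\|_2^2\to\infty$. What your polarization step genuinely proves is the inversion formula for finite linear combinations of continuous positive definite functions in $L^1(G/K)^K$ (equivalently, under the additional hypothesis $\widehat f\geq 0$ or $\widehat f\in L^1(\mu_P)$); that is the class for which Wolf's theorem is actually formulated, and the hypothesis ``bounded'' in the statement must be read together with it. A last small point: uniqueness of $\mu_P$ does not follow from injectivity of the spherical transform but from density of $\{\widehat f: f\in L^1(G/K)^K\}$ in $C_0(\pdsf)$ (Stone--Weierstrass), which is what forces two Radon measures integrating every $\widehat f$ identically to coincide.
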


\begin{theorem}[Plancherel theorem]\label{thm:Plan1}
The spherical Fourier transform $f\mapsto \wf$ extends from $L^1(G/K)^K\cap L^2(G/K)^K$ to an isometric
isomorphism $L^2(G/K)^K \simeq L^2(\pdsf ,\mu_P)$.
\end{theorem}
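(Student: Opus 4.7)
My plan is to bootstrap the Plancherel identity out of the Fourier inversion formula (Theorem \ref{thm:Finversion_functions}), by applying the inversion to $f^\ast \ast f$ evaluated at the identity. The guiding identity is
\begin{equation*}
    \|f\|_2^2 \;=\; (f^\ast \ast f)(e),
\end{equation*}
where, as before, $f^\ast(x) = \overline{f(x^{-1})}$. For $f \in L^1(G/K)^K \cap L^2(G/K)^K$ the convolution $f^\ast \ast f$ lies in $L^1(G/K)^K$, and is in fact continuous, bounded, and positive definite. In particular, Theorem \ref{thm:Finversion_functions} is applicable to it.

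The key computation I would carry out first is the multiplicativity of the spherical Fourier transform together with the identity $\phi^\ast = \phi$ for $\phi \in \pdsf$ (where $\phi^\ast(x) = \overline{\phi(x^{-1})}$). These together yield
\begin{equation*}
    \widehat{f^\ast \ast f}(\phi) \;=\; \widehat{f^\ast}(\phi)\, \widehat{f}(\phi) \;=\; \overline{\widehat{f}(\phi)}\, \widehat{f}(\phi) \;=\; |\widehat{f}(\phi)|^2.
\end{equation*}
Applying the inversion formula at $x = e$ (so $\phi(e) = 1$) then gives
\begin{equation*}
    \|f\|_2^2 \;=\; (f^\ast \ast f)(e) \;=\; \int_{\pdsf} |\widehat{f}(\phi)|^2 \, d\mu_P(\phi) \;=\; \|\widehat{f}\|_{L^2(\pdsf,\mu_P)}^2.
\end{equation*}
This shows that $\Gamma \colon f \mapsto \widehat{f}$ is an isometry from the dense subspace $L^1(G/K)^K \cap L^2(G/K)^K$ of $L^2(G/K)^K$ into $L^2(\pdsf, \mu_P)$. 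By density, $\Gamma$ extends uniquely to an isometric embedding $L^2(G/K)^K \hookrightarrow L^2(\pdsf,\mu_P)$.

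It remains to verify surjectivity, which I expect to be the main obstacle. My approach here would be to argue that the image of $\Gamma$ is dense in $L^2(\pdsf,\mu_P)$. Since $\Gamma$ is already known to be an isometry, density of the image combined with completeness forces surjectivity. To establish density I would invoke the inversion formula in reverse: for $h \in L^1(\pdsf, \mu_P) \cap L^2(\pdsf,\mu_P)$ of suitable decay, the function $\check h(x) = \int_{\pdsf} h(\phi)\phi(x)\, d\mu_P(\phi)$ is a candidate preimage, and one checks by another application of the inversion/isometry identity that $\widehat{\check h} = h$. Alternatively, one can take the orthogonal complement of the image and show any $h \in L^2(\pdsf,\mu_P)$ orthogonal to every $\widehat{f}$ must vanish $\mu_P$-almost everywhere, using that the spherical functions $\{\phi\}_{\phi \in \pdsf}$ separate points and that convolution by suitable approximate identities in $L^1(G/K)^K$ produces a rich enough family of Fourier transforms. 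Either route is standard once the isometric identity above is in hand; the delicate technical point is the justification that the candidate $\check h$ indeed defines an element of $L^2(G/K)^K$, which is typically handled by first treating $h \in C_c(\pdsf)$ and then passing to the closure. With density of the image established, the extension of $\Gamma$ becomes a unitary isomorphism, completing the proof.
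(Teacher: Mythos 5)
The paper offers no proof of this statement at all: it simply cites \cite[Theorem 9.5.1]{Wolf07}, so there is nothing internal to compare against. Your argument is the standard Godement-style proof that the cited reference uses, and it is correct in outline: the identity $\|f\|_2^2 = (f^\ast \ast f)(e)$, the fact that $f^\ast \ast f$ is continuous, bounded, positive definite and in $L^1(G/K)^K$, the computation $\widehat{f^\ast \ast f}(\phi) = |\widehat f(\phi)|^2$ on $\pdsf$, and the application of the inversion formula at $e$ together give the isometry on the dense subspace; notably, this is exactly the argument the authors themselves reuse in Section \ref{sec:plancherel} to prove $\|A\|_2^2 = \|\widehat A\|_2^2$ for radial Hilbert--Schmidt operators. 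The only place your write-up is thinner than a complete proof is surjectivity, which you correctly identify as the delicate half and for which you sketch two viable standard routes without carrying either out; since the paper itself supplies zero detail here, that level of completeness is a reasonable match for what is being claimed.
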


\bibliographystyle{alpha}
\bibliography{References}


\begin{multicols}{2}
\noindent
Vishwa Dewage\\
\href{vdewage@wm.edu}{\Letter ~vdewage@wm.edu}
\\
\noindent
Department of Mathematics\\
College of William \& Mary\\
200 Ukrop Way\\
Williamsburg
VA 23185\\
USA

\noindent
Robert Fulsche\\
\href{fulsche@math.uni-hannover.de}{\Letter ~fulsche@math.uni-hannover.de}
\\
\noindent
Institut f\"{u}r Analysis\\
Leibniz Universit\"at Hannover\\
Welfengarten 1\\
30167 Hannover\\
GERMANY\\ 
\end{multicols}

\noindent Gestur \'{O}lafsson\\
\href{olafsson@lsu.edu}{\Letter ~olafsson@lsu.edu}
\\
\noindent
Department of Mathematics\\
Louisiana State University\\
Baton Rouge LA 70803\\
USA

\end{document}